\documentclass{amsart}
\usepackage{graphicx} 

\usepackage{amsmath,amssymb,amsthm}

\usepackage{hyperref}
\usepackage{cleveref}

\usepackage[normalem]{ulem}
\usepackage{xcolor}

\newtheorem{theorem}{Theorem}[section]
\newtheorem{lemma}{Lemma}[section]
\newtheorem{remark}{Remark}[section]
\newtheorem{corollary}{Corollary}[section]
\newtheorem{definition}{Definition}[section]
\newtheorem{proposition}{Proposition}[section]
\newtheorem{question}{Question}[section]

\DeclareMathOperator{\tr}{tr}
\DeclareMathOperator{\sizes}{\mathsf{s}}

\def\N{\mathbb{N}}
\def\Z{\mathbb{Z}}
\def\Q{\mathbb{Q}}
\def\R{\mathbb{R}}
\def\C{\mathbb{C}}

\title{On very badly approximable numbers}
\author{Zhe Cao}
\address{School of Mathematical Sciences, Nankai University, Tianjin, China}
\email{zhecao@mail.nankai.edu.cn}

\author{Harold Erazo}
\address{IMPA, Rio de Janeiro, Brazil}
\email{harold.erazo@impa.br}

\author{Carlos Gustavo Moreira}
\address{SUSTech International Center for Mathematics, Shenzhen, Guangdong, P. R. China \newline 
	IMPA, Rio de Janeiro, Brazil}
\email{gugu@impa.br}

\date{\today}

\begin{document}

	\begin{abstract}
		We prove a refined version of Markov's theorem in Diophantine approximation. More precisely, we characterize completely the set of irrationals $x$ such that $\left|x-\frac{p}{q}\right|<\frac{1}{3q^2}$ has only finitely many rational solutions: their continued fraction is eventually a balanced sequence through a simple coding. As consequence, we show that all such numbers are either quadratic surds or transcendental numbers. In particular, for any algebraic real number $x$ of degree at least $3$ there are infinitely rational numbers $\frac{p}{q}$ such that $\left|x-\frac{p}{q}\right|<\frac{1}{3q^2}$.
	\end{abstract}
	\subjclass[2020]{11J06, 11J70, 68R15}
	
	\keywords{Continued fractions, Combinatorics of words,  Diophantine approximation}
	
	\maketitle
	
	\section{Introduction}
	
	\subsection{Main Results}\label{subsec_main_result}
	
	A longstanding open problem in Diophantine approximation is: does every algebraic real number of degree at least 
	$3$ has unbounded partial quotients in its simple continued fraction expansion?	
	This question seems to have been first posed by Khinchin \cite{Khinchin} (see also \cite{Shallit_survey}, \cite{All+2001}, \cite{Wald2004}), and it still remains unsolved.
	No algebraic real number of degree at least 3 is known with either bounded or unbounded partial quotients. 
	Bugeaud \cite{transcendence} obtained a partial result, showing that the continued fractions of such numbers can not be “too simple”, in particular, ruling out numbers whose continued fractions have linear complexity. 
	
	In this paper, we will study numbers which are very badly approximable and we will show that they have a very particular form: their continued fraction is codified by balanced sequences from certain positions and thus their continued fraction has linear complexity.
	
	For $x\in\R\setminus\Q$, define the \emph{Lagrange value} $k(x)$ by
	\begin{align*}
		k(x)=\sup\left\{ c>0 \mid \left| x - \frac{p}{q} \right| < \frac{1}{cq^2} 	\text{has infinitely many rational solutions } \frac{p}{q}\right\} \\
	\end{align*}
	The \emph{Lagrange spectrum} is define by 
	$$
	\mathcal{L}=\{ k(x)<\infty \mid x\in \R\setminus\Q \},
	$$
	while the \emph{Markov spectrum} is defined by
	$$
	\mathcal{M}=\left\{\left(\inf_{(x,y)\in\Z^2\setminus (0,0)} \left|f(x,y)\right| 	\right)^{-1} \mid f(x,y)=ax^2+bxy+cz^2, b^2-4ac=1  \right\}.
	$$
	
	A \emph{Markov number} $m$ is the largest coordinate of a solution of the \emph{Markov equation} $x^2+y^2+z^2 = 3xyz$ in the positive integers. 
	A classical result by Markov \cite{Markoff1879},\cite{Markoff1880} from 1880's states that $\mathcal{L}$ and $\mathcal{M}$ coincide before $3$ and this initial part is exactly a discrete sequence accumulating at $3$:
	$$
	\mathcal{L}\cap(-\infty,3)=\mathcal{M}\cap(-\infty,3)=\left\{ \sqrt{5},\sqrt{8},\frac{\sqrt{221}}{5},\dots,\sqrt{9-\frac{4}{m^2}},\dots \right\} ,
	$$	
	where $m$ is a Markov number.
	
	We will recall the Markov's theorem more explicitly in \Cref{subsec_markov}. 
	One consequence of Markov's theorem is that given an algebraic real number $x$ of degree at least $3$, we have $k(x)\geq 3$, that is, for any $\varepsilon>0$ the inequality $\left|x-\frac{p}{q}\right|<\frac{1}{(3-\varepsilon)q^2}$ has infinitely many solutions $p/q\in\Q$. 
	However, it says nothing about the inequality  $\left|x-\frac{p}{q}\right|<\frac{1}{3q^2}$.
	
	Inspired by Bombieri's proof of Markov's theorem \cite{Bombieri}, we characterize all irrational numbers $x$ for which the inequality $\left|x-\frac{p}{q}\right|<\frac{1}{3q^2}$ has only finitely many rational solutions $\frac{p}{q}$, using continued fractions (\Cref{thm:full_characterization}). We give a mainly combinatorial proof based on the renormalization method \cite{Bombieri} (see \Cref{subsec_renormlization} for details). As a corollary, we obtain a result on the approximation properties of algebraic real numbers of degree at least $3$ (\Cref{thm:<3_has_solution}), which is the best approximation result known so far for those numbers.

	Let $\N=\{0,1,2,\dots\}$ be the set of non-negative integers and $\N_{>0}=\{1,2,\dots\}$ the set of positive integers. We consider finite words, left infinite words, right infinite words, and bi-infinite word on a finite set, called the \emph{alphabet}. 
	We set $a=22,b=11$ and we will mainly consider alphabet $\{1,2\}$ or $\{a,b\}$ in this paper.
	When referring to the length of a word, we will always mean the length over the alphabet $\N^{*}$.
	Let $x=x_1x_2\dots x_n$ be a finite word, we define the \emph{transpose} of $x$ to be $x^T=x_nx_{n-1}\dots x_1$, moreover if $x_i\in\{a,b\}$, we define $x^{+}=x_2\dots x_n$ and $x^{-}=x_1\dots x_{n-1}$ to be the words obtained by removing the first letter and last letter, respectively.
	Similarly, we can define the transpose of an infinite word $x=x_1x_2\dots$ to be $x^T=\dots x_2x_1$ (left infinite word). 
	A sequence of finite words $\{w_i\}_{i\in\N}$ converges to a right infinite word (resp. left infinite word) $\omega$ if every prefix (resp. suffix) of $\omega$ is prefix (resp. suffix) of all but finitely many $w_i$, denoted as $\omega=\lim_{i\to\infty} w_i$.
	A sequence of finite words $\{w_i=w_i^1\vert w_i^2\}_{i\in\N}$ converges to a bi-infinite word $\underline{\omega}=\omega^1\vert\omega^2=\lim_{i\to\infty}w_i^1\vert w_i^2$ if the sequences of finite words $\{w_i^1\}_{i\in\N}$, $\{w_i^2\}_{i\in\N}$ converges to the left and right infinite word $\omega^1$,$\omega^2$ respectively.
	
	Given a word $\theta=w_1\dots w_n\in\{a,b\}^n$, we denote by $|\theta|_a$ (resp. $|\theta|_b$) the number of occurrences of the letter $a$ (resp. $b$). We say that $\theta$ is \emph{balanced} if any two subfactors $w_{i_1+1}\dots w_{i_1+k}$ and $w_{i_2+1}\dots w_{i_2+k}$ of the same length are such that $\big||w_{i_1+1}\dots w_{i_1+k}|_b-|w_{i_2+1}\dots w_{i_2+k}|_b\big|\leq 1$. The same definition applies to infinite or bi-infinite words. The balanced words were classified by Heinis \cite[Theorem 2.5]{Heinis} (see \Cref{subsec:balanced}).
	
	Let $(\alpha,\beta)$ be a pair of finite words over the alphabet $\{a,b\}$. Define the \emph{exterior renormalization operators} $\overline{U},\overline{V}$ by
	$$
	\overline{U}: (\alpha,\beta)\mapsto (\alpha\beta,\beta),\quad \overline{V}: (\alpha,\beta)\mapsto (\alpha,\alpha\beta).
	$$
	Starting with the word pair $(\alpha_0,\beta_0)=(a,b)$, we can inductively define alphabets $(\alpha_{n+1},\beta_{n+1})\in\{\overline{U}(\alpha_n,\beta_n),\overline{V}(\alpha_n,\beta_n)\}$. 
	We get an infinite binary tree, which we denote as $\overline{T}$: the root is $(a,b)$ and its children are born when we apply $\overline{U}$ and $\overline{V}$.
	Let $\overline{P}$ be the set of vertices of $\overline{T}$ and $\overline{P}_n$ be the set of vertices of $\overline{T}$ with distance exactly $n$ to the root, so in particular $(\alpha_n,\beta_n)\in\overline{P}_n$. Let $P$ denote the set of words $\alpha\beta$ for all $(\alpha,\beta)\in\overline{P}$. The words $P\cup\{a,b\}$ are known as \emph{(lower) Christoffel words}.

	The following is our main theorem that characterizes those irrationals $x=[x_0;x_1,x_2,\dots]$ such that 
	\begin{equation}\label{eq:x-pq<3}
		\left|x-\frac{p}{q}\right|<\frac{1}{3q^2}
	\end{equation} 
	has only finitely many rational solutions. Hurwitz \cite{Hurwitz} claimed that these irrationals are ``unendlich" (infinite). The first to determine their cardinality was C. Gurwood, who in his 1976 PhD thesis \cite{Gurwood} characterized those $x$ for which \eqref{eq:x-pq<3} has \emph{no} solutions and proved that the set of such $x$ is uncountable. This part of his work was cited in a small number of papers in the 1970s but does not appear to have been taken up in the subsequent literature. The result was later reproved by a different method in the undergradute thesis (in Hungarian) \cite{Harcos} of G. Harcos in 1996. Unfortunately, both Harcos and we were unaware of Gurwood’s thesis; as a result, before becoming aware of Gurwood’s earlier work, we independently re-established the cardinality in \cite{Zhe+25} by constructing uncountably many irrationals $x$ such that \eqref{eq:x-pq<3} has exactly $n$ rational solutions for every $n\in\N\cup\{\infty\}$.

	In the present work we will extend Gurwood's result by considering any finite number of solutions. Let $\lambda_{n+1}(x)=[0;x_{n},x_{n-1},\dots,x_1]+[x_{n+1};x_{n+2},x_{n+3},\dots]$. It follows form $\eqref{eq_n_th_convergent}$ and a classical result by Legendre that all the solutions of \eqref{eq:x-pq<3} are of form $\frac{p_n}{q_n}=[0;x_1,x_2,\dots,x_n]$ with $\lambda_{n+1}(x)>3$.

	\begin{theorem}\label{thm:full_characterization}
		Let $x=[x_0;x_1,x_2,\dots]$ be such that $\left| x-\frac{p}{q} \right| < \frac{1}{3q^2}$ has only finitely many solutions and let $N\in\N$ minimal such that $\lambda_N(x)\leq3$ for all $n\geq N+1$.
		
		If the continued fraction of $x$ is ultimately periodic, then there is $(\alpha,\beta)\in\overline{P}$ such that either
		\begin{itemize}
			\item $x_{N+1}x_{N+2}x_{N+3}\ldots = (\beta^T)^\infty$.
			\item $x_{N+1}x_{N+2}x_{N+3}\ldots = \beta^T(\alpha^T)^\infty$.
			\item $x_{N+1}x_{N+2}x_{N+3}\ldots = 2\alpha^{+}\alpha^\infty$.
			\item $x_{N+1}x_{N+2}x_{N+3}\ldots = 2\alpha^{+}\beta^\infty$.
		\end{itemize}
		
		Moreover, if $N\geq 1$ and $x_{N+1}x_{N+2}\dots\neq a^\infty, b^\infty$,
		then there exist unique factorization $x_{N+1}x_{N+2}\dots=\theta^Tb|a\theta R$ and is such that $[0;x_N,\dots,x_1]<[0;R]$ when $x_{N+1}=1$, and $2x_{N+1}x_{N+2}\dots=\theta^Ta|b\theta R$ and is such that $[0;x_N,\dots,x_1]<[0;2,R]$ when $x_{N+1}=2$.
		
		If the continued fraction of $x$ is not ultimately periodic, then there is a sequence of alphabets $(\alpha_{n+1},\beta_{n+1})\in \{\overline{U}(\alpha_{n},\beta_{n}),\overline{V}(\alpha_{n},\beta_{n})\}$ with both renormalization operators $\overline{U}$ and $\overline{V}$ appearing infinitely many times and such that:
		\begin{itemize}
			\item if $x_{N+1}=1$ then $x_{N+1}x_{N+2}x_{N+3}\ldots=\lim_{n\to\infty}\beta_n^T$ and if $N\geq 1$ then $[0;x_N,\dots,x_1]<[0;\lim_{n\to\infty}\alpha_n]$; or
			\item if $x_{N+1}=2$ then $x_{N+1}x_{N+2}x_{N+3}\ldots=\lim_{n\to\infty}2\alpha_n^{+}$ and if $N\geq1$ then $[0;x_N,\dots,x_1]<[0;2,\lim_{n\to\infty}\beta_n^T]$.
		\end{itemize}
		Conversely, for any such continued fraction $x=[x_0;x_1,x_2,\dots]$ the inequality $\left| x-\frac{p}{q} \right| < \frac{1}{3q^2}$ has at most $N$ solutions.
	\end{theorem}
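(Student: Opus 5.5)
We reduce the statement to a combinatorial problem about the tail $\omega=x_{N+1}x_{N+2}\dots$ and then solve that problem by Bombieri-style renormalization. By minimality of $N$ and Legendre's criterion, every $n\ge N+1$ satisfies $\lambda_n(x)\le 3$. Hence every shift of $\omega$ satisfies $\lambda\le 3$ when it is read together with its own (finite) past inside $\omega$; the only exception is the first position, whose past is $x_N\dots x_1$.

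The first step is local. A partial quotient $\ge 3$ already forces $\lambda>3$, and so do the factors $121$ and $212$ and odd blocks of $1$'s or $2$'s between letters of the other kind. From this we get that $\omega$, except possibly for a single leading $2$ (or $1$), is a word in $a=22$ and $b=11$. That leading $2$ is the source of the $2\alpha^+$ shapes in the statement.

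The second step is the renormalization induction, carried out on the alphabet $\{a,b\}$. Suppose we know that $\omega$ is eventually a concatenation of $\alpha_n,\beta_n$ with $(\alpha_n,\beta_n)\in\overline P_n$. We then compare $\lambda$ at the cut points inside $\alpha_n\alpha_n$, $\beta_n\beta_n$, $\alpha_n\beta_n$ and $\beta_n\alpha_n$. Using the standard lexicographic comparison of continued fractions, with sign alternating according to parity of position, we aim to show two things:
\begin{itemize}
\item The blocks $\alpha_n\alpha_n$ and $\beta_n\beta_n$ cannot both occur infinitely often, since each forces the $\lambda$ value at one of the cuts above $3$ via the symmetry $\alpha\beta=$ palindrome${}\cdot$ ends of Christoffel words.
\item Whichever of the two blocks occurs tells us whether $\overline U$ or $\overline V$ applies, so the word lifts to the alphabet $(\alpha_{n+1},\beta_{n+1})$.
\end{itemize}
If at some stage one block occurs only finitely often, or the process stalls, $\omega$ is eventually periodic. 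Examining exactly where the last non-periodic junction may sit gives the four periodic forms: $(\beta^T)^\infty$, $\beta^T(\alpha^T)^\infty$, $2\alpha^+\alpha^\infty$ and $2\alpha^+\beta^\infty$. If neither stage stops, both $\overline U$ and $\overline V$ occur infinitely often and $\omega$ is the limit of the prefixes $\beta_n^T$ (respectively $2\alpha_n^+$). These prefixes are forced because $\omega$ starts at a position with no constraining past, and the inequality $\lambda\le3$ at $N+1$ must propagate to all later positions.

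The third step handles the past. For $N\ge1$, minimality gives $\lambda_N(x)>3$ and $\lambda_{N+1}(x)\le 3$. We compute $\lambda_{N+1}$ through the Markov-type identity at a cut of the form $\theta^Tb|a\theta R$. In that identity the symmetric part $\theta^T b|a\theta$ contributes exactly $3$ plus an error whose sign is that of $[0;x_N,\dots,x_1]-[0;R]$. This yields the stated inequalities, and uniqueness of the factorization follows from the uniqueness of the central palindrome in Christoffel words. The non-periodic case is the limiting version of the same computation.

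For the converse, balanced sequences coming from the tree $\overline T$ satisfy $\lambda\le 3$ at every cut; this is Markov's theorem in Bombieri's formulation, applied to the limits. The conditions on the past guarantee $\lambda_{N+1}\le 3$, so at most the indices $\le N$ can give solutions.

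I expect the main obstacle to be the second step. There we need exact rather than asymptotic control of $\lambda=3$: cuts with $\lambda=3$ exactly are allowed, and this boundary case is what distinguishes the finitely-many-solutions regime from $k(x)\ge3$. We will therefore need a precise lemma comparing $[0;\gamma\dots]$ and $[0;\gamma'\dots]$ for Christoffel-related $\gamma,\gamma'$, with equality only in the limiting periodic words.
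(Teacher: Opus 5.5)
Your architecture matches the paper: reduce to $\{a,b\}$, renormalize, read off the periodic and Sturmian shapes, and compare a boundary cut with the symmetric value $3$. The genuine gap is in your second step, which you carry out only in an \emph{eventual} form.

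\textbf{Eventual versus exact renormalization.} You assume $\omega$ is eventually a concatenation of $\alpha_n,\beta_n$, and you only exclude $\alpha_n\alpha_n$ and $\beta_n\beta_n$ both occurring infinitely often. That is the Lagrange-value argument, and it yields for each $n$ a tail whose starting position depends on $n$. It is compatible with numbers like $[0;1^{s_1},2,2,1^{s_2},2,2,\dots]$ ($s_n$ even, increasing), where only $\overline U$ is ever used and no tail has the stated form. The theorem instead needs, for \emph{every} $n$, all of $\omega$ (resp.\ $2\omega$) written over $(\alpha_n,\beta_n)$ with $\omega^T$ ending in $\beta_n$. So you must rule out a \emph{single} co-occurrence of $\alpha\alpha$ and $\beta\beta$ anywhere in the tail. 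You must also rule out the terminal patterns $\beta^\ell\alpha^k$ of $\omega^T$ for $(\alpha,\beta)=(uv,v)$ (\Cref{lem:noalphaalphabetabeta,lem:forbidden_pattern1}); this is what replaces your ``examining where the last non-periodic junction may sit''. Your stated mechanism is also wrong: neither $\alpha\alpha$ nor $\beta\beta$ alone forces a bad cut, since $(\alpha\alpha\beta)^\infty$ has none. What forces one is a factor beginning with one block and ending with the other, and only when there is room to extend it on a suitable side (\Cref{lem:leverage_cut,lem:force_continuation,lem:bad_extensions}). When that room lies to the left of position $N+1$, the would-be bad cut involves $x_N,x_{N-1},\dots$, which can rescue it.

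\textbf{Where $\lambda_N(x)>3$ must enter.} You use this hypothesis only in your third step, and saying $\omega$ ``starts at a position with no constraining past'' has it backwards: every $\lambda_n$ with $n\ge N+1$ involves $x_N\dots x_1$. The hypothesis ``$\lambda_n(x)\le3$ for all $n\ge M+1$'' holds for every $M\ge N$, yet the conclusion fails for most $M>N$. For example, take the Fibonacci slope, where $\omega=babaabab\dots=\lim_n\beta_n^T$ with $\overline V,\overline U$ alternating. Removing the prefix $ba$ leaves $baabab\dots$, whose prefix $baa$ has fewer $b$'s than its factor $bab$. So it is not upper balanced, hence not of the form $\lim_n\tilde\beta_n^T$, although all its cuts are good relative to its actual past. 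Hence no argument ignoring $\lambda_N(x)>3$ can force the prefixes $\beta_n^T$ (resp.\ $2\alpha_n^{+}$). In the boundary configurations the paper uses this hypothesis to pin down $x_N=x_{N-1}=2$. It then compares the past with $w_a$ via the longest common suffix of $x_1\dots x_{N-2}$ and $(w_a)^{-}$ (\Cref{lem:bad_cut_truncated1}, and the case $N\ge1$ of \Cref{lem:good_or_indeterminate2}). The same hypothesis also excludes, in your first step, a leading odd block of $1$'s (your ``or $1$'') and an even leading block of $2$'s.

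\textbf{Indeterminate cuts, past condition, and converse.} The obstacle you anticipate, cuts with $\lambda=3$ exactly, never arises, since every $\lambda_n(x)$ is irrational. The real bookkeeping is about \emph{indeterminate} cuts. For the necessary condition on $[0;x_N,\dots,x_1]$ and for the converse you need three facts:
\begin{enumerate}
\item which cuts of the tail are decided by the past;
\item that they all sit at the parity for which one inequality on the past makes them good;
\item that there is a single such cut $\theta^Tb|a\theta R$ (resp.\ $\theta^Ta|b\theta R$) in the periodic case, and infinitely many, converging as in \Cref{lem:converge_alpha_beta}, in the Sturmian case (\Cref{lem:safe_cuts,lem:only_one_indeterminate_cut}).
\end{enumerate}
These cuts are generally not $\lambda_{N+1}$; for $\alpha=aabab$ the relevant one is $\lambda_{N+3}$. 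Quoting Markov's theorem for bi-infinite words does not supply these facts.
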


	\begin{remark}
		The bound on the number of solutions can not be improved, for example consider the continued fraction given by $[0;x_1,\dots,x_N,\lim_{n\to\infty}\beta_n^T]$ or $[0;x_1,..,x_N,\beta^\infty]$ where $x_1,\dots,x_N\geq 3$.
	\end{remark}
	
	\begin{remark}
		The condition $[0;x_N,...,x_1]<[0;R]$ and $[0;x_N,...,x_1]<[0;2,R]$ for the eventually periodic case is to ensure that there are no more bad cuts after the $N$ position. 
		For example if $\alpha=aabab$, then the infinite word $\alpha^\infty=aababaabab\dots$ has a unique factorization $aa|bab\dots=\theta^Ta|b\theta R$, so $\lambda_n(x)<3$ for that position if and only if $[0;x_N,...,x_1]<[0;2,R]$. 
		On the other hand, the non-periodic cases always has infinitely many such factorizations, so we have to impose a different condition.
	\end{remark}

	\begin{remark}\label{rmk_tildem_<3_is_finite}
		A consequence of the \Cref{thm:full_characterization} is that, given $\varepsilon>0$, the set of $x$ such that $\left|x-\frac{p}{q}\right|\geq \frac{1}{(3-\varepsilon)q^2}$ for all $p/q\in\Q$ is finite. On the other hand, it also follows from the \Cref{thm:full_characterization} that the set of irrationals $x$ with $k(x)=3$ and such that $\left|x-\frac{p}{q}\right|\geq \frac{1}{3q^2}$ for all $p/q\in\Q$, is a Cantor set of Hausdorff dimension zero.
	\end{remark} 
	
	The previous theorem may be viewed as a refinement of Markov’s theorem from the perspective of Diophantine approximation, as its consequences are new even in the ultimately periodic case. Indeed, if $k(x)<3$ then the continued fraction expansion of $x$ eventually coincides with a periodic expansion of the form $(\alpha\beta)^\infty$ for some $(\alpha,\beta)\in\overline{P}$, or with $a^\infty$ or $b^\infty$. We provide an explicit criterion for estimating where the period appears, namely the last index $N$ for which $\lambda_N(x)>3$. Moreover, the sequence $x_{N+1}x_{N+2}\dots$ when $x_{N+1}=1$ and $2x_{N+1}x_{N+2}\dots$ when $x_{N+2}$ will be a balanced sequence.
	
	Our proof is essentially self-contained; in particular, the Diophantine approximation part of Markov’s theorem (\Cref{thm_markov}) follows directly from \Cref{thm:full_characterization}. Gurwood’s proof relies on a characterization of lower and upper balanced sequences; however, a key step in that argument (\Cref{lem:not_generalizable_lemma}) does not generalize, which prevents us from using his elegant ideas to obtain a complete proof of \Cref{thm:full_characterization}. Nevertheless, we present Gurwood’s proof in detail and show that his result is equivalent to the case $N=0$ of \Cref{thm:full_characterization}, after applying results from the theory of balanced words.
	
	Together with a deep result by Bugeaud \cite[Theorem 1.1]{transcendence}, we can deduce the following theorem.
	
	\begin{theorem}\label{thm:equal3_is_tran}
		Given $x\in\R\setminus\Q$ with $k(x)=3$ and such $\left| x-\frac{p}{q} \right| < \frac{1}{3q^2}$ has only finitely many rational solutions, then $x$ transcendental.
	\end{theorem}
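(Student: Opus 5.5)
We apply \Cref{thm:full_characterization} and then split into two cases according to whether the continued fraction of $x$ is ultimately periodic. Let $x$ satisfy $k(x)=3$, and suppose $\left|x-\frac{p}{q}\right|<\frac{1}{3q^2}$ has only finitely many solutions. Then \Cref{thm:full_characterization} applies, with $N$ the last index such that $\lambda_N(x)>3$.

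\emph{Periodic case.} Suppose the expansion is ultimately periodic. By \Cref{thm:full_characterization}, the tail $x_{N+1}x_{N+2}\dots$ is eventually $(\beta^T)^\infty$, $(\alpha^T)^\infty$, $\alpha^\infty$ or $\beta^\infty$ for some $(\alpha,\beta)\in\overline{P}$. Every word in $\overline{P}$ is a Christoffel-type word, so each of these periods is a conjugate (a rotation, or a transpose) of a Christoffel word in $P\cup\{a,b\}$. Periodic tails of this kind yield Markov quadratic irrationals. Their Lagrange value is $\sqrt{9-4/m^2}<3$ for the corresponding Markov number $m$; for the tails $a^\infty$ and $b^\infty$ it is $\sqrt 8$ and $\sqrt5$. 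This is the classical computation behind Markov's theorem, which the paper says follows from \Cref{thm:full_characterization}. Since $k(x)$ depends only on the tail of the expansion, we get $k(x)<3$, contradicting $k(x)=3$. The only point to check is that, for instance, $\beta^\infty$ with $\beta=\alpha'\beta'$ really is a rotation of a Christoffel word's period. I expect this to follow directly from the construction of $\overline{P}$ by $\overline{U},\overline{V}$, since $\alpha$ and $\beta$ are themselves concatenations of Christoffel words from earlier levels. So the periodic case cannot occur.

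\emph{Non-periodic case.} Here the expansion of $x$ is not ultimately periodic. By \Cref{thm:full_characterization}, the tail is $\lim\beta_n^T$ or $2\lim\alpha_n^+$, where both $\overline{U}$ and $\overline{V}$ occur infinitely often. Such a limit is a Sturmian word over $\{a,b\}=\{22,11\}$ with irrational slope: it is an aperiodic balanced word, as the paper notes via Heinis's classification. A Sturmian word $w$ has factor complexity $p_w(n)=n+1$. The substitution $a\mapsto 22$, $b\mapsto 11$ is uniform of length $2$, so the image word over $\{1,2\}$ has complexity $O(n)$. Prepending the finite prefix $x_0,\dots,x_N$ (and possibly the extra letter $2$) changes the complexity by at most an additive constant. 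Hence the continued fraction sequence of $x$ has linear complexity and is not ultimately periodic.

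\emph{Transcendence.} Bugeaud's theorem \cite[Theorem 1.1]{transcendence} states that a real number whose continued fraction is not ultimately periodic and has complexity $p(n)=O(n)$ is transcendental. Applying it to the non-periodic case gives that $x$ is transcendental. Together with the exclusion of the periodic case, this proves the theorem.

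The main obstacle is the complexity bound for the substituted word. The point to verify is that, for a word $u=\sigma(w)$ with $\sigma$ 2-uniform, each factor of length $n$ of $u$ is determined by a factor of length at most $\lceil n/2\rceil+1$ of $w$ together with a parity bit. This gives $p_u(n)\le 2(n/2+2)=O(n)$, which is routine. Everything else is bookkeeping against \Cref{thm:full_characterization}.
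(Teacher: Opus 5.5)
Your proposal is correct and follows the paper's proof: Theorem~\ref{thm:full_characterization} makes the tail a balanced (Sturmian) word over $\{a,b\}$. The substitution $a\mapsto 22$, $b\mapsto 11$ and the finite prefix keep the complexity linear, and Bugeaud's criterion then gives transcendence. The only difference is that you rule out the ultimately periodic case explicitly, which the paper does only implicitly; Markov's theorem already shows an ultimately periodic $x$ cannot have $k(x)=3$, so the rotation check is unnecessary.
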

	
	As a corollary we get
	
	\begin{theorem}\label{thm:<3_has_solution}
		If $x$ is an algebraic real number of degree at least $3$, then the inequality $\left|x-\frac{p}{q}\right|<\frac{1}{3q^2}$ has infinitely many rational solutions $p/q$.
	\end{theorem}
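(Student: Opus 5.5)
We deduce \Cref{thm:<3_has_solution} from \Cref{thm:equal3_is_tran} together with Markov's theorem, arguing by contradiction. Let $x$ be algebraic of degree at least $3$, and suppose that $\left|x-\frac{p}{q}\right|<\frac{1}{3q^2}$ has only finitely many rational solutions. Then for every $c>3$ the inequality $\left|x-\frac{p}{q}\right|<\frac{1}{cq^2}$ also has only finitely many solutions, since its solution set is contained in the previous one. Hence $k(x)\le 3$.

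The next step is to exclude $k(x)<3$. If $k(x)<3$, then by Markov's theorem (as recalled in the introduction, and as also recovered from \Cref{thm:full_characterization}) the continued fraction of $x$ is eventually periodic: it eventually coincides with $(\alpha\beta)^\infty$ for some $(\alpha,\beta)\in\overline{P}$, or with $a^\infty$ or $b^\infty$. By Lagrange's theorem, $x$ is then a quadratic irrational. This contradicts the assumption that $x$ has degree at least $3$. Alternatively, one can read this off directly from \Cref{thm:full_characterization}: in the ultimately periodic case, Lagrange's theorem gives the same contradiction.

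Therefore $k(x)=3$, and the inequality has only finitely many solutions. \Cref{thm:equal3_is_tran} then says $x$ is transcendental, contradicting the assumption that $x$ is algebraic. This completes the argument.

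There is no real obstacle at this level, since all the depth sits inside \Cref{thm:equal3_is_tran}. That result combines the non-periodic case of \Cref{thm:full_characterization}, where the tail is a limit of Christoffel words (a Sturmian-type coding with linear complexity), with Bugeaud's theorem. The only point needing care is the direction of the inequality in the first step: finitely many solutions at constant $3$ forces $k(x)\le 3$. This holds because the sets of solutions are nested, decreasing as the constant $c$ increases.
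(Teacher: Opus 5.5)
Your proposal is correct and matches the deduction the paper intends. Finitely many solutions at constant $3$ forces $k(x)\le 3$; Markov's theorem excludes $k(x)<3$ because $x$ is not quadratic; and \Cref{thm:equal3_is_tran} excludes $k(x)=3$ because $x$ is algebraic.
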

	
	It is much more difficult to obtain a similar characterization for all $k^{-1}(3)$.
	In other words, we do not know the answer of the following question:
	
	\begin{question}
		Is there any algebraic real number in the set $k^{-1}(3)$?
	\end{question}
	
	The reason is because for example numbers such as $[0;1^{s_1},2,2,1^{s_2},2,2,1^{s_3},2,2,\dots]$ where $(s_n)_{n_\geq 1}$ is an increasing sequence of positive integers belong to $k^{-1}(3)$. 
	It is known (see \cite[Example 4.5.17]{wordcombinatorics}) that if $s_n\sim r^n$ for some $r>1$, then the complexity is linear (so the number is trascendental) but if $s_n\sim n^r$ for some $r\in\N_{>0}$, then the complexity is of the order $n^{1+1/r}$, so Bugeaud theorem does not apply.
	On the other hand, $k^{-1}(3)$ do not contain rationals or quadratic numbers, since for rational $x$ the inequality $|x-\frac{p}{q}|<\frac{1}{cq^2}$ could only have finitely many rational solutions $\frac{p}{q}$,
	and it follows from Markov's theorem that irrationals with $k(x)=3$ are not eventually periodic, equivalently, not quadratic.

	\subsection{Spectrum \texorpdfstring{$\widetilde{\mathcal{M}}$}{M-tilde}}\label{subsec_tildeM}

	Analogous with the Lagrange value, we can define, for all $x\in\R$,
	\begin{equation*}
		\widetilde{m}(x) := \inf\left\{c>0:\left|x-\frac{p}{q}\right|<\frac{1}{cq^2} \text{ 	has no rational solutions } \frac{p}{q}\neq x\right\},
	\end{equation*}
	and define the spectrum
	\begin{equation}\label{eq_def_tildeM}
		\widetilde{\mathcal{M}}:=\{\widetilde{m}(x)<\infty\mid x\in\R\setminus\Q\}.
	\end{equation}
	
	Notice that $\widetilde{m}(x)$ is the best constant $c$ such that $\left|x-\frac{p}{q}\right|\geq\frac{1}{cq^2}$ for all $\frac{p}{q}\in\Q\setminus\{x\}$. In particular it always holds that 
	\begin{equation}\label{eq:basic_inequality}
		\left|x-\frac{p}{q}\right|\geq \frac{1}{\widetilde{m}(x)q^2} \qquad \text{for all } \frac{p}{q}\in\Q\setminus\{x\}.
	\end{equation}
	
	For rational numbers $x=\frac{a}{b}$, the value of $\widetilde{m}(x)$ is always attained, because for $\frac{p}{q}\neq\frac{a}{b}$ with $q> b$ one has $q^2\cdot\left|\frac{a}{b}-\frac{p}{q}\right|=\frac{q}{b}\cdot|aq-bp|\geq \frac{q}{b}> 1$, while if we chose $p=\lfloor \frac{a}{b}\rfloor$ and $q=1$, one clearly has $q^2\cdot\left|\frac{a}{b}-\frac{p}{q}\right|=\left|\frac{a}{b}-\lfloor \frac{a}{b}\rfloor\right|\leq 1$. 
	
	The real numbers $x$ such that $\widetilde{m}(x)\leq 3$ are called \emph{poorly approximable numbers}. They were classified for rational $x$ by Flahive \cite{Flahive} and for irrational $x$ by Gurwood \cite{Gurwood}. The irrationals $x$ with $\tilde{m}(x)<3$ were rediscovered independently by Florek in an unpublished preprint \cite{Florek} using Markov quadratic forms. Recently, the rationals $x$ with $\widetilde{m}(x)\leq 3$ were classified by Springborn \cite{Springborn} through a hyperbolic geometric correspondence. 
	
	In this paper we will only consider irrational $x$, so we will always assume that $\widetilde{m}$ is restricted to the irrationals. Similar with Markov's theorem, one can characterize $\widetilde{\mathcal{M}}$ before 3 and in fact, show, that this beginning is homothetic with the beginning of Markov's spectrum.
	\begin{theorem}\label{thm:characterization_tildeM}
		We have that
		\begin{equation*}
			\widetilde{\mathcal{M}}\cap(-\infty,3)=\left\{\frac{3+\mu}{2}: \mu\in\mathcal{M}\cap(0,3)\right\}.
		\end{equation*}
		In particular from Markov's theorem
		\begin{equation*}
			\widetilde{\mathcal{M}}\cap(-\infty,3)=\left\{\frac{3+\sqrt{9-\frac{4}{m^2}}}{2} \mid m \text{ a Markov number }\right\}.
		\end{equation*}
	\end{theorem}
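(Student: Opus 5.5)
We reduce $\widetilde m$ to the continued fraction expansion and then feed Markov's theorem (in the refined form of \Cref{thm:full_characterization}) into the resulting formula. Write $x=[x_0;x_1,x_2,\dots]$. Small quotients cannot be approximated badly by convergents that lie far away, so by Legendre's criterion only convergents matter as soon as the constant is below $3$ (any $p/q$ with $|x-p/q|<\frac1{2q^2}$ is a convergent). Using \eqref{eq_n_th_convergent} we have
\[
\left|x-\frac{p_n}{q_n}\right|=\frac{1}{\lambda_{n+1}(x)q_n^2}
\]
for $n\ge 0$, with $\lambda_1(x)=[x_1;x_2,\dots]$. Hence
\[
\widetilde m(x)=\max\Bigl(\sup_{n\ge1}\lambda_n(x),\ \text{(contribution of } q=1)\Bigr)
\]
whenever this value is below $3$. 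Here the $q=1$ term is $\max(1/\{x\},1/(1-\{x\}))$, which dominates $\lambda_1$. The first step is to make this formula precise. One needs $\widetilde m(x)<3$ iff all $\lambda_n(x)\le c<3$ together with the $q=1$ condition. Non-convergent fractions contribute at most a constant $\le 2$ and can be ignored.

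The second step is the key computation: $\sup_n\lambda_n(x)$ versus the Markov value. Since $\lambda_n(x)<3$ for all $n\ge1$, we are in the case $N=0$ of \Cref{thm:full_characterization}. Hence $x_1x_2\dots$ is either $(\beta^T)^\infty$, $\beta^T(\alpha^T)^\infty$, $2\alpha^+\alpha^\infty$, $2\alpha^+\beta^\infty$ (or $a^\infty,b^\infty$); the non-periodic cases are excluded since they give $\sup=3$. For each such word I would compute $\sup_n\lambda_n$ explicitly. It is attained (or approached) along the periodic tail, where the two-sided Markov value of $(\alpha\beta)^\infty$ is $\mu=\sqrt{9-4/m^2}$. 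Because the sequence is one-sided, however, the past $[0;x_n,\dots,x_1]$ is a finite truncation. The expected phenomenon is the following. The best cut in the periodic word has the form $\theta^T b|a\theta\dots$, and the one-sided words above are chosen so that the left side is cut exactly at the point making $\lambda_n$ maximal among truncations. This gives the quantity $\frac{3+\mu}{2}$: the periodic Markov number satisfies $\mu=\sqrt{9-4/m^2}$, and the truncated value equals the average of $\mu$ and $3$. I would verify this using the standard identity for the Markov periodic word $(\alpha\beta)^\infty$. There $\lambda$ at the critical cut equals $\frac{3m+\ldots}{m}$ via the Markov triple matrices $\begin{pmatrix}\cdot&\cdot\\ \cdot&\cdot\end{pmatrix}$ with trace $3m$, and the fixed point of the Cohn matrix $M$ gives $\frac{3m+\sqrt{9m^2-4}}{2m}=\frac{3+\mu}{2}$ as $[\text{tail}]+[0;\text{finite past}]$ in the limit. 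This trace computation, relating $q=1$ / truncated-left values to the Cohn matrix fixed points, is the main obstacle, and I would first try it on $m=1,2$ (giving $\frac{3+\sqrt5}{2}$, $\frac{3+\sqrt8}{2}$) to fix the right cut.

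Finally, both inclusions follow. Every $\widetilde m(x)<3$ arises as above and equals $\frac{3+\mu}{2}$ for the Markov number $m$ of the associated pair $(\alpha,\beta)$ (or $a,b$, giving $m=1,2$). Conversely, for each Markov number, the explicit $x=[0;(\beta^T)^\infty]$ (suitably shifted so that the $q=1$ term does not exceed the value) realizes $\frac{3+\mu}{2}$. The second displayed formula then follows from Markov's theorem.
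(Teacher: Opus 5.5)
\textbf{There is a genuine gap.}

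Your setup is sound and matches the paper. Non-convergents contribute at most $2$, so $\widetilde m(x)=\sup_{n\ge1}\lambda_n(x)$. The case $N=0$ of \Cref{thm:full_characterization} then leaves the four one-sided periodic families, and the non-periodic ones are discarded because $k=3$. Your extra ``$q=1$ term'' is harmless: $1/(1-\{x\})$ equals $\lambda_2(x)$ if $x_1=1$ and is at most $2$ otherwise. Integer shifts do not change $\widetilde m$ either, so no ``suitable shift'' is needed.

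Two things go wrong after that.

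\emph{The wrong periodic word.} The relevant Markov value is not that of $(\alpha\beta)^\infty$ but that of the period of the tail: $\beta$ for $(\beta^T)^\infty$ and $2\alpha^+\beta^\infty$, and $\alpha$ for the other two. The paper writes $\beta=uv$ or $\alpha=uv$ and obtains $\frac{3+m((uv)^\infty)}{2}$. For example, $b^\infty$ has value $\frac{3+\sqrt5}{2}$, not the value attached to $(ab)^\infty$.

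\emph{The missing step: you never determine $\sup_n\lambda_n(x)$.} Your heuristic that it is ``attained (or approached) along the periodic tail'' is false. Deep in the tail, $\lambda_n(x)$ differs by $o(1)$ from the value of the corresponding cut of the bi-infinite word $(uv)^\infty$, and that value is at most $\mu<\frac{3+\mu}{2}$. The supremum is therefore a maximum attained at a specific finite position near the start, and you defer locating it to experiments with $m=1,2$.

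Evaluating a single cut only gives $\widetilde m(x)\ge\frac{3+\mu}{2}$, which proves neither inclusion. The inclusion $\supseteq$ needs the exact value for one $x$ per Markov number, and $\subseteq$ needs it for every $x$ in all four families.

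What is missing is \Cref{lem:position_tildem_attain_max}: for $u^bv(uv)^\infty$ the maximum is at the cut $u^bv|(uv)^\infty=b\theta b|a\theta b\cdots$. Its proof has four parts:
\begin{enumerate}
\item odd-length cuts in the periodic part are bounded by the corresponding cut of $(uv)^\infty$, hence by $\mu$;
\item for even-length cuts after $k\ge1$ periods the value decreases in $k$, since $[0;w_1^T,(b,\theta,a)^k,b,\theta,b]<[0;w_1^T,b,\theta,b]$;
\item the cuts $22|11$, $22|22$, $11|11$ are bounded crudely;
\item the remaining $b|a$ and $a|b$ cuts inside $u^bvuv$ are handled by \Cref{lem:first_cuts}, an analogue of \Cref{lem:cuts}, which compares both halves of each cut termwise with those of the maximal cut.
\end{enumerate}
You also need the analogous statement for $((uv)^T)^\infty$, via \Cref{lem:second_cuts}: the maximum is at $b\theta a|(b\theta a)^\infty$, with value $[0;\overline{b,\theta,a}]+[a,\theta,b]$. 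The other two families then follow from $[0;1,1,y]=1-[0;2,y]$.

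Once the maximizing cuts are known, the computation you call the main obstacle is short. With $M_{uv}=\left(\begin{smallmatrix}3q-q'&\ast\\ q&q'\end{smallmatrix}\right)$ one gets $[\overline{a,\theta,b}]=\frac{3q-2q'+\sqrt{9q^2-4}}{2q}$ and $[0;b,\theta,b]=q'/q$, which sum to $\frac{3+\mu}{2}$. For the other cut, $[a,\theta,b]=3-q'/q$ together with Galois conjugation of $[\overline{a,\theta,b}]$ gives the same value.
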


	Although the previous theorem was already known \cite{Gurwood, Flahive, Florek, Harcos}, we offer yet another proof in \Cref{sec:tildeMbefore3} based on renormalization. Notice that the case $N=0$ of \Cref{thm:full_characterization} gives a full characterization of the irrationals $x$ such that $\left|x-\frac{p}{q}\right|\geq\frac{1}{3q^2}$ for all rational $p/q$, which is precisely the set
	\begin{align*}
		\widetilde{m}^{-1}\left(\widetilde{\mathcal{M}}\cap(-\infty,3]\right) &=\left\{x\in\R\setminus\Q\mid \widetilde{m}(x)\leq 3\right\} \\
		&=\left\{x\in\R\setminus\Q\mid \left|x-\frac{p}{q}\right|\geq\frac{1}{3q^2}\text{ for all rational }\frac{p}{q}\right\}.
	\end{align*}
	As proved by Harcos and Florek using a quadratic forms approach, these irrationals are closely related to the roots of Markov forms. In \Cref{subsec:spectrum_widetilde_M_before_3} we show that the following theorem is a direct corollary of the case $N=0$ of \Cref{thm:full_characterization}. Note that the number of solutions of \eqref{eq:x-pq<3} remains the same if we change $x$ by $k\pm x$ for any $k\in\Z$; in particular $\widetilde{m}(x)=\widetilde{m}(x+k)$.
	
	\begin{theorem}\label{thm:theorem_Florek_Harcos}
		Let $x$ be an irrational. Then $\widetilde{m}(x)<3$ if and only if $x=k\pm\theta_r$ or $x=k\pm\Theta_r$ for some $k\in\Z$, where $\theta_r,\Theta_r$ are the roots of some Markov form $F_r$.
	\end{theorem}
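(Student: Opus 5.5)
We plan to derive the statement from the case $N=0$ of \Cref{thm:full_characterization} together with the classical description of Markov forms. Recall that $\widetilde{m}(x)<3$ means there is $c<3$ with $\left|x-\frac{p}{q}\right|\geq \frac{1}{cq^2}$ for every $p/q$. By Legendre and \eqref{eq_n_th_convergent}, this amounts to $\sup_{n\geq 1}\lambda_n(x)<3$, where the supremum is taken over all cuts, including the first one. Since $\widetilde{m}(x\pm k)=\widetilde{m}(x)$ and $\widetilde{m}(-x)=\widetilde{m}(x)$, we may normalize $x\in(0,1)$ and write $x=[0;x_1,x_2,\dots]$.

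\emph{Necessity.} Suppose $\widetilde{m}(x)<3$. Then in particular \eqref{eq:x-pq<3} has no solutions, so $N=0$ in \Cref{thm:full_characterization}. Moreover $k(x)\leq \widetilde{m}(x)<3$, so by Markov's theorem the expansion is ultimately periodic. The periodic alternatives of \Cref{thm:full_characterization} with $N=0$ therefore apply: $x_1x_2\dots$ is one of
\[
(\beta^T)^\infty,\quad \beta^T(\alpha^T)^\infty,\quad 2\alpha^+\alpha^\infty,\quad 2\alpha^+\beta^\infty, \qquad (\alpha,\beta)\in\overline{P},
\]
or it is $a^\infty$ or $b^\infty$. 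We must now discard the cases that still have supremum $3$, where $\lambda_n\to3$ along a subsequence. In the mixed forms $\beta^T(\alpha^T)^\infty$ and $2\alpha^+\beta^\infty$, the word seen from the cut at the junction approximates the bi-infinite balanced word $\lim \theta^T b|a\theta$, whose value is exactly $3$. We expect the supremum to be $3$ unless the tail is purely periodic. For purely periodic tails, $\lambda$ is controlled by the Markov value of the periodic orbit, which is $\sqrt{9-4/m^2}<3$. A direct bound then shows the finitely many initial cuts also stay below $3$, and the surviving words are exactly $(\beta^T)^\infty$, $2\alpha^+\alpha^\infty$, and the degenerate $b^\infty=1^\infty$ and $2\,2^\infty$-type expansions.

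\emph{Identification with roots.} We use the classical fact, via Cohn matrices or Bombieri's renormalization, that the Markov form $F_r$ attached to the Christoffel word $\alpha\beta$ has roots whose continued fractions are purely periodic with period $(\alpha\beta)^{\pm}$ up to transposition. Concretely, $\theta_r=[0;\overline{\beta^T\alpha^T}]$-type and $\Theta_r$ correspond to the conjugate period read backwards, with a shift $2\alpha^+$ accounting for the representation $[\dots;2,\dots]$ versus the reversed palindromic structure. Since $\alpha\beta$ is a Christoffel word, $\alpha^+\beta^-$-type cores are palindromes. We then check that $(\beta^T)^\infty$ and $2\alpha^+\alpha^\infty$ are exactly the expansions of $\theta_r \bmod 1$ and of $-\Theta_r \bmod 1$, or the reverse. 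Combined with the symmetries $x\mapsto k\pm x$, this gives necessity. Sufficiency follows from the converse part of \Cref{thm:full_characterization} together with the purely periodic Lagrange computation giving $\sup\lambda_n=\frac{3+\sqrt{9-4/m^2}}{2}$-type values below $3$. This is consistent with \Cref{thm:characterization_tildeM}.

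\emph{Main obstacle.} The hardest step is the explicit matching of the words $(\beta^T)^\infty$ and $2\alpha^+\alpha^\infty$ with the continued fractions of the roots of $F_r$, including the boundary letters. This requires handling the exact Christoffel and palindrome decompositions $\alpha\beta=a\,p\,b$ with $p$ a palindrome. We would attempt it by induction along the tree $\overline{T}$, checking that $\overline{U}$ and $\overline{V}$ act compatibly on the Cohn matrices.
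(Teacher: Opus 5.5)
Your overall route is the paper's: Markov's theorem excludes the non-periodic case, the periodic $N=0$ case of \Cref{thm:full_characterization} lists the candidates, and these are matched with roots of Markov forms. However, the step where you discard the mixed forms $\beta^T(\alpha^T)^\infty$ and $2\alpha^+\beta^\infty$ is false, and your necessity argument rests on it.

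For a fixed $(\alpha,\beta)\in\overline{P}$ these words are eventually periodic with Christoffel period $\alpha^T$, resp.\ $\beta$, so $k(x)<3$. The junction is a single cut, not an approximation of a Sturmian configuration; that only happens in the non-periodic case. By \Cref{lem:periodic_good} with $N=0$, $\lambda_n(x)<3$ for every $n\geq 1$. Since $\limsup_n\lambda_n(x)=k(x)<3$, it follows that $\widetilde{m}(x)<3$.

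The simplest counterexample is $x=[0;2,1,1,1,\dots]=\frac{3-\sqrt{5}}{2}$. This is $2\alpha^+\beta^\infty$ for $(\alpha,\beta)=(a,b)$, and $\widetilde{m}(x)=\lambda_1(x)=\frac{3+\sqrt{5}}{2}$. In general $v^T((uv)^T)^\infty=1-\theta_r$ and $2u^+(uv)^\infty=3+\Theta_r$.

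The symmetry $x\mapsto 1-x$ that you invoke already refutes the claim. By \eqref{eq:elementary_identity}, $1-[0;1,1,y]=[0;2,y]$. Combined with \eqref{eq:periodic_identity1} and \eqref{eq:periodic_identity2}, this map sends the pure form $((uv)^T)^\infty=u^b(uv)^\infty$ to the mixed form $2u^+(uv)^\infty$. It sends the mixed form $v^T((uv)^T)^\infty=u^bv(uv)^\infty$ to the pure form $2u^+v(uv)^\infty$. So $\{x\in(0,1):\widetilde{m}(x)<3\}$ cannot consist of pure forms alone. For the same reason your list of degenerate survivors misses $2b^\infty$ and $ba^\infty=[0;1,1,2,2,\dots]=2-\sqrt{2}$.

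The repair is to keep all periodic alternatives and identify each one with a root, as the paper does. Pairs with $\alpha=uv$ are $(uv,(uv)^kv)$, and pairs with $\beta=uv$ are $(u(uv)^k,uv)$. So the alternatives with eventual period $uv$ (or $(uv)^T$) are exactly the four words $u^b(uv)^\infty$, $u^bv(uv)^\infty$, $2u^+(uv)^\infty$ and $2u^+v(uv)^\infty$. The pairs $(a,a^nb)$ and $(ab^n,b)$ yield $a^\infty$, $ba^\infty$, $b^\infty$ and $2b^\infty$, which are $k\pm$ the roots of the forms with $m=1,2$.

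For the four main words, \eqref{eq:theta_r} gives $[0;2,u^+,v,\overline{u,v}]=\theta_r$. Then \eqref{eq:Theta_r_plus_3} together with \eqref{eq:elementary_identity} gives $-2-\Theta_r$, $1-\theta_r$ and $3+\Theta_r$ for the other three. You single out this matching as the main obstacle and propose an induction on Cohn matrices, but it is immediate from the classical expansions \eqref{eq:theta_r} and \Cref{lem:lemma21_Florek}. Your identification of the two pure forms with $\theta_r$ and $-\Theta_r \bmod 1$ is correct.

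Your sufficiency argument is fine as it stands. Each $\lambda_n<3$ by the converse part of \Cref{thm:full_characterization}, and periodicity keeps the $\limsup$ strictly below $3$.
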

	
	Consider the continued fraction of $x\in\R\setminus\Q$
	$$
	x=[x_0;x_1,x_2\dots]=x_0+\frac{1}{x_1+\frac{1}{x_2+\genfrac{}{}{0pt}{0}{}{\ddots}}},
	$$
	let $\frac{p_n}{q_n}=[x_0;x_1,\dots,x_n]$ be the \emph{$n$-th convergent} of $x$, we have
	\begin{equation}\label{eq_n_th_convergent}
		x-\frac{p_n}{q_n} = \frac{(-1)^{n}}{(\gamma_{n+1}+\eta_{n+1})q_n^2}=\frac{(-1)^{n}}{\lambda_{n+1}(x)q_n^2}
	\end{equation}
	where $\gamma_{n+1}=[x_{n+1};x_{n+2},x_{n+3},\dots]$, $\eta_{n+1}=[0;x_{n},x_{n-1},\dots,x_1]$ and $\lambda_{n+1}(x)=\gamma_{n+1}+\eta_{n+1}$. 
	A classical result by Legendre states that if $\left| x-\frac{p}{q}\right|<\frac{1}{2q^2}$ , where $\frac{p}{q}\in\Q$, then $\frac{p}{q}=\frac{p_n}{q_n}$ for some $n\in\N$.

	By the results above, we have an expression of the Lagrange value of $x$:
	\begin{equation}\label{defk}
		k(x)=\limsup_{\frac{p}{q}\in\Q, \frac{p}{q}\to x}\frac{1}{q|p-qx|}=\limsup_{n\to\infty}\left(\gamma_{n+1}+\eta_{n+1}\right)=\limsup_{n\to\infty}\lambda_{n+1}(x).  
	\end{equation}
	Analogously
	\begin{equation}\label{def_tildem1} 
		\widetilde{m}(x) = \sup_{\frac{p}{q}\in\Q\setminus\{x\}}\frac{1}{q|p-qx|}=\sup_{n\in\N}\left(\gamma_{n+1}+\eta_{n+1}\right)=\sup_{n\in\N}\lambda_{n+1}(x),
	\end{equation}
	so we always have $\widetilde{m}(x)\geq k(x)$.
	In particular $\widetilde{\mathcal{M}}$ is defined in a similar way as $\mathcal{L}$ and $\mathcal{M}$, but is different and its structure is interesting since it is very related to them.
	It is easy to see that is different, because for example its first point is $\widetilde{m}(\frac{\sqrt{5}-1}{2}=[0;1,1,\dots])=\frac{3+\sqrt{5}}{2}$,
	while by Markov's theorem $\frac{3+\sqrt{5}}{2}\notin\mathcal{L}$ or $\mathcal{M}$.

	Notice that if $\widetilde{m}(x)$ is rational, then necessarily $\widetilde{m}(x)=k(x)$, because each term $\lambda_{n+1}(x)$ in \eqref{def_tildem1} is irrational. In particular $\widetilde{m}^{-1}(3)\subseteq k^{-1}(3)$. On the other hand, it holds that $\{x\in\R\setminus\Q:\widetilde{m}(x)<3\}\subset\{x\in\R\setminus\Q:k(x)<3\}$, which is countable by Markov's theorem.
	Actually, the set $\{x\in\R\setminus\Q:\widetilde{m}(x)<3-\varepsilon\}$ is finite for any $\varepsilon>0$ (see \Cref{rmk_tildem_<3_is_finite}). Another consequence of \Cref{thm:full_characterization}, is that for $t<3$, $t\neq\frac{3+\sqrt{5}}{2}$, $t\neq\frac{3+2\sqrt{2}}{2}$ the cardinality of the set $\widetilde{m}^{-1}(t)\cap(0,1)$ is always a multiple of $4$  and if the Frobenius Uniqueness conjecture is true, then this cardinality will always be $0$ or $4$. For the exceptional values it holds that $\widetilde{m}^{-1}(\frac{3+\sqrt{5}}{2})=\{\frac{\sqrt{5}-1}{2},\frac{3+\sqrt{5}}{2}\}$ and $\widetilde{m}^{-1}(\frac{3+2\sqrt{2}}{2})=\{\sqrt{2}-1,2-\sqrt{2}\}$.
	
	From \eqref{eq:basic_inequality} it follows that given any $t\in\R$, the set $\widetilde{m}^{-1}((-\infty,t])$ is closed. In contrast, recall that $k^{-1}(t)$ is dense in $\mathbb{R}$ for any $t$ in the Lagrange spectrum. 
	Another consequence of \Cref{thm:full_characterization} is that the set $\widetilde{m}^{-1}(t)$ is closed for all $t\leq 3$. 
	However, this is not true in general: for example the irrationals $z_n=[0,1^{2n},2,1^{2n+2},2,1^{2n+4},2,\dots]$ are such that $\widetilde{m}(z_n)=1+\sqrt{5}$ for all $n\geq 1$, but $z_n$ tends to $z=[0;1,1,\dots]=\frac{\sqrt{5}-1}{2}$ which satisfies $\widetilde{m}(z)=\frac{3+\sqrt{5}}{2}$. From this same construction it is not difficult to see that $\widetilde{M}$ has positive Hausdorff dimension after $3$.

	Probably the first to consider the kind of problem \eqref{eq:basic_inequality} was Davenport, who in 1947 asked which is the largest constant $c$ such that for any irrational number $x$, the inequality $|x-\frac{p}{q}|<\frac{1}{cq^2}$ has always at least one solution. 
	In 1948, Prasad \cite{Prasad1948} computed that this constant is $\frac{3+\sqrt{5}}{2}$ and is optimal for any number of the form $k\pm\frac{1+\sqrt{5}}{2}$, $k\in\Z$. Excluding these numbers, the best constant was computed by Eggan \cite{Eggan} is $\frac{3+2\sqrt{2}}{2}$ and is optimal for numbers of the form $k\pm (1+\sqrt{2})$, $k\in\Z$. The next constant was computed in \cite{PrasadPrasad}. In \cite{Bur+2002}, the authors go much further by characterizing the $r$-th best constants $C_r(n)$ ($r\geq1$ and $n\geq 1$ fixed) such that the inequality $\left|x-\frac{p}{q}\right|\geq\frac{1}{C_r(n)q^2}$ has always at least $n$ solutions among all $x$ that are not $GL_2(\Z)$ equivalent to all the $\gamma_{i}$ with corresponding $m_i<m_{(r)}$, where $\theta_i,m_i$ and $m_{(r)}$ are defined in \Cref{subsec_markov}.

	This article is organized as follows. In \Cref{subsec_markov} we recall Markov's theorem. In \Cref{subsec_renormlization} we introduce the key notion of renormalization and recall several properties of Christoffel words. 
	In \Cref{subsec:roots} we review the properties of roots of Markov forms and deduce \Cref{thm:theorem_Florek_Harcos}. In \Cref{subsec:balanced} we recall several lemmas from Gurwood’s work on balanced sequences and introduce new ones. \Cref{sec:characterization} is devoted to the proof of \Cref{thm:full_characterization}. In \Cref{subsubsec:gurwood} we recall Gurwood’s proof and show that it is equivalent to the case $N=0$ of \Cref{thm:full_characterization}. In \Cref{sec:tildeMbefore3} we prove \Cref{thm:characterization_tildeM}. Finally, in \Cref{sec:remarks} we discuss a similar spectrum to $\widetilde{M}$ and pose a question concerning the approximation of complex numbers.

	\subsection{Acknowledgments}
	The first author would like to thank the Instituto de Matemática Pura e Aplicada (IMPA) where this work began. The second author was partially supported by CAPES and FAPERJ. The third author was partially supported by CNPq and FAPERJ. We are grateful to Davi Lima for mentioning the spectrum defined by Divis.

	\section{Preliminary}
	
	\subsection{Periodic continued fractions}
	
	Given a finite word $w=a_0\dots a_n$ where $a_i$ are positive integers, define the quadratic irrational $\gamma$ by
	\begin{equation*}
		\gamma=[w,\eta]=[w,w,\dots].
	\end{equation*}
	Define the matrix
	\begin{equation}\label{eq:matrix_M_w}
		M_w=\begin{pmatrix}
			a_0 & 1 \\
			1 & 0
		\end{pmatrix}
		\begin{pmatrix}
			a_1 & 1 \\
			1 & 0
		\end{pmatrix}\dotsb
		\begin{pmatrix}
			a_n & 1 \\
			1 & 0
		\end{pmatrix}=\begin{pmatrix}
			p & p^\prime \\
			q & q^\prime
		\end{pmatrix}
	\end{equation}
	so we have the convergents $p/q=[a_0;\dots,a_n]$ and $p^\prime/q^\prime=[a_0;\dots,a_{n-1}]$. Moreover, one has that
	\begin{equation*}
		\gamma=\frac{p\gamma+p^\prime}{q\gamma+q^\prime}
	\end{equation*}
	so
	\begin{equation}\label{eq:eta_formula}
		\gamma=\frac{p-q^\prime+\sqrt{\Delta}}{2q}
	\end{equation}
	where $\Delta=\tr(M_w)^2+4\cdot(-1)^{n}$ is the discriminant.
	
	A classical theorem by Galois says that if a quadratic irrational $\gamma=\frac{P+\sqrt{D}}{Q}$ has a purely periodic continued fraction $\gamma=[\overline{a_0;a_1,\dots,a_m}]$, then its Galois conjugate $\overline{\gamma}=\frac{P-\sqrt{D}}{Q}$ satisfies $-\frac{1}{\overline{\gamma}}=[\overline{a_m;a_{m-1},\dots,a_0}]$.

	We have in general that if $w=a_0\dots a_n$ then
	\begin{equation*}
		[\overline{a_0,\dots,a_n}]+[0;\overline{a_n,\dots,a_0}]=\gamma+(-\overline{\gamma})=\frac{\sqrt{\Delta}}{q(w)}
	\end{equation*}
	where $q(w)$ is the lower-left corner of the matrix $M_w$ and $\Delta=\tr(M_w)^2-4$ is the discriminant.

	\subsection{Markov's Theorem}\label{subsec_markov}
	
	There are essentially three approaches for proving Markov's theorem. The first is based on the theory of continued fractions \cite{CF89, Bombieri}, the second on the theory of indefinite binary quadratic forms \cite{dickson1930studies, Cassels} and the third on hyperbolic geometry \cite{CSeries, SpringbornMarkov}. The approach adopted in this paper falls within the first paradigm.
	
	\subsubsection{Badly approximable numbers}    
	
	Given a matrix $A=
	\left(\begin{smallmatrix} a & b \\ c & d \end{smallmatrix}\right)
	\in GL_2(\Z)$, that is, $a,b,c,d\in\Z$ and $|ad-bc|=1$, define the action of $A$ on $x\in\R\setminus\Q$ by $Ax=\frac{ax+b}{cx+d}$.
	We say that $y\in\R\setminus\Q$ is \emph{$GL_2(\Z)$-equivalent} to $x$ if there exist a $A\in GL_2(\Z)$ such that $y=Ax$.
	
	We have a equivalent description of $GL_2(\Z)$-equivalent by continued fraction:
	$x,y\in\R\setminus\Q$ are $GL_2(\Z)$-equivalent if and only if their continued fractions eventually coincide.
	
	Recall that a Markov number is the largest coordinate of a positive integer solution triple $(x,y,z)$ of the Markov equation.
	The solution $(x,y,z)$ is called normalized if $x\leq y\leq z$ .
	The \emph{multiplicity} of a Markov number $m$ is the number of distinct normalized solutions $(x,y,z)$ of Markov equation with $z=m$.
	
	Let $(m_r)_{r\geq1}$ be the sequence of the ordered Markov numbers with multiplicity, i.e., a non-decreasing sequence whose terms are all Markov numbers and such that the number of times each term appears is equal to its multiplicity.
	We can enumerate the set of normalized solutions of the Markov equation by a sequence of pairwise distinct triples $\{(x_r, y_r, z_r)\}_{r \geq 1}$ with $z_r = m_r$. 
	On the other hand, let $(m_{(r)})_{r\geq 1}$ be the sequence of the ordered Markov numbers without multiplicity, i.e., $m_{(r)}$ is the $r$-th largest element of the set of Markov numbers. 
	
	The well known uniqueness conjecture by Frobenius states that all the Markov numbers have multiplicity of $1$, that is, $m_r=m_{(r)}$ for every $r\geq 1$. The conjecture still remains open.
	
	The following result by Markov \cite{Markoff1879, Markoff1880} gives a full characterization of $\mathcal{L}$ and $\mathcal{M}$ before $3$, and also, of those badly approximable numbers with constant $k(x)<3$.
	
	\begin{theorem}\label{thm_markov}{\rm$($Markov$)$} 
		$$
		\mathcal{L}\cap(-\infty,3)=\mathcal{M}\cap(-\infty,3)=\left\{ \sqrt{5},\sqrt{8},\frac{\sqrt{221}}{5},\dots,\frac{\sqrt{9m_{(r)}^2-4}}{m_{(r)}},\dots \right\}.
		$$
		
		Moreover, there is a sequence of $GL_2(\Z)$-inequivalent irrationals $\gamma_{r}=\frac{a_r+\sqrt{9{m_r}^2-4}}{b_r}$ $(a_r,b_r\in\Z)$ such that $k(\gamma_{r})=\frac{\sqrt{9m_r^2-4}}{m_r}$, and every $x\in\R\setminus\Q$ with $k(x)<3$ is $GL_2(\Z)$-equivalent to some $\gamma_{r}$.
		
	\end{theorem}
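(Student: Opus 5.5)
The plan is to work entirely on the continued fraction side, using \eqref{defk}: $k(x)=\limsup_n \lambda_{n+1}(x)$. The first step is to see that $k(x)<3$ forces all partial quotients to be eventually in $\{1,2\}$. A partial quotient $\ge 3$ gives $\lambda\ge 3+$ something positive, and infinitely many of them push $k(x)$ to at least about $3+\frac{1}{3}$. With all $x_i\in\{1,2\}$, a parity argument shows that blocks $1,2,1$ and $2,1,2$ cannot recur infinitely often unless $k(x)\ge 3$ (the standard Cusick–Flahive estimates). Also, $1$'s and $2$'s must eventually occur in even runs. So the tail can be written over the alphabet $\{a,b\}=\{22,11\}$, apart from the degenerate tails $a^\infty$ and $b^\infty$, which give $\sqrt8$ and $\sqrt5$.

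The second and main step is renormalization in the style of \cite{Bombieri}. Suppose the tail is a word over the current alphabet $(\alpha_n,\beta_n)\in\overline{P}_n$. I will show that, unless the tail is eventually $\alpha_n^\infty$ or $\beta_n^\infty$, one of the factors $\alpha_n\alpha_n$ or $\beta_n\beta_n$ occurs only finitely often. Otherwise some cut inside a factor such as $\beta_n\beta_n\alpha_n\alpha_n$ (suitably transposed) gives $\lambda>3+\delta_n$ infinitely often. This comparison is the core lemma: it compares $[0;\theta^T\ldots]$ with $[0;\theta\ldots]$ at the central cut of Christoffel words, using the palindromic structure $\alpha\beta=c\,\pi\,d$ with $\pi$ a palindrome. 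Once one square is excluded, the tail is eventually a word over $\overline{U}(\alpha_n,\beta_n)$ or $\overline{V}(\alpha_n,\beta_n)$, and I iterate.

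The process can stop, with the tail eventually $\alpha^\infty$ or $\beta^\infty$, or $(\alpha\beta)^\infty$ after reindexing, for some Christoffel pair. If it never stops, the tail agrees with $\lim\alpha_n$ or $\lim\beta_n$ on longer and longer windows. Quantitatively, I expect that if the process runs to depth $n$ infinitely often, then $\limsup\lambda\ge 3-\epsilon_n$ with $\epsilon_n\to0$, from the estimate $\sqrt{9m^2-4}/m\to3$ along growing Markov numbers. Hence $k(x)<3$ forces termination, so $x$ is eventually periodic with period $\alpha\beta$, $\alpha=a$, or $\beta=b$, for some $(\alpha,\beta)\in\overline{P}$.

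The final step is computing $k$. By the periodic formula, $k(x)=\max_{\text{cuts}}\sqrt{\Delta}/q(w)$ with $w=\alpha\beta$, and the maximal cut is expected to give $\sqrt{\tr(M_w)^2-4}/q(w)$. I would then use the identity (Cohn matrices) that for Christoffel $w=\alpha\beta$ one has $\tr M_w=3m$ and the relevant entry $q=m$, where $m$ is a Markov number. This is proved by induction along $\overline{T}$ via the Fricke trace identity $\tr(AB)+\tr(A^{-1}B)=\tr A\,\tr B$, which reproduces the Markov triple mutation $(m_1,m,m_2)\mapsto$ new triple. The maximal value is then $\sqrt{9m^2-4}/m$. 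Conversely, every Markov triple arises from a vertex of $\overline{T}$, so every value is attained, and the $\mathcal{M}$ equality follows because $\mathcal{M}\cap(-\infty,3)$ has the same discrete structure; the paper may assume the standard $\mathcal{L}\subseteq\mathcal{M}$ with closure arguments here. Inequivalence of the $\gamma_r$ follows from distinct Christoffel periods, since primitive Christoffel words are not conjugate to each other. I expect the main obstacle to be the uniform cut comparison lemma, with constants $\delta_n$ positive and independent of position, that drives the renormalization.
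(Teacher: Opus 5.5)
Your overall strategy is the Bombieri-style route the paper also uses: reduce the tail to $\{a,b\}$, renormalize along $\overline{T}$, and compute the periodic values with Cohn matrices. However, the step you call the core lemma is false as stated, and it is not the right tool.

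The central-cut comparison for $\alpha\beta=c\pi d$ only handles adjacent squares, $\alpha\alpha\beta\beta=a\theta a|b\theta b$. When $\alpha\alpha$ and $\beta\beta$ both recur, what recurs in general is $\alpha\alpha(\beta\alpha)^k\beta\beta$ or $\beta\beta(\alpha\beta)^k\alpha\alpha$ with $k$ unbounded, and then no constant $\delta_n>0$ exists. Take the tail $(ab)^{k_1}a(ab)^{k_2}b(ab)^{k_3}a\cdots$ with $k_i\to\infty$. Both $aa$ and $bb$ recur at level $0$. Yet every limit point of its shifts is $(ab)^\infty$, $(ab)^\infty a(ab)^\infty$ or $(ab)^\infty b(ab)^\infty$. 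These are balanced, of Markov value at most $3$, so $k(x)=3$ and $\lambda_n(x)>3+\delta$ holds only finitely often for every $\delta>0$. Passing to a bi-infinite limit does not rescue the lemma, since each of these limits contains at most one of the two squares.

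What you need is weaker in its conclusion, since infinitely many cuts with $\lambda>3$ already contradict $k(x)<3$. But it needs a propagation argument, not a single comparison. Any factor beginning with $\alpha\alpha$ and ending with $\beta\beta$, lying far enough from the start of the tail, forces a bad cut (Bombieri's forcing lemma; here \Cref{lem:leverage_cut}, \Cref{lem:force_continuation}, \Cref{lem:bad_extensions}). The orientation $\beta\beta(\alpha\beta)^k\alpha\alpha$ reduces to this via \eqref{eq:identity1}, \eqref{eq:identity2} and the invariance of good/bad cuts under transposition (\Cref{prop_lexicographic_comparison}).

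The other steps are sketched exactly where the work lies.
\begin{itemize}
\item \textbf{The $\mathcal{M}$ equality.} The inclusion $\mathcal{L}\subseteq\mathcal{M}$ plus closedness gives only $\mathcal{L}\cap(-\infty,3)\subseteq\mathcal{M}\cap(-\infty,3)$; ``the same discrete structure'' is what has to be proved. You must show that a bi-infinite word with $\sup_n\lambda_n<3$ is periodic with Christoffel period. This follows by running the renormalization directly on bi-infinite words with no bad cuts (\Cref{prop_renorm}).
\item \textbf{Step 3.} Because you discard prefixes level by level, tails such as $\alpha\beta^{k_1}\alpha\beta^{k_2}\cdots$ with $k_i\to\infty$ survive every level (eventually only $\overline{U}$) without terminating. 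Your bound $\limsup\lambda\ge 3-\epsilon_n$ must therefore cover limit words like $\beta^\infty\alpha\beta^\infty$. The fact that $\sqrt{9m^2-4}/m\to3$ gives no lower bound for non-periodic words over $(\alpha_n,\beta_n)$. You would have to pass to a bi-infinite limit word and iterate the renormalization on it, using that Markov numbers increase down the tree.
\item \textbf{How the paper avoids Step 3.} It fixes the exact suffix $x_{N+1}x_{N+2}\cdots$ after the last $n$ with $\lambda_n(x)>3$. Renormalization of that fixed suffix is then exact, and non-termination forces both $\overline{U}$ and $\overline{V}$ to occur infinitely often. The suffix is then $\lim\beta_n^T$ or $\lim 2\alpha_n^{+}$, whose Lagrange value is $3$. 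This is how the Diophantine part follows from \Cref{thm:full_characterization}.
\item \textbf{The maximal cut.} That the maximum over cuts of $(\alpha\beta)^\infty$ occurs at $|\alpha\beta$ is a lemma to be proved (Bombieri's Theorem~27, reproved in \Cref{cor:cuts}), not an expectation.
\end{itemize}
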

	
	We can further assume the irrationals $\gamma_{r}$ above to be purely periodic and describe them through the alphabets.
	Let $\gamma_{1}=[b,b,\dots]=[1,1,\dots]=\frac{1+\sqrt{5}}{2}$ and $\gamma_{2}=[a,a,\dots]=[2,2,\dots]=1+\sqrt{2}$. 
	It follows from \cite[Theorem 26]{Bombieri} that there exists a surjective map from $\{(x_r,y_r,z_r)\}_{r\geq3}$ onto $\overline{P}$.
	So given $r\geq 3$, take the image $(\alpha_r,\beta_r)\in\overline{P}$ of $(x_r,y_r,z_r)$ and let
	\begin{equation}\label{eq:gamma_r}
		\gamma_{r}=[(\alpha_r\beta_r)^\infty]=[\alpha_r,\beta_r,\alpha_r,\beta_r,\dots]\in\Q\left(\sqrt{9m_r^2-4}\right).
	\end{equation}
	The numbers $\gamma_r$ are $GL_2(\Z)$-inequivalent between each other because of \cite[Theorem 18]{Bombieri}. 
	
	\subsubsection{Markov quadratic forms}\label{subsubsec:markov_forms}
	
	We will consider now binary quadratic forms with real coefficients. Given two binary quadratic forms $f(x,y)$ and $g(x,y)$, we say that they are \emph{$GL_2(\Z)$-equivalent} if there is $A=
	\left(\begin{smallmatrix} a & b \\ c & d \end{smallmatrix}\right)
	\in GL_2(\Z)$ such that $f(ax+by,cx+dy)=g(x,y)$. Recall that a quadratic form $f(x,y)=Ax^2+Bxy+Cy^2$ is indefinite (i.e., takes positive and negative values) if and only if is discriminant $B^2-4AC$ is positive.
	
	Suppose that $(x_r,y_r,m_r)$ is a positive integer solution of Markov's equation with $\max\{x_r, y_r\}\leq m_r$. Let $0\leq k_r<m$ and $l_r>0$ be the integers defined by
	\begin{equation}\label{eq:k-l}
		k_r\equiv\frac{y_r}{x_r}\equiv-\frac{x_r}{y_r}\pmod{m_r} \qquad k_r^2+1=l_rm_r.
	\end{equation}
	We order $x_r$ and $y_r$ such that $k_r$ is minimal. In particular $0\leq 2k_r\leq m_r$.
	
	\begin{definition}
		Let $m_r$ be a Markov number. Then the \emph{Markov form} $F_{r}$ is the binary quadratic form 
		\begin{equation*}
			m_rF_{r}=m_rx^2+(3m_r-2k_r)xy+(l_r-3k_r)y^2
		\end{equation*}
		where $k_r$ and $l_r$ are defined on \eqref{eq:k-l}.
	\end{definition}
	
	The definition of $k_r$ and $l_r$ is asymmetric on $x_r,y_r$. If we exchange $x_r$ with $y_r$, the corresponding $k_r^\prime, l_r^\prime$ defined on \eqref{eq:k-l} yield the form $F_{r}^\prime(x,y)=F_{r}(x+2y,-y)$.

	The second part of Markov's theorem \cite{Markoff1879, Markoff1880} is about minima of indefinite binary quadratic forms. We refer the reader to the classical books \cite{Cassels} and \cite{dickson1930studies} for this part and also \cite[Appendix C]{Lim+21}. Given a quadratic form $f(x,y)=Ax^2+Bxy+Cy^2$ define its Markov value
	\begin{equation*}
		m(f)=\frac{\sqrt{B^2-4AC}}{\inf_{(x,y)\in\Z^2\setminus(0,0)}|f(x,y)|}
	\end{equation*}
	\begin{theorem}[Markov]
		Let $f(x,y)$ be an indefinite binary quadratic form. Then:
		\begin{enumerate}
			\item The inequality $m(f)<3$ holds if and only if $f$ is $GL_2(\Z)$--equivalent to a multiple of some Markof form.
			\item The equality $m(f)=3$ holds if and only if $f$ is $GL_2(\Z)$--equivalent to a multiple of a form $F(x,y)=A^\prime(x-\theta y)(x-\Theta y)$ where the roots $\theta>0>\Theta$, $\theta=[a_0;a_1,\dots]$ and $-1/\Theta=[0;a_{-1},a_{-2},\dots]$ are such that $(a_n)_{n\in\Z}$ is a bi-infinite Sturmian sequence (see \Cref{subsec:balanced}) after replacing $(2,2)\mapsto a$ and $(1,1)\mapsto b$.
		\end{enumerate}
	\end{theorem}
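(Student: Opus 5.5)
The plan is to reduce both parts to statements about doubly infinite sequences $(a_n)_{n\in\Z}$ attached to the form, and then to apply the continued-fraction results. By homogeneity and $GL_2(\Z)$-invariance of $m(f)$, we may assume that $f$ has irrational roots. (The rational-root case gives $\inf|f|=0$ or a form equivalent to $xy$-type forms with $m(f)=\infty$ or trivially large, so those cases are excluded.) We then reduce $f$ within its class to $f=A'(x-\theta y)(x-\Theta y)$ with $\theta>1$ and $-1<\Theta<0$. We write $\theta=[a_0;a_1,\dots]$ and $-1/\Theta=[0;a_{-1},a_{-2},\dots]$. The classical computation, analogous to \eqref{eq_n_th_convergent}, then gives
\[
\frac{\sqrt{B^2-4AC}}{|f(p,q)|}\quad\text{ranges over}\quad [a_n;a_{n+1},\dots]+[0;a_{n-1},a_{n-2},\dots],\qquad n\in\Z,
\]
up to values that are never the supremum. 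Consequently $m(f)=\sup_{n\in\Z}\lambda_n$ for the bi-infinite sequence. This is the standard Perron-type formula, and I would quote it from \cite{Cassels} or \cite[Appendix C]{Lim+21} rather than reprove it.

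With this reduction, part (1) becomes the statement that $\sup_n\lambda_n<3$ forces $(a_n)$ to be periodic of the form $(\alpha\beta)^\infty$ with $(\alpha,\beta)\in\overline P$, or $a^\infty$ or $b^\infty$. To prove this, I first show that all $a_n\in\{1,2\}$, since any $a_n\ge3$ gives $\lambda_n>3$. Next, the occurrences of $1$ and of $2$ come in pairs, because an odd block such as $121$ or $212$ forces a local value above $3$. After recoding with $a=22$ and $b=11$, I apply the renormalization machinery to the bi-infinite word. This is the machinery used in \Cref{sec:characterization} for $\lambda\le3$ along one-sided tails, which I would invoke in its bi-infinite form. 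Renormalizing with $\overline U$ and $\overline V$ either stops after finitely many steps, giving the periodic word $(\alpha\beta)^\infty$, or continues forever. In the latter case the word is a bi-infinite balanced non-periodic (Sturmian) word, and then $\sup\lambda_n=3$. The strictness $<3$ therefore excludes the infinite case. For the converse, the periodic word $(\alpha\beta)^\infty$ gives $m(f)=\sqrt{9m^2-4}/m<3$ by Galois' theorem and the formula $\sqrt{\Delta}/q(w)$ from the preliminaries. Finally, I identify the form with roots $[(\alpha\beta)^\infty]$ and $-[0;((\alpha\beta)^T)^\infty]$ as a multiple of a Markov form. For this I use the surjection of \cite[Theorem 26]{Bombieri} onto $\overline P$ together with the root description in \Cref{subsec:roots}, which gives $F_r$ with roots $\theta_r,\Theta_r$.

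Part (2) follows the same dichotomy. If $m(f)=3$, then all $\lambda_n\le3$. The case analysis above gives the $a,b$ coding again, and balancedness is forced because an unbalanced pair of factors produces a cut with $\lambda_n>3$; this is the bi-infinite version of the lemmas in \Cref{subsec:balanced}. A periodic balanced word would give $m<3$, so the word must be non-periodic, hence Sturmian. Conversely, for a Sturmian word every cut has $\lambda_n<3$, by the comparison lemmas for Christoffel approximations, while $\sup\lambda_n=3$ because the Christoffel prefixes $\alpha_n\beta_n$ approximate it with values $\sqrt{9m_n^2-4}/m_n\to3$.

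I expect the main obstacle to be the step showing that an unbalanced, or non-renormalizable, bi-infinite word admits some $n$ with $\lambda_n>3$ and not merely $\ge3$. This requires the delicate lexicographic comparisons of $[0;\theta R]$ against $[0;\theta^T\dots]$ used in the main theorem. I would first try to transplant those one-sided comparison lemmas verbatim by choosing the cut at the center of the shortest unbalanced factor pair $a\theta a$ and $b\theta b$.
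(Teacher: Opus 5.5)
The paper does not prove this statement; it quotes it from \cite{Cassels}, \cite{dickson1930studies} and \cite[Appendix C]{Lim+21}. Measured against the tools the paper does contain --- Perron's formula, \Cref{prop_renorm}, \Cref{lem:lemma31gurwood} and the root expansions of \Cref{subsec:roots} --- your architecture is the natural one. However, your treatment of $m(f)=3$ contains a false step: you argue ``non-periodic, hence Sturmian''. The degenerate balanced words $\alpha^\infty\beta\alpha^\infty$ and $\beta^\infty\alpha\beta^\infty$ of \Cref{prop:balanced_characterization} are non-periodic, have periodic tails, and have Markov value exactly $3$. For instance, in $\dots 2222\,11|2222\dots$ the marked cut has value $[2;2,2,\dots]+[0;1,1,2,2,\dots]=(1+\sqrt{2})+(2-\sqrt{2})=3$. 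If ``Sturmian'' means ``not eventually periodic'', as the heading of the third case of \Cref{prop:balanced_characterization} says, then the forward implication in (2) is false and your argument fails exactly at that step. Statement (2) is correct only when ``Sturmian'' is read as the renormalization condition written in that case: a non-constant word over some $(\alpha_n,\beta_n)\in\overline{P}_n$ for every $n$. That reading does include $\alpha^\infty\beta\alpha^\infty$, since it is non-constant over $\overline{V}^k(\alpha,\beta)=(\alpha,\alpha^k\beta)$ for every $k$. You need to adopt this reading explicitly and carry the degenerate words through both directions. Relatedly, your claim that every cut of a Sturmian word has $\lambda_n<3$ is false even for aperiodic words. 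If $c$ is a characteristic Sturmian word, then $c^Tab\,c$ is Sturmian, and its cut $c^Ta|bc$ has value $[2;2,c]+[0;1,1,c]=3$ by \eqref{eq:elementary_identity}. Only $\lambda_n\le 3$ is true, and that is all $m(f)=3$ requires.

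The step you single out as the main obstacle is not delicate in the bi-infinite setting, and the device you propose for it would not work as stated. An unbalanced word does contain factors $a\theta a$ and $b\theta b$, but they need not be adjacent, so in general there is no cut $a\theta^Ta|b\theta b$ ``at their centre''. Manufacturing such a configuration is exactly the forcing argument of \Cref{lem:lemma31gurwood}: every extension by $n$ letters of an unbalanced factor of length $n$ contains some $x\theta^Tx|y\theta y$ with $\{x,y\}=\{a,b\}$, and these cuts exceed $3$ strictly by \Cref{prop_lexicographic_comparison}. A bi-infinite word always admits that extension, so $\sup_n\lambda_n\le 3$ forces balance directly; you already invoke this in part (2). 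The one-sided comparisons of \Cref{sec:characterization}, such as \Cref{lem:bad_cut_truncated1}, exist only to control the boundary at position $N$. Three smaller points. First, evenness of all blocks does not follow from excluding $121$ and $212$ (the word $2211122$ contains neither); it requires the descent of \Cref{lem:evens_blocks}. Second, with $-1<\Theta<0$ the left half is $-\Theta=[0;a_{-1},a_{-2},\dots]$, not $-1/\Theta$. Third, obtaining $m((\alpha\beta)^\infty)=\sqrt{9m^2-4}/m$ needs \Cref{cor:cuts} (the maximum sits at the cut $\dots|\alpha\beta\dots$) together with $\tr M_{\alpha\beta}=3q(\alpha\beta)$, not only Galois' theorem and the formula $\sqrt{\Delta}/q(w)$.
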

	
	We refer readers to expository article by Bombieri \cite{Bombieri} and to \cite{CF89, Lim+21, Cassels} for more details about these theorems, the uniqueness conjecture, and results about the structure of $\mathcal{L}$ and $\mathcal{M}$.

	\subsection{Dynamical Characterization}
	
	In 1921, Perron \cite{perron1921} gave a dynamical system characterization of Lagrange spectrum and Markov spectrum.
	Consider the symbolic dynamical system $(\Sigma,\sigma)$, where $\Sigma=(\N^{*})^{\Z}$ is the symbolic space and $\sigma: \Sigma\rightarrow\Sigma, (x_i)_{i\in\Z}\mapsto (x_{i+1})_{i\in\Z}$ is the left shift. 
	For any $\underline{x}=(x_i)_{i\in\Z}\in\Sigma$ , define the \emph{height function} $\lambda$ on $\Sigma$ as
	$$
	\lambda(\underline{x})=[x_0;x_1,x_2,\dots]+[0;x_{-1},x_{-2},\dots]
	$$
	and let $\lambda_n(\underline{x})=\lambda(\sigma^{n}(\underline{x}))=[x_n;x_{n+1},x_{n+2},\dots]+[0;x_{n-1},x_{n-2},\dots]$
	and we extend the definition of the Lagrange value to $\underline{x}$ by
	$$
	l(\underline{x})=\limsup_{n\to\infty}{\lambda_n(\underline{x})}.
	$$
	If we define a Lagrange spectrum through bi-infinite sequences, it will give the same set, that is
	$$
	\mathcal{L}=\left\{l(\underline{x})<\infty \mid \underline{x}\in\Sigma \right\}.
	$$
	This is because the value of limsup is preserved under projection. 
	More precisely, let
	$$
	\pi: \Sigma=(\N^{*})^{\Z} \rightarrow \Sigma^{+}=(\N^{*})^{\N^{*}},\quad \underline{x}=(x_i)_{i\in\Z}\mapsto \pi(\underline{x})=(x_i)_{i\in\N^{*}}
	$$
	be the projection of a bi-infinite word to the right part, we have $l(\underline{x})=k(\pi(\underline{x}))$.
	
	Similarly, let $m(\underline{x})=\sup_{n\in\Z}\lambda_n(\underline{x}) $ be the \emph{Markov value} of $\underline{x}$
	and we will have
	$$
	\mathcal{M}=\left\{m(\underline{x})<\infty \mid \underline{x}\in\Sigma \right\}.
	$$
	This is so called dynamical characterization of Lagrange and Markov spectrum.
	
	In general we do not have $m(\underline{x})=\widetilde{m}(\pi(\underline{x}))$, since for example we have $m(\dots 111\dots)=\sqrt{5}$ while $\widetilde{m}([0;1,1,\dots])=\frac{3+\sqrt{5}}{2}$.
	The definitions of $m(\underline{x})$ and $\widetilde{m}(x)$ are very similar and this is also the reason we use the notations $\widetilde{m}$ and $\widetilde{\mathcal{M}}$.

	\subsection{Renormalization}\label{subsec_renormlization}
	
	The main tool we used in this paper is renormalization, as introduced in \cite{Bombieri}, where this technique was used to give a full characterization of those bi-infinite sequences with Markov value at most 3.

	Recall that we define the exterior renormalization operators $\overline{U},\overline{V}$ and the infinite binary tree $\overline{T}$ in \Cref{subsec_main_result}.
	The following proposition is \cite[Proposition 2.16]{Zhe+25}, so called renormalization algorithm, and is originally from Bombieri's work \cite{Bombieri}. We will not use it, although it is instructive to understand the similarities between the spectra.
	
	\begin{proposition}\label{prop_renorm}
		A bi-infinite word $\omega$ has Markov value smaller than $3$ if and only if it can be written as a constant word in some alphabet $(\alpha,\beta)\in \overline{P}$, and $\omega$ has Markov value exactly $3$ if and only if for all $n\in\N$, the word $\omega$ can be written as a non-constant word for some alphabet $(\alpha_n,\beta_n)\in\overline{P}_n$.
	\end{proposition}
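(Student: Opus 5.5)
Since this proposition is Bombieri's renormalization algorithm \cite{Bombieri} as restated in \cite[Proposition 2.16]{Zhe+25}, the plan is to follow that structure. We argue by induction on the depth of the tree $\overline{T}$ and show that a bi-infinite word $\omega$ with $m(\omega)\le 3$ can always be renormalized one step further.

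\textbf{Step 0: reduction to the alphabet $\{a,b\}$.} First we show that $m(\omega)\le 3$ forces two things. The letters of $\omega$ lie in $\{1,2\}$, since any letter $\ge 3$ gives $\lambda_n>3$. Moreover, the $2$'s and the $1$'s occur in blocks of even length, because a configuration such as $121$ or $212$ (with suitable neighbours) produces a height exceeding $3$ by comparison of continued fractions. Hence $\omega$ is a bi-infinite word over $\{a,b\}=\{22,11\}$. The constant words $a^\infty$ and $b^\infty$ give the values $\sqrt8$ and $\sqrt5$. Otherwise, the forbidden factors $aa\cdot bb$-type patterns, such as $b\,a\,a\,b$ versus $a\,b\,b\,a$, coexisting with the wrong orientation, force one of the letters to be isolated. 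By the symmetry $(a,b)\leftrightarrow$ transpose, this means $\omega$ is a word in $(\alpha_1,\beta_1)=(a,ab)$ or $(ab,b)$. The key local lemma to prove here is a lexicographic comparison: for words $u,v$, $\lambda$ at the cut in $\dots u\,|\,v\dots$ exceeds $3$ exactly when a certain comparison $[0;u^T]$ versus $[0;v]$ fails. In Bombieri's language, the word around each cut between $a$ and $b$ must be "balanced" in the sense of comparing the left and right tails.

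\textbf{Inductive step.} Suppose $\omega$ is written over $(\alpha_n,\beta_n)\in\overline{P}_n$. We use the fact that the map $a\mapsto\alpha_n$, $b\mapsto\beta_n$ transports the height comparison: the condition $m(\omega)\le 3$ for $\omega$ over $(\alpha_n,\beta_n)$ is equivalent to the analogous comparison conditions for the underlying word $\omega'$ over $\{a,b\}$. This is the heart of Bombieri's argument. It rests on the identities for Christoffel words, namely that $\alpha_n\beta_n$ and $\beta_n\alpha_n$ differ only in a central swap $12\leftrightarrow21$ and that $\alpha_n,\beta_n$ are palindromes up to this swap. These identities let us compute $\lambda$ at cuts inside $\alpha_n\beta_n$ against cuts inside $\beta_n\alpha_n$. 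With this in hand, the argument of Step 0 applied to $\omega'$ shows that $\omega'$ is constant, which gives Markov value $<3$ with $\omega=(\alpha_n)^\infty$, $(\beta_n)^\infty$, or periodic words of the form $(\alpha\beta)^\infty$ at the next level. Otherwise $\omega'$ lies in $\{\overline U,\overline V\}$ applied once more. For the "$<3$" direction, we also need a uniform gap: at each depth, being non-constant forces some height to exceed $3-\varepsilon_n$ with $\varepsilon_n\to0$. So a word that is non-constant at every level has $m=3$, while a constant word at level $n$ (a periodic Markov word) has value $\sqrt{9-4/m^2}<3$ by the periodic formula $\sqrt{\Delta}/q(w)$ for $w=\alpha\beta$.

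\textbf{Main obstacle.} The hardest step is the transport lemma of the inductive step: showing that the comparison of tails at cuts between consecutive blocks $\alpha_n\beta_n$ versus $\beta_n\alpha_n$ reduces exactly to the same comparison one level down. I would first try to prove it via the matrix formula \eqref{eq:matrix_M_w} and the Cohn-matrix relation $\tr(M_{\alpha\beta})=3\,q(\alpha\beta)$, using only the palindromic structure of Christoffel words. Everything else, including the converse constructions, follows by reading the same estimates backwards.
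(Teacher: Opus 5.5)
The paper does not prove this proposition itself. It quotes it from \cite[Proposition 2.16]{Zhe+25} (going back to \cite{Bombieri}) and never uses it, so the benchmark is Bombieri's renormalization argument, which is the outline you follow. The outline is right, but its one non-routine step is left unproved, and the tool you propose for it does not address it. The inductive step needs only one direction of your ``transport lemma'': if $\omega=W_n(\omega')$ with $\alpha_n=W_n(a)$, $\beta_n=W_n(b)$, and $\omega'$ contains both $aa$ and $bb$, then $\omega$ has a cut of value $>3$. The matrices $M_w$ and the Cohn relation $\tr M_{\alpha\beta}=3q(\alpha\beta)$ give the heights of the periodic orbits $(\alpha\beta)^\infty$. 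They say nothing about how the two tails compare at a cut of an arbitrary, possibly non-periodic, word over $\{\alpha_n,\beta_n\}$. The missing idea is combinatorial. By \Cref{prop_lexicographic_comparison}, a bad cut is witnessed by a finite configuration $b\theta^Tb|a\theta a$ or $a\theta^Ta|b\theta b$. By the identities of \Cref{lem:commutative_identities}, $U$ and $V$ map such a configuration, inside a bi-infinite word, to one of the same shape with $\tilde\theta=bU(\theta)$, resp.\ $\tilde\theta=V(\theta)a$; for example $U(a\theta^Ta\,b\theta b)=a\tilde\theta^Ta\,b\tilde\theta b$. This reduces the inductive step to depth $0$. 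There you must show that a bi-infinite word over $\{a,b\}$ containing a factor $aa(ba)^kbb$ or $bb(ab)^kaa$ has a bad cut. This is the forcing argument of \Cref{lem:force_continuation,lem:bad_extensions} with $(\alpha,\beta)=(a,b)$; the second factor is handled by reversing the word, which preserves the set of heights. Your Step 0 should state exactly this obstruction, namely the simultaneous occurrence of $aa$ and $bb$. The factors $baab$ and $abba$ you mention are individually harmless, since they occur in $(aab)^\infty$ and $(abb)^\infty$. Also, the even-block reduction needs the descent argument of \Cref{lem:evens_blocks}, not just the exclusion of $121$ and $212$.

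The converse implications do not follow by ``reading the same estimates backwards''. Suppose $\omega$ is non-constant over some $(\alpha_n,\beta_n)\in\overline{P}_n$ for every $n$. Then no level gives a desubstituted word already known to have value $\le 3$, so transport alone leads to an infinite regress. You need a finiteness argument. For example: a cut of value $>3$ is already bad on a finite window, and for $n$ large that window is a factor of some $\alpha^\infty$ or $\beta^\infty$ with $(\alpha,\beta)\in\overline{P}$, which has value $<3$. The degenerate words $\alpha^\infty\beta\alpha^\infty$ and $\beta^\infty\alpha\beta^\infty$ need a separate check. For the constant words, $\sqrt{\Delta}/q(w)$ is only the height at the cut at the start of the period. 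To get $m<3$ you must also know that no other cut is higher (Bombieri's Theorem 27, \Cref{cor:cuts} here). Alternatively, show that all cuts are good and that a periodic word has no cut $E^Tx|yE$ of value exactly $3$. Finally, two identities you quote are inaccurate. By $\alpha\beta=\beta_a\alpha^b$, the words $\alpha_n\beta_n$ and $\beta_n\alpha_n$ differ only in the two letters straddling the end of $\beta_n$ in $\beta_n\alpha_n$, where $ba$ becomes $ab$ (over $\{1,2\}$, $1122\leftrightarrow 2211$); this is not a central $12\leftrightarrow21$ swap. And $\alpha_n,\beta_n$ have the form $a\delta b$ with $\delta$ a palindrome ($\alpha^b_a=\alpha^T$), not palindromes ``up to this swap''.
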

	
	Let $U,V$ be the Nielsen substitutions of the free group $\langle a,b \rangle$, which are
	\begin{align*}
		U : 
		\begin{aligned}
			&a\mapsto ab \\
			&b\mapsto b  \\
		\end{aligned}
		,\quad
		V : 
		\begin{aligned}
			&a\mapsto a \\
			&b\mapsto ab.  \\
		\end{aligned}
	\end{align*}
	The actions of $U,V$ can be naturally extended to word pairs over the alphabet $\{a,b\}$ by
	\begin{equation*}
		U:(\alpha,\beta) \mapsto (U(\alpha),U(\beta)),\quad V(\alpha,\beta) \mapsto (V(\alpha),V(\beta)).    
	\end{equation*}
	
	We call the operators $U,V$ by \emph{inner renormalization operators}.
	Renormalization by $U,V$ or $\overline{U},\overline{V}$ are intrinsically related by \cite[Proposition 2.16]{Zhe+25} (see also \cite[Theorem 2.6.1]{Reutenauerbook}).
	
	\begin{proposition}\label{relbtrenorm}
		Let $R_1,R_2,\dots R_n\in\{U,V\}$ be interior renormalization operators. We have the following equality
		$$
		(\overline{R}_1\dots\overline{R}_n)(a,b)=(R_n\dots R_1)(a,b).
		$$
	\end{proposition}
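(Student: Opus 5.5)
We prove the identity $(\overline{R}_1\cdots\overline{R}_n)(a,b)=(R_n\cdots R_1)(a,b)$ by induction on $n$. The case $n=0$ is trivial. Here composition of operators means application from right to left. So the left-hand side first applies $\overline{R}_n$ to $(a,b)$, then $\overline{R}_{n-1}$, and so on up to $\overline{R}_1$.

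The key observation concerns how the two kinds of operators interact. For a pair $(\alpha,\beta)$, let $\phi_{\alpha,\beta}$ be the monoid morphism with $a\mapsto\alpha$ and $b\mapsto\beta$, so that $(\alpha,\beta)=(\phi_{\alpha,\beta}(a),\phi_{\alpha,\beta}(b))$. Then
\[
\overline{U}(\alpha,\beta)=(\alpha\beta,\beta)=(\phi_{\alpha,\beta}(ab),\phi_{\alpha,\beta}(b))=\phi_{\alpha,\beta}\bigl(U(a),U(b)\bigr),
\]
and similarly $\overline{V}(\alpha,\beta)=\phi_{\alpha,\beta}(V(a),V(b))$. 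In words: an exterior operator applied to a pair is the inner operator precomposed with the substitution encoded by that pair, so $\phi_{\overline{R}(\alpha,\beta)}=\phi_{\alpha,\beta}\circ R$ as morphisms.

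The induction step then proceeds as follows. Put $(\alpha,\beta)=(\overline{R}_2\cdots\overline{R}_n)(a,b)$. By the induction hypothesis applied to the $n-1$ operators $R_2,\dots,R_n$, we have $(\alpha,\beta)=(R_n\cdots R_2)(a,b)$, that is, $\phi_{\alpha,\beta}=R_n\circ\cdots\circ R_2$ as morphisms. Note that these are genuine word morphisms, and a morphism is determined by the images of $a$ and $b$. Hence
\[
\overline{R}_1(\alpha,\beta)=\phi_{\alpha,\beta}\bigl(R_1(a),R_1(b)\bigr)=(R_n\circ\cdots\circ R_2\circ R_1)(a,b),
\]
which is exactly the claim for $n$.

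The only subtle point, and the one to check carefully, is the ordering convention. Iterating exterior operators peels off the outermost one ($\overline{R}_1$) last, whereas the composed substitution puts $R_1$ innermost. The induction must therefore split off $\overline{R}_1$, not $\overline{R}_n$. Splitting off $R_n$ instead would require commuting operators, which is false in general. One also needs the fact that applying a morphism to a pair of words is compatible with concatenation, $\phi(uv)=\phi(u)\phi(v)$, which holds by definition.
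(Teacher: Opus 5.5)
Your proof is correct and complete. The paper gives no proof of its own here; it only cites \cite[Proposition 2.16]{Zhe+25} and \cite[Theorem 2.6.1]{Reutenauerbook}. Your argument is a self-contained route to the same fact: encode a pair by the morphism $\phi_{\alpha,\beta}$, check $\phi_{\overline{R}(\alpha,\beta)}=\phi_{\alpha,\beta}\circ R$ for $R\in\{U,V\}$, and induct by peeling off the outermost operator $\overline{R}_1$. Your reading of the ordering convention is the one the paper uses later, for instance in Section~2.7.2, where $(\overline{R_n}^{i_n}\dotsb\overline{R_1}^{i_1})(a,b)$ is matched with $(R_1^{i_1}\dotsb R_n^{i_n})(a,b)$.

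One side remark is overstated. Splitting off $\overline{R}_n$ does not require commuting operators. It only requires strengthening the claim to an arbitrary starting pair:
\[
\phi_{(\overline{R}_1\cdots\overline{R}_n)(\alpha,\beta)}=\phi_{\alpha,\beta}\circ R_n\circ\cdots\circ R_{1}.
\]
Iterating your key identity proves this from either end, and the proposition is the case $(\alpha,\beta)=(a,b)$, where $\phi_{a,b}$ is the identity.
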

	
	\subsection{Some properties of Christoffel words}\label{subsec:properties}
	
	In this section, we present some useful lemmas.
	We will make use of the following identities.
	
	\begin{lemma}\label{lem:symmetry1}
		Let $(\alpha_n,\beta_n)\in \overline{P}_n$. We have
		\begin{enumerate}
			\item $\alpha_n\beta_n=a\theta b$ with $\theta$ palindrome. \label{item1}
			\item $\alpha_n\alpha_n\beta_n\beta_n=a\theta ab\theta b$ with $\theta$ palindrome. \label{item4}
			\item There exists $W_n=R_1R_2\dots R_n$ with $R_i\in\{U,V\}$ such that $\alpha_n=W_n(a)$ and $\beta_n=W_n(b)$. \label{item2}
		\end{enumerate} 
	\end{lemma}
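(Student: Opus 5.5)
The plan is to get (3) directly from \Cref{relbtrenorm}, and to prove (1) and (2) together by induction on $n$ along the tree $\overline{T}$, using a strengthened inductive hypothesis.

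\textbf{Item (3).} By definition, $(\alpha_n,\beta_n)=\overline{R}_1\cdots\overline{R}_n(a,b)$ for some $\overline{R}_i\in\{\overline{U},\overline{V}\}$. \Cref{relbtrenorm} gives $(\alpha_n,\beta_n)=(R_n\cdots R_1)(a,b)$. Since $U,V$ act on pairs coordinatewise, we may put $W_n:=R_n\cdots R_1$, which after relabelling the $R_i$ has the form $R_1R_2\cdots R_n$. Then $\alpha_n=W_n(a)$ and $\beta_n=W_n(b)$.

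\textbf{Inductive hypothesis for (1) and (2).} For $(\alpha,\beta)\in\overline{P}$ we write, in the free group $\langle a,b\rangle$,
$$\alpha=a\,u\,b,\qquad \beta=a\,v\,b,\qquad \alpha\beta=a\,w\,b .$$
So $u=a^{-1}\alpha b^{-1}$ and similarly for $v$ and $w$. This convention is used so that the degenerate words $\alpha=a$ and $\beta=b$ fit the same scheme: for $(a,b)$ we get $u=b^{-1}$, $v=a^{-1}$ and $w=\varepsilon$. We claim, for every vertex of $\overline{T}$:
\begin{enumerate}
\item[(i)] $u$, $v$ and $w$ are palindromes, i.e.\ fixed by the reversal anti-automorphism of the free group;
\item[(ii)] $w=u\,ba\,v=v\,ab\,u$.
\end{enumerate}

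\textbf{Base case.} At the root, (i) is clear. For (ii), $u\,ba\,v=b^{-1}baa^{-1}=\varepsilon$ and $v\,ab\,u=a^{-1}abb^{-1}=\varepsilon$.

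\textbf{Step $\overline{U}$.} The new pair is $(\alpha\beta,\beta)$, so $u'=w$ and $v'=v$. The new product is
$$\alpha\beta\beta=a\,w\,b\,a\,v\,b,$$
hence $w'=w\,ba\,v$. Substituting $w=v\,ab\,u$ gives $w'=v\,ab\,u\,ba\,v$. Reversal fixes this expression, so $w'$ is a palindrome, and reversing $w\,ba\,v$ also gives $w'=v\,ab\,w$. This is exactly (ii) for the new pair.

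\textbf{Step $\overline{V}$.} The new pair is $(\alpha,\alpha\beta)$ and the argument is symmetric, with $w'=u\,ba\,w=w\,ab\,u$.

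\textbf{Positivity.} For $n\ge 1$ the word $\alpha_n\beta_n$ is a genuine positive word that starts with $a$ and ends with $b$. Hence $w$ is an honest positive palindrome, which proves (1).

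\textbf{Item (2).} Expand and use (ii) twice:
$$\alpha\alpha\beta\beta=a\,u\,b\,a\,(u\,ba\,v)\,b\,a\,v\,b=a\,(u\,ba\,v)\,ba\,v\,b\cdot\ldots$$
More carefully, $\alpha\alpha\beta\beta=a\,u\,ba\,w\,ba\,v\,b$. Put $\theta=w$. Then
$$a\,\theta\,ab\,\theta\,b=a\,(u\,ba\,v)\,ab\,(u\,ba\,v)\,b .$$
So it suffices to check $v\,ab\,u\,ba\,v=u\,ba\,v\,ba\,v$. Both sides equal $w\,ba\,v$, by $w=v\,ab\,u$ on the left and $w=u\,ba\,v$ on the right. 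Hence $\alpha\alpha\beta\beta=a\,\theta\,ab\,\theta\,b$ with $\theta=w$ palindromic.

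\textbf{Main obstacle.} The only delicate point is the boundary branches $(a,a^kb)$ and $(ab^k,b)$, where $u$ or $v$ is not a positive word. The free-group convention is chosen precisely so that these need no separate case analysis. One must still confirm that the final $\theta$ is positive, which follows from the positivity remark above. If one prefers to avoid the free group, these two families can instead be checked by hand: for $(a,a^kb)$ the product is $a^{k+1}b$ with central word $a^{k}$, and the other family is symmetric.
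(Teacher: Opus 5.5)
Your proof is correct, and it takes a genuinely different route from the one the paper relies on. The paper gives no argument of its own here, only citations to \cite{Bombieri} and \cite{EGRS2024}. The natural argument in that framework is the source of \Cref{lem:commutative_identities}, and it is the pattern this paper uses in \Cref{lem:ba_good_cuts} and \Cref{lem:leverage_cut}. It proves (3) first, so that $\alpha_n\beta_n=W_n(ab)$ and $\alpha_n\alpha_n\beta_n\beta_n=W_n(aabb)$. It then pushes the shapes $a\theta b$ and $a\theta ab\theta b$, starting from $\theta=\varepsilon$, through the inner substitutions: $U(a\theta b)=a\,(bU(\theta))\,b$, $V(a\theta b)=a\,(V(\theta)a)\,b$, and likewise for $a\theta ab\theta b$. \Cref{lem:commutative_identities} guarantees that $bU(\theta)$ and $V(\theta)a$ are again palindromes.

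You instead stay with the exterior operators, i.e.\ plain concatenation, and strengthen the induction to the central-word identity $w=u\,ba\,v=v\,ab\,u$ with $u,v,w$ palindromic in the free group. The free-group convention ($u=b^{-1}$, $v=a^{-1}$ at the root) absorbs the boundary branches $(a,a^kb)$ and $(ab^k,b)$ without case analysis. Item (2) then follows from (ii) by a one-line computation with the same $\theta=w$ as in (1), instead of needing a second induction. Your route is self-contained (\Cref{relbtrenorm} is needed only for (3)). It also gives more: the standard factorization of the central word of $\alpha\beta$ in terms of those of $\alpha$ and $\beta$. The cited route is shorter once the commutation identities are available, and it meshes with the inner-renormalization computations used throughout the rest of the paper.

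One slip in presentation: in your treatment of (2), the second equality in the first display is wrong and should simply be deleted. The computation after ``More carefully'' is the right one. Cancelling the common prefix $a\,u\,ba$ and the final $b$ leaves $u\,ba\,v\,ba\,v=v\,ab\,u\,ba\,v$, and both sides equal $w\,ba\,v$.
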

	
	\begin{proof}
		(\ref{item1}) was done in the proof of \cite[Theorem 15]{Bombieri}, see also \cite[Remark 3.9]{EGRS2024}.
		(\ref{item2}) corresponds to \cite[Lemma 3.7]{EGRS2024}.
		(\ref{item4}) was contained in the proof of \cite[Lemma 3.15]{EGRS2024} (as a special case).
	\end{proof}
	
	The following lemma is from the proof of \cite[Theorem 15]{Bombieri}, see also \cite[Lemma 3.14]{EGRS2024}.
	
	\begin{lemma}\label{lem:commutative_identities}
		For any finite word $w$ over the alphabet $\{a, b\}$, we have the identities $bU(w^T) = U(w)^Tb$ and $V(w^T)a = aV(w)^T$. In particular, if $w$ is a palindrome, then $b U(w)$ and $V(w) a$ are palindromes as well.
	\end{lemma}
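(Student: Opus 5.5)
Both identities can be proved by induction on the length of $w$, using that $U$ and $V$ are monoid morphisms on $\{a,b\}^*$. Transposition is an anti-morphism: $(xy)^T=y^Tx^T$.

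For the first identity, $bU(w^T)=U(w)^Tb$, the base case $w$ empty reads $b=b$. It then suffices to check the two one-letter cases. For $w=a$ we have $bU(a)=bab$ and $U(a)^Tb=(ab)^Tb=bab$. For $w=b$ both sides equal $bb$. For the inductive step, write $w=uc$ with $c\in\{a,b\}$, so $w^T=cu^T$. Then
$$bU(w^T)=bU(c)U(u^T)=U(c)^T\,b\,U(u^T)=U(c)^T\,U(u)^T\,b=(U(u)U(c))^Tb=U(w)^Tb,$$
where the second equality uses the one-letter case and the third uses the induction hypothesis applied to $u$.

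For the second identity, $V(w^T)a=aV(w)^T$, the one-letter cases are as follows. For $w=a$ both sides equal $aa$. For $w=b$ we have $V(b)a=aba$ and $aV(b)^T=a(ab)^T=aba$. For the inductive step, write $w=cu$, so $w^T=u^Tc$. Then
$$V(w^T)a=V(u^T)V(c)a=V(u^T)\,a\,V(c)^T=a\,V(u)^TV(c)^T=a\,(V(c)V(u))^T=aV(w)^T.$$

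For the palindrome consequence, suppose $w=w^T$. Then the first identity gives $(bU(w))^T=U(w)^Tb=bU(w^T)=bU(w)$, so $bU(w)$ is a palindrome. Similarly the second identity gives $(V(w)a)^T=aV(w)^T=V(w^T)a=V(w)a$, so $V(w)a$ is a palindrome.

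There is no real obstacle here. The only point needing care is the order of factors in the inductive step: one peels the last letter for $U$ and the first letter for $V$, so that the fixed letter ($b$, resp. $a$) can be commuted past each letter in turn.
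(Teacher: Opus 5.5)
Your proof is correct and complete. The single-letter identities $bU(c)=U(c)^Tb$ and $V(c)a=aV(c)^T$ for $c\in\{a,b\}$, together with the facts that $U,V$ are morphisms and transposition is an anti-morphism, give both identities by induction. The palindrome consequence then follows exactly as you wrote.

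The paper does not prove this lemma itself. It only refers to the proof of Theorem 15 in Bombieri and to Lemma 3.14 of EGRS2024, so your induction supplies a self-contained verification in place of that citation.

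One small correction to your closing remark: the order of peeling is not essential. In both identities you may remove either the first or the last letter of $w$. You then apply the induction hypothesis and the single-letter identity in the corresponding order, and the computation goes through either way.
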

	
	The following lemma is \cite[Lemma 3.8]{EGRS2024}, see also \cite[Lemma 2.18]{Zhe+25}.
	
	\begin{lemma}\label{lem:symmetry2}
		For every $(\alpha,\beta)\in \overline{P}$, $\alpha$ starts with $a$ and $\beta$ ends with $b$.
		We always have the equalities $\alpha\beta=\beta_a\alpha^b$, $\alpha^b\beta=\beta^T\alpha^b$, $\alpha\beta_a=\beta_a\alpha^T$, $\alpha^b_a=\alpha^T$ and $\beta^b_a=\beta^T$.
		As a consequence, words $\alpha^k\beta,\alpha\beta^k$, with $k\geq 1$, start with $\beta_a$ and end with $\alpha^b$.
	\end{lemma}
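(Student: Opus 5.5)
The plan is to prove everything by induction on the depth $n$ of $(\alpha,\beta)\in\overline{P}_n$ in the tree $\overline{T}$. Throughout I read the notation as follows, since the statement does not define it here. For a word $w$ starting with $a$, $w^b$ is $w$ with its first letter changed to $b$. For a word $w$ ending with $b$, $w_a$ is $w$ with its last letter changed to $a$. With this reading the root case $(a,b)$ works: $\alpha\beta=ab=\beta_a\alpha^b$, and $\alpha^b_a=a=\alpha^T$. A spot check at $(aab,ab)$ also works: $aabab=aa\cdot bab$.

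Two facts will be used in every step. First, $(\cdot)^b$ only touches the first letter and $(\cdot)_a$ only the last letter, so they commute with concatenation on the relevant side: $(uv)^b=u^bv$, $(uv)_a=uv_a$, and $(w^b)_a=(w_a)^b$ when $|w|\ge2$. Second, transposition swaps them: $(w^b)^T=(w^T)_{\,\cdot}$ with the roles of first and last letter exchanged.

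\textbf{Boundary letters.} That $\alpha$ starts with $a$ and $\beta$ ends with $b$ is immediate by induction. $\overline{U}$ gives $(\alpha\beta,\beta)$ and $\overline{V}$ gives $(\alpha,\alpha\beta)$, and both preserve the first letter of the first word and the last letter of the second.

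\textbf{Inductive step for $\overline{U}$.} Here $(\alpha',\beta')=(\alpha\beta,\beta)$, and I check each identity by pure rewriting with the inductive hypotheses.
\begin{itemize}
\item $\alpha'^b_a=(\alpha\beta)^b_a=\alpha^b\beta_a=(\alpha^b\beta)_a=(\beta^T\alpha^b)_a=\beta^T\alpha^b_a=\beta^T\alpha^T=\alpha'^T$. When $|\alpha|=1$ this uses $\alpha^b_a=\alpha^T$ at the level of a single letter, which holds at the root.
\item $\alpha'\beta'=\alpha\beta\beta$ and $\beta'_a\alpha'^b=\beta_a\alpha^b\beta$. These agree by $\alpha\beta=\beta_a\alpha^b$.
\item $\alpha'^b\beta'=\alpha^b\beta\beta=\beta^T\alpha^b\beta=\beta^T\beta^T\alpha^b=\beta'^T\alpha'^b$.
\item $\alpha'\beta'_a=\alpha\beta\beta_a=\beta_a\alpha^b\beta_a=\beta_a(\alpha^b\beta)_a=\beta_a(\beta^T\alpha^b)_a=\beta_a\beta^T\alpha^T=\beta'_a\alpha'^T$.
\item $\beta'^b_a=\beta^T$ is inherited directly.
\end{itemize}
The inductive step for $\overline{V}$, with $(\alpha',\beta')=(\alpha,\alpha\beta)$, is symmetric. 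For example, $\beta'^b_a=(\alpha\beta)^b_a=\alpha^b\beta_a$ reduces to $\beta^T\alpha^T$ via $\alpha\beta_a=\beta_a\alpha^T$. The remaining identities follow in the same way.

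\textbf{Final claim.} For "$\alpha^k\beta$ and $\alpha\beta^k$ start with $\beta_a$ and end with $\alpha^b$", I would induct on $k$ using $\alpha\beta=\beta_a\alpha^b$. The ending is immediate for $\alpha^k\beta$: it is $\alpha^{k-1}\beta_a\alpha^b$. For the start, use $\alpha\beta_a=\beta_a\alpha^T$ repeatedly to push $\beta_a$ to the front. For $\alpha\beta^k$, use $\alpha^b\beta=\beta^T\alpha^b$ to push $\alpha^b$ to the end.

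\textbf{Main obstacle.} The only delicate point is bookkeeping in degenerate cases, where $\alpha$ or $\beta$ is a single letter. There the first-letter and last-letter modifications act on the same letter, so identities such as $(uv)^b_a=u^bv_a$ need $u,v$ nonempty. I would treat depth $\le1$, that is $(a,b)$, $(ab,b)$ and $(a,ab)$, by direct check, and run the induction from there. Alternatively, everything could be deduced from \Cref{lem:symmetry1}(1): $\alpha\beta=a\theta b$ with $\theta$ palindrome, together with the analogous palindromic decompositions of $\alpha$ and $\beta$ from \Cref{lem:commutative_identities}. However, the direct induction above seems shortest.
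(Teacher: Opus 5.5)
Your proof is correct. The paper gives no argument of its own for this lemma: it imports it from \cite[Lemma 3.8]{EGRS2024} (see also \cite[Lemma 2.18]{Zhe+25}). Your induction on the depth in $\overline{T}$ is therefore a self-contained replacement that uses nothing beyond the definitions of $\overline{U},\overline{V}$. Your reading of the notation matches the paper's later usage, e.g.\ $u^bv=b\theta b$ when $uv=a\theta b$: $w^b$ changes the first letter to $b$, and $w_a$ changes the last letter to $a$.

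The $\overline{U}$-step is right as written, and the $\overline{V}$-step closes, with one detail to state precisely. For the new second word you need $(\alpha\beta)^b_a=\alpha^b\beta_a=\beta^T\alpha^T$. This is exactly the identity you already derived in the $\overline{U}$-step from $\alpha^b\beta=\beta^T\alpha^b$ and $(\alpha^b)_a=\alpha^T$. The relation $\alpha\beta_a=\beta_a\alpha^T$ alone would not give it; you would also need $(\beta_a)^b=\beta^T$. The remaining identities are as follows:
\begin{itemize}
\item $\alpha\alpha\beta=\alpha\beta_a\alpha^b$ and $\alpha\alpha\beta_a=\alpha\beta_a\alpha^T$, obtained by prefixing $\alpha$ to the old identities;
\item $\alpha^b\alpha\beta=\alpha^b\beta_a\alpha^b=\beta^T\alpha^T\alpha^b$.
\end{itemize}
The final claim follows as you say, in the explicit forms $\alpha^k\beta=\beta_a(\alpha^T)^{k-1}\alpha^b$ and $\alpha\beta^k=\beta_a(\beta^T)^{k-1}\alpha^b$.

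One bookkeeping correction: the single-letter cases are not confined to depth $\le 1$. We have $\beta=b$ along the whole branch $(ab^n,b)$ and $\alpha=a$ along $(a,a^nb)$, so a hand check at depth $\le 1$ does not remove them. What settles them is fixing the readings $\alpha^b_a:=(\alpha^b)_a$ and $\beta^b_a:=(\beta_a)^b$. These are the only readings under which the last two identities hold for the letters $a$ and $b$. They are also exactly the forms your inductive steps consume. Every other manipulation, such as $(\alpha\beta)^b_a=\alpha^b\beta_a$, only needs $\alpha,\beta$ nonempty, so the induction simply starts at the root $(a,b)$. Your remark that transposition swaps the two operations is garbled: $(w^b)^T$ is $w^T$ with its last letter changed to $b$. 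That remark is never used, however.

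The alternative you mention also works, and more directly than you suggest. Since $\alpha$ and $\beta$ are letters or words $\alpha_m\beta_m$ for earlier pairs, \Cref{lem:symmetry1}(1) alone gives $\alpha=a\theta_\alpha b$ and $\beta=a\theta_\beta b$ with palindromes $\theta_\alpha,\theta_\beta$. Palindromicity of the central word $\theta_\alpha ba\theta_\beta$ of $\alpha\beta$ then means $\theta_\alpha ba\theta_\beta=\theta_\beta ab\theta_\alpha$, from which all five identities are immediate. That route is shorter, but it rests on the Bombieri input behind \Cref{lem:symmetry1} and still needs the letter cases separately; yours is fully elementary.
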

	
	\begin{corollary}
		For every $(\alpha,\beta)\in\overline{P}$ we have
		\begin{equation}\label{eq:infinite_identities}
			\alpha^\infty=\alpha\alpha\dots=\beta_a(\alpha^T)^\infty, \qquad \beta^\infty=\dots\beta\beta=(\beta^T)^\infty\alpha^b.
		\end{equation}
	\end{corollary}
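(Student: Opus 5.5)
The plan is to obtain both identities of \eqref{eq:infinite_identities} by iterating the finite commutation relations of \Cref{lem:symmetry2}. We then pass to the limit in the sense of convergence of finite words defined in \Cref{subsec_main_result}: a right infinite word is determined by its prefixes, and a left infinite word by its suffixes.

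For the first identity, start from $\alpha\beta_a=\beta_a\alpha^T$. A straightforward induction on $k$ gives
\begin{equation*}
\alpha^k\beta_a=\alpha^{k-1}\beta_a\alpha^T=\dots=\beta_a(\alpha^T)^k \qquad (k\geq 1).
\end{equation*}
The left-hand side has $\alpha^k$ as a prefix, and the right-hand side has $\beta_a(\alpha^T)^{k-1}$ as a prefix. So for every $k$, the words $\alpha^k$ and $\beta_a(\alpha^T)^{k-1}$ share a common prefix of length at least $\min\{k|\alpha|,\,|\beta_a|+(k-1)|\alpha|\}$, and this length tends to infinity. Since every prefix of $\alpha^\infty$ is a prefix of $\alpha^k$ for large $k$, and likewise every prefix of $\beta_a(\alpha^T)^\infty$ is a prefix of $\beta_a(\alpha^T)^{k-1}$ for large $k$, the two right infinite words have the same prefixes of every length. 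Hence $\alpha^\infty=\beta_a(\alpha^T)^\infty$.

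For the second identity, use $\alpha^b\beta=\beta^T\alpha^b$ in the same way to get
\begin{equation*}
\alpha^b\beta^k=(\beta^T)^k\alpha^b \qquad (k\geq 1).
\end{equation*}
Now compare suffixes rather than prefixes. The word $\beta^k$ is a suffix of the left-hand side, and $(\beta^T)^{k-1}\alpha^b$ is a suffix of the right-hand side. Therefore these two words agree on their last $\min\{k|\beta|,\,(k-1)|\beta|+|\alpha^b|\}$ letters, which again grows without bound. Taking $k\to\infty$ and using the definition of convergence to a left infinite word gives $\dots\beta\beta=(\beta^T)^\infty\alpha^b$.

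There is no real obstacle here. The only point needing care is that the left infinite case must be handled through suffixes, matching the convention for convergence of finite words to left infinite words. It is also worth checking that the induction steps use exactly the identities $\alpha\beta_a=\beta_a\alpha^T$ and $\alpha^b\beta=\beta^T\alpha^b$ from \Cref{lem:symmetry2}, and not their transposes.
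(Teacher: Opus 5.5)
Your proposal is correct and follows the paper's own argument. You iterate the identities $\alpha\beta_a=\beta_a\alpha^T$ and $\alpha^b\beta=\beta^T\alpha^b$ from \Cref{lem:symmetry2} to get $\alpha^k\beta_a=\beta_a(\alpha^T)^k$ and $\alpha^b\beta^k=(\beta^T)^k\alpha^b$, then let $k\to\infty$; your prefix and suffix bookkeeping just makes explicit the limit step the paper leaves implicit, and your second identity correctly includes the exponent $k$ that is missing in the paper's displayed $(\beta^T)\alpha^b$.
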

	
	\begin{proof}
		By \Cref{lem:symmetry2} we have $\beta_a(\alpha^T)^k = \alpha^k\beta_a$ and $(\beta^T)\alpha^b=\alpha^b\beta^k$, letting $k\to\infty$ shows the identities.
	\end{proof}
	
	For more background material we refer the reader to the book \cite{Reutenauerbook}.

	\subsection{Roots of Markov quadratic forms}\label{subsec:roots}
	Since we can factor
	\begin{equation*}
		F_{r}(x,y)=\left( x+\frac{3m_r - 2k_r}{2m_r} y \right)^2-\left( \frac{9}{4} - \frac{1}{m_r^2} \right) y^2,
	\end{equation*}
	letting
	\begin{equation*}
		\Delta = \frac{3+\sqrt{9 - \frac{4}{m_r^{2}}}}{2},
	\end{equation*}
	we have that $F_r(x,y)=(x-\theta_ry)(x-\Theta_ry)$, where the roots are
	\begin{equation*}
		\theta_{r} = \frac{k_r}{m_r} - 3 + \Delta, \qquad \Theta_{r} = \frac{k_r}{m_r} - \Delta,
	\end{equation*}
	and since $0\leq 2k_r\leq m_r$ and $3>\Delta\geq\frac{3+\sqrt{5}}{2}$, we have $-3 < \Theta_r < -2 < 0 < \theta_r < 1$. Moreover note that $\theta_r=\overline{\Theta_r}$ where the bar denotes the Galois conjugate.

	The continued fraction expansion of such roots can be described through Christoffel words. Frobenius associated with each Markoff number $m > 2$ an ordered pair of relatively prime positive integers, which are called \emph{Frobenius coordinates}. According to \cite[Page 24]{CF89}, given a pair of coprime positive integers $(\mu,\nu)$ with $\nu>1$ with $m_r$ the corresponding Markov number, let
	\begin{equation}\label{eq:Christoffel_word}
		r_i = \left\lfloor i\frac{\mu}{\nu}\right\rfloor - \left\lfloor (i-1)\frac{\mu}{\nu}\right\rfloor, \quad 1\leq i\leq \nu.
	\end{equation}
	The word $w(\mu,\nu)=ab^{r_1}ab^{r_2}\dots ab^{r_\nu}$ is precisely the lower Christoffel word such that $|w(\mu,\nu)|_b=\mu$ and $|w(\mu,\nu)|_a=\nu$ (see \cite[Chapter 2]{Reutenauerbook}). In particular there is an alphabet $(\alpha_r,\beta_r)\in\overline{P}$ such that $ab^{r_1}ab^{r_2}\dots ab^{r_\nu}=\alpha_r\beta_r$.
	
	By \cite[Page 27]{CF89}, the continued fraction of $\theta_r$ is
	\begin{equation}\label{eq:theta_r}
		\theta_r = [0;2,\overline{b^{r_1},a,b^{r_2},a,\dots,a,b^{r_\nu},a}],
	\end{equation}
	where we used the substitution $a=(2,2)$ and $b=(1,1)$. If $\nu=1$ then we have $\theta_r=[0;2,\overline{b^{\mu},2,2}]$. In particular we have $2+\theta_r=\gamma_r$ where $\gamma_r$ was defined on \eqref{eq:gamma_r}. An elementary identity of continued fractions useful for our purposes is 
	\begin{equation}\label{eq:elementary_identity}
		[0;1+x,y]=1-[0;1,x,y].
	\end{equation}
	
	The following is \cite[Lemma 2.1]{Florek} with a different proof (see also \cite[Lemma 19]{Bombieri}).
	
	\begin{lemma}\label{lem:lemma21_Florek}
		If $\nu>1$, the continued fraction of $\Theta_r$ is
		\begin{equation*}
			\Theta_r + 3 = [0;2,\overline{b^{r_1},a,b^{r_2},a,\dots,a,b^{r_{\nu}-1},a,b}].
		\end{equation*}    
		If $\nu=1$ then the continued fraction is $\Theta_r=[0;2,\overline{b^{\mu-1},a,b}]$.
	\end{lemma}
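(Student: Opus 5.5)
The plan is to obtain the expansion of $\Theta_r$ from that of $\theta_r$ in \eqref{eq:theta_r} by Galois conjugation, using $\Theta_r=\overline{\theta_r}$ and Galois's theorem on purely periodic expansions. Finally I would rewrite the reversed period via the palindromic structure of Christoffel words (\Cref{lem:symmetry1}) and the identity \eqref{eq:elementary_identity}.

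\textbf{Step 1: conjugate.} Write $w=b^{r_1}ab^{r_2}a\cdots ab^{r_\nu}a$ as a word in $\{1,2\}$ (with $a=22$, $b=11$), and let $\gamma=[\overline{w}]$. Then \eqref{eq:theta_r} says $\theta_r=[0;2,\gamma]=\frac{1}{2+1/\gamma}$. Conjugating gives $\Theta_r=\frac{1}{2+1/\overline{\gamma}}$. By Galois's theorem, $\delta:=-1/\overline{\gamma}=[\overline{w^T}]$, so $\Theta_r=\frac{1}{2-\delta}$.

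\textbf{Step 2: compute $\Theta_r+3$.} Since $w$ ends with $a$, the word $w^T$ begins $2,2$, so $\delta=[2;2,\dots]\in(2,3)$. This gives
\[
\Theta_r+3=3-\frac{1}{\delta-2}=3-[2;w',\overline{w^T}]=1-[0;w',\overline{w^T}],
\]
where $w'$ is $w^T$ with its first letter removed. Now $w'$ begins with $2$, then $b^{r_\nu}$ (that is, ones), and so on. Writing $w'=2,1,\dots$ and applying \eqref{eq:elementary_identity} in the form $1-[0;1,x,y]=[0;1+x,y]$ requires first splitting the leading $2$ as $1+1$. Concretely, $[0;2,z]=[0;1,1,z]$ is not a valid expansion rewrite, so instead I would use $1-[0;2,z]=[0;1,1,z]$, which is the same identity with $x=1$. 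This yields $\Theta_r+3=[0;1,1,1^{2r_\nu},2,2,\dots]$ up to the bookkeeping of the tail, i.e. a leading block of ones followed by the reversed period.

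\textbf{Step 3: match the period.} The remaining and main step is to show that this expansion agrees with $[0;2,\overline{b^{r_1},a,\dots,a,b^{r_\nu-1},a,b}]$. This is a combinatorial identity among shifts of $(w^T)^\infty$ and $w^\infty$. The period $w$ is, up to the letter substitution, the cyclic word of $\alpha_r\beta_r=a\theta b$ with $\theta$ a palindrome (\Cref{lem:symmetry1}(\ref{item1})). Hence $w^T$ is a cyclic shift of $w$ by moving the boundary letters $a$ and $b$, and the extra leading ones produced in Step 2 absorb exactly one $b=11$. This accounts for the exponent $r_\nu-1$ in the last block and the extra trailing $b$ in the claimed period.

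I expect this index bookkeeping to be the obstacle. To handle it, I would first verify the statement on small cases, namely $w(1,2)$ and $w(2,3)$, to fix the alignment, and then argue in general through the identity $\alpha\beta=\beta_a\alpha^b$ from \Cref{lem:symmetry2}. The case $\nu=1$, where $w=b^\mu a$ and $\theta_r=[0;2,\overline{b^\mu,2,2}]$, is handled by the same computation directly, since the reversed period is simply $ab^\mu$ and no palindrome argument is needed.
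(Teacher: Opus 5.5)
Your route (Galois conjugation of the purely periodic tail, then \eqref{eq:elementary_identity}, then the palindrome of \Cref{lem:symmetry1}) is the paper's route. However, Step~2 contains a computational error, and everything after it aims at a false identity.

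\textbf{The error in Step 2.} With $w'$ equal to $w^T$ minus its first letter, the number $[2;w',\overline{w^T}]$ is $\delta$ itself, not $1/(\delta-2)$. What you computed is $3-\delta$. Since $\delta-2=[0;w',\overline{w^T}]$, one has $1/(\delta-2)=[w',\overline{w^T}]$, whose integer part is the leading $2$ of $w'$. Hence
\[
\Theta_r+3=3-[w',\overline{w^T}]=1-[0;\overline{b^{r_\nu},a,b^{r_{\nu-1}},\dots,a,b^{r_1},a}].
\]
Your expansion $[0;1,1,1^{2r_\nu},2,2,\dots]$ lies in $(1/2,2/3]$, whereas any $[0;2,\dots]$ lies in $[1/3,1/2)$. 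So no tail bookkeeping in Step~3 can reconcile them. For example, for $m_r=5$ your formula gives $3-\delta\approx 0.613$, while $\Theta_r+3\approx 0.413$.

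The result also contradicts your own Step~3 narrative: you produced a leading block $b^{r_\nu+1}$, not $b^{r_\nu-1}$. The identity must be used in the direction $1-[0;1,1,z]=[0;2,z]$. This is legitimate because $\alpha_r\beta_r$ ends in $b$, i.e.\ $r_\nu\geq 1$, and it removes one $b$ from the leading block $b^{r_\nu}$.

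\textbf{Step 3 after the fix.} Once this is corrected, Step~3 is not an obstacle, and you need neither small cases nor $\alpha\beta=\beta_a\alpha^b$. From $\alpha_r\beta_r=ab^{r_1}ab^{r_2}\cdots ab^{r_\nu}=a\theta b$, the palindrome of \Cref{lem:symmetry1} is explicitly $\theta=b^{r_1}a\cdots ab^{r_\nu-1}$. So the reversed period is $b^{r_\nu}a\cdots ab^{r_1}a=b\theta^Ta=b\theta a$. Stripping the first $b$ from $(b\theta a)^\infty$ leaves $(\theta ab)^\infty$, hence
\[
\Theta_r+3=[0;2,\overline{\theta,a,b}]=[0;2,\overline{b^{r_1},a,\dots,a,b^{r_\nu-1},a,b}],
\]
exactly as in the paper's proof.

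\textbf{The case $\nu=1$.} This is the same computation with $\theta=b^{\mu-1}$, and it yields $\Theta_r+3=[0;2,\overline{b^{\mu-1},a,b}]$. So the ``$\Theta_r=$'' in that line of the statement should read $\Theta_r+3$; indeed $\Theta_r<-2$.
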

	
	\begin{proof}
		It follows from \cite[Page 23]{CF89} that the word $b^{r_1}\dots ab^{r_\nu-1}$ is a palindrome. Since $\theta_r+2$ is a purely periodic continued fraction with period $ab^{r_1}\dots ab^{r_\nu}$, we have that the Galois conjugate has continued fraction 
		\begin{equation*}
			-\Theta_r-2=-\overline{\theta_r}-2=[0;\overline{b^{r_\nu},\dots,b^{r_1},a}]=[0;\overline{b^{r_1+1},\dots a,b^{r_\nu-1},a}]
		\end{equation*}
		In particular using \eqref{eq:elementary_identity} we conclude
		\begin{equation*}
			\Theta_r+3=1-[0;1,1,\overline{b^{r_1},\dots a,b^{r_\nu-1},a,b}]=[0;2,\overline{b^{r_1},a,\dots,a,b^{r_{\nu}-1},a,b}].
		\end{equation*}
	\end{proof}
	
	Since $\alpha_r\beta_r$ is a lower Christoffel word that it is not a letter, it is of the form $\alpha_r\beta_r=a\theta^{(r)} b$ with $\theta^{(r)}$ a palindrome. By the previous lemma we have 
	\begin{equation}\label{eq:Theta_r_plus_3}
		\Theta_r+3=[0;2,\overline{\theta^{(r)},a,b}].
	\end{equation}

	\subsubsection{The spectrum \texorpdfstring{$\widetilde{M}$}{Tilde-M} before 3}\label{subsec:spectrum_widetilde_M_before_3}
	
	Now we will explain the link between \Cref{thm:characterization_tildeM}, \Cref{thm:theorem_Florek_Harcos} and the case $N=0$ of \Cref{thm:full_characterization}. 
	Notice that by \Cref{prop_renorm}, the irrationals in the non-periodic and $N=0$ case of \Cref{thm:full_characterization} have Lagrange value $k=3$, in particular, must have value $\tilde{m}=3$.
	So to characterize the spectrum $\widetilde{M}$ before 3, it suffices to consider the periodic and $N=0$ case of \Cref{thm:full_characterization}.
	
	All continued fractions $x=[0;x_1,x_2,\dots]$ that are eventually periodic with period $uv$ where $(u,v)\in\overline{P}$ and such that $\widetilde{m}(x)<3$, can be found by \Cref{thm:full_characterization}, by choosing pairs $(\alpha,\beta)\in\overline{P}$ such that $\alpha=uv$ or $\beta=uv$. The only continued fractions that are not of this form come from pairs $(\alpha,\beta)=(a,a^nb)$ or $(\alpha,\beta)=(ab^n,b)$, explicitly they are
	\begin{equation*}
		x_1x_2\dots=b^\infty=111\dots, \quad x_1x_2\dots=2b^\infty=2111\dots
	\end{equation*}
	with value $\widetilde{m}(x)=\frac{3+\sqrt{5}}{2}$ and
	\begin{equation*}
		x_1x_2\dots=a^\infty=222\dots, \quad x_1x_2\dots=ba^\infty=11222\dots
	\end{equation*}
	with value $\widetilde{m}(x)=\frac{3+\sqrt{8}}{2}$. For the rest of the numbers it suffices to consider
	\begin{equation}\label{eq:expansion-cases}
		\begin{aligned}
			x_1x_2\cdots &= ((uv)^T)^\infty,
			&\quad\text{where } (\alpha,\beta)=(u,uv),\\
			x_1x_2\cdots &= v^T((uv)^T)^\infty,
			&\quad\text{where } (\alpha,\beta)=(uv,v),\\
			x_1x_2\cdots &= 2u^+(uv)^\infty,
			&\quad\text{where } (\alpha,\beta)=(uv,v),\\
			x_1x_2\cdots &= 2u^+v(uv)^\infty,
			&\quad\text{where } (\alpha,\beta)=(u,uv).
		\end{aligned}
	\end{equation}
	
	Since $(uv)^T=(v_au^b)^T=u^bv_a$ (because both are palindromes), we have
	\begin{equation}\label{eq:periodic_identity1}
		((uv)^T)^\infty=(u^bv_a)^\infty=u^b(v_au^b)^\infty=u^b(uv)^\infty
	\end{equation}
	and
	\begin{equation}\label{eq:periodic_identity2}
		v^T((uv)^T)^\infty=v^T(u^bv_a)^\infty=v^Tu^b(uv)^\infty=u^bv(uv)^\infty.
	\end{equation}
	
	In particular, the continued fraction $[0;2,u^{+},v,\overline{u,v}]$ is equal to a positive root $\theta_r$ of some Markov form. Write $uv=a\theta^{(r)}b$ for some palindrome $\theta^{(r)}$. By \eqref{eq:elementary_identity} and \eqref{eq:Theta_r_plus_3} we have that 
	\begin{align*}
		[0;\overline{(uv)^T}]&=[0;\overline{b,\theta^{(r)},a}]=1-[0;2,\overline{\theta^{(r)},a,b}]=-2-\Theta_r, \\
		[0;v^T,\overline{(uv)^T}]&=[0;u^b,v,\overline{u,v}]=1-[0;2,u^{+},v,\overline{u,v}]=1-\theta_r, \\
		[0;2,u^{+},\overline{u,v}]&=1-[0;u^b,\overline{u,v}]=1-[0;\overline{(uv)^T}]=3+\Theta_r.
	\end{align*}
	
	Finally, using the fact that $\widetilde{m}(x)=\widetilde{m}(x+k)$ for any irrational $x$ and $k\in\Z$, one gets that $\widetilde{m}(k\pm\theta_r)<3$ and $\widetilde{m}(k\pm\Theta_r)<3$ for any roots $\theta_r,\Theta_r$ and any $k\in\Z$ and furthermore these are all possibilities.

	\subsection{Cuts}
	
	We use a vertical bar between two letters to represent a \emph{cut} of words over the alphabet $\{1,2\}$. 
	For example, if $x=x_1x_2x_3\dots$ is an infinite word, the cut at the $i$-th position is $x_1\dots x_{i-2}x_{i-1}\vert x_{i}\dots$. 
	We define the \emph{value} of a cut of infinite or bi-infinite words by
	\begin{align*}
		\lambda(x_1\dots x_{i-1}x_i\vert x_{i+1}\dots)&=[0;x_i,x_{i-1},\dots,x_1]+[x_{i+1};x_{i+2},\dots] \\
		\lambda(\dots x_{i-1}x_i\vert x_{i+1}\dots)&=[0;x_i,x_{i-1},\dots]+[x_{i+1};x_{i+2},\dots]
	\end{align*}
	where we use the same notation as the height function $\lambda$, which is an abuse of notation.
	
	When considering words over the alphabet $\{a,b\}$, generally we have two types of cuts: $E^Tb\vert aF$ or $E^T a\vert bF$, where $E,F$ are two infinite words in alphabet $\{a,b\}$.
	The first one is clear, while for the second one we adopt the following abuse of notation:
	\begin{equation*}
		\lambda(E^Ta|b F) = [2;2,E]+[0;1,1,F] = \lambda(E^T2|2b F).
	\end{equation*}
	So when writing $E^Ta\vert bF$, we always mean the cut $E^T2\vert 2bF$ instead of the cut $E^T22\vert bF$.
	Since almost all the words appearing in this paper can be written over the alphabet $\{a,b\}$, this notation is particularly convenient for applying the following lexicographic comparison criteria (\Cref{prop_lexicographic_comparison}).
	
	\begin{definition}\label{def_badcut}
		A cut of an infinite or bi-infinite word is \emph{bad} if the cut has value strictly bigger than $3$, otherwise the cut is \emph{good}.
		
		We can extend the definition to finite words. 
		The cut of finite word is \emph{bad} (resp. good) if any extension of the cut over the alphabet $\{a,b\}$ is still a bad cut (resp. good cut).
		Otherwise we say that the cut is \emph{indeterminate}.
	\end{definition}

	The following proposition is from \cite[Lemma 8]{Bombieri}, which gives a criteria to deduce if a cut is good or bad.
	\begin{proposition}\label{prop_lexicographic_comparison}
		Let $\preccurlyeq$ be the lexicographic order on words of alphabet $\{a,b\}$ and let $E,F$ be two infinite words over the alphabet $\{a,b\}$.
		Then the cuts $E^Tb\vert aF$ and $F^Ta\vert bE$ are good if and only if $E\preccurlyeq F$, and are bad if and only if $F\preccurlyeq E$.
		
		As a consequence, the cuts $a\omega^Tb\vert a\omega b $, $b\omega^Ta\vert b\omega a$ are good and the cuts $b\omega^Tb\vert a\omega a$, $a\omega^Ta\vert b\omega b$ are bad, where $\omega$ is a finite word in alphabet $\{a,b\}$.
	\end{proposition}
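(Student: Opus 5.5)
The statement to prove is \Cref{prop_lexicographic_comparison}. I will compare the two continued fractions that make up the value of a cut. Write $E=e_1e_2\dots$ and $F=f_1f_2\dots$ over $\{a,b\}$, where $a=22$ and $b=11$. The cut $E^Tb\vert aF$ has value
\begin{equation*}
\lambda=[0;1,1,E]+[2;2,F].
\end{equation*}
The first step is to put this into a form where $E$ and $F$ enter symmetrically. The constant $3$ is the value for $E=F$ with both Sturmian; the key identity comes from \eqref{eq:elementary_identity}, which gives
\begin{equation*}
[0;1,1,E]=1-[0;2,E].
\end{equation*}
Hence
\begin{equation*}
\lambda=3-[0;2,E]+[0;2,F].
\end{equation*}
Substituting, we get $\lambda\le 3$ if and only if $[0;2,F]\le[0;2,E]$.

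The comparison rests on the standard monotonicity rule for continued fractions: at the first index where two expansions differ, the sign of the comparison alternates with the parity of that index. The plan is to apply this to $[0;2,E]$ versus $[0;2,F]$ written over $\{1,2\}$. Each letter of $E,F$ contributes an even number ($2$) of partial quotients, and the leading $2$ is common to both. So if $E$ and $F$ first differ at letter $k$, the first differing partial quotient is at an odd position after the initial $0$, counting the $2$ as position $1$. Since $a=22>b=11$ at the first digit, the parity bookkeeping should give: $E\prec F$ (at the first difference $E$ has $b$, $F$ has $a$) implies $[0;2,F]<[0;2,E]$, hence $\lambda<3$, a good cut. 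The mirror case gives a bad cut. I will also handle $E=F$, where $\lambda=3$ exactly and the cut is good. I expect the parity check to be the only delicate point, and will verify it on $E=b\dots$, $F=a\dots$ directly.

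The second cut is $F^Ta\vert bE$, which by the paper's convention means $F^T2\vert 2bE$. Its value is
\begin{equation*}
[2;1,1,E]+[0;2,F]=3-[0;2,E]+[0;2,F],
\end{equation*}
which is the same expression as before. So it yields the same criterion.

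For the consequences, take $E=\omega b\dots$ and $F=\omega a\dots$ for $a\omega^Tb\vert a\omega b$: here the extensions $E=\omega b\cdot$ and $F=\omega\cdot$. More precisely, I will read $E$ as the reverse of the left side without its last $b$, so $E=\omega a\dots$, and $F=\omega b\dots$. I will check in each of the four listed cases that the first difference right after $\omega$ forces the inequality uniformly in the unknown extensions. This is what makes the finite cut good or bad in the sense of \Cref{def_badcut}.

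The main obstacle is the sign and parity bookkeeping, together with the boundary case $E=F$ (equality gives good, hence the non-strict $\preccurlyeq$). Strictness in the ``bad iff $F\preccurlyeq E$'' statement should be read as $F\prec E$. Otherwise there is an overlap at equality, which I will note explicitly.
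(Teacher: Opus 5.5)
Your reduction is correct and is the natural route; the paper gives no proof and only cites Bombieri's Lemma 8. By $[0;1,1,E]=1-[0;2,E]$, both cuts have value $3-[0;2,E]+[0;2,F]$, and you also handle the convention $F^Ta|bE=F^T2|2bE$ correctly. The problem is the sign step, which you yourself flag as the delicate point: it is wrong as written. With $a_1=2$, the $i$-th letter of $E$ occupies the partial quotients $a_{2i},a_{2i+1}$. So if $E$ and $F$ first differ at letter $k$, the first differing partial quotient is $a_{2k}$, at an \emph{even} position. There $[0;a_1,a_2,\dots]$ is \emph{increasing}, so whichever word carries $a$ (digit $2$) at the first difference has the larger value of $[0;2,\cdot]$.

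Your claim ``$E$ has $b$, $F$ has $a$ at the first difference, hence $[0;2,F]<[0;2,E]$, hence good'' is therefore false. Take $E=b^\infty$ and $F=a^\infty$: the cut $\dots 11|22\dots$ has value $\frac{\sqrt5-1}{2}+1+\sqrt2>3$. Indeed the proposition itself lists $bb|aa$ (the case $\omega$ empty) as bad.

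The second issue is the order convention. The consequence part forces $a\prec b$: for $a\omega^Tb|a\omega b$ one has $E=\omega a\dots$ and $F=\omega b\dots$, and this cut must be good. You instead took $b\prec a$. Your two errors (wrong parity, reversed order) cancel formally, so you land on the string ``good iff $E\preccurlyeq F$'', but with the opposite meaning of $\preccurlyeq$. If you ran the four finite cases under your rule, you would find $a\omega^Tb|a\omega b$ bad, contradicting the statement.

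The correct bookkeeping is as follows. We have $\lambda\le 3$ iff $[0;2,F]\le[0;2,E]$. This holds iff $E=F$, or at the first difference $E$ has $a$ and $F$ has $b$; that is, iff $E\preccurlyeq F$ with $a\prec b$. The four finite cases then follow uniformly in the unknown extensions, exactly as you intended, using your final reading ($E=\omega a\dots$, $F=\omega b\dots$ for the first cut). Your remark that ``bad iff'' should read $F\prec E$ is correct, since $E=F$ gives value exactly $3$, which counts as good.
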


	Given finite word $\omega=\omega_1\omega_2\dots \omega_n$ of alphabet $\N_{>0}$, define \emph{the size} of $\omega$ by $\sizes(\omega)=\left|I(\omega)\right|$, where $I(\omega)$ is a closed subinterval of $[0,1]$ given by
	\begin{equation*}
		I(\omega)=\left\{ x\in[0,1] \mid x=[0;\omega_1,\omega_2,\dots,\omega_n,t], t\geq 1 \right\}\cup\left\{ [0;\omega_1,\omega_2,\dots,\omega_n] \right\}.
	\end{equation*}
	That is, $I(\omega)$ consists of all numbers in $[0,1]$ whose continued fraction expansions begin with $\omega$.
	The following lemma is a basic property of function $\lambda$.
	
	\begin{lemma}\label{lem:compare}		
		Let $\gamma,\eta$ be two finite words over the alphabet $\{1,2\}$ of even length such that $\gamma$ is a strict prefix of $\eta$, and let $E^Tb\vert aF=\dots a\gamma^Tb\vert a\gamma b\dots, (E^\prime)^Tb\vert aF^\prime=\dots a\eta^Tb\vert a\eta b\dots$ be two bi-infinite words over the alphabet $\{1,2\}$.
		We have
		\begin{equation*}
			\lambda(E^Tb\vert aF)<\lambda((E^\prime)^Tb\vert aF^\prime)
		\end{equation*}
	\end{lemma}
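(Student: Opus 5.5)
We compare the two cut values by writing each as a sum of a right part and a left part and comparing these separately. Write
\begin{equation*}
\lambda(E^Tb\vert aF)=[2;2,\gamma,1,1,\dots]+[0;1,1,\gamma,2,2,\dots],
\end{equation*}
and similarly for the primed word with $\eta$ in place of $\gamma$. Here we use that the right part starts with $a\gamma b$ and the left part, read backwards from the cut, starts with $b\gamma a$. Let $\eta=\gamma\delta$ with $\delta$ nonempty. Since $a\eta b$ continues $a\gamma$ with $\delta$ rather than with $b$, the two right-hand continued fractions agree on the common prefix $2,2,\gamma$ and then differ. The same holds for the two left-hand continued fractions after the prefix $1,1,\gamma$.

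\textbf{Step 1: the first disagreement.} Let $c$ be the first letter of $\delta$ and compare it with the letter $1$, the first letter of $b$.
\begin{itemize}
\item If $c=1$, the expansions continue to agree, so we compare at the first later index where they differ.
\item If $c=2$, we compare $2$ against $1$ directly.
\end{itemize}
It is cleanest to work in the alphabet $\{1,2\}$ and note that $\delta$ has even length, so it occupies an even number of positions. The first discrepancy between $a\gamma b\cdots$ and $a\eta b\cdots$ then sits at some position $k$. The first discrepancy between $b\gamma a\cdots$ and $b\eta a\cdots$ sits at the corresponding position $k'$.

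\textbf{Step 2: the parity rule.} For $[x_0;x_1,\dots]$, increasing the first differing entry at an even index increases the value, and at an odd index decreases it. The two sides are offset only through the prefixes $22$ and $11$, both of even length, and the cut lies after an even-length word. So the parities of the differing positions on the two sides have a fixed relation to each other.

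\textbf{Step 3: combining the sides.} The plan is to show that each side moves in the direction that increases $\lambda$ when $\gamma$ is replaced by $\eta$. If instead the two sides move in opposite directions, we show the dominant one is the side where the discrepancy occurs earlier. The standard estimate is $|[0;u,x]-[0;u,y]|\approx \sizes(u)\,|x-y|$, together with the fact that $\sizes$ decays geometrically in the length of $u$.

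\textbf{Step 4: reduction to the lexicographic criterion.} A cleaner route is to reduce to \Cref{prop_lexicographic_comparison}. Both cuts are symmetric, of the form $\omega^T\vert\omega$ with the appropriate outer letters. Write the difference $\lambda(E'^T b\vert aF')-\lambda(E^Tb\vert aF)$ as a telescoping sum of two one-sided changes. Each change amounts to comparing $[2;2,\gamma,b,\dots]$ with $[2;2,\gamma,\delta,\dots]$, and, on the left, $[0;1,1,\gamma,a,\dots]$ with $[0;1,1,\gamma,\delta^T,\dots]$. On the left the fixed letter after $\gamma$ is $a$, versus $b$ on the right, so the lexicographic inequalities point opposite ways. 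Because the left prefix $0;1,1$ has odd-index parity relative to the right prefix $2;2$, the opposite comparisons translate into the same sign of change in the two continued fractions, and both sides increase.

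\textbf{Main obstacle.} The hardest step is handling the unspecified tails, shown as $\dots$, beyond the first discrepancy, since $E,F,E',F'$ are arbitrary. This is settled by the first-differing-index monotonicity of continued fractions, which holds regardless of the tails.
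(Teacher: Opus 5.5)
There is a genuine gap. The decisive claim of Step 4 is that replacing $\gamma$ by $\eta$ increases both halves of the cut value, and for every $\delta$ this fails for suitable tails (or outright). Write $R_1=[2;2,\gamma,1,1,\dots]$, $R_2=[2;2,\gamma,\delta,1,1,\dots]$, $L_1=[0;1,1,\gamma,2,2,\dots]$ and $L_2=[0;1,1,\gamma,\delta,2,2,\dots]$. On the left it is $\delta$, not $\delta^T$, that follows $\gamma$, because $a\eta^Tb$ is read backwards from the cut.

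If $\delta$ begins with $22$, this block coincides with the $a$ that follows $\gamma$ on the left. So $L_1$ and $L_2$ first differ inside the arbitrary tail. For instance, take $\gamma=\varnothing$, $\eta=22$ and first word $\dots 11\,22\,11\vert 22\,11\dots$; then $L_1=[0;1,1,2,2,1,\dots]>[0;1,1,2,2,2,\dots]=L_2$. Symmetrically, if $\delta$ begins with $11$, the sign of $R_2-R_1$ is decided by the tail $F$.

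The statement only asks that $\gamma,\eta$ have even length over $\{1,2\}$, so $\delta$ may also begin with $12$ or $21$. In those cases both signs are determined, but they are opposite; for $12$ the discrepancies even sit at the same index $|\gamma|+3$ on both sides. So your final paragraph is mistaken. First-differing-index monotonicity does not control the tails: on one side either the first discrepancy lies in the unconstrained part, or the two sides pull in opposite directions.

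What is missing is a genuinely quantitative comparison, which Step 3 only gestures at with ``$\approx$'' and geometric decay. The competing discrepancies are at most a couple of letters apart, so constants matter. Suppose $\delta$ begins with $11$ and you use only cylinder bounds, with $q,q'$ the last two denominators of $[0;2,\gamma]$. Writing $L=1-[0;2,\gamma,\dots]$, the guaranteed gain on the left is the gap between $I(2\gamma11)$ and $I(2\gamma22)$, namely $1/((2q+q')(7q+3q'))$. The trivial bound for $|R_2-R_1|$ is $\sizes(2\gamma11)=1/((2q+q')(3q+2q'))$, which is larger. So the split does not close without further input.

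The paper never splits the value. It quotes Lemma 3.2 of \cite{EGRS2024} to compare each symmetric cut with $3$, getting $\lambda(E^Tb\vert aF)<3-\sizes(b\gamma b)$ and $\lambda((E')^Tb\vert aF')>3-\sizes(b\eta1)$. It then closes the chain with $\sizes(b\gamma b)\ge\sizes(b\eta)>\sizes(b\eta1)$, which uses $|\eta|\ge|\gamma|+2$.

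The natural mechanism behind such bounds is the identity $[2;2,y]+[0;1,1,y]=3$. It rewrites the cut value as $3-\bigl([0;2,\gamma,2,2,\dots]-[0;2,\gamma,1,1,\dots]\bigr)$, so both halves are measured inside the single cylinder $I(2\gamma)$. The lemma then becomes a comparison between a deficit at depth $|\gamma|$ and one at depth at least $|\gamma|+2$.
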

	
	\begin{proof}
		By \cite[Lemma 3.2]{EGRS2024} we have
		\begin{equation*}
			\lambda(E^Tb\vert aF) < 3 - \sizes(b\gamma b) \leq 3-\sizes(b\eta) < 3-\sizes(b\eta 1) < \lambda((E^\prime)^Tb\vert aF^\prime).
		\end{equation*}		
	\end{proof}

	\subsection{Balanced sequences}\label{subsec:balanced}
	
	The balanced bi-infinite words were classified by Heinis \cite[Theorem 2.5]{Heinis}. The following is a characterization of balanced sequences based on Christoffel words. We will not use it, but is instructive to understand the type of sequences that appear in  \Cref{thm:full_characterization}.  
	
	\begin{proposition}\label{prop:balanced_characterization}
		A bi-infinite word $\omega\in\{a,b\}^{\Z}$ is a balanced sequence if and only if it has the following forms
		\begin{itemize}
			\item Periodic: $\omega=\theta^{\infty}$ for some Christoffel word $\theta\in P\cup\{a,b\}$.
			\item Degenerate: $\omega=\alpha^{\infty}\beta\alpha^{\infty}$ or $\omega=\beta^{\infty}\alpha\beta^{\infty}$ for some $(\alpha,\beta)\in\overline{P}$.
			\item Not eventually periodic (Sturmian sequences): $\omega$ if for all $n\in\N$, the word $\omega$ can be written as a non-constant word for some alphabet $(\alpha_n,\beta_n)\in\overline{P}_n$. 
		\end{itemize}
	\end{proposition}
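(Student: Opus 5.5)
The statement is an "if and only if", so I will prove the two directions separately. In both I work at the level of cuts, using the renormalization tree $\overline{T}$ and the identities from \Cref{lem:symmetry1}, \Cref{lem:commutative_identities} and \Cref{lem:symmetry2}.

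\emph{Sufficiency.} Take $\omega$ of one of the three listed forms. By \Cref{lem:symmetry1}(\ref{item2}) there is $W=R_1\dots R_n$ with $R_i\in\{U,V\}$ and $(\alpha,\beta)=(W(a),W(b))$. Thus $\omega$ is the image under $W$ of a word in $\{a,b\}$:
\begin{itemize}
\item for the periodic form $(\alpha\beta)^\infty$, the image of the constant word $(ab)^\infty$;
\item for the degenerate forms $\alpha^\infty\beta\alpha^\infty$ and $\beta^\infty\alpha\beta^\infty$, the images of $a^\infty b a^\infty$ and $b^\infty a b^\infty$;
\item for the Sturmian form, by definition a non-constant word in $(\alpha_n,\beta_n)$ for every $n$.
\end{itemize}
In every case $\omega$ is again of the same form with a shorter word pair. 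Induction on word length then shows that $\omega$ is balanced.

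The base of the induction is $(a,b)$. There the words $a^\infty b a^\infty$, $b^\infty a b^\infty$ and $(ab)^\infty$ are visibly balanced.

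For the inductive step I will show that $U$ and $V$ preserve balancedness of bi-infinite words. Under $U$, each $a$ becomes $ab$ and each $b$ stays $b$. A factor of $U(\omega)$ then corresponds to a factor of $\omega$, up to one boundary letter, and one compares the counts of $a$. The case of $V$ is symmetric.

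\emph{Necessity.} Let $\omega$ be balanced. If $\omega$ contains both $aa$ and $bb$, it contains factors $aa\dots$ and $bb\dots$ of length $2$ whose $b$-counts differ by $2$, contradicting balance. Hence one letter, say $b$, is isolated. There are two subcases:
\begin{itemize}
\item $\omega\in\{a^\infty,b^\infty\}$, which is periodic with $\theta=a$ or $b$;
\item $\omega$ is a word over $\{a,ab\}$ or over $\{ab,b\}$, that is, $\omega=U(\omega')$ or $V(\omega')$.
\end{itemize}
I then check that $\omega'$ is again balanced. Otherwise, two factors of $\omega'$ of equal length whose counts differ by $2$ can be extended to factors of $\omega$ with the same defect; this is the standard desubstitution argument.

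Iterating gives a sequence $\omega=W_n(\omega_n)$. If the process stops at $n$ with $\omega_n$ constant, then $\omega=\alpha_n^\infty$ or $\beta_n^\infty$, which is periodic. Since $\alpha_n$ and $\beta_n$ are themselves Christoffel words $uv$ of the parent pair, this is of the form $\theta^\infty$.

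If the process stops with $\omega_n$ having a single occurrence of one letter, we are in the degenerate case. This means $\omega_n$ is eventually one letter on both sides, and balance forces exactly one isolated occurrence. By \Cref{relbtrenorm}, $W_n(a),W_n(b)$ form the exterior pair.

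Otherwise the process never terminates, and $\omega$ is non-constant in every $(\alpha_n,\beta_n)$, which is the Sturmian case.

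\emph{Main obstacle.} I expect the hardest step to be the desubstitution lemma: that $\omega'$ inherits balance, together with the precise bookkeeping of the "one isolated letter" degenerate case.
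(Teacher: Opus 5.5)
The paper gives no proof of this proposition. It is quoted from Heinis's classification of bi-infinite balanced words and explicitly marked as not used later, so your desubstitution argument is an independent route. Several parts of it are sound:
\begin{itemize}
\item the necessity half, in outline;
\item the periodic and degenerate cases of sufficiency, since there $\omega=W(\omega_0)$ with $\omega_0\in\{(ab)^\infty,a^\infty ba^\infty,b^\infty ab^\infty\}$ and $W$ a \emph{finite} composition of $U$ and $V$;
\item your two transfer lemmas. They are true, provided the counting respects block boundaries (e.g.\ compare $V(f')a$ with $V(g')$ minus its first letter). Alternatively, use the criterion that a word is unbalanced iff it contains $a\eta a$ and $b\eta b$ with $\eta$ a palindrome, together with \Cref{lem:commutative_identities}.
\end{itemize}

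\textbf{The gap: sufficiency in the third (Sturmian) case.} Your ``induction on word length'' has no base case there. Write $\omega=R_1(\omega_1)$, $\omega_1=R_2(\omega_2),\dots$. Every $\omega_n$ is again a word of the third type, so ``$\omega_{n+1}$ balanced $\Rightarrow$ $\omega_n$ balanced'' never reaches a word already known to be balanced. You need a finiteness argument instead.

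Balance is a condition on two factors $f,g$ of some fixed length $k$. Suppose both $U$ and $V$ occur infinitely often in the (forced) desubstitution.
\begin{itemize}
\item Then $\min(|\alpha_n|,|\beta_n|)\to\infty$, so fix $n$ with $\min(|\alpha_n|,|\beta_n|)\geq k$.
\item Each of $f,g$ then lies in $W_n(xy)$ for some length-two factor $xy$ of $\omega_n$.
\item Since $\omega_n$ is itself a $U$- or $V$-image, it does not contain both $aa$ and $bb$. So all its length-two factors occur in $a^\infty ba^\infty$, or all occur in $b^\infty ab^\infty$.
\item Hence $f,g$ are both factors of $\alpha_n^\infty\beta_n\alpha_n^\infty$ or both of $\beta_n^\infty\alpha_n\beta_n^\infty$. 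These are balanced by your finite induction.
\end{itemize}

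If instead only one of $U,V$ occurs infinitely often (the third bullet, as worded, allows this), a separate argument is needed. For example, if only $V$ occurs from level $n$ on, every $b$ of $\omega_n$ is preceded by $a^j$ for every $j$. This forces $\omega_n=a^\infty ba^\infty$ (symmetrically $b^\infty ab^\infty$), so $\omega$ is degenerate.

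The same observation shows that degenerate words already satisfy the third bullet. For instance, $\alpha^\infty\beta\alpha^\infty$ is non-constant over $\overline{V}^j(\alpha,\beta)=(\alpha,\alpha^j\beta)$ for all $j$. So in the necessity direction you only need the dichotomy ``some $\omega_n$ is constant, or none is''. The stopping rule for the degenerate case, which you flagged as delicate, can simply be dropped.
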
 
	
	In \cite{reutenauer}, it was proven that the bi-infinite sequences with Markov value less or equal than 3 are the same as the bi-infinite balanced sequences after replacing $a=(2,2)$ and $b=(1,1)$. One can further conclude that such a bi-infinite balanced sequence has Lagrange value equal to 3 if and only if it is not periodic.  
	
	\subsubsection{Lower and upper balanced sequences}
	
	The lower and upper balanced sequences were characterized by Gurwood \cite{Gurwood}. We will reproduce this of his thesis with slightly changes for completeness.
	
	\begin{definition}
		Let $\omega=w_1w_2\dots$ be an right infinite word over $\{a,b\}$.  We say that $\omega$ is \emph{upper balanced} if $\omega=b^\infty$ or if it is not constant and
		\begin{enumerate}
			\item any cut of the form $\dots b|a\dots$ extends to a good cut or extends to $\theta^T b|a\theta$ for some prefix $\theta^Tba\theta$ of $\omega$,
			\item any cut of the form $\dots a|b\dots$ extends to a good cut.
		\end{enumerate}
		We say that $\omega$ is \emph{lower balanced} if $\omega=a^\infty$ or it is not constant and
		\begin{enumerate}
			\item any cut of the form $\dots a|b\dots$ extends to a good cut or extends to $\theta^T a|b\theta$ for some prefix $\theta^Tab\theta$ of $\omega$,
			\item any cut of the form $\dots b|a\dots$ extends to a good cut.
		\end{enumerate}
		We say that a left infinite word $\omega=\dots w_{2}w_{1}$ is lower (resp. upper) balanced if its transpose $\omega^T=w_1w_2\dots$ is upper (resp. lower) balanced.
	\end{definition}
	
	It follows directly from the definition that lower (resp. upper) balanced sequences begin with $a$ (resp. $b$). It also follows that a right infinite sequence $\omega$ is lower balanced if and only if $b\omega^{+}$ is upper balanced.
	
	The following three lemmas are from Gurwood \cite{Gurwood}. We reproduce their proof for completeness. 
	
	\begin{lemma}\label{lem:lemma31gurwood}
		Suppose that $w_1\dots w_n\in\{a,b\}^n$ is not balanced. Then any extension $w_1\dots w_{2n}\in\{a,b\}^{2n}$ contains a bad cut, that is, contains $x\theta^T x|y\theta y$ with $\{x,y\}=\{a,b\}$.
	\end{lemma}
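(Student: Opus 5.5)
We use the standard fact that an unbalanced word contains two factors of the same length whose numbers of $b$'s differ by exactly $2$, and that such a pair can be chosen in the form $x\theta x$ and $y\theta y$ with $\{x,y\}=\{a,b\}$ and a common middle $\theta$. Once we have such a pair inside $w_1\dots w_n$, we use the extra $n$ letters to force one of them to sit in a bad position. The decisive input is \Cref{prop_lexicographic_comparison}: every cut of the form $x\theta^Tx|y\theta y$ with $x\neq y$ is bad, whatever extensions lie outside.

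\textbf{Step 1: minimal unbalanced pair.} Since $w=w_1\dots w_n$ is not balanced, choose two factors $u,v$ of $w$ of the same length $k$ with $\big||u|_b-|v|_b\big|\geq 2$, taking $k$ minimal. Sliding a window of length $k$ by one letter changes the $b$-count by at most $1$. So by minimality the difference is exactly $2$. Deleting the first letters, or the last letters, of $u$ and $v$ would give a shorter unbalanced pair unless those letters differ. Hence $u=x u' x$ and $v=y v' y$ with $\{x,y\}=\{a,b\}$ and $|u'|_b=|v'|_b$.

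\textbf{Step 2: make the middles equal.} The classical argument (Lothaire, Prop. 2.1.3, or Gurwood's version) shows that a minimal pair can be chosen with $u'=v'$. If $u'\neq v'$, compare them at their first differing position $i$. This splits them as $u'=pxs$ and $v'=pys'$ (or the same with $x$ and $y$ swapped). One can then produce a shorter unbalanced pair, taking suffixes/prefixes such as $x p x$ versus $y p y$ when the differing letters match the pattern, or truncating otherwise, which contradicts minimality. Carrying this out carefully gives a common word $\theta':=u'=v'$. Thus $w$ contains both $x\theta' x$ and $y\theta' y$.

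\textbf{Step 3: producing the bad cut.} Set $\theta=\theta'$. The statement asks for an occurrence of $x\theta^T x|y\theta y$ inside $w_1\dots w_{2n}$, that is, an occurrence of a reversed block followed by a forward block. Step 2 only gives two occurrences located somewhere in $w$. So we need a palindromic, mirror-symmetric refinement: choose the minimal pair so that $\theta$ is a palindrome. Minimality forces $\theta$ to be a factor in which every subword is balanced-compatible, and I would try to show from Step 2 that $\theta^T$ also appears in the same configuration. I expect this symmetry step to be the main obstacle.

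The role of the extension to length $2n$ is then to guarantee that the cut-point configuration is completed inside the word. I would take the extension letters $w_{n+1},\dots,w_{2n}$ to be arbitrary. Next I argue via \Cref{lem:lemma31gurwood}'s hypothesis applied to shifts, as follows. Among the $b$-counts of windows of length $|\theta|+2$ in $w_1\dots w_{2n}$, at least two differ by $2$. Then a discrete intermediate-value argument on adjacent windows locates a position where $x\theta^T x$ ends immediately before $y\theta y$ begins. Finally, \Cref{prop_lexicographic_comparison} shows this cut is bad for every extension.
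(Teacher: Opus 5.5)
\textbf{There is a genuine gap, and it is in Step 3, not where you expected it.} Steps 1--2 together with the palindrome property are just the classical fact you cite: Lothaire's Proposition 2.1.3 already gives a \emph{palindromic} $\theta$ with $a\theta a$ and $b\theta b$ both factors. So the ``symmetry step'' you flag is not the obstacle. (A minor point: the minimal difference is exactly $2$ because deleting the last letter of both factors changes the difference by at most one; the sliding-window remark is not the right reason.)

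The real obstacle is adjacency. Knowing that $a\theta a$ and $b\theta b$ occur \emph{somewhere} in $w_1\dots w_n$ says nothing about their occurring side by side. Your discrete intermediate-value argument cannot supply this: continuity of $b$-counts of sliding windows only shows that intermediate counts are attained, not which words sit next to each other. In fact the conclusion you aim for is false. Take $w=aababb$ with $n=6$:
\begin{itemize}
\item The minimal unbalanced pair is $aa,bb$, so your $\theta$ is forced to be $\varnothing$.
\item The word $w$ itself contains no bad cut at all.
\item The extension $aababb\,ababab$ contains neither $aabb$ nor $bbaa$.
\item The bad cut this extension does contain is $aaba|bbab$. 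Its middle is $ba$, not $\varnothing$, and it uses the extension letters $w_7w_8$.
\end{itemize}
So in general the bad cut has a different (longer) middle than the minimal pair, lies partly in $w_{n+1}\dots w_{2n}$, and its position depends on how $w$ is extended. An argument that takes the extension letters to be arbitrary and never uses them cannot work.

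\textbf{What is missing is the forcing argument the paper uses.}
\begin{enumerate}
\item By induction on $n$, reduce to the case where every proper factor of $w_1\dots w_n$ is balanced. Then the unbalanced pair is the length-$k$ prefix and suffix.
\item Counting $A_\ell=|w_{n-\ell+1}\dots w_n|_b-|w_1\dots w_\ell|_b$ forces $k\mid n$ and the rigid shape $w_1\dots w_n=a\theta a\,(b\theta a)^{q-2}\,b\theta b$ with $\theta$ a palindrome.
\item Only now assume that $w_1\dots w_{2n}$ has no bad cut. The indeterminate cut $w_{n-k}|w_{n-k+1}$, of the form $\dots b\theta a|b\theta b\dots$, forces $w_{n+1}=a$.
\item Together with the cut $w_n|w_{n+1}=b|a$, it then forces the extension letters one by one to mirror earlier ones, $w_{n+t+1}=w_{n-t}$. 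Pushing this as far as possible contradicts $w_1=a$.
\end{enumerate}
This letter-by-letter forcing, which genuinely uses the length-$2n$ extension, has no substitute in your outline.
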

	\begin{proof}
		The proof is by induction on $n$. The case $n\leq 3$ can be checked directly. Assume that the theorem is true for all $m\leq n-1$. By contradiction, assume that there is an extension  $w_1\dots w_{2n}$ with no bad cuts. By the induction hypothesis, we can assume that $\big||w_1\dots w_{k}|_b-|w_{n-k+1}\dots w_{n}|_b\big|>1$ and that $k$ is minimal with that property. Since $k$ is minimal, we can assume $|w_{n-k+1}\dots w_{n}|_b-|w_1\dots w_{k}|_b=2$ (if the value is $-2$ we can exchange $a$ and $b$). Let $M=|w_1\dots w_{k}|_b+1=|w_{n-k+1}\dots w_{n}|_b-1$. We can also assume $n$ is minimal in the sense that any strict factor of $w_1\dots w_n$ is balanced, so we have $|w_{j+1}\dots w_{j+k}|_{b}=M$ for $1\leq j\leq n-k-1$. In particular, we have that $w_s=w_t \pmod{k}$, $2\leq s,t\leq n-1$. Moreover, $w_1=w_k=w_{n-k}=a$ and $w_{n-k+1}=w_n=w_{k+1}=b$. Set $A_\ell=|w_{n-\ell+1}\dots w_n|_b-|w_1\dots w_\ell|_b$. In particular we have $A_1=1$, $A_k=2$ and by the minimality of $k$ that $A_\ell\leq 1$ for $1\leq \ell\leq k-1$. If $A_\ell=0$ for some $2\leq\ell\leq k-1$, then $|w_{n-k+1}\dots w_{n-\ell}|_b-|w_\ell\dots w_k|_b=2$, which contradicts the induction hypothesis. Hence $A_\ell=1$ for $1\leq\ell\leq k-1$ and thus $w_s=w_{n-s+1}$ for $2\leq s\leq k-1$. 
		
		Let $n=qk+r$, $0\leq r\leq k-1$. If $2\leq r\leq k-1$ then $w_r=w_{(q-1)k+r}=w_{n-k}=a$ and $w_{n-r+1}=w_{(n-r+1)-(q-1)k}=w_{k+1}=b$ which is a contradiction and if $1\leq r\leq k-2$ then $w_{r+1}=w_{(q-1)k+r+1}=w_{n-k-1}=b$ and $w_{n-r}=w_{(n-r)-(q-1)k}=w_k=a$ which is also a contradiction. Therefore $r=0$ and the word $w_1\dotsb w_n$ has the form
		\begin{equation*}
			w_1\dotsb w_n = a \theta a~b\theta a~b\theta a~\dotsb~b\theta a~b\theta b
		\end{equation*}
		where the word $\theta = w_2\dots w_{k-1}$ is a palindrome. 
		
		Now, recall that we assume by contradiction that $w_1\dots w_{2n}$ does not have bad cuts. In particular, by considering the cut 
		\begin{equation*}
			\dots w_{n-k}|w_{n-k+1}\dots = \dotsb a~b\theta a|b\theta b \dotsb 
		\end{equation*}
		we must have $w_{n+1}=a$. Now we want to use the cut $w_n|w_{n+1}= b|a$, so lets consider the maximal $2\leq t\leq n-2k-1$ such that $w_{n+2}\dotsb w_{n+t} = w_{n-1}\dotsb w_{n-t+1}$. Since there are no bad cuts, if $w_{n-t-k}=w_{n-t}=b$ then $w_{n+t+1}=b$. Since $w_{n-t+1}\dotsb w_{n-1}=w_{n-t-k+1}\dotsb w_{n-k-1}$ ,by considering the cut
		\begin{equation*}
			\dots w_{n-k}|w_{n-k+1}\dots = \dotsb w_{n-t-k}~w_{n-t-k+1}\dotsb w_{n-k-1}a~b\theta a|b\theta b~aw_{n+2}\dotsb w_{n+t}~w_{n+t+1} \dotsb,
		\end{equation*}
		we have that if $w_{n-t-k}=a$, then $w_{n+t+1}=a$. In resume, we must have that $w_{n+t+1}=w_{n-t-k}=w_{n-t}$. Since $t$ is maximal, we must have that $t=n-2k-1$, which implies $a=w_{1}=w_{n-t-2k}=w_{n-t-k}=w_{n-t}=w_{2k+1}=b$, a contradiction.  
	\end{proof}
	
	\begin{lemma}\label{lem:lemma32gurwood}
		The non-constant right infinite word $\omega=w_1w_2\dots$ over the alphabet $\{a,b\}$ is lower balanced if and only if
		\begin{equation}\label{eq:lower_balanced_equation}
			|w_1\dots w_k|_b\leq |w_{\ell+1}\dots w_{\ell+k}|_b\leq |w_1\dots w_k|_b+1
		\end{equation}
		for all $\ell,k\geq 1$.
	\end{lemma}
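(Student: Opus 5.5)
We prove the two implications separately. Throughout, cuts are compared with \Cref{prop_lexicographic_comparison}: a cut $E^Tb|aF$ or $F^Ta|bE$ is good exactly when $E\preccurlyeq F$. The combinatorial input is \Cref{lem:lemma31gurwood}.

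\emph{Sufficiency.} Assume $\omega$ is non-constant and satisfies \eqref{eq:lower_balanced_equation}. Condition (2) of lower balance concerns a cut $\dots w_\ell b|a w_{\ell+2}\dots$, with $w_{\ell}=b$ and $w_{\ell+1}=a$. We extend it symmetrically, say to $x\theta^T b|a\theta y$ with $x,y$ the first letters where the two sides differ. The cut is bad only if $x=a$ and $y=b$ (or if it meets the left end of $\omega$). If $x=a$, $y=b$, then the factor $a\theta^Tb$ ending at $w_\ell$ and the factor $a\theta b$ starting at $w_{\ell+1}$ have equal length. But $a\theta^Tb$ is a factor that, when compared with the appropriate prefix $w_1\dots w_k$, violates the lower inequality $|w_1\dots w_k|_b\le |w_{\ell+1}\dots w_{\ell+k}|_b$. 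Indeed, the prefix of $\omega$ must be $b$-poorest among factors of its length, which forces the reading to the left to become smaller, i.e. a good cut.

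Doing this precisely requires choosing the prefix. We take $k$ to be the length of $\theta$ plus one and compare the prefix of length $k$ with the two factors $\theta^Tb$ and $a\theta$. Since these differ by exactly one $b$, the prefix must coincide in $b$-count with $a\theta$. Reading further, the letter after the prefix is then forced by the upper bound. If instead the cut runs into $w_1$ before a mismatch, then the left side is a finite word $w_1\dots w_\ell$ reversed. In that case the same count shows $w_1\dots w_\ell b\,|\,a\dots$ has the shape $\theta^Ta|b\theta$ needed in condition (1), or is good. Condition (1) for cuts $a|b$ is handled symmetrically, using the upper inequality $|w_{\ell+1}\dots w_{\ell+k}|_b\le |w_1\dots w_k|_b+1$: a bad extension $b\theta^Ta|b\theta a$ with room on the left yields two equal-length factors whose $b$-counts differ by $2$ relative to a prefix.

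\emph{Necessity.} Assume $\omega$ is lower balanced. First we show $\omega$ is balanced. If it were not, some finite prefix $w_1\dots w_n$ would be unbalanced. By \Cref{lem:lemma31gurwood}, the extension $w_1\dots w_{2n}$ then contains a bad cut $x\theta^Tx|y\theta y$ that is fully determined inside $\omega$. Such a cut is neither good nor of the exceptional prefix form $\theta^Ta|b\theta$, since the exceptional form requires the left part to reach $w_1$. This contradicts lower balance.

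Knowing $\omega$ is balanced, every factor of length $k$ has $b$-count $m_k$ or $m_k+1$. It remains to show that the prefix attains the minimum $m_k$. Suppose $|w_1\dots w_k|_b=m_k+1$ while some factor $w_{\ell+1}\dots w_{\ell+k}$ has count $m_k$, and take $k$ minimal. Minimality forces $w_k=b$ and $w_{\ell+k}=a$, with the preceding $k-1$ letters having equal counts. A Gurwood-type symmetrization, as in the proof of \Cref{lem:lemma31gurwood}, should then produce a cut $\theta^Ta|b\theta$ whose left side is not a prefix and which extends badly, or a bad $b|a$ cut. Either contradicts lower balance.

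The main obstacle is this last step: converting a prefix that is too $b$-rich into an explicit bad cut in the interior, rather than one hitting $w_1$. I would first try to mimic the periodicity argument of \Cref{lem:lemma31gurwood}, showing that $\omega$ begins with $(b\theta a)$-blocks, where $\theta$ is a palindrome, and then exhibit the cut $\dots a\,b\theta a|b\theta b\dots$.
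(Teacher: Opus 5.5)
Your proposal has genuine gaps in both directions.

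\emph{Sufficiency.} You have the good and bad patterns reversed. By \Cref{prop_lexicographic_comparison} the bad cuts are $b\theta^Tb|a\theta a$ and $a\theta^Ta|b\theta b$, i.e.\ the configurations $x\theta^Tx|y\theta y$ of \Cref{lem:lemma31gurwood}. The cuts $a\theta^Tb|a\theta b$ and $b\theta^Ta|b\theta a$ are good. The configurations you analyse are exactly the good ones: $x=a$, $y=b$ for a $b|a$ cut, and $b\theta^Ta|b\theta a$ for an $a|b$ cut. In both, the two flanking blocks contain the same number of $b$'s, so no contradiction can come out, and your argument dissolves into vague claims. The correct step is immediate. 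Condition \eqref{eq:lower_balanced_equation} puts the $b$-count of every factor of length $k$ in $\{|w_1\cdots w_k|_b,\,|w_1\cdots w_k|_b+1\}$. A bad cut, however, exhibits two factors $x\theta^Tx$ and $y\theta y$ of equal length whose $b$-counts differ by $2$.

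Your treatment of a $b|a$ cut whose symmetric extension reaches $w_1$ is also wrong. Such a cut has the shape $\theta^Tb|a\theta$ with $\theta^Tb$ a prefix; it can never have the shape $\theta^Ta|b\theta$. It is exactly what condition (2) forbids. It is excluded by the \emph{left} inequality in \eqref{eq:lower_balanced_equation}, because the prefix $\theta^Tb$ has one more $b$ than the factor $a\theta$ of the same length. This is the only place where the left inequality, as opposed to mere balancedness, is needed, and your sketch never uses it correctly.

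\emph{Necessity.} Your first step is fine and matches the paper: a lower balanced word has no fully determined bad cut, so it is balanced by \Cref{lem:lemma31gurwood}. But the second step, that the prefix is $b$-poorest among factors of its length, is only a plan (``should then produce\dots''). So the left inequality is never proved.

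The missing idea is to pass to $b\omega^{+}$. Lower balanced words start with $a$, and $\omega$ is lower balanced iff $b\omega^{+}$ is upper balanced. Hence $b\omega^{+}$ also has no fully determined bad cut and is balanced, by \Cref{lem:lemma31gurwood} again. Its prefix $bw_2\cdots w_k$ contains $|w_1\cdots w_k|_b+1$ letters $b$, while for $\ell\ge 1$ the factor $w_{\ell+1}\cdots w_{\ell+k}$ is unchanged. Balancedness then gives $\bigl|1+|w_1\cdots w_k|_b-|w_{\ell+1}\cdots w_{\ell+k}|_b\bigr|\le 1$, which is exactly the missing lower bound. No symmetrization or periodicity argument is needed.
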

	\begin{proof}
		By \Cref{lem:lemma31gurwood} we must have that the whole sequence $\omega$ is balanced. In particular, we have that $\big||w_1\dots w_k|_b- |w_{\ell+1}\dots w_{\ell+k}|_b\big|\leq 1$. Since the sequence $b\omega^{+}$ is upper balanced, it does not have bad cuts either, so again by \Cref{lem:lemma31gurwood} is balanced and $\big|1+|w_1\dots w_k|_b- |w_{\ell+1}\dots w_{\ell+k}|_b\big|\leq 1$. These two equations imply \eqref{eq:lower_balanced_equation}. 
		
		Now suppose \eqref{eq:lower_balanced_equation}. Notice that a bad cut $x\theta^Tx|y\theta y$, $\{x,y\}=\{a,b\}$ is incompatible with \eqref{eq:lower_balanced_equation} because $\big||x\theta^Tx|_b-|y\theta y|_b\big|>1$. Finally, if a cut of $\omega$ of the form $\dots b|a\dots$ extends to $\theta^Tb|a\theta$ for some prefix of $\omega$, then $|\theta^Tb|_b>|a\theta|_b$ which contradicts \eqref{eq:lower_balanced_equation}. In conclusion, any such cut $\dots b|a\dots$ must extend to a good cut.
	\end{proof}
	
	Let $\chi$ be the substitution given by $\chi(a)=0$ and $\chi(b)=1$. 
	\begin{lemma}\label{lem:balanced_inequality}
		The right infinite word $\omega=w_1w_2\dots\neq b^\infty$ over the alphabet $\{a,b\}$ is lower balanced if and only if there is $0\leq\xi\leq1$ such that either
		\begin{equation}\label{eq:irrational_xi}
			\chi(w_n)=\lfloor n\xi \rfloor -\lfloor (n-1)\xi \rfloor, \qquad \text{for all }n\geq 1,
		\end{equation}
		or $w_1=a$ and
		\begin{equation}\label{eq:skew_xi}
			\chi(w_n)=\lfloor -(n-1)\xi \rfloor - \lfloor -n\xi \rfloor, \qquad \text{for all }n\geq 2.
		\end{equation}
	\end{lemma}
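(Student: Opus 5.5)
We will deduce the statement from \Cref{lem:lemma32gurwood}, rewriting the balance condition \eqref{eq:lower_balanced_equation} in terms of the prefix counts $h_k:=|w_1\dots w_k|_b$ (with $h_0=0$), and then solving it via a Fekete-type argument. The constant words are handled separately. The word $a^\infty$ is lower balanced by definition and equals \eqref{eq:irrational_xi} with $\xi=0$. The word $b^\infty$ is excluded by hypothesis. For every other $\xi$, the words defined by \eqref{eq:irrational_xi} or \eqref{eq:skew_xi} are non-constant. So we may work with non-constant $\omega$, where \Cref{lem:lemma32gurwood} applies.

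\emph{Necessity.} Since $|w_{\ell+1}\dots w_{\ell+k}|_b=h_{\ell+k}-h_\ell$, condition \eqref{eq:lower_balanced_equation} reads
\begin{equation*}
h_k+h_\ell\le h_{k+\ell}\le h_k+h_\ell+1 \qquad (k,\ell\geq 1).
\end{equation*}
Thus $(h_k)$ is superadditive and $(h_k+1)$ is subadditive. By Fekete's lemma the limit $\xi=\lim h_k/k$ exists, and it satisfies both $\xi=\sup_k h_k/k$ and $\xi=\inf_k (h_k+1)/k$. Hence $h_k\le k\xi\le h_k+1$ for all $k$, and $\xi\in[0,1]$.

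We now split into two cases.
\begin{itemize}
\item If $k\xi<h_k+1$ for every $k$, then $h_k=\lfloor k\xi\rfloor$ for all $k$. Taking differences gives $\chi(w_n)=h_n-h_{n-1}$, which is \eqref{eq:irrational_xi}.
\item Otherwise $\xi$ is rational. The only ambiguity is at indices $k$ with $k\xi\in\Z$, where $h_k\in\{k\xi,k\xi-1\}$. We claim the choice is uniform. Suppose $h_{k_1}=k_1\xi$ and $h_{k_2}=k_2\xi-1$. Superadditivity gives $h_{k_1k_2}\ge k_2h_{k_1}=k_1k_2\xi$. Subadditivity gives $h_{k_1k_2}+1\le k_1(h_{k_2}+1)=k_1k_2\xi$. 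These contradict each other.
\end{itemize}
So in the second case the choice $h_k=k\xi-1$ is made at every index with $k\xi\in\Z$. Hence $h_k=\lceil k\xi\rceil-1=-\lfloor -k\xi\rfloor-1$ for all $k\ge1$. Differencing for $n\ge2$ gives \eqref{eq:skew_xi}. Moreover $h_1=0$, since $0<\xi\le1$ in this case, so $w_1=a$.

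\emph{Sufficiency.} By \Cref{lem:lemma32gurwood} it suffices to check the two-sided inequality on $h$. In case \eqref{eq:irrational_xi} we have $h_k=\lfloor k\xi\rfloor$. The required inequality is then the standard fact that $\lfloor x+y\rfloor-\lfloor x\rfloor-\lfloor y\rfloor\in\{0,1\}$. In case \eqref{eq:skew_xi} with $w_1=a$ we have $h_k=\lceil k\xi\rceil-1$ for $k\ge1$. Then for $\ell\ge1$,
\begin{equation*}
h_{\ell+k}-h_\ell=\lceil(\ell+k)\xi\rceil-\lceil\ell\xi\rceil\in\{\lceil k\xi\rceil-1,\lceil k\xi\rceil\}=\{h_k,h_k+1\},
\end{equation*}
which is exactly \eqref{eq:lower_balanced_equation}.

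The main obstacle is the rational case of necessity. There one must rule out mixing the two possible values of $h_k$ at different indices where $k\xi$ is an integer. The multiplicative trick using the product $k_1k_2$ above is what resolves it.
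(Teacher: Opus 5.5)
Your proof is correct and follows essentially the paper's argument. Both rewrite \Cref{lem:lemma32gurwood} as $h_k+h_\ell\le h_{k+\ell}\le h_k+h_\ell+1$, take $\xi=\sup_k h_k/k$, and split according to whether some $k\xi$ equals $h_k+1$; your $h_{k_1k_2}$ trick is exactly the paper's estimate $mS_n\le S_{mn}\le nS_m+n-1$. The only difference is that the paper uses this estimate to get the strict inequality $S_n/n<(S_m+1)/m$ for all $n,m$, which gives both the sandwich $S_m/m\le\xi\le(S_m+1)/m$ and the uniformity of the choice at integer points at once, so Fekete's lemma is not needed.
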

	
	\begin{proof}
		The fact that these sequences are lower balanced follows from the inequality $\lfloor x\rfloor +\lfloor y\rfloor\leq \lfloor x+y\rfloor\leq \lfloor x\rfloor +\lfloor y\rfloor+1$ for arbitrary real numbers $x,y$.
		
		Conversely, let $S_n=|w_1\dots w_n|_b$. By \Cref{lem:lemma32gurwood} we have $S_n+S_m\leq S_{n+m}\leq S_n+S_m+1$ for all $m,n$, that is, the sequence $(S_n)_{n\geq 1}$ is almost sub-additive. By induction, we have that $mS_n\leq S_{mn}\leq nS_m+n-1$ and thus $\frac{S_n}{n}<\frac{S_m+1}{m}$. 
		Set $\xi=\sup_{n\geq 1}\frac{S_n}{n}\in[0,1]$, we have $\frac{S_m}{m}\leq\xi\leq\frac{S_m+1}{m}$. If $\xi$ is irrational then $\frac{S_m}{m}<\xi<\frac{S_m+1}{m}$ for all $m$, so $S_m=\lfloor m\xi\rfloor$ for all $m\geq 1$. 
		If $\xi$ is rational and $\xi=\frac{S_k}{k}$ for some $k$, then $\frac{S_m}{m}\leq\xi<\frac{S_m+1}{m}$ for all $m$, so $S_m=\lfloor m\xi\rfloor$ for all $m\geq 1$. In both cases $\chi(w_n)=S_{n+1}-S_n=\lfloor n\xi \rfloor -\lfloor (n-1)\xi \rfloor$ for all $n\geq 1$. If $\xi\neq \frac{S_k}{k}$ for all $k$, then $\xi\neq 0$ and $\frac{S_m}{m}<\xi\leq\frac{S_m+1}{m}$ so $m\xi\leq S_m+1<m\xi+1$. Hence $S_m+1=-\lfloor -m\xi\rfloor$, which implies that $S_1=0$ and $\chi(w_n)=\lfloor -(n-1)\xi \rfloor - \lfloor -n\xi \rfloor$ for all $n\geq 2$.
	\end{proof}

	\subsubsection{Characterization of the sequence \texorpdfstring{$\lfloor n\xi \rfloor - \lfloor (n-1)\xi \rfloor$}{floor(n xi) - floor((n-1) xi)} through renormalization}\label{charcterization_xi}
	
	For $0<\xi<1$, let $\omega=w_1w_2\dots$ a right infinite word over the alphabet $\{a,b\}$ such that $\chi(w_n)=\lfloor n\xi \rfloor -\lfloor (n-1)\xi \rfloor$.
	If $\xi=\frac{p}{q}$ is rational, then $\omega$ is periodic with period $\eta=w_1\dots w_q$. Moreover, we have $w(p,q)=U(\eta)$, so $\eta$ is exactly the Christoffel lower word with Frobenius coordinates $|\eta|_a=q-p$ and $|\eta|_b=p$.
	If $\xi$ is irrational, then by \cite[Page 288]{ItoYasutomi} the sequence $\omega$ can be described by the renormalization operators $U,V$. Indeed, if $\xi=[0;d_1,d_2,\dots]$ is the continued fraction of $\xi$ then letting $i_n=d_n$ and $R_n=U$ if $n$ is even and $i_n=d_n-1$ and $R_n=V$ is $n$ is odd, we have that $w_1w_2w_3\dots=\lim_{n\to\infty}(R_1^{i_1}\dotsb R_n^{i_n})(a)$, so by \Cref{relbtrenorm} we have $\omega=\lim_{n\to\infty}\alpha_n$  where   $(\alpha_n,\beta_n)=\left(\overline{R_n}^{i_n}\dotsb\overline{R_1}^{i_1}\right)(a,b)$.

	\subsubsection{Indeterminate cuts in one--sided balanced sequences}
	
	In this subsection we will determine how are the indeterminate cuts of certain balanced sequences. This information is what allows to go from the case of no solutions of \eqref{eq:x-pq<3} to the case of any finite number of solutions.
	
	\begin{lemma}\label{lem:ba_good_cuts}
		Let $w=\alpha\beta$ for some $(\alpha,\beta)\in\overline{P}$. Let us write $w=w_1\dots w_k$ with $w_i\in\{a,b\}$ for $1\leq i\leq k$. Then for any $1\leq j\leq k$ it holds that: if $w_jw_{j+1}=ba$, then there is $r\geq 1$ such that $\theta=w_{j-1}\dots w_{j-r+1}=w_{j+2}\dots w_{j+r}$, $w_{j-r}=a$ and $w_{j+r+1}=b$, so in particular $w_{j-r}\dots w_{j+r+1}=a\theta^T ba\theta b$. 
	\end{lemma}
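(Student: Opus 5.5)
The plan is to argue by induction on the depth of $(\alpha,\beta)$ in the tree $\overline{T}$, using the inner renormalization operators. By \Cref{lem:symmetry1}(\ref{item2}) there is $W_n=R_1R_2\cdots R_n$ with $R_i\in\{U,V\}$ such that $\alpha=W_n(a)$ and $\beta=W_n(b)$, so $w=\alpha\beta=W_n(ab)$. Write $w=R_1(w')$ with $w'=(R_2\cdots R_n)(ab)$. The pair $\bigl((R_2\cdots R_n)(a),(R_2\cdots R_n)(b)\bigr)$ lies in $\overline{P}_{n-1}$ by \Cref{relbtrenorm}, so $w'$ is again of the form $\alpha'\beta'$ and we may apply the inductive hypothesis to it. The base case $w=ab$ contains no factor $ba$, so there is nothing to prove. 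In each inductive step we first locate where a factor $ba$ of $w=R_1(w')$ can come from in $w'$, and then push the symmetric pattern $a\theta^Tba\theta b$ of $w'$ forward through $R_1$.

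\emph{Case $R_1=U$} ($a\mapsto ab$, $b\mapsto b$). Every image $U(x)$ ends in $b$, and $U(y)$ starts with $a$ only when $y=a$. Also no $ba$ occurs inside a single image. Hence an occurrence of $ba$ in $U(w')$ sits exactly at a boundary $U(x)|U(a)$, where $x$ is the letter of $w'$ preceding an $a$.
\begin{itemize}
\item If $x=a$, then $w'$ contains $aa$ and $U(aa)=abab$. This gives the pattern with $r=1$ and $\theta$ empty: $w_{j-1}=a$ and $w_{j+2}=b$.
\item If $x=b$, the inductive hypothesis gives a factor $a\theta^Tba\theta b$ of $w'$ centred at this $ba$. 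Its image is
\[
U(a\theta^Tba\theta b)=a\,\bigl(bU(\theta^T)\bigr)\,b\,a\,\bigl(bU(\theta)\bigr)\,b .
\]
By \Cref{lem:commutative_identities}, $bU(\theta^T)=U(\theta)^Tb=(bU(\theta))^T$. So with $\Theta=bU(\theta)$ this is exactly $a\Theta^Tba\Theta b$, centred at the new $ba$.
\end{itemize}

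\emph{Case $R_1=V$} ($a\mapsto a$, $b\mapsto ab$). Every image $V(y)$ starts with $a$, and $V(x)$ ends with $b$ iff $x=b$. Again no $ba$ lies inside an image. So the occurrences of $ba$ are exactly the boundaries $V(b)|V(y)$, where $b$ is a non-final letter of $w'$ followed by $y$.
\begin{itemize}
\item If $y=b$, then $V(bb)=abab$, giving $r=1$ as before.
\item If $y=a$, we use the inductive pattern $a\theta^Tba\theta b$ around this $ba$ in $w'$. Its image is
\[
V(a\theta^Tba\theta b)=a\,\bigl(V(\theta^T)a\bigr)\,b\,a\,\bigl(V(\theta)a\bigr)\,b .
\]
By \Cref{lem:commutative_identities}, $V(\theta^T)a=aV(\theta)^T=(V(\theta)a)^T$. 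So $\Theta=V(\theta)a$ works.
\end{itemize}

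In all cases the outer letters $a$ on the left and $b$ on the right are genuine letters of $w$, since the whole pattern is the image of a factor of $w'$. This gives the indices $w_{j-r}=a$ and $w_{j+r+1}=b$ with $1\le j-r$ and $j+r+1\le k$. The main point needing care is the bookkeeping that every $ba$ in $R_1(w')$ arises at a boundary of images in one of the two described ways. In particular, in the $U$-case the $a$ of $ba$ is never the first letter of $w'$, and in the $V$-case the $b$ is never the last letter of $w'$. Both follow because a $ba$ needs a letter on each side.
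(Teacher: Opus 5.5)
Your proof is correct and follows the paper's argument. Both use induction on the number of inner renormalization steps, both locate every new factor $ba$ at $U(a|a)$, $U(b|a)$, $V(b|b)$ or $V(b|a)$, and both transport the symmetric pattern through \Cref{lem:commutative_identities} with $\Theta=bU(\theta)$ or $\Theta=V(\theta)a$. The only differences are cosmetic: you peel off the outermost operator where the paper appends one, and you spell out the boundary bookkeeping (no $ba$ inside a single image, outer letters genuinely in $w$) that the paper leaves implicit.
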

	
	\begin{proof}
		Recall that given any such $w$, there is a sequence of renormalization operators $R_1,\dots,R_n\in\{U,V\}$ such that $w=(R_n\dotsb R_1)(ab)$. The proof of the lemma is by induction on $n$. 
		
		Since the base case $w=ab$ is trivial, suppose that $\tilde{w}=U(w)$. In this case, any factor $ba$ of $\tilde{w}$ comes from either $U(a|a)=ab|ab$ or $U(b|a)=b|ab$. In the second case, by hypothesis $a\theta^Tb|a\theta b$ is a subword of $w$, so $U(a\theta^Tb|a\theta b)=abU(\theta^T)b|abU(\theta)b=aU(\theta)^Tbb|abU(\theta)b=a\tilde{\theta}^Tb|a\tilde{\theta}b$ where $\tilde{\theta}=bU(\theta)$. 
		In case that $\tilde{w}=V(w)$, then any factor $ba$ of $\tilde{w}$ comes from either $V(b|b)=ab|ab$ or $V(b|a)=ab|a$. In the second case, by hypothesis $a\theta^Tb|a\theta b$ is a subword of $w$, so $V(a\theta^Tb|a\theta b)=aV(\theta^T)ab|aV(\theta)ab=aaV(\theta)^Tb|aV(\theta)ab=a\tilde{\theta}^Tb|a\tilde{\theta}b$ where $\tilde{\theta}=V(\theta)a$.
	\end{proof}
	
	The conclusion of the following lemma is that the right infinite words $\lim_{n\to\infty}\beta_n^T$ and $((\alpha\beta)^T)^\infty$ are upper balanced while $\lim_{n\to\infty}\alpha_n$ and $(\alpha\beta)^\infty$ are lower balanced.
	
	\begin{lemma}\label{lem:safe_cuts}
		Let $\overline{R}_1,\overline{R}_2,\dots$ be a sequence of renormalization operators $\overline{U},\overline{V}$ such that both the operators $\overline{U},\overline{V}$ appears infinitely many times and let $(\alpha_0,\beta_0)=(a,b)$ and $(\alpha_{n+1},\beta_{n+1})=\overline{R}_{n+1}(\alpha_{n},\beta_{n})\in\overline{P}_{n+1}$ for $n\geq 0$.
		All the cuts of the left infinite words  $\lim_{n\to\infty}\beta_n$, $(\alpha\beta)^\infty$ and the cuts of the right infinite words $\lim_{n\to\infty}\alpha_n$, $(\alpha\beta)^\infty$ are either good or indeterminate. Moreover, if they are indeterminate, then they are of the form $\theta^Ta|b\theta$, with $\theta$ of even length, for some suffix or prefix, respectively, or they are of form $2|2a\dots$ in a right infinite word.
	\end{lemma}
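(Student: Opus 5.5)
The plan is to reduce everything to cuts inside finite Christoffel words $\alpha\beta$ and then pass to limits.

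\emph{Step 1: the finite word $w=\alpha\beta$.} By \Cref{lem:symmetry2}, $\lim_n\alpha_n$ is the right limit of the words $\alpha_n\beta_n=\beta_{n,a}\alpha_n^b$, and $\lim_n\beta_n$ is the left limit of the same words, because both operators occur infinitely often. Similarly, $(\alpha\beta)^\infty$ in either direction is a limit of powers $(\alpha\beta)^k$, and $(\alpha\beta)^k$ is again a Christoffel word of the form $\alpha'\beta'$ after applying $\overline U$ or $\overline V$ repeatedly. So it suffices to understand a cut $w_j|w_{j+1}$ of a long Christoffel word $w$, as seen from a suffix (for left infinite words) or a prefix (for right infinite words).

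\emph{Step 2: classify the cuts.} The two cut types are $b|a$ and $a|b$.
\begin{itemize}
\item For a cut $b|a$, \Cref{lem:ba_good_cuts} gives a factor $a\theta^Tb|a\theta b$. By \Cref{prop_lexicographic_comparison} this factor is good, so any extension of it is good.
\item For a cut $a|b$, compare the two sides letter by letter: $w_{j-i}$ against $w_{j+1+i}$. If they first differ at some $i$ inside $w$, the resulting pattern is $b\theta^Ta|b\theta a$ or $a\theta^Ta|b\theta b$. The second is bad, and we must rule it out. This follows from balancedness of $w$, which follows from \Cref{lem:lemma31gurwood} applied inversely: a factor $a\theta^Ta$ versus $b\theta b$ has $b$-counts differing by $2$, contradicting the Sturmian (floor-formula) description of \Cref{charcterization_xi}. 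So a difference inside $w$ always gives the good pattern $b\theta^Ta|b\theta a$.
\item If no difference occurs before the cut reaches the end of the suffix or prefix, the cut is indeterminate and has the form $\theta^Ta|b\theta$ with $\theta$ running to the boundary.
\end{itemize}

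\emph{Step 3: parity and the extra $2|2a$ case.} The claim that $\theta$ has even length (as a word over $\{1,2\}$) is automatic, since every letter $a,b$ has length $2$. The remaining issue is cuts that fall in the middle of a letter. Inside $b=11$ the value is below $3$, trivially good. Inside $a=22$, on the left infinite words or in the interior of the right infinite words, one bounds $2+[0;2,\dots]+[0;\dots]$ below $3$ unless the cut is at the very beginning of the right word. That gives the exceptional form $2|2a\dots$ for right infinite words.

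\emph{Step 4: pass to the limits.} For a fixed cut in the limit word, take $n$ large. The windows $w_{j-i}\dots w_{j+1+i}$ then stabilize, so the classification of Step 2 transfers to the limit. A cut that is indeterminate in every approximant stays indeterminate, with $\theta$ a full suffix or prefix.

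\textbf{Main obstacle.} The hardest step is excluding the bad pattern $a\theta^Ta|b\theta b$ inside Christoffel words. I would first try an induction on the renormalization depth mirroring the proof of \Cref{lem:ba_good_cuts}: track how $U$ and $V$ transform $a|b$ cuts, using \Cref{lem:commutative_identities} to push the palindromic structure through. If that is messy, I would fall back on the balancedness argument of Step 2.
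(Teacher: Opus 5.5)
Your proof is correct in substance and has the same skeleton as the paper's. You isolate the few cut patterns that are not trivially good and settle $b|a$ cuts with \Cref{lem:ba_good_cuts}. For $a|b$ cuts you compare the two sides letter by letter, so the only thing to exclude is the bad pattern $a\theta^Ta|b\theta b$. What survives is a comparison that runs off the finite side, i.e.\ $\theta^Ta|b\theta$ with $\theta^Tab\theta$ a suffix or prefix.

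Where you differ is in how $a\theta^Ta|b\theta b$ is excluded. The paper notes that such a factor would sit inside a Christoffel word $\beta_n$. It then invokes the fact that $(\beta_n)^\infty$ has no bad cuts, i.e.\ that periodic Christoffel orbits have Markov value below $3$, which comes from Bombieri's renormalization theory (\Cref{prop_renorm}). You instead use that Christoffel words are balanced, being periods of the mechanical words $\lfloor n\xi\rfloor-\lfloor (n-1)\xi\rfloor$ with $\xi$ rational (\Cref{charcterization_xi}), together with the count $|b\theta b|_b-|a\theta^Ta|_b=2$. Your route is purely combinatorial and needs no Diophantine input, but it relies on identifying the tree words $P$ with the floor-formula Christoffel words. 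The paper's route stays inside its renormalization framework. Your Steps 1 and 4 also make explicit the localization to finite prefixes/suffixes and the passage to the limit, which the paper leaves implicit.

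Three slips should be fixed.
\begin{enumerate}
\item $(\alpha\beta)^k$ with $k\geq 2$ is never of the form $\alpha'\beta'$, since Christoffel words are primitive. Use instead that $(\alpha\beta)^k$ is a prefix of $(\alpha\beta)^k\beta$ and a suffix of $\alpha(\alpha\beta)^k$. These are the products of the pairs $\overline{V}^{k-1}\overline{U}(\alpha,\beta)$ and $\overline{U}^{k-1}\overline{V}(\alpha,\beta)$. Every window you use (the factor from \Cref{lem:ba_good_cuts}, and the comparison window) has length at most about twice the finite side of the cut. So it lies in such a prefix or suffix, and likewise in the common prefix $\alpha_n$ of $\alpha_n\beta_n$ and $\lim\alpha_n$ (resp.\ common suffix $\beta_n$) for $n$ large.
\item \Cref{lem:lemma31gurwood} points the wrong way and is not needed. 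You only use the trivial fact that a balanced word contains no factor $x\theta^Txy\theta y$ with $\{x,y\}=\{a,b\}$.
\item In the paper's convention, $E^Ta|bF$ denotes the cut $E^T2|2bF$ in the middle of the letter $a$. So your Step 3 estimate $2+[0;2,\dots]+[0;\dots]<3$ holds only for a cut inside an $a$ followed by another $a$, where the last term is $[0;2,2,\dots]<\tfrac12$. For an $a$ followed by $b$ the last term is $[0;1,1,\dots]>\tfrac12$, and that cut is exactly the $a|b$ cut of Step 2, not a trivially good one. The trivially good cuts are those between letters ($22|22$, $11|11$, $22|11$) and those inside $b$. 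You should list them, since your Step 2 names only $b|a$ and $a|b$.
\end{enumerate}
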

	\begin{proof}
		First notice that the possible patterns of indeterminate cut is of form $2|2a\dots$, $\dots b|a\dots$, or $\dots a|b\dots$.
		The first cut only occur in a right infinite word beginning with $aa$, and the second is ruled out by \Cref{lem:ba_good_cuts}.
		
		For the cuts $\dots a|b\dots$, since it can not extend to $a\theta^T a|b\theta b$ because otherwise that subword would be contained in some $\beta_n$ (which contradicts the fact that the bi-infinite periodic sequence $(\beta_n)^\infty$ has no bad cuts), we have that the cut $\dots a|b\dots$ either it extends to $b\theta^T a|b\theta a$ (which is good) or it extends to $\theta^T a|b\theta$ being also a suffix of $\lim_{n\to\infty}\beta_n$ or prefix $\lim_{n\to\infty}\alpha_n$.
	\end{proof}

	\begin{lemma}\label{lem:extend_alphabeta}
		Let $(\alpha,\beta)\in\overline{P}_n$ for some $n\geq 1$.
		\begin{enumerate}
			\item\label{item_alphabetaright} If $(\alpha,\beta)\neq (ab^{n},b)$, then there are finite words $\eta,\theta,\theta^\prime$ such that $\eta\neq\varnothing$, $\theta$ is a prefix of $\theta^\prime$, and that $\alpha\beta$ has a factorization $\eta\theta^T a|b\theta$.
			Moreover, the exactly same cut can be extended to some subword $(\theta^\prime)^Ta|b\theta^\prime$ inside $\alpha\beta\alpha$.  
			\item\label{item_alphabetaleft} If $(\alpha,\beta)\neq (a,a^{n}b)$, then there are finite words $\eta,\theta,\theta^\prime$ such that $\eta\neq\varnothing$, $\theta$ is a prefix of $\theta^\prime$, and that $\alpha\beta$ has a factorization $\theta^T a|b\theta\eta$, 
			Moreover, the exactly same cut can be extended to some subword $(\theta^\prime)^Ta|b\theta^\prime$ inside $\beta\alpha\beta$.  
		\end{enumerate}
		Moreover, if $(\alpha,\beta)$ is obtained thorough a sequence of renormalization operators $R_1,R_2,\dots$, $R_i\in\{U,V\}$, such that both $U$ and $V$ appear infinitely many times, then $|\theta^\prime|-|\theta|$ goes to infinity as $n\to\infty$.
	\end{lemma}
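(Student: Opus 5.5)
Induction on the depth $n$ of $(\alpha,\beta)$ in $\overline{T}$, using \Cref{relbtrenorm} and \Cref{lem:symmetry1}(\ref{item2}) to write $\alpha=W(a)$, $\beta=W(b)$ with $W=R_1\cdots R_n$. A cut $a|b$ then pulls back through the last applied substitution to a cut of the previous pair, and the palindromic neighbourhoods are transported by the commutation identities of \Cref{lem:commutative_identities}. This mirrors the proof of \Cref{lem:ba_good_cuts}. I treat item (\ref{item_alphabetaright}); item (\ref{item_alphabetaleft}) is symmetric under exchanging $a\leftrightarrow b$, reversing words and swapping the roles of $U,V$. The excluded pairs $(ab^n,b)$ and $(a,a^nb)$ are exactly those where $\alpha\beta$ has a unique $ab$ factor sitting at the wrong end.

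\textbf{Base case and transport.} For the base I take the first pairs that are not excluded, e.g. $(a,ab)$ with $\alpha\beta=aab=\eta\,a|b$ ($\eta=a$, $\theta=\varnothing$). Inside $\alpha\beta\alpha=aaba$ the same cut extends to $a\,a|b\,a$? No: $\theta'$ must satisfy $(\theta')^T a|b\theta'$, so for item (\ref{item_alphabetaright}) I pick the cut so that the letters after $b$ in $\alpha$ mirror those before $a$; I check these small cases by hand. For the inductive step, suppose $\alpha\beta=\eta\theta^Ta|b\theta$ and the cut extends to $(\theta')^Ta|b\theta'$ inside $\alpha\beta\alpha$.
\begin{itemize}
\item Under $U$, $U(a|b)=ab|b$, so the cut is the $a|b$ inside $U(a)$. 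The image of $(\theta')^Ta|b\theta'$ is $U(\theta')^T\,a|b\,bU(\theta')$ after using $bU(w^T)=U(w)^Tb$. Setting $\tilde\theta'=bU(\theta')$ and analogously $\tilde\theta=bU(\theta)$ (possibly truncated, since $\alpha\beta$ ends in $b$), the images of $\eta$ and of the trailing $\alpha$ supply the required containment in $\tilde\alpha\tilde\beta\tilde\alpha$.
\item Under $V$, one uses $V(w^T)a=aV(w)^T$ in the same way, with $\tilde\theta=V(\theta)a$.
\end{itemize}
Nonemptiness of $\eta$ is preserved because the images of nonempty words are nonempty. The prefix relation $\theta\subseteq\theta'$ is preserved because $U$, $V$ preserve prefixes, and the prefixing of $b$ (resp. appending of $a$) is uniform in both.

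\textbf{Main obstacle.} The hardest point is the boundary effect: the suffix $\theta$ must end exactly at the end of $\alpha\beta$, while the image of a suffix is not exactly a suffix of the image. Because $\beta$ ends with $b$ (\Cref{lem:symmetry2}), one must adjust $\tilde\theta$ by the last letter. Relatedly, one must check that passing to a child does not land in the excluded family. When it does (e.g. $V$ applied to $(ab^k,b)$), the cut must be chosen fresh rather than transported, so I would handle the transition out of the excluded family as a separate explicit computation.

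\textbf{Growth of $|\theta'|-|\theta|$.} Track lengths: each $U$ or $V$ at least preserves $|\theta'|-|\theta|$, since the difference word is mapped to its image, whose length does not decrease. A $V$ following a $U$, or vice versa, strictly increases it, because the difference word then contains both letters and its image is longer. Since both operators occur infinitely often, there are infinitely many alternations, so the difference tends to infinity. Equivalently, $\theta'$ is controlled by the common extension in $\alpha\beta\alpha$, which contains the long palindrome of \Cref{lem:symmetry1}(\ref{item4}), while $\theta$ is cut off by the end of $\alpha\beta$.
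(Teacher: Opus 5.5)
Your approach is the same as the paper's: induct by applying an outer substitution $U$ or $V$, transport the cut with the commutation identities, and start where one leaves the excluded family. However, your argument for $|\theta'|-|\theta|\to\infty$ has a genuine gap. Write $\theta'=\theta\delta$. Your growth argument treats $\delta$ as simply mapped to $U(\delta)$ or $V(\delta)$, because you take the prefixed $b$ and the appended $a$ to be ``uniform in both''. You do remark that $\tilde\theta$ may need a last-letter adjustment, but you never use it. Under $V$ the update is not uniform, and the asymmetry is exactly the source of growth. Since $V(\eta\theta^Tab\theta)=V(\eta)a\,V(\theta)^Ta|b\,V(\theta)$, the word $\theta$, pinned to the end of $\alpha\beta$, must become $V(\theta)$. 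By contrast $\theta'$, which runs into the trailing $\alpha$ of $\alpha\beta\alpha$, becomes $V(\theta')a$.

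With uniform rules an empty $\delta$ stays empty forever, and empty $\delta$ really occurs. For the pairs $(a(ab)^k,ab)=V(ab^k,b)$ with $k\ge 1$, the only factorization has $\theta=\varnothing$. For example, for $(aab,ab)$ the cut $aaba|b\,aab$ inside $\alpha\beta\alpha$ cannot be extended, so $\theta'=\varnothing$, and later $U$-steps keep $\delta=\varnothing$. So your claim that after an alternation ``the difference word contains both letters'' is unfounded: at that moment it may contain no letters at all. What is needed is the paper's mechanism. $U$-steps never decrease $|\delta|$. Each $V$-step adds at least the extra $a$, provided $\theta'$ does not reach the end of $\alpha\beta\alpha$. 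If it does reach the end, then $\theta'=\theta\alpha$, so $|\theta'|-|\theta|=|\alpha|$, which grows.

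The transport step also omits the boundary cases for $\theta'$. Your ``main obstacle'' paragraph puts the boundary problem in the wrong place: the image of a suffix under a morphism is a suffix.

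Under $U$, the image of $(\theta')^Ta|b\theta'$ is $U((\theta')^T)a|b\,bU(\theta')$, not $U(\theta')^Ta|b\,bU(\theta')$. Rewriting it as $(\tilde\theta')^Ta|b\tilde\theta'$ with $\tilde\theta'=bU(\theta')$ needs a letter before $(\theta')^T$ inside $\alpha\beta\alpha$. If $(\theta')^T$ starts at the very beginning of $\alpha\beta\alpha$, you must take $\tilde\theta'=b\,U(\theta')^{-}$. You then have to check that $bU(\theta)$ is still a prefix of it and that $|\delta|$ does not drop. This uses $\eta\neq\varnothing$, which forces $\theta\ne\theta'$, together with the fact that $\theta'$ then ends in $a$.

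Symmetrically, under $V$ you must take $\tilde\theta'=V(\theta')$ when $\theta'$ reaches the end of $\alpha\beta\alpha$. For $\theta$ itself under $U$, the $b$ needed before $U(\theta^T)$ comes from $U(\eta)=\tilde\eta b$. Here $\tilde\eta\ne\varnothing$ because $\eta$ begins with $a$, not merely because images of nonempty words are nonempty.

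Finally, the base case is easier than you feared. For $(a,ab)$ the extension $aa|ba$ with $\theta'=a$ is valid, so your ``No'' is mistaken. In general, $\eta=a(ab)^k$ and $\theta=\theta'=\varnothing$ for $(a(ab)^k,ab)$ handle every exit from the excluded family at once.
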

	
	\begin{proof}
		
		We only prove \Cref{item_alphabetaright} since the another is analogous.
		
		First notice that by \Cref{relbtrenorm}, if $(\alpha_n,\beta_n)\neq (ab^{n},b)$ then there exist some $0\leq k<n$ so that we can write $(\alpha_1,\beta_1)=(a,ab)$ or $(\alpha_n,\beta_n)=(R_n\dots R_{k+2})(a(ab)^{k},ab)$ where $R_i\in\{U,V\}$.
		We use induction on $n>k$.
		First for the base cases $(a(ab)^k,ab)$, it is enough to take $\eta=a(ab)^k,\theta=\theta^\prime=\varnothing$.
		Now suppose the claim holds for $\alpha_n\beta_n=\eta_n\theta_n^T ab\theta_n$ for some $n\geq k+1$, we prove for the case $\alpha_{n+1}\beta_{n+1}=R_{n+1}(\alpha_n\beta_n)$.
		
		If $R_{n+1}=U$, since $\eta_n\neq\varnothing$, we let $\eta_{n+1}b=U(\eta_n)$ and $\theta_{n+1}=bU(\theta_n)$. Therefore
		\begin{equation*}
			\alpha_{n+1}\beta_{n+1}=\eta_{n+1}bU(\theta_n^T)a|bbU(\theta_n)=\eta_{n+1}U(\theta_n)^Tba|bbU(\theta_n)=\eta_{n+1}\theta_{n+1}^Ta|b\theta_{n+1},
		\end{equation*}
		and then take $\theta_{n+1}^\prime=bU(\theta_{n}^\prime)$ if $(\theta_{n}^\prime)^T$ is not a prefix of $\alpha_{n}$ and $\theta_{n+1}^\prime=b(U(\theta_{n}^\prime))^{-}$ otherwise.    	
		Notice that if $(\theta_{n}^\prime)^T$ is prefix of $\alpha_{n}$, then $\theta_{n}^\prime$ ends with $a$ and since $\eta_n\neq\varnothing$ we have that $\theta_{n}$ is a strict prefix of $\theta_{n}^\prime$, so indeed $bU(\theta_{n})$ is still a strict prefix of $b(U(\theta_{n}^\prime))^{-}$. 
		
		If $R_{n+1}=V$, let $\eta_{n+1}=V(\eta_n)a$, $\theta_{n+1}=V(\theta_n)$. We have
		\begin{equation*}
			\alpha_{n+1}\beta_{n+1}=V(\eta_n)V(\theta_n^T)aa|bV(\theta_n)=V(\eta_{n})aV(\theta_n)^Ta|bV(\theta_n)=\eta_{n+1}\theta_{n+1}^Ta|b\theta_{n+1},
		\end{equation*}
		and then take $\theta_{m+1}^\prime=V(\theta_{n}^\prime)a$ if $\theta_{n}^\prime$ is not suffix of $\alpha_{n}$ or $\theta_{n+1}^\prime=V(\theta_{n}^\prime)$ otherwise.
		
		Finally, notice that $|\theta_{n+1}^\prime|-|\theta_{n+1}|$ will increase each time we apply $V$ and $\theta_{n}^\prime$ is not a suffix of $\alpha_{n}$. Otherwise if $\theta_{n}^\prime$ is a suffix of $\alpha_n$, then we actually have $\theta_{n}^\prime=\theta_{n}\alpha_{n}$ so $|\theta_{n}^\prime|-|\theta_{n}|$ is also growing.
	\end{proof}
	
	\begin{remark}
		The assumption on $(\alpha,\beta)$ is to guarantee the existence of such factorization.
		For example, if $(\alpha,\beta)=(ab^n,b)$ then $\alpha\beta=ab^{n+1}$ can not be written in form $\eta\theta^Ta|b\theta$.
	\end{remark}
	
	\begin{lemma}\label{lem:only_one_indeterminate_cut}
		Let $(\alpha,\beta)\in\overline{P}_n$. 
		\begin{enumerate}
			\item The left infinite words $\alpha^\infty\beta$ and $(\alpha\beta)^\infty$ contain exactly one indeterminate cut $R\theta^Ta|b\theta$. This cut is located inside the suffix $\alpha\beta$. \label{item:left_infinite}
			\item The right infinite words $\alpha\beta^\infty$ and $(\alpha\beta)^\infty$ contain exactly one indeterminate cut $\theta^Ta|b\theta R$ if $ab$ is a prefix of $\alpha\beta$, and exactly the two indeterminate cuts $\theta^Ta|b\theta R$, $2|2a\dots$ if $aa$ is a prefix of $\alpha\beta$. All these cuts are located inside the prefix $\alpha\beta$. \label{item:right_infinite}
		\end{enumerate}
	\end{lemma}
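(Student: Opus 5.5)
We will combine \Cref{lem:safe_cuts}, which restricts the kinds of indeterminate cut that can occur, with \Cref{lem:extend_alphabeta}, which supplies such cuts, and with \Cref{lem:ba_good_cuts} and \Cref{prop_lexicographic_comparison}, which settle all other cuts.

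\textbf{Item (\ref{item:left_infinite}).} The words are left infinite, so a cut is determined only if its extension to the left forces the comparison; for a cut inside the infinite periodic part, the left side is long enough to determine it. First we reduce to the suffix. Take any cut of $\alpha^\infty\beta$ (resp.\ $(\alpha\beta)^\infty$) whose right side has length greater than $|\alpha\beta|$. It lies inside a factor of the bi-infinite word $(\alpha\beta)^\infty$, or inside a subword of some $\beta_m$ for a longer alphabet; by Markov/\Cref{prop_renorm} these have no bad cuts. By \Cref{lem:safe_cuts}, the only possible indeterminate cuts are of the form $\theta^Ta|b\theta$ with $b\theta$ a full suffix. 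A cut $\dots b|a\dots$ is good by \Cref{lem:ba_good_cuts}: it extends to $a\theta^Tb|a\theta b$ inside a conjugate period, and since the left side is infinite that extension is available. So every indeterminate cut has the form $\theta^Ta|b\theta$ with $b\theta$ a suffix of the word.

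\textbf{Existence and location.} Existence comes from \Cref{lem:extend_alphabeta}(\ref{item_alphabetaright}), which writes $\alpha\beta=\eta\theta^Ta|b\theta$ with $\eta\neq\varnothing$. Hence the cut sits inside the suffix $\alpha\beta$, and the part to its left continues as $\dots\alpha\eta\theta^Ta$. The cut is not good: the comparison $E\preccurlyeq F$ of \Cref{prop_lexicographic_comparison} needs letters past the end of $\theta$ on the right, which are absent. It is not bad either: the second half of \Cref{lem:extend_alphabeta} extends it to $(\theta')^Ta|b\theta'$, and that extension is good once the left side is appended. The exceptional pairs $(ab^n,b)$ and $(a,a^nb)$ are checked by hand.

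\textbf{Uniqueness.} Suppose there were two cuts $\theta_1^Ta|b\theta_1$ and $\theta_2^Ta|b\theta_2$ with $b\theta_1$ and $b\theta_2$ both suffixes, and $|\theta_1|<|\theta_2|$. Then $b\theta_1$ is a suffix of $\theta_2$, and palindromicity (\Cref{lem:symmetry1}, \Cref{lem:symmetry2}) transfers it into $\theta_2^T$. This produces a factor $x\omega^Tx|y\omega y$ with $\{x,y\}=\{a,b\}$ inside the periodic word, which is a bad cut and contradicts Markov's theorem. Equivalently, the lengths would force a nontrivial period incompatible with primitivity of the Christoffel word $\alpha\beta$.

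\textbf{Item (\ref{item:right_infinite}).} This follows by transposition, using $\alpha^b_a=\alpha^T$, $\beta^b_a=\beta^T$ and \eqref{eq:infinite_identities}, together with part (\ref{item_alphabetaleft}) of \Cref{lem:extend_alphabeta}. The extra cut $2|2a\dots$ appears exactly when the word begins with $aa$, as noted in \Cref{lem:safe_cuts}. It is indeterminate because $[0;2]+[2;2,a,\dots]$ straddles $3$ depending on the left continuation.

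\textbf{Main obstacle.} I expect the uniqueness step to be hardest, specifically producing the contradiction from two overlapping palindromic suffix structures. My first approach is induction along $U,V$ as in \Cref{lem:extend_alphabeta}. Each application of $U$ or $V$ maps an indeterminate cut bijectively to an indeterminate cut, using the identities of \Cref{lem:commutative_identities}, and the base case $ab$ is immediate.
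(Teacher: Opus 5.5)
\textbf{The gap is the uniqueness step, and it cannot be closed, because the uniqueness claim is false as stated.} Your existence argument via \Cref{lem:extend_alphabeta} is fine.

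The uniqueness argument has no mechanism behind it. The words $\theta_1,\theta_2$ attached to indeterminate cuts need not be palindromes (\Cref{lem:symmetry1} concerns the central word of $\alpha\beta$), so nothing ``transfers'' $b\theta_1$ into $\theta_2^T$. Two indeterminate suffix cuts also do not force a bad cut: $\lim_n\beta_n$ has infinitely many of them and no bad cuts (\Cref{lem:safe_cuts}, \Cref{lem:converge_alpha_beta}).

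Periodicity does not rescue the claim. Take $(\alpha,\beta)=(aab,ab)=\overline{U}\,\overline{V}(a,b)\in\overline{P}_2$, so $(\alpha\beta)^\infty=\cdots aabab\,aabab$.
\begin{itemize}
\item This word ends with $ab$ and also with $ba\,a\,b\,ab$. That gives two cuts inside the last $\alpha\beta$: $\cdots aaba|b$ ($\theta=\varnothing$) and $\cdots baa|bab$ ($\theta=ab$).
\item By \Cref{prop_lexicographic_comparison}, a cut $X^Ta|bY$ is good iff $Y\preccurlyeq X$, with $a\prec b$. Here $X=baab\cdots$ and $X=ababaa\cdots$ respectively, and the right side ($\varnothing$, resp.\ $ab$) is a prefix of $X$.
\item Appending $bb\cdots$ makes each cut bad, and appending $a^\infty$ makes each good. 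With $b^\infty$ appended the values are about $3.005$ and $3.0001$, even though the bi-infinite word $(aabab)^\infty$ has Markov value $<3$.
\item The same two cuts occur in $\alpha^\infty\beta=\cdots aab\,aab\,ab$.
\end{itemize}
Dually, take $(ab,abb)=\overline{V}\,\overline{U}(a,b)$. The right infinite word $(ababb)^\infty$ begins with $ab$, yet it has two indeterminate cuts: $2|2\,babb\cdots$ ($\theta=\varnothing$) and $ab\,a|b\,ba\cdots$ ($\theta=ba$). This contradicts item (2).

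\textbf{Your fallback induction fails at the same place.} The operators $U,V$ do not map indeterminate cuts bijectively onto indeterminate cuts.
\begin{itemize}
\item Suppose the left infinite word ends with $bb$, as $(abb)^\infty$ does for $(ab,b)$. Then $V$ turns the terminal $bb$ into $ab\,a|b$. This cut has empty right side, so it is a new indeterminate cut.
\item It appears in addition to the image $aV(\theta)^Ta|bV(\theta)$ of the old indeterminate cut. This is exactly how $(aabab)^\infty=V((abb)^\infty)$ acquires its second cut.
\item The paper's proof is this same induction and has the same hole. It dismisses the cuts $V(b)a|b=aba|b$ on the grounds that the next letter is $a$, which fails at the right end of the word.
\item Dually, $U$ applied to a right infinite word beginning with $aa$ creates an extra cut $2|2b\cdots$ at the left end.
\end{itemize}
What the induction actually shows is weaker. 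The inherited cut stays indeterminate, and each such boundary step adds one further indeterminate cut, which later steps propagate. So ``exactly one'' must be weakened accordingly. Its uses in \Cref{lem:periodic_good} and in the ``unique factorization'' of \Cref{thm:full_characterization} then have to account for all these cuts, or show that the extra conditions are implied by the inherited one.
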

	\begin{proof}
		Notice that by \Cref{lem:safe_cuts} all cuts of such infinite words are either good or indeterminate, and the latter only occurs when they are suffixes and prefixes, accordingly. Moreover, those indeterminate cuts are all of the form $\theta^Ta|b\theta$, except for the right infinite word beginning with $aa$.
		We only prove \Cref{item:left_infinite} since the another case is analogous.
		
		We can assume that $(\alpha,\beta)=(\alpha_n,\beta_n)\in\overline{P}_n$ for some $n\geq 1$, so by \Cref{relbtrenorm} there are renormalization operators $R_1,\dots,R_n\in\{U,V\}$ such that $(\alpha_n,\beta_n)=(R_n\dots R_1)(a,b)$. 
		We will prove the claim by induction on $n$. Clearly the statement hold for $(ab)^\infty$ and $a^\infty b$. Consider first the situation when $R_{n+1}=U$. Notice that the new cuts $a|b$  come from $U(a)=ab$. Since any word ends with $b$ after applying $U$ and since $a|bU(a)=a|bab$, we see that any indeterminate cut in the new sequence comes from $U(ab)=a|bb$. 
		If the $ab$ is not in the last $\alpha_n\beta_n$, then by the induction hypothesis extends to a good cut $b\theta^Ta|b\theta a$, so it becomes $bU(b\theta^Ta|b\theta a)=bbU(\theta^T)a|bbU(\theta)ab=bU(\theta)^Tba|bbU(\theta)ab$ which is good. 
		In the case that this $ab$ is located in the last $\alpha_n\beta_n$ and is indeterminate, it produces the only new indeterminate cut  $bU(\theta^T)a|bbU(\theta)=U(\theta)^Tba|bbU(\theta)=\tilde{\theta}^Ta|b\tilde{\theta}$ where $\tilde{\theta}=bU(\theta)$. Similarly, if $R_{n+1}=V$, then the new cuts come from $V(b)=a|b$. Since any word begins with $a$ after applying $U$ and since $V(b)a|b=aba|b$, we see that any indeterminate cut in the new sequence comes from $V(ab)=aa|b$. If the $ab$ is not in the last $\alpha_n\beta_n$, then by the induction hypothesis extends to a good cut $b\theta^Ta|b\theta a$ an since $\alpha_n\beta_n$ does not ends with $a$, it becomes $V(b\theta^Ta|b\theta a)a=abV(\theta^T)aa|bV(\theta)aa=abaV(\theta)^Ta|bV(\theta)aa$ which is good. 
		In the case that this $ab$ is located in the last $\alpha_n\beta_n$ and is indeterminate, it produces the only new indeterminate cut  $V(\theta^T)aa|bV(\theta)=aV(\theta)^Ta|bV(\theta)=a\tilde{\theta}^Ta|b\tilde{\theta}$ where $\tilde{\theta}=V(\theta)$.
	\end{proof}
	
	\begin{lemma}\label{lem:converge_alpha_beta}
		Let $\overline{R}_1,\overline{R}_2,\dots$ be a sequence of renormalization operators $\overline{U},\overline{V}$ such that both the operators $\overline{U},\overline{V}$ appears infinitely many times and let $(\alpha_0,\beta_0)=(a,b)$ and $(\alpha_{n+1},\beta_{n+1})=\overline{R}_{n+1}(\alpha_{n},\beta_{n})\in\overline{P}_{n+1}$ for $n\geq 0$.
		Notice that in this case $\lim_{n\to\infty}\alpha_n$ is a right infinite word and $\lim_{n\to\infty}\beta_n$ is a left infinite word.
		\begin{enumerate}
			\item 
			$\lim_{n\to\infty}\beta_n$ contains infinitely many indeterminate cuts in the suffix $\eta_n\theta_n^Ta|b\theta_n$, where $\theta_n$, $\eta_n$ are some finite words such that $\lim_{n\to\infty}\eta_n^T=\lim_{n\to\infty}\alpha_n$. \label{item:convergealpha}
			\item $\lim_{n\to\infty}\alpha_n$ contains infinitely many indeterminate cuts in the prefix $\theta_n^Ta|b\theta_n\eta_n$, where $\theta_n$, $\eta_n$ are some finite words such that $\lim_{n\to\infty}\eta_n^T=\lim_{n\to\infty}\beta_n$. \label{item:convergebeta}
		\end{enumerate}
	\end{lemma}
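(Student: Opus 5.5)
We obtain the indeterminate cuts of the limit words by passing to the limit in the finite statements already proved. For every $n$, \Cref{lem:only_one_indeterminate_cut} locates the unique indeterminate cut of the left infinite word $\alpha_n^\infty\beta_n$ (equivalently of $(\alpha_n\beta_n)^\infty$) inside its last block $\alpha_n\beta_n$. \Cref{lem:extend_alphabeta} gives this cut explicitly as a factorization
\begin{equation*}
\alpha_n\beta_n=\eta_n\theta_n^Ta|b\theta_n ,
\end{equation*}
with $\eta_n\neq\varnothing$. It also shows that the same cut extends to a symmetric word $(\theta_n^\prime)^Ta|b\theta_n^\prime$ inside $\alpha_n\beta_n\alpha_n$, where $|\theta_n^\prime|-|\theta_n|\to\infty$ because both $\overline{U}$ and $\overline{V}$ occur infinitely often. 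The exceptional pairs $(ab^n,b)$ and $(a,a^nb)$ occur only finitely often along such a sequence, so we may discard them.

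For item (\ref{item:convergealpha}), the word $\beta_m$ is a suffix of $\beta_{m+1}$ for all $m$ (either $\beta_{m+1}=\beta_m$ or $\beta_{m+1}=\alpha_m\beta_m$), which is why $\lim\beta_n$ exists as a left infinite word. We then show that $\alpha_n\beta_n$ is a suffix of $\beta_N$ for $N$ large. If $R_{n+1}=\overline{V}$ this holds with $N=n+1$. If $R_{n+1}=\overline{U}$, we wait for the next $\overline{V}$: since $\overline{U}$ keeps $\beta$ fixed, the first subsequent $\overline{V}$ gives $\beta_N=\alpha_{N-1}\beta_n$, and $\alpha_{N-1}$ ends with $\alpha_n\beta_n$ (more precisely with $\beta_n$, and in the $\overline{U}$ chain with $\alpha_n\beta_n$). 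Hence the cut $\eta_n\theta_n^Ta|b\theta_n$ is a cut of $\lim\beta_n$ lying in the suffix. It is indeterminate because, by \Cref{lem:safe_cuts}, it cannot extend to a good or bad cut before the right end is reached: the symmetric extension $\theta_n^Ta|b\theta_n$ reaches the end of the word. As $n\to\infty$ the $\theta_n$ have growing length, since $\theta_{n+1}=bU(\theta_n)$ or $V(\theta_n)$, so the cuts are distinct and there are infinitely many.

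It remains to identify $\lim\eta_n^T$. The recursion in the proof of \Cref{lem:extend_alphabeta} gives $\eta_{n+1}b=U(\eta_n)$ or $\eta_{n+1}=V(\eta_n)a$. Combining this with \Cref{lem:commutative_identities}, one checks that $\eta_n^T$ is a prefix of a word built from $\alpha_n$, for instance via $\alpha^b_a=\alpha^T$ in \Cref{lem:symmetry2}, so $\eta_n$ is essentially $\alpha_n^T$ up to boundary letters. The $\overline{V}$-steps strictly lengthen these common prefixes, which gives $\lim\eta_n^T=\lim\alpha_n$. Item (\ref{item:convergebeta}) follows by the symmetric argument, using part (\ref{item_alphabetaleft}) of \Cref{lem:extend_alphabeta}, the prefix $\alpha_n\beta_n$ of $\alpha_N$ for large $N$, and the identity $\beta^b_a=\beta^T$.

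The main obstacle is this identification of $\eta_n$ with a transposed $\alpha_n$ (respectively $\beta_n$) up to boundary letters. The plan is to prove by induction, in parallel with \Cref{lem:extend_alphabeta}, the exact relation $\eta_n=(\alpha_n^T)$ with its first and last letters adjusted as in \Cref{lem:symmetry2}, and then to verify that the adjustment stays bounded. This makes the convergence immediate.
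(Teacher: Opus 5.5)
\textbf{Main gap: identifying $\lim_{n\to\infty}\eta_n^T$.} Your proof has a genuine gap at exactly the step you flag as the main obstacle, and the planned induction would not close it.

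Suppose a relation of the form ``$\eta_n$ is $\alpha_n^T$ (equivalently, by \Cref{lem:symmetry2}, $\alpha_n$) up to boundary letters'' holds. It still does not give $\eta_n^T\to\lim_{n\to\infty}\alpha_n$. Since $\alpha_n\beta_n=\eta_n\theta_n^Tab\theta_n$, the word $\eta_n$ is a truncation on the right. Transposing turns that truncation into a shift at the front of $\eta_n^T$, which destroys prefix agreement.

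Concretely, take the path $\overline U,\overline V,\overline U,\overline V,\dots$. Then $(\alpha_5,\beta_5)=(ababbababbabb,ababbabb)$ and $\overline R_6=\overline V$. The factorization from \Cref{lem:extend_alphabeta} is $\theta_5=babb$, $\eta_5=ababbababba$. So $\eta_5$ is $\alpha_5$ minus its last two letters, yet $\eta_5^T=abbababbaba$ shares only a prefix of length two with $\alpha_5$. The number of deleted letters is $1,2,5$ for $n=3,5,7$, so it is not bounded either.

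The missing idea is the one you quoted and then did not use. Write $\theta_n'=\theta_n\xi_n$.
\begin{itemize}
\item On the right of the cut, $b\theta_n'$ runs past $\alpha_n\beta_n$ into the following $\alpha_n$, so $\xi_n$ is a prefix of $\alpha_n$.
\item On the left, $\xi_n^T\theta_n^Ta$ is a suffix of $\eta_n\theta_n^Ta$, so $\xi_n^T$ is a suffix of $\eta_n$, i.e.\ $\xi_n$ is a prefix of $\eta_n^T$.
\end{itemize}
Hence $\eta_n^T$ and $\lim_{n\to\infty}\alpha_n$ share a prefix of length $|\theta_n'|-|\theta_n|\to\infty$. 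This is precisely the paper's argument.

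\textbf{Two further steps are wrong as written.}

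First, the case $\overline R_{n+1}=\overline U$ fails. If $\overline U$ is applied $j\geq 1$ times before the next $\overline V$, then $\beta_N=\alpha_n\beta_n^{j+1}$. This does not end with $\alpha_n\beta_n$, because by \Cref{lem:symmetry2} the word $\alpha_n\beta_n^j$ ends with $\alpha_n^b\neq\alpha_n$. So the cut you want is followed by further letters and is no longer at the end of the word. Restrict instead, as the paper does, to the infinitely many $n$ with $\overline R_{n+1}=\overline V$; there $\beta_{n+1}=\alpha_n\beta_n$ is a suffix of $\lim_{n\to\infty}\beta_n$.

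Second, your argument that the cuts are distinct does not work. The recursion $\theta_{n+1}=bU(\theta_n)$ or $V(\theta_n)$ in the proof of \Cref{lem:extend_alphabeta} applies the new substitution outermost. By \Cref{relbtrenorm}, the step $\overline R_{n+1}$ along your path is an innermost substitution. So that recursion does not relate consecutive $\theta_n$ along the path; for instance, in the example above $\theta_3=b$ but $\theta_4=babb$.
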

	
	\begin{proof}
		We only prove \Cref{item:convergealpha}, since the another case is analogous.
		
		Since both the operators $\overline{U},\overline{V}$ appear infinitely many times, we can pick a subsequence $\{n_k\}_{k\geq1}$ such that $\overline{U}$ appear in the first $n_1$ renormalization operators and that $\overline{R}_{n_k+1}=\overline{V}$ for any $k\geq 1$.
		Then $\beta_{n_k+1}=\alpha_{n_k}\beta_{n_k}$ a suffix of $\lim_{n\to\infty}\beta_n$ satisfying Case $1$ of \Cref{lem:extend_alphabeta}.
		So $\beta_{n_k+1}=\eta_{n_k}\theta_{n_k}^Ta|b\theta_{n_k}$ and the same cut can be extended to some subword $(\theta_{n_k}^\prime)^Ta|b\theta_{n_k}^\prime$ inside $\beta_{n_k+1}\alpha_{n_k+1}=\alpha_{n_k}\beta_{n_k}\alpha_{n_k}$.
		Thus $\eta_{n_k}^T$ and $\alpha_{n_k}$ shares a common prefix of length $|\theta_{n_k}^\prime|-|\theta_{n_k}|$, which can be arbitrary large by \Cref{lem:extend_alphabeta}.
		So we have that $\lim_{k\to\infty}\eta_{n_k}^T=\lim_{n\to\infty}\alpha_n$.
	\end{proof}

	\section{Proof of \texorpdfstring{\Cref{thm:full_characterization}}{Theorem 1.2} and \texorpdfstring{\Cref{thm:equal3_is_tran}}{Theorem 1.2}}\label{sec:characterization}

	
	\begin{proof}[Proof of \Cref{thm:equal3_is_tran} assuming \Cref{thm:full_characterization}]
		Without loss of generality we can assume $x\in[0,1)$. By \Cref{thm:full_characterization}, we know there is $\underline{y}=(y_n)_{n\in\Z}\in m^{-1}(3)\cap l^{-1}(3)$ such that $[0;y_1,y_2,\dots]=[0;x_{N+1},x_{N+2},\dots]$. Since $y_1y_2\dots$ is a (one-sided) balanced sequence (see \cite{reutenauer}), after applying the substitution $\chi:a\mapsto 22,b\mapsto 11$ it will have linear complexity growth (in fact, it is a quasi-sturmian sequence and will have a complexity function equal to $n+3$ for all $n\geq 4$ by \cite[Theorem 3.1]{Heinis}). Since adding the finite word $x_1\dots x_N$ to the right will only increase the complexity by a constant factor of at most $N$, we conclude that $x_1\dots x_Nx_{N+1}x_{N+2}\dots$ has linear complexity growth.

		Finally, since algebraic real numbers of degree at least $3$ do not have linear complexity (see \cite[Theorem 1.1]{transcendence}) and quadratic ones have bounded complexity, we conclude that $x=[0;x_1,x_2,\dots]$ is transcendental.
	\end{proof}
	
	\begin{remark}
		In general if $\underline{x}\in\{1,2\}^\mathbb{Z}$ is such that $m(\underline{x})=\sup_{n\in\Z}\lambda_n(\underline{x})\leq 3$, then $[0;x_1,x_2,\dots]$ is always a quadratic irrational (if $\underline{x}$ is eventually periodic) or transcendental.
	\end{remark}

	\subsection{Proving that the construction works}
	\begin{lemma}\label{lem:periodic_good}
		Let $N\in\N$, $(\alpha,\beta)\in\overline{P}$ and $x=[x_0;x_1,x_2,\dots]$ an irrational satisfying any one of the following
		\begin{equation}
			\begin{aligned}
				x_{N+1}x_{N+2}x_{N+3}\ldots&=(\beta^T)^\infty, \qquad &x_{N+1}x_{N+2}x_{N+3}\ldots&=\beta^T(\alpha^T)^\infty,  \\ x_{N+1}x_{N+2}x_{N+3}\ldots&=2\alpha^{+}\alpha^\infty, \qquad
				&x_{N+1}x_{N+2}x_{N+3}\ldots&=2\alpha^{+}\beta^\infty.
			\end{aligned}
		\end{equation}
		If $N\geq 1$ and $x_{N+1}x_{N+2}\dots\neq a^\infty, b^\infty$, then let $x_{N+1}x_{N+2}\dots=\theta^Tb|a\theta R$ or $2x_{N+1}x_{N+2}\dots=\theta^Ta|b\theta R$ be the unique such factorization determined by \Cref{lem:only_one_indeterminate_cut}, where $\theta$ has even length.
		Moreover, suppose that $[0;x_N,\dots,x_1]<[0;R]$or $[0;x_N,\dots,x_1]<[0;2,R]$ whenever $x_{N+1}=1$ or $x_{N+1}=2$, respectively. Then $\lambda_n(x)<3$ for all $n\geq N+1$ and so the inequality $\left| x-\frac{p}{q} \right| < \frac{1}{3q^2}$ has at most $N$ solutions.
	\end{lemma}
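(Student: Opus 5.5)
Every cut at a position $n\geq N+1$ of $x$ is a cut of the tail $x_{N+1}x_{N+2}\dots$ (or of $2x_{N+1}\dots$ when $x_{N+1}=2$) with the finite prefix $x_N\dots x_1$ attached on the left. The plan is therefore to bound all these cuts using the structure of the tail together with the hypothesis on $[0;x_N,\dots,x_1]$.

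First we put the tails into the normal forms of \Cref{lem:safe_cuts} and \Cref{lem:only_one_indeterminate_cut}. By \eqref{eq:infinite_identities} and \Cref{lem:symmetry2}, $(\beta^T)^\infty$ and $\beta^T(\alpha^T)^\infty$ are transposes of the left infinite words $\beta^\infty$ and $\alpha^\infty\beta$. Similarly $2\alpha^+\alpha^\infty=\alpha^\infty$ and $2\alpha^+\beta^\infty=\alpha\beta^\infty$ once the leading $2$ is reread as the letter $a$ (so $2\alpha^{+}$ corresponds to $\alpha$ with its first $a=22$ halved). Rewriting $\beta$ or $\alpha$ as a product of $\alpha'\beta'$-blocks for the parent pair where needed, each tail becomes (up to transpose) one of the words $\alpha^\infty\beta$, $(\alpha\beta)^\infty$, $\alpha\beta^\infty$, treated in \Cref{lem:only_one_indeterminate_cut}. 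By \Cref{lem:safe_cuts} every cut of the tail is either good or indeterminate, i.e. its value is at most $3$ whatever is appended to the left over $\{a,b\}$. Good means strictly below $3$ for any extension, since equality would require a balanced bi-infinite pattern that the periodic/degenerate structure excludes. Also, by \Cref{lem:only_one_indeterminate_cut}, the only indeterminate cuts are the one of the form $\theta^Tb|a\theta R$ (resp.\ $\theta^Ta|b\theta R$ after prepending $2$), located at the start, plus possibly the cut $2|2a\dots$.

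Next we must pass from extensions over $\{a,b\}$ to the actual prefix $x_N\dots x_1$, which is arbitrary in $\N_{>0}$. For a good cut at a position well inside the tail, the value is decided lexicographically by \Cref{prop_lexicographic_comparison} at a finite depth inside the tail. Arbitrary digits further left then perturb the value only within the interval $I(\cdot)$ of the determined prefix, so the cut stays below $3$. This is the same size estimate as in \Cref{lem:compare}, via \cite[Lemma 3.2]{EGRS2024}.

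For the unique indeterminate cut $\theta^Tb|a\theta R$, the left part is $[0;\theta^T\text{ reversed},b,x_N,\dots,x_1]$. Because $\theta$ has even length, the parity works out so that the cut value is below $3$ exactly when $[0;x_N,\dots,x_1]<[0;R]$. This is checked by comparing with the symmetric (value-$3$-threshold) configuration $\dots R^T\theta^Tb|a\theta R$ and using monotonicity of continued fractions. The case $x_{N+1}=2$ is analogous with $[0;2,R]$. The cut $2|2a\dots$ is handled directly: its left side is $[0;2,x_N,\dots]<1/2$, and its right side is at most $[2;a,\dots]$, which is below $5/2$ for a word starting $22a$.

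The main obstacle is the middle step: making rigorous that good cuts remain strictly below $3$ for arbitrary integer prefixes, including prefixes with digits $\geq3$ close to the cut. I would handle it by noting that a digit $\geq 3$ on the left only decreases the contribution $[0;\dots]$ relative to a digit $1$ or $2$ in the same position at even depth. So it suffices to check that cuts stay good under all $\{1,2\}$-extensions, together with a parity check at odd depth, which uses the even length of the relevant $\theta$. Once all cuts at $n\geq N+1$ are below $3$, Legendre's theorem and \eqref{eq_n_th_convergent} give at most $N$ solutions.
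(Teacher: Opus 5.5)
Your route is the paper's. \Cref{lem:only_one_indeterminate_cut} leaves only the unique indeterminate cut, which you settle by comparing with the symmetric configuration $R^T\theta^Tb|a\theta R$ of value exactly $3$. That cut's left side is $[0;b,\theta,x_N,\dots,x_1]$, with the $b$ first. The special cut $2|2a\dots$ is handled separately.

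The point you call the main obstacle, arbitrary digits in $x_1\dots x_N$, is passed over silently in the paper. It is simpler than your parity sketch, which would need goodness under all $\{1,2\}$-extensions, whereas \Cref{lem:safe_cuts} only concerns extensions over $\{a,b\}$. The cases are:
\begin{itemize}
\item A cut whose right side begins with $1$ is below $2+1$.
\item A cut $\dots2|22\dots$ with a genuine digit $2$ just left of the bar is below $\tfrac12+\tfrac52$.
\item For a $b|a$ or $a|b$ cut whose comparison in \Cref{prop_lexicographic_comparison} is decided at a letter inside the tail, the two sides differ there by $22$ against $11$ after a common prefix of even length. Since $[2;2,t]\geq\tfrac73>2\geq[1;1,t]$ for every real $t\geq1$, the inequality survives any continuation.
\item If the comparison is not decided inside the tail, the cut is the indeterminate one.
\end{itemize}
Strictness of $<3$ is automatic anyway, since $\lambda_n(x)$ is a rational plus an irrational.

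\textbf{The genuine gap is the cut $2|2a\dots$}, which is the only cut of shape $\dots2|22\dots$ without a genuine $2$ on the left. In $2x_{N+1}x_{N+2}\dots$ it sits right after the prepended, fictitious $2$. So in $x$ it is $\lambda_{N+1}(x)=[x_{N+1};x_{N+2},\dots]+[0;x_N,\dots,x_1]$, whose left side is $[0;x_N,\dots,x_1]$, not $[0;2,x_N,\dots]$. What you actually estimated is $\lambda_{N+2}(x)$, which is trivially good.

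When the tail is not $a^\infty$, the hypothesis gives $[0;x_N,\dots,x_1]<[0;2,R]<\tfrac12$, and your bound goes through. When the tail is $a^\infty=2^\infty$, no hypothesis is imposed and the assertion is false. Take $(\alpha,\beta)=(a,b)$, $N=2$ and $x=[0;3,1,\overline{2}]$; then $\lambda_3(x)=1+\sqrt2+\tfrac34>3$.

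The paper's proof makes the same computation $\lambda(x_1\dots x_N2|2a\dots)=[a,x_N,\dots,x_1]+[0;a,\dots]$, so your proposal inherits a defect of the statement itself. Only the count survives. In the $a^\infty$ case, $\lambda_{N+1}(x)>3$ forces $x_N=1$, and then $\lambda_N(x)=\sqrt2+[0;x_{N-1},\dots,x_1]<3$, so at most $N$ indices are bad. To obtain $\lambda_n(x)<3$ for all $n\geq N+1$ you must add a hypothesis such as $x_N\geq2$ in that case. This holds automatically in the forward direction of \Cref{thm:full_characterization}, where $\lambda_N(x)>3$.
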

	
	\begin{proof}
		Recall by Legendre's theorem that all solutions of the inequality are of the form $p/q=p_n/q_n$, so it suffices to see the numbers of $n\geq 1$ with $\lambda_n(x)>3$.
		
		By \Cref{lem:only_one_indeterminate_cut}, all cuts inside $x_{N+1}x_{N+2}\dots$ are good except for the exceptional cut $x_{N+1}x_{N+2}\dots=\theta^Tb|a\theta R$ or $2(\theta^T)^+a|b\theta R$, with $\theta$ of even length, or $x_{N+1}x_{N+2}\dots=2|2a\dots$. 
		
		For the case $2|2a\dots$ we have
		\begin{align*}
			\lambda(x_1\dots x_N 2|2a\dots)=[a,x_N,\dots,x_1]+[0;a,\dots]<[a,x_N,\dots,x_1]+[0;b,x_N,\dots,x_1]=3,
		\end{align*}
		and if $x_{N+1}x_{N+2}\dots=a^\infty$, then this will be the only indeterminate cut.
		
		Now assume that $x_{N+1}x_{N+2}\dots\neq a^\infty,b^\infty$. 
		By \Cref{lem:only_one_indeterminate_cut} there exists a unique such factorization.
		The hypothesis on $x_1,\dots,x_N$ is precisely to guarantee that if $x_{N+1}=1$
		\begin{align*}
			\lambda(x_1\dots x_N\theta^T b|a\theta R) =[0;b,\theta,x_N,\dots,x_1]+[a,\theta,R] <[0;b,\theta,R]+[a,\theta,R]= 3,
		\end{align*}
		and if $x_{N+1}=2$
		\begin{align*}
			\lambda(x_1\dots x_N2(\theta^T)^+ a|b\theta R)=[a,\theta^-,2,x_N,\dots,x_1]+[0;b,\theta,R]<[a,\theta,R]+[0;b,\theta,R]=3.
		\end{align*}
		Therefore the only possible positions with $\lambda_n(x)>3$ are $1\leq n\leq N$.
	\end{proof}
	
	\begin{lemma}
		Suppose that the sequence of alphabets $(\alpha_{n+1},\beta_{n+1})\in \{\overline{U}(\alpha_{n},\beta_{n}),\overline{V}(\alpha_{n},\beta_{n})\}$ is such that both renormalization operators $\overline{U}$ and $\overline{V}$ appear infinitely many times. 
		Let $N\in\N$ and $x=[x_0;x_1,x_2,\dots]$ be an irrational such that
		\begin{enumerate}
			\item if $x_{N+1}=1$ then $x_{N+1}x_{N+2}x_{N+3}\ldots=\lim_{n\to\infty}\beta_n^T$and if $N\geq 1$ then $[0;x_N,\dots,x_1]<[0;\lim_{n\to\infty}\alpha_n]$; or
			
			\item if $x_{N+1}=2$ then $x_{N+1}x_{N+2}x_{N+3}\ldots=\lim_{n\to\infty}2\alpha_n^{+}$ and if $N\geq1$ then $[0;x_N,\dots,x_1]<[0;2,\lim_{n\to\infty}\beta_n^T]$. 
		\end{enumerate}	 
		Then $\lambda_n(x)<3$ for all $n\geq N+1$ and so the inequality $\left| x-\frac{p}{q} \right| < \frac{1}{3q^2}$ has at most $N$ solutions.
	\end{lemma}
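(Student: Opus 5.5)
By Legendre's theorem it suffices to show $\lambda_n(x)<3$ for every $n\geq N+1$, i.e.\ that every cut of $x_1\dots x_N|x_{N+1}x_{N+2}\dots$ located at or after position $N+1$ is good. The plan mirrors the proof of \Cref{lem:periodic_good}, with \Cref{lem:safe_cuts} and \Cref{lem:converge_alpha_beta} taking the place of \Cref{lem:only_one_indeterminate_cut}.

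\textbf{Step 1: classify the cuts inside the tail.} We treat the case $x_{N+1}=1$; the case $x_{N+1}=2$ is symmetric with the roles of $\alpha_n$ and $\beta_n$ exchanged. Here $x_{N+1}x_{N+2}\dots=\lim\beta_n^T$, the transpose of the left infinite word $\lim\beta_n$. By \Cref{lem:safe_cuts}, every cut of this word is either good or indeterminate. An indeterminate cut has the form $\theta^Tb|a\theta R$, obtained by transposing a cut $\theta^Ta|b\theta$ that appears near the end of $\beta_n$ for some $n$, and $\theta$ has even length.

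A good cut stays good whatever finite word $x_1\dots x_N$ is placed in front. So only the indeterminate cuts need attention.

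\textbf{Step 2: control the indeterminate cuts.} Each such cut $x_1\dots x_N\theta^Tb|a\theta R$ must satisfy
\[
[0;b,\theta,x_N,\dots,x_1]+[a;\theta,R]<3 .
\]
I expect to show this by comparing it with the balanced completion $[0;b,\theta,R]+[a;\theta,R]=3$, exactly as in \Cref{lem:periodic_good}.

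Since $\theta$ has even length, the map $t\mapsto[0;b,\theta,t]$ is increasing in $t$, just as in that lemma. The required inequality therefore reduces to
\[
[0;x_N,\dots,x_1]<[0;R] .
\]
This is where the hypothesis enters, but it is stated as $[0;x_N,\dots,x_1]<[0;\lim\alpha_n]$, so we must relate $R$ to $\lim\alpha_n$.

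By \Cref{lem:converge_alpha_beta}, the suffixes of $\lim\beta_n$ containing indeterminate cuts are $\eta_n\theta_n^Ta|b\theta_n$, with $\eta_n^T\to\lim\alpha_n$. After transposing, the continuation $R$ of the cut is $\eta_n^T$, which agrees with $\lim\alpha_n$ on a prefix whose length tends to infinity.

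\textbf{Main obstacle.} The difficulty is that $R$ is a fixed finite word for each individual cut, not $\lim\alpha_n$ itself. The argument has to make sure that every indeterminate cut is one of the cuts described in \Cref{lem:converge_alpha_beta}. It must also show that, for each such cut, the comparison with $\lim\alpha_n$ really gives the strict inequality $[0;x_N,\dots,x_1]<[0;R]$.

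My first attempt is to check that $\eta_n^T$ is an actual prefix of $\lim\alpha_n$. Then $[0;R]$ and $[0;\lim\alpha_n]$ share a long common expansion. The strict inequality $[0;x_N,\dots,x_1]<[0;\lim\alpha_n]$ is decided at a finite depth, namely the first index where $x_N\dots x_1$ and $\lim\alpha_n$ differ, or the end of $x_N\dots x_1$. For all sufficiently long common prefixes this forces $[0;x_N,\dots,x_1]<[0;R]$.

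The remaining cuts involve short $\theta$, where the common prefix of $R$ and $\lim\alpha_n$ may be too short for this argument. For these I would use \Cref{lem:compare}: the value of the cut increases as $\theta$ grows. So if some cut with a short $\theta$ were bad, the cut at the same position could be extended to a longer $\theta_n$, and that longer cut would be bad as well. This contradicts the previous paragraph.

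Finally, the case $x_{N+1}=2$ runs in parallel. It uses part (2) of \Cref{lem:converge_alpha_beta}, the convention for cuts of the form $2(\theta^T)^+a|b\theta R$, and the computation of the cut $2|2a\dots$ from the proof of \Cref{lem:periodic_good}.
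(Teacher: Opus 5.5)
Your overall frame matches the paper's, and so does your reduction of each indeterminate cut $x_1\dots x_N\theta^Tb|a\theta R$ to $[0;x_N,\dots,x_1]<[0;R]$. The shared frame is: Legendre; good cuts of the tail stay good whatever $x_1\dots x_N$ is; only the indeterminate cuts matter; and the even length of $b\theta$ turns each one into a comparison of $[0;x_N,\dots,x_1]$ with a left continuation. The gap is the step from the hypothesis $[0;x_N,\dots,x_1]<[0;\lim_{n\to\infty}\alpha_n]$ to these per-cut inequalities.

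\textbf{The first argument does not cover the early cuts.} It only handles cuts whose continuation $R$ agrees with $\lim_{n\to\infty}\alpha_n$ beyond the depth at which the hypothesis is decided, and early indeterminate cuts need not have this property. For the operators $\overline{V},\overline{U},\overline{V},\overline{U},\dots$ one gets
$\lim_{n\to\infty}\beta_n^T=babaababaabaa\cdots$ and $\lim_{n\to\infty}\alpha_n=aabaababaab\cdots$.
The very first cut $b|aR$ is indeterminate with $R=baababaa\cdots$, which differs from $\lim_{n\to\infty}\alpha_n$ already in its first letter. If $x_N\dots x_1$ begins with $22$, your argument says nothing about this cut. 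Also, \Cref{lem:converge_alpha_beta} only exhibits infinitely many indeterminate cuts; it does not say every indeterminate cut is of that form.

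\textbf{The repair with \Cref{lem:compare} does not make sense.} The position of an indeterminate cut determines $\theta$, because the whole block $x_{N+1}\dots$ to the left of the cut is $\theta^Tb$. So there is no ``longer $\theta_n$ at the same position''. Moreover, \Cref{lem:compare} compares good-type cuts of two different bi-infinite words. It gives no implication from a bad cut of $x$ at one position to a bad cut of $x$ further right.

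\textbf{The missing ingredient.} What you need is $[0;\lim_{n\to\infty}\alpha_n]\le[0;R]$ for every indeterminate cut; then $[0;x_N,\dots,x_1]<[0;\lim_{n\to\infty}\alpha_n]\le[0;R]$. The paper gets this for all cuts at once by comparing with a Sturmian completion rather than the symmetric completion $R^T\theta^Tb|a\theta R$. It takes the bi-infinite word $\underline{y}$ with $y_{N+1}y_{N+2}\dots=x_{N+1}x_{N+2}\dots$ and $\dots y_{N-1}y_N=\lim_{n\to\infty}\alpha_n^T$ (respectively $\lim_{n\to\infty}\beta_n2$ when $x_{N+1}=2$).

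Up to transposition, $\underline{y}$ is the limit of $\beta_n|\alpha_n$. Its finite factors occur in the periodic words $(\alpha_n\beta_n)^\infty$, which have Markov value $<3$. Hence every cut of $\underline{y}$ has value at most $3$ (indeed $m(\underline{y})=3$). By the hypothesis and the monotonicity you already identified, $\lambda_n(x)=\lambda(x_1\dots x_N\theta^Tb|a\theta R)<\lambda(\dots y_{N-1}y_N\theta^Tb|a\theta R)\le 3$ for every indeterminate cut. This needs no knowledge of $R$, no identification of the cuts via \Cref{lem:converge_alpha_beta}, and no control of common prefixes.
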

	\begin{proof}
		Similar with the proof of \Cref{lem:periodic_good}, we focus on the numbers of $n\geq 1$ with $\lambda_n(x)>3$.
		Let $\underline{y}\in\{a,b\}^\Z$ be a bi-infinite sequence defined as follows: put $y_{N+1}y_{N+2}\dots=x_{N+1}x_{N+2}\dots$ and if $x_{N+1}=1$ then $\dots y_{N-1}y_N=\lim_{n\to\infty}\alpha_n^T$ and if $x_{N+1}=2$ then $\dots y_{N-1}y_N=\lim_{n\to\infty}\beta_n2$. In particular we have that the Markov value of $\underline{y}$ is exactly 3. 
		
		By \Cref{lem:safe_cuts} we have that all cuts inside the word $x_{N+1}x_{N+2}\dots$ are good, except for the indeterminate ones that are all suffixes of $x_{N+1}x_{N+2}\dots$ of the form $\theta^Tb|a\theta R$ with $|\theta|$ even or $\theta^Ta|b\theta R$ with $|\theta|$ odd, according to $x_{N+1}=1$ and $x_{N+1}=2$, respectively. 
		The hypothesis guarantees that in any case if $\{c,d\}=\{a,b\}$
		\begin{equation*}
			\lambda_n(x)=\lambda(x_1\dots x_N\theta^T c|d\theta R)<\lambda(\dots y_{N-1}y_{N}\theta^T c|d\theta R)=\lambda_n(\underline{y})\leq 3
		\end{equation*}
		Therefore the only possible positions with $\lambda_n(x)>3$ are $1\leq n\leq N$.
	\end{proof}

	\subsection{Characterization}
	
	We will assume through the rest of this section that $x=[x_0;x_1,x_2,\dots]$ is an irrational number such that the inequality $\left|x-\frac{p}{q}\right|<\frac{1}{3q^2}$ has only finitely many solutions $p/q\in\Q$. By \eqref{eq_n_th_convergent} this means there are only finitely many indices $n\geq 1$ such that 
	\begin{equation*}
		\lambda_n(x)=[x_n;x_{n+1},x_{n+2},\dots]+[0;x_{n-1},\dots,x_1]>3.
	\end{equation*}
	Take $N$ to be the maximum of those indices or let $N=0$ if there is none. Observe that $N=0$ precisely when $\widetilde{m}(x)\leq 3$. In particular, we have that there are no bad cuts in the suffix $x_{N+1}x_{N+2}\dots$ of the infinite word $x_1x_2\dots$. Observe that if $N\geq 1$, then by hypothesis $\lambda_N(x)>3$, so we must have $x_N\geq 2$ in that case.

	The next lemma states that all blocks of $x_{N+1}x_{N+2}\dots$ are even blocks of 1's and 2's except the first one, which, can only be an even block of 1's or an odd block of 2's. Its proof is similar to the descent argument of \cite[Lemma 9]{Bombieri}.

	\begin{lemma}\label{lem:evens_blocks}
		Let $x=[x_0;x_1,x_2,\dots]$ be an irrational and $N\in\N$ minimal such that $\lambda_n(x)\leq 3$ for all $n\geq N+1$.
		Then $x_n\in\{1,2\}$ for all $n\geq N+1$.
		Moreover either $x_{N+1}x_{N+2}\dots=1^{e_1}2^{f_1}1^{e_2}2^{f_2}\dots$ where all $e_i,f_i\geq 2$ are even or $x_{N+1}x_{N+2}\dots=2^{f_1}1^{e_1}2^{f_2}1^{f_2}\dots$ where $f_1\geq 1$ is odd and the rest $e_i,f_i\geq 2$ are even, where the number of blocks is finite if the last block is infinite.
	\end{lemma}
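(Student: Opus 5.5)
Throughout I argue by contradiction: each forbidden configuration at a position $n\geq N+2$ will produce a cut $\lambda_n(x)>3$. Positions are indexed so that $\lambda_n(x)=[x_n;x_{n+1},\dots]+[0;x_{n-1},\dots,x_1]$; the cut at $n$ sits between $x_{n-1}$ and $x_n$. The hypothesis is that every $\lambda_n(x)$ with $n\geq N+1$ is at most $3$. The first claim is then immediate. If $x_n\geq 3$ for some $n\geq N+1$, then $\lambda_n(x)\geq x_n+[0;x_{n-1},\dots,x_1]>3$, since $x$ is irrational and the tail term is positive when $n\geq 2$. If $n=1$, then $\lambda_1(x)=[x_1;x_2,\dots]>3$ holds as well. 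Hence $x_n\in\{1,2\}$ for $n\geq N+1$.

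For the block structure I follow the descent argument of \cite[Lemma 9]{Bombieri}. The elementary facts are these. A cut of the form $\dots 1\,|\,2\dots$ or $\dots 2\,|\,1\dots$, where each neighbouring digit is a $2$ flanked appropriately, is controlled by comparing continued fractions. Concretely, $[2;1,\dots]+[0;1,\dots]$ is at most $3$ only when the tails compare correctly. The basic local bad patterns are:
\begin{itemize}
\item $1\,2\,|\,1$: here $\lambda\geq 2+[0;1,\ldots]$ plus a large left part. Together with $2\,1\,2$, which has value above $3$ at the cut before the middle $1$'s neighbour, this rules out isolated blocks.
\end{itemize}
I would first show that a block $1^{e}$ of odd length $e$ with a $2$ on both sides, occurring at positions $\geq N+1$, forces a bad cut. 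The same should hold for an odd block $2^{f}$ with a $1$ on both sides. The method is to take the cut at the $2\,|\,1$ boundary (respectively $1\,|\,2$) and use the parity of the block to compare $[2;1^{e},2,\dots]$ with $[0;1^{e'},\dots]$ on the other side. For $e=1$ the cut $\dots 2|1\,2$ reverses to $\dots 1 2|\dots$, and a direct computation gives $2+[0;1,2,\dots]+[0;1,\dots]>3$. For general odd lengths I would induct, shrinking the symmetric neighbourhood as in Bombieri's descent: either some cut is bad, or the two sides agree on a longer symmetric window and the next comparison occurs at odd depth with reversed inequality. Each step extends the symmetric window, so eventually a mismatch occurs, or the window reaches $x_{N+1}$ on the left.

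The left boundary is the delicate part, and I expect it to be the main obstacle. A block touching position $N+1$ is not flanked on the left by a digit of the tail. Instead it is flanked by $x_N$, which is $\geq 2$ when $N\geq 1$ because $\lambda_N(x)>3$, or by nothing when $N=0$. Here I would use the cut at $N+1$ itself together with $x_N\geq 2$. If the first block is $1^{e_1}$, comparing $[1;1^{e_1-1},2,\dots]$ against $[0;x_N,\dots]\leq 1/2$ should show that $e_1$ odd forces $\lambda$ at the end of the block to exceed $3$. If instead the first block is $2^{f_1}$, the analogous check shows that $f_1$ even gives a bad cut, which leaves $f_1$ odd. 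The case $N=0$ is handled the same way with empty left part.

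Finally, the last block may be infinite, and no parity condition is needed for it. The most laborious step is the careful parity bookkeeping in the descent. Combining everything yields exactly the two stated shapes.
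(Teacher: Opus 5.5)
Your first claim ($x_n\in\{1,2\}$ for $n\ge N+1$) is fine. The block-structure part, however, has a genuine gap, and it starts with the choice of cut.

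Any cut with a $1$ immediately to its right has value $<2+1=3$. So neither your pattern $12|1$ nor ``the cut at the $2|1$ boundary'' of a block of $1$'s can ever be bad. The cuts that do the work are these:
\begin{itemize}
\item for a maximal block $1^\ell$, the cut at its right end, $w1^\ell|2^m1^r\cdots$;
\item for a maximal block $2^\ell$ preceded by a $1$, the cut just before its last $2$, $w12^{\ell-2}2|21^m2^r\cdots$.
\end{itemize}
The tool you are missing is the identity $[2;2,y]=3-[0;1,1,y]$. It converts $\lambda\le3$ into a comparison such as $[0;1^\ell,w^T]\le[0;1,1,2^{m-2},1^r,\dots]$, which is decided at the first discrepancy according to the parity of its index. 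For odd $\ell\ge3$ this forces $m=2$ and $r$ odd with $r\le\ell-2$. Moreover $r\ne1$, because $212$ is excluded by the cut $21|2$. So the descent runs over the lengths of successive odd blocks, moving to the right, and must terminate. It is not the growth of a symmetric window, and it only ever consults the single letter preceding the block. Your unresolved alternative ``or the window reaches $x_{N+1}$ on the left'' shows you have not identified what actually decreases, so ``eventually a mismatch occurs'' is unsupported.

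Your treatment of the left boundary is also incorrect, in both subcases.

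\emph{Initial block $1^{e_1}$ with $e_1$ odd.} The cut at the end of that block need not be bad. Take $x_N=3$ (so $\lambda_N>3$) and tail $1^72^21^32^2\cdots$. The comparison of $[0;1^7,3,\dots]$ with $[0;1^5,2,2,\dots]$ is decided at the even index $6$ in the good direction, and the first bad cut is $1^72|21^3$. The right move is to run the same argument as for interior blocks. The letter $x_N\ge2$ preceding the block, or nothing when $N=0$, plays the role of a flanking $2$.

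\emph{Initial block $2^{f_1}$.} Knowing only $x_N\ge2$ and $\lambda_n\le3$ for $n\ge N+1$ cannot suffice. For example, $x=[0;\overline{1,1,2,2,2,2}]$ has $\lambda_n<3$ for all $n$ and $x_4=2$, yet $x_5x_6\cdots=2,2,1,1,\dots$ begins with an even block of $2$'s. What is needed is a sharper consequence of $\lambda_N(x)>3$. When $x_{N+1}x_{N+2}=22$, either $x_N\ge3$, or $x_N=2$, $N\ge2$ and $x_{N-1}=1$. Otherwise $\lambda_N(x)=[2;2,\dots]+[0;x_{N-1},\dots]<3$, with the second term absent if $N=1$. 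With this, the cut $x_1\cdots x_N2^{f_1-1}|21^{e_1}\cdots$ is bad whenever $f_1$ is even. For $N=0$ the same cut with empty left part works.
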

	\begin{proof}
		First, since $\lambda_n(x)\geq x_n$, it obviously holds that $x_n\in\{1,2\}$ for all $n\geq N+1$. Observe that $212$ is forbidden in $x_{N+1}x_{N+2}\dots$ since $[2;1,2]+[0;\overline{2,1}]>3$. Similarly, $x_{N+1}x_{N+2}\dots$ can not begin with $12\dots$ because if $N=0$ then $\lambda_2(x)\geq [2;1]+[0;x_3,\dots]>3$ and if $N\geq 1$, then $x_N\geq 2$ also gives that $\lambda_{N+2}(x)\geq [2;1,2]+[0;\overline{2,1}]>3$. Observe also that $121$ is forbidden in $x_{N+1}x_{N+2}\dots$ because $[2;1,1]+[0;1,1,\overline{2,1}]>3$.
		
		Now suppose that $x_{N+1}x_{N+2}\dots$ contains a subword of the form $1^\ell 2^m 1^r$ with $m\geq 1$ and $\ell$, $r$ maximal. By the previous observations $\ell\geq 2$ and $m\geq 2$. Note that if $w$ is a finite word (possibly empty), then we have
		\begin{equation*}
			\lambda(w 1^\ell |2^{m} 1^r \dots)=3+[0;1^\ell,w^T]-[0;1,1,2^{m-2},1^r,\dots].
		\end{equation*}
		Assuming $\ell$ odd, note that $\ell\geq 3$. Since the value of this cut must be less or equal to 3, then $m=2$ and $r-2\leq\ell$ is also odd.
		However this is not possible, because extending to the right it yields an infinite descent of positive odd numbers, so $\ell$ must be even. 
		Moreover, $\ell>2$ even implies that $m=2$. This argument also applies for $1^\ell 2^\infty$ and shows that $\ell=2$ in that case. 
		In conclusion all blocks of 1's appearing in $x_{N+1}x_{N+2}\dots$ are even. 
		
		Now assume that $12^\ell 1^m 2^r$ is a subword of $x_{N+1}x_{N+2}\dots$ with $m\geq1$ and $r$ maximal. Since $121$ is forbidden, one has that $\ell\geq 2$. If $\ell$ is odd, then for any finite word $w$ (possibly empty), the inequality
		\begin{equation*}
			\lambda(w 12^{\ell-2}2|21^m2^r\dots)=3+[0;1^m,2^r,1\dots]-[0;1,1,2^{\ell-2},1,w^T]<3,
		\end{equation*}
		shows that $m=2$ and that $r\leq\ell-2$ is also odd. The same descending argument as before gives a contradiction, so $\ell$ must be even. In the case of the subword $12^\ell1^\infty$, the same argument yields $\ell=2$. 
		
		Finally, let us show that $x_{N+1}x_{N+2}\dots$ can not begin with an even block of 2's. Indeed, suppose $x_{N+1}x_{N+2}\dots$ begins with $2^\ell1^m$ with $m$ maximal. If $N=0$, since we have already shown that $m$ must be even, if $\ell$ is even, we have
		\begin{equation*}
			\lambda(2^{\ell-1}|21^m\dots)=3+[0;1^m,\dots]-[0;1,1,2^{\ell-2}]>3,
		\end{equation*}
		because all blocks are even (or the last is infinite). In case that $N\geq 1$, then since $\lambda_N(x)>3$, one must have that either $x_N\geq 3$ or $x_N=2$ with $N\geq 2$ and $x_{N-1}=1$ (since otherwise we will have $\lambda_N(x)=\lambda(\dots2\vert22\dots)<3$). 
		In any case the same cut above is greater than 3, so we conclude that $\ell$ must be odd.
	\end{proof}
	
	Let $\omega:=x_{N+1}x_{N+2}\dots$ and $(\alpha_0,\beta_0)=(a,b)$. According to \Cref{lem:evens_blocks}, all blocks of 1's and 2's in $\omega$ are even, except the case when $\omega$ begins with an odd block of $2$.  
	\begin{itemize}
		\item In case that $x_{N+1}=1$, we can write $\omega$ as a word over the alphabet $(\alpha_0,\beta_0)=(a,b)$ and begins with $\beta_0$.
		\item In case that $x_{N+1}=2$, we can write the $2\omega$ as a word over the alphabet $(\alpha_0,\beta_0)=(a,b)$ and begins with $\alpha_0$. 
	\end{itemize}

	\subsubsection{Gurwood's proof of the case \texorpdfstring{$N=0$}{N=0}}\label{subsubsec:gurwood}
	
	Only in this subsection we will assume that $x=[x_0;x_1,x_2,\dots]$ is an irrational number such that $\left|x-\frac{p}{q}\right|\geq\frac{1}{3q^2}$ for all $p/q\in\Q$. In this subsection we will show how that Gurwood's characterization in lower and upper balanced sequences coincides with the case $N=0$ of \Cref{thm:full_characterization}. It is precisely this lemma that fails to generalize to the case of an arbitrary number of solutions, which is why we cannot extend Gurwood’s elegant proof to the general case.
	
	\begin{lemma}\label{lem:not_generalizable_lemma}
		Let $x=[x_0;x_1,x_2,\dots]$ be an irrational such that $\lambda_n(x)\leq 3$ for all $n\geq 1$ and let $\omega=x_{1}x_{2}\dots$. Then $x_{1}=1$ if and only if $\omega$ is an upper balanced sequence over $\{a,b\}$ and $x_{1}=2$ if and only if $2\omega$ is a lower balanced sequence over $\{a,b\}$.
	\end{lemma}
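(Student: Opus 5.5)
We reduce to the setting $N=0$ of \Cref{lem:evens_blocks}. Since $\lambda_n(x)\le 3$ for all $n\ge 1$, the minimal $N$ there is $0$. Hence if $x_1=1$, the word $\omega$ is a word over $\{a,b\}$ beginning with $b$. If $x_1=2$, then $2\omega$ is a word over $\{a,b\}$ beginning with $a$. This already shows that each "if" direction reduces to the corresponding "only if" direction, because upper balanced sequences begin with $b$ and lower balanced ones begin with $a$. It therefore suffices to show two things: when $x_1=1$, $\omega$ satisfies the two conditions defining upper balanced; when $x_1=2$, $2\omega$ satisfies the conditions defining lower balanced. The constant cases $b^\infty$ and $a^\infty$ are immediate.

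\textbf{Case $x_1=1$.} Take a cut of $\omega$ of the form $\dots x|y\dots$ with $\{x,y\}=\{a,b\}$, and extend it symmetrically as $E^T x|y F$ as far as possible inside $\omega$.

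Suppose first that the extension stops because of a mismatch inside $\omega$. Then the cut extends to $c\theta^T x|y\theta d$ with $c\ne d$. Such a cut is either good or bad by \Cref{prop_lexicographic_comparison}. Badness is excluded because the corresponding $\lambda_n(x)\le 3$: a determinate bad cut of a finite subword forces $\lambda_n>3$ for every continuation, including the finite left part $x_1\dots x_{n-1}$ of the actual continued fraction. This last point needs a check, since the left side of a cut in $x$ is finite, not an arbitrary infinite extension. I would handle it using the interval $I(\cdot)$ monotonicity underlying \Cref{lem:compare}, together with the fact that truncation on the left corresponds to appending $\infty$. Truncation at a letter boundary lies between the $a$- and $b$-extensions, so it preserves strictness.

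Otherwise the symmetric extension reaches the left end of $\omega$. Then the cut has the form $\theta^T x|y\theta$ with $\theta^Tx y\theta$ a prefix of $\omega$. For $x=b$, $y=a$ this is exactly the allowed exception in condition (1). For $x=a$, $y=b$ I must show the cut is good, contradicting nothing. Here the real left side of the cut in $x$ is $[0;\theta^T\text{-reversed},\ldots]$ terminated, and I compare it with a continuation on the left by the letter $b$. The value is $[0;a,\theta]+[b;\theta,\dots]$ with the left terminated after $\theta$. The parity of letters (all in $\{11,22\}$, with the finite end behaving like appending $\infty$) shows that terminating is the same as continuing with a letter that makes the cut larger. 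Hence the cut being $\le 3$ forces goodness, and I expect this termination comparison to go the right way precisely for $a|b$ and fail for $b|a$. Verifying this sign is the main obstacle. The $2|2$ issue also needs care: since $x_1=1$, every cut between letters $a,b$ is at an even position, so that issue does not arise here.

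\textbf{Case $x_1=2$.} The argument is the same applied to $2\omega$, with the paper's convention that $a|b$ means the cut $2|2b$. The prepended $2$ is fictitious, so a cut reaching the left end is compared via the truncation $[0;\dots,2]$, that is, $x_1\dots$ with one $2$ removed. The roles of $a|b$ and $b|a$ swap accordingly, giving exactly the lower balanced conditions. The fictitious first $2$ also produces the special first cut $2|2a\dots$ of the original word. That cut corresponds to no letter-boundary cut of $2\omega$ and so imposes no condition.
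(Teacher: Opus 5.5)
Your framework matches the paper's. You extend each $b|a$ or $a|b$ cut symmetrically and separate two cases: a mismatch inside $\omega$, which gives a determinate cut that cannot be bad, and a cut that reaches the left end. Your worry about the finite left side is harmless in the mismatch case, since the whole mismatching letter is present and strictness survives. The gap is in the left-end step for $x_1=1$, and that step is the entire content of the lemma. You evaluate the cut $\theta^Ta|b\theta$ as the literal letter boundary $\theta^T22|11\theta\dots$, with value $[0;a,\theta]+[b;\theta,\dots]$, and you assert that the $2|2$ convention does not arise when $x_1=1$. It does arise. In the definition of upper balanced sequences and in \Cref{prop_lexicographic_comparison}, $\theta^Ta|b\theta R$ always denotes the cut $\theta^T2|2b\theta R$, taken in the middle of the letter $a$. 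The literal cut $22|11$ always has value below $5/2$ and constrains nothing. Consequently your goal, ``show the cut is good, contradicting nothing'', is the wrong one. Even a good value would not give condition (2), because an indeterminate finite cut does not ``extend to a good cut''.

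The correct computation goes the other way. The finite left side $[0;2,\theta]$ amounts to placing $\infty$ at the even index $|\theta|+2$, which increases the value. So by \eqref{eq:elementary_identity}
\[
\lambda(\theta^Ta|b\theta R)=[0;2,\theta]+[2;1,1,\theta,R]>[0;2,\theta,R]+[2;1,1,\theta,R]=3 .
\]
A left-end $a|b$ cut is therefore \emph{bad}, contradicting $\lambda_n(x)\le 3$, so this configuration never occurs. Every $a|b$ cut then meets a mismatch inside $\omega$, and the resulting determinate cut must be good, which is condition (2). The opposite inequality $[0;1,1,\theta]<[0;1,1,\theta,R]$ shows that a left-end $b|a$ cut has value $<3$. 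This is why it is the permitted exception in (1), and it is the paper's second computation. For $x_1=2$ the same two comparisons apply to $2\omega$: the fictitious leading $2$ shifts the truncation index by one and swaps the two outcomes. You assert this swap but do not verify it. As written, your proposal leaves the decisive sign unchecked in both cases and, for $x_1=1$, aims at the wrong conclusion.
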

	
	\begin{proof}
		Recall that \Cref{lem:evens_blocks} allows us to regard $\omega$ or $2\omega$ as a word over $\{a,b\}$. We will prove the first case since the other is analogous. Since by hypothesis $\omega$ does not have bad cuts, any cut of $\omega$ extends to an either good or indeterminate cut. If $\omega$ contains a cut of the form $\omega=\theta^Ta|b\theta R$ with $\theta$ of even length, then this creates a bad cut because 
		\begin{equation*}
			\lambda(\theta^Ta|b\theta R)=[0;b,\theta,R]+[a;\theta]>[0;b,\theta,R]+[a;\theta,R]=3.
		\end{equation*}
		Thus $\omega$ must be upper balanced. On the other hand, if $\omega$ is already upper balanced, cuts of the form $\omega=\theta^Tb|a\theta R$ with $\theta$ of even length are good because
		\begin{equation*}
			\lambda(\theta^Tb|a\theta R)=[a;\theta,R]+[0;b,\theta]<[a;\theta,R]+[0;b,\theta,R]=3.
		\end{equation*}
	\end{proof}
	
	From the previous lemma and \Cref{lem:balanced_inequality} one gets a characterization of $x$. Let us write $\omega=w_1w_2\dots$ or $2\omega=w_1w_2\dots$ where $w_i\in\{a,b\}$ accordingly. From the previous lemma, we have our $w_1w_2\dots$ is upper balanced when $w_1=a$ and lower balanced when $w_1=b$. Recall that $w_1w_2\dots$ is a lower balanced sequence if and only if $bw_2\dots$ is an upper balanced sequence. 
	
	Now we show the equivalence between Gurwood's result with the case  $N=0$ of \Cref{thm:full_characterization}.
	First, let us suppose $w_1$ $w_1=b$ and that $\chi(w_n)=\lfloor n\xi \rfloor -\lfloor (n-1)\xi \rfloor$ for some $0\leq\xi<1$. If $\xi$ is rational. If $\xi=0$ then $w_1w_2\dots=a^\infty$ with corresponding upper balanced sequence $bw_2\dots=ba^\infty$.
	If $\xi>0$, then by \Cref{charcterization_xi} there is some $(u,v)\in\overline{P}$ such that $w_1w_2\dots=(uv)^\infty$, with the corresponding upper balanced sequence $bw_2\dots=u^bv(uv)^\infty=v^T((uv)^T)^\infty$ by \eqref{eq:periodic_identity2}.
	If $\xi$ is irrational, then by \Cref{charcterization_xi} there is a sequence $(\alpha_n,\beta_n)\in\overline{P}$ such that $w_1w_2\dots=\lim_{n\to\infty}\alpha_n$, with the corresponding upper balanced sequence $bw_2w_3\dots=\lim_{n\to\infty}(\beta_n)^T$.
	This is because $bw_2w_3\dots$ begins with $\alpha_{2n}^b=(\alpha_{2n-1}^{d_{2n-1}}\beta_{2n-1})^b(\alpha_{2n-1}^{d_{2n-1}-1}\beta_{2n-1})^{d_{2n}-1}=(\beta^T(\alpha_{2n-1}^T)^{d_{2n-1}-1}\alpha_{2n-1}^b)(\alpha_{2n-1}^{d_{2n-1}-1}\beta_{2n-1})^{d_{2n}-1}$, in particular, begins with $\beta_{2n-1}^T.$
	
	Now assume that  $w_1w_2\dots$ satisfies instead \eqref{eq:skew_xi}, that is, $w_1=a$ and $\chi(w_n)=\lfloor -(n-1)\xi \rfloor - \lfloor -n\xi \rfloor$ for $n\geq 2$. If $\xi$ is irrational this reduces to the previous case because $\lfloor n\xi\rfloor-\lfloor(n-1)\xi\rfloor=\lfloor -(n-1)\xi \rfloor - \lfloor -n\xi \rfloor$ for $n\geq 2$, since $\lfloor m\xi\rfloor+\lfloor -m\xi\rfloor=-1$ whenever $m\xi$ is not an integer. 
	If $\xi$ is rational, then since $\lfloor m\xi\rfloor+\lfloor -m\xi\rfloor=-1$ for $1\leq m\leq q-1$ and $\lfloor q\xi\rfloor+\lfloor -q\xi\rfloor=0$, by the previous case there exists some $(u,v)\in\overline{P}$ such that $bw_2\dots=((uv)_a^b)^\infty=((uv)^T)^\infty$, and by \eqref{eq:periodic_identity1} we have $w_1w_2\dots = u(uv)^\infty$.

	\subsubsection{Proof in the general case}

	Our proof of the general case uses nothing from balanced sequences. Instead, it is a direct proof based on renormalization and the properties of Christoffel words \Cref{subsec:properties}). We will need the following \cite[Lemma 3.17]{EGRS2024}.
	
	\begin{lemma}\label{lem:symmetry3}
		Let $(u, v) \in \overline{P}$ and let $e_1, \dotsc, e_k \geq 1$. If $(\alpha, \beta) = (u, uv)$, then
		\[
		u^b\beta\alpha^{e_1} \beta \alpha^{e_2} \beta \ldots \alpha^{e_k} v_a=(\alpha^{e_k}\beta\alpha^{e_{k-1}}\beta\ldots\beta\alpha^{e_1}\beta\beta)^T,
		\]
		while if $(\alpha, \beta) = (uv, v)$, then
		\[
		u^b\beta^{e_1}\alpha\beta^{e_2}\alpha \ldots \beta^{e_k}\alpha v_a = (\alpha\alpha \beta^{e_k} \alpha \beta^{e_{k-1}} \alpha \ldots \alpha \beta^{e_1})^T.
		\]
	\end{lemma}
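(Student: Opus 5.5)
The plan is to prove both identities by a "sliding" argument: push the prefix $u^b$ from left to right through the word, converting each block $\alpha$ or $\beta$ into its transpose with the identities of \Cref{lem:symmetry2}. I treat $(\alpha,\beta)=(u,uv)$; the case $(\alpha,\beta)=(uv,v)$ is symmetric (exchange the roles of $a$ and $b$, or equivalently of $U$ and $V$).

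In the first case $u^b=\alpha^b$, so the left-hand side is $\alpha^b\beta\alpha^{e_1}\beta\cdots\alpha^{e_k}v_a$. The transpose of the right-hand side is
\[
(\alpha^{e_k}\beta\cdots\beta\alpha^{e_1}\beta\beta)^T=\beta^T\beta^T(\alpha^T)^{e_1}\beta^T\cdots\beta^T(\alpha^T)^{e_k}.
\]
I would proceed in three steps.

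\textbf{Step 1 (sliding rules).} Use the identity $\alpha^b\beta=\beta^T\alpha^b$ from \Cref{lem:symmetry2}. I also need a companion rule $\alpha^b\alpha=\alpha^T\alpha^b$. I would try to derive it from $\alpha^b_a=\alpha^T$ and the palindromic structure of \Cref{lem:symmetry1}: write $\alpha=uv'$ one level up, or argue by induction on the depth in $\overline{T}$, using \Cref{lem:commutative_identities} to see how $U$ and $V$ act on transposes. If this rule fails for a particular $\alpha$, the fallback is to slide whole blocks $\alpha^{e}\beta$. For these, \Cref{lem:symmetry2} gives that $\alpha^e\beta$ starts with $\beta_a$ and ends with $\alpha^b$, which suggests the rule $\alpha^b(\alpha^e\beta)=(\alpha^e\beta)^T\alpha^b$.

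\textbf{Step 2 (induction).} Induct on $k$ and on $e_1+\dots+e_k$. Each application of a sliding rule moves $\alpha^b$ one block to the right and transposes that block, so after all blocks are processed the left-hand side becomes $\beta^T(\alpha^T)^{e_1}\beta^T\cdots(\alpha^T)^{e_k}\alpha^b v_a$.

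\textbf{Step 3 (closing).} Show that the leftover tail $\alpha^b v_a$ equals the missing extra factor $\beta^T$, rearranged correctly. Since $\beta=uv$ and $\alpha=u$, we have $\alpha^b v_a=u^bv_a=\beta_a^{\,b}$, that is, $\beta$ with both its first and last letter flipped. By \Cref{lem:symmetry2}, $\beta^b_a=\beta^T$. This tail must then be commuted back across the word to produce the leading $\beta^T\beta^T$. I would do this via the reversed rule $\beta^T X^T=\dots$, or more cleanly by comparing lengths and using that both sides are determined by the palindrome $\theta$ with $\alpha\beta=a\theta b$.

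I expect the main obstacle to be Step 1 together with this final bookkeeping. The identities in \Cref{lem:symmetry2} are stated only for $\alpha$ and $\beta$ individually, so verifying the $\alpha$-sliding rule (or the block version) needs a genuine induction over the renormalization tree using \Cref{relbtrenorm} and \Cref{lem:commutative_identities}. Getting the extra $\beta$ at the boundary to line up correctly is where sign and letter-flip errors are most likely.
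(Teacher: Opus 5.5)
The paper does not prove this lemma; it quotes it from \cite[Lemma 3.17]{EGRS2024}. So your argument has to stand on its own, and as written it has a genuine gap.

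\textbf{The primary sliding rule is false.} The rule $\alpha^b\alpha=\alpha^T\alpha^b$ fails for \emph{every} $(\alpha,\beta)\in\overline{P}$, not just for some $\alpha$. The length-$|\alpha|$ prefixes of the two sides are $\alpha^b$ and $\alpha^T$, and these differ in their last letter. Indeed, $\alpha^b$ ends with $b$, because $\alpha$ is either $a$ or a Christoffel word ending in $b$. On the other hand, $\alpha^T$ ends with $a$, because $\alpha$ begins with $a$.

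\textbf{Steps 2 and 3 inherit the error.} They are computed with this rule, so the word you reach is
$\beta^T(\alpha^T)^{e_1}\beta^T\cdots\beta^T(\alpha^T)^{e_k}\alpha^bv_a=\beta^T(\alpha^T)^{e_1}\cdots\beta^T(\alpha^T)^{e_k}\beta^T$.
This is not equal to the left-hand side. For $(u,v)=(a,b)$ and $k=e_1=1$ it is $baaba$, whereas both sides of the lemma equal $babaa$. So ``commuting the tail back'' cannot succeed, since it would have to identify two distinct words. Also, the closing identity $\alpha^bv_a=\beta^T$ is not the one the proof actually needs.

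\textbf{How to repair it.} Your fallback is the right idea, but knowing that $\alpha^e\beta$ starts with $\beta_a$ and ends with $\alpha^b$ does not justify it. The block rule $\alpha^b\alpha^e\beta=(\alpha^e\beta)^T\alpha^b$ is exactly the identity $\alpha^b\beta=\beta^T\alpha^b$ of \Cref{lem:symmetry2}, applied to the pair $(\alpha,\alpha^e\beta)=\overline{V}^e(\alpha,\beta)\in\overline{P}$; the case $e=0$ is the original pair.

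Group the left-hand side as $\alpha^b\,[\beta]\,[\alpha^{e_1}\beta]\cdots[\alpha^{e_{k-1}}\beta]\,\alpha^{e_k}v_a$. Sliding $\alpha^b$ through the bracketed blocks gives $\beta^T\beta^T(\alpha^T)^{e_1}\beta^T\cdots\beta^T(\alpha^T)^{e_{k-1}}\,\alpha^b\alpha^{e_k}v_a$.

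Since $\alpha=u$, $\beta=uv$ and $e_k\geq1$, the tail is $\alpha^b\alpha^{e_k}v_a=(\alpha^{e_k}\beta)^b_a$. This equals $(\alpha^{e_k}\beta)^T=\beta^T(\alpha^T)^{e_k}$, by $\beta^b_a=\beta^T$ applied to the same pair $(\alpha,\alpha^{e_k}\beta)$. The extra $\beta^T$ then lands at the front automatically, and no induction over $\overline{T}$ is needed.

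\textbf{The second identity.} It is the mirror image of the first. Slide $v_a=\beta_a$ to the left through the blocks $\alpha,\alpha\beta^{e_k},\dots,\alpha\beta^{e_2}$, using $\alpha\beta^e\beta_a=\beta_a(\alpha\beta^e)^T$; this is \Cref{lem:symmetry2} applied to $\overline{U}^e(\alpha,\beta)$. Then close with $u^b\beta^{e_1}\beta_a=(\alpha\beta^{e_1})^b_a=(\alpha\beta^{e_1})^T$.

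Note also that a bare exchange $a\leftrightarrow b$ does not map $\overline{P}$ to itself. The symmetry that swaps the two cases is that exchange composed with reversal.
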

	
	The proof of the following lemma is analogous to a part of the proof of \cite[Lemma 3.15]{EGRS2024}. In the mentioned lemma, it was proved that if a word can be written over the alphabet $(\alpha,\beta)\in\overline{P}$, begins with $\alpha\alpha$ and ends with $\beta\beta$, then a bad cut is forced to appear if we extend this word to the right over the alphabet $\{a,b\}$. The next statement will show that the same holds if we extend to the left over the alphabet $\{a,b\}$.
	
	\begin{lemma}\label{lem:leverage_cut}
		Let $(\alpha,\beta)\in\overline{P}$ and $w=\alpha\alpha(\beta\alpha)^k\beta\beta$ for some $k\geq 1$. Then 
		\begin{enumerate}
			\item It can be written in the form $w=|a\theta a|b\tau$ where
			\begin{enumerate}
				\item there exists a word $\gamma$ such that $\tau=\theta^Tab\gamma b$;
				\item $\theta ab\theta^Tab\gamma b$ begins with $\gamma a$.
			\end{enumerate}
			\item It can be written in the form $w=\tau a|b\theta b|$ where
			\begin{enumerate}
				\item there exists a word $\gamma$ such that $\tau=a\gamma^Tab\theta^T$;
				\item $\theta^Tba\theta ba\gamma a$ starts with $\gamma b$.
			\end{enumerate}
		\end{enumerate}
		Moreover, if $(\alpha,\beta)=(u,uv)$ then $\theta$ contains $v$ and if $(\alpha,\beta)=(uv,v)$ then $\theta$ contains $u$.  
		In particular in both cases $|\tau|\geq|\alpha\beta|$, and, if $w$ is a finite word over the alphabet $\{\alpha,\beta\}$ starting with $\alpha\alpha$ and ending with $\beta\beta$, then $w$ will contain such subwords.
	\end{lemma}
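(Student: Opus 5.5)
The plan is to reduce everything to the two symmetric identities of \Cref{lem:symmetry3} and to locate the cut at the ``middle'' of $w$. By the symmetry $(\alpha,\beta)=(u,uv)$ versus $(uv,v)$, and since reversing a word swaps the roles of $a$ and $b$ for Christoffel pairs (\Cref{lem:symmetry2}: $\alpha^b_a=\alpha^T$, $\beta^b_a=\beta^T$), it suffices to prove part (1) in the case $(\alpha,\beta)=(u,uv)$. Part (2) then follows by transposing and exchanging letters, and the case $(uv,v)$ is handled by the second identity of \Cref{lem:symmetry3} in exactly the same way.

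For $(\alpha,\beta)=(u,uv)$ and $w=\alpha\alpha(\beta\alpha)^k\beta\beta$, I would first rewrite $w$ using the identities of \Cref{lem:symmetry2}. Write $\alpha=a\alpha^{+}$ and split the final $\beta\beta$ as $\beta\, v_a\cdot$(last letter $b$) after replacing $\beta=uv=u\,v$. The aim is to produce the factorization $w=a\theta a|b\tau$ in which the cut $a|b$ sits at the junction where the first identity of \Cref{lem:symmetry3} applies. Concretely, I would take the cut immediately after the first $\alpha$ and rewrite
\[
\alpha\,|\,\alpha(\beta\alpha)^k\beta\beta
\]
so that the right part begins with $b$ by using $\alpha=\beta_a\alpha^T$-type identities, namely $\alpha\beta_a=\beta_a\alpha^T$. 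Then I would define $\theta$ from $\alpha^{+}$ stripped of its last letter $a$, which is possible since $u$ ends in $a$ when $u\ne a$, and treat the case $u=a$ directly. Next I would apply \Cref{lem:symmetry3} with $e_1=\dots=e_k=1$, which gives $u^b\beta(\alpha\beta)^{k-1}\alpha v_a=(\alpha\beta\cdots\alpha\beta\beta)^T$. This is exactly the statement that the right side of the cut, read forward, agrees with the left side read backward for a long stretch $\theta^Tab$, followed by some $\gamma b$, which is property (a). Property (b), that $\theta ab\theta^Tab\gamma b$ begins with $\gamma a$, should come from the same identity read once more: the mismatch letter after the common palindromic stretch is $a$ on one side and $b$ on the other, with the $b$ forced by the terminal $\beta\beta$ (ending in $b$) and the $a$ by $\alpha\alpha$.

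The ``moreover'' claim, that $\theta$ contains $v$, follows because $\theta$ contains the full central block $\beta=uv$ minus boundary letters, and $k\ge1$ guarantees at least one full $\beta$ inside. Hence $|\tau|\ge|\alpha\beta|$. For the last sentence, I would argue that any word over $\{\alpha,\beta\}$ starting with $\alpha\alpha$ and ending with $\beta\beta$ contains a factor $\alpha\alpha(\beta\alpha)^k\beta\beta$ after renormalizing. The route is to take a last occurrence of $\alpha\alpha$ and a first subsequent $\beta\beta$; in between, the letters alternate. If $k=0$ occurs (that is, $\alpha\alpha\beta\beta$), I would pass to the next level of renormalization via \Cref{lem:symmetry1}(\ref{item4}), where $\alpha\alpha\beta\beta=a\theta ab\theta b$ directly gives the required form.

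I expect the main obstacle to be the bookkeeping in property (b): pinning down $\gamma$ precisely and checking that the letters following the common stretch really are $a$ versus $b$ in the claimed positions. That requires tracking the prefix/suffix trimming ($^{+}$, $_a$, $^b$) through \Cref{lem:symmetry3}. I would first test it on small examples such as $(\alpha,\beta)=(a,ab)$ and $(aab,ab)$ to fix the indices.
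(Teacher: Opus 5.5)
There is a genuine gap: the factorization you build is not the right one, and neither (a) nor (b) is actually derived. The cut ``immediately after the first $\alpha$'' is never an $a|b$ cut, because the letter to its right is the first letter of the second $\alpha$, namely $a$. Rewriting $w$ with identities such as $\alpha\beta_a=\beta_a\alpha^T$ does not change which letter sits at a given position.

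Your definition of $\theta$ also rests on a false claim. A first coordinate $u\neq a$ of a pair in $\overline{P}$ is itself a product $\alpha'\beta'$ with $(\alpha',\beta')\in\overline{P}$, so it ends in $b$, not $a$. The $\theta$ that works (and the one the paper's induction produces) is the central palindrome of the Christoffel word, $\alpha\beta=a\theta b$. So $|\theta|=|\alpha\beta|-2$, and the cut sits right after the prefix $\alpha\beta_a=a\theta a$ of $\alpha\alpha\beta=\alpha\beta_a\alpha^b$. For example, $(\alpha,\beta)=(ab,b)$, $k=1$ gives $w=ababbabbb=|aba|b\,babbb$ with $\theta=\gamma=b$. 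Likewise $(a,ab)$, $k=1$ gives $w=aaabaabab=|aaa|b\,aabab$ with $\theta=\gamma=a$.

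The identity of \Cref{lem:symmetry3} with all $e_i=1$ relates a word to its own reversal. You never show how it yields $\tau=\theta^Tab\gamma b$. Property (b), which is the substantive part and is what drives the forcing in \Cref{lem:force_continuation}, is only asserted to ``come from the same identity''.

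Your argument for the ``moreover'' clause cannot be right either. $\theta$ depends only on $(\alpha,\beta)$, not on $k$. Moreover, for $(a,ab)=(u,uv)$ with $(u,v)=(a,b)$ one has $\theta=a$, which does not contain $v$; this is the paper's own base case. What is true is that $\theta$ contains $u$ when $(\alpha,\beta)=(u,uv)$, and $v$ when $(\alpha,\beta)=(uv,v)$. The bound $|\tau|\geq|\alpha\beta|$ needs none of this, since $|\tau|\geq|\theta|+3$ already gives $|\tau|\geq|w|/2$.

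The paper avoids this bookkeeping by induction on the depth of $(\alpha,\beta)$. It writes $w$ as the image of $aa(ba)^kbb$ under a product of $U$'s and $V$'s, with base case $\theta=\varnothing$, $\tau=(ab)^kb$, $\gamma=(ab)^{k-1}$. Under $U$ it propagates $\tilde\theta=bU(\theta)$, $\tilde\tau=bU(\tau)$, $\tilde\gamma=bU(\gamma)$, and under $V$ it propagates $\tilde\theta=V(\theta)a$, $\tilde\tau=V(\tau)$, $\tilde\gamma=V(\gamma)a$. Properties (a) and (b) are checked at each step with \Cref{lem:commutative_identities}.

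A direct argument in the spirit of yours also works once $\theta$ is chosen correctly:
\begin{itemize}
\item From $\alpha\beta=\beta_a\alpha^b=\beta_ab\alpha^+$ one gets $w=\alpha(\alpha\beta)^{k+1}\beta=a\theta a|b\tau$ with $\tau=\alpha^+(\alpha\beta)^k\beta$.
\item Expanding the first $\alpha\beta$ inside $\tau$ the same way, and using $\alpha^+\beta_a=\theta a$ and $\theta=\theta^T$, gives $\tau=\theta^Tab\gamma b$ with $\gamma=\alpha^+(\alpha\beta)^{k-1}\beta^-$. This is (a).
\item For (b), note that $\theta ab\theta^Tab\gamma b=w^+=\alpha^+(\alpha\beta)^{k+1}\beta$ while $\gamma a=\alpha^+(\alpha\beta)^{k-1}\beta_a$. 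So (b) reduces to the fact that any word beginning with $\alpha\beta$ begins with $\beta_a$.
\end{itemize}

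Part (2) does follow from part (1) by symmetry, but the right symmetry is $w\mapsto E(w)^T$, where $E$ exchanges the letters $a$ and $b$. It sends $\alpha\alpha(\beta\alpha)^k\beta\beta$ to the corresponding word for $(E(\beta)^T,E(\alpha)^T)$. That pair lies in $\overline{P}$ because the map intertwines $\overline{U}$ and $\overline{V}$. The identity $\alpha^b_a=\alpha^T$ you cite only swaps the two end letters and does not give this.

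Finally, $\alpha\alpha\beta\beta=a\theta ab\theta b$ does not have the shape in (1) with this $\theta$, since its $\tau=\theta b$ is too short. This is why the case $k=0$ is treated separately in \Cref{lem:bad_extensions}.
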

	
	\begin{proof}
		Since $(\alpha,\beta)\in\overline{P}$, we know that there is a $W=R_1\dots R_k$ where $R_i\in\{U,V\}$ such that $\alpha=W(a)$ and $\beta=W(b)$. Thus $w$ is the image of $aa(ba)^kbb$.
		
		The proof of the factorization $w=\tau a|b\theta b|$ is already contained in \cite[Lemma 3.15]{EGRS2024}, so we will only do the proof of the case $w=|a\theta a|b\tau $ (which is analogous). Write $w=\alpha\alpha(\beta\alpha)^k\beta\beta$ with $k\geq 0$ and $(\alpha,\beta)\in\overline{P}_n$.
		We will prove the lemma by induction on $n$. For $n=0$, by choosing $\gamma=(ab)^{k-1}$, $\theta=\varnothing$ and $\tau=(ab)^{k}b$, it can be easily verified that it satisfies the conclusion on the lemma. 
		
		We now show that the previous structure remains when we apply $U$ or $V$ to $w=|a\theta a|b\tau$.
		In the first case we have $U(w)=|abU(\theta)a|bbU(\tau)=|a\tilde{\theta}a|b\tilde{\tau}$ where $\tilde{\theta}=bU(\theta)$, $\tilde{\tau}=bU(\tau)$. We chose $\tilde{\gamma}=bU(\gamma)$. Therefore using the identities $bU(\theta^T)=U(\theta)^Tb, V(\theta^T)a=aV(\theta)^T$ from \Cref{lem:commutative_identities} we have
		\begin{equation*}
			\tilde{\tau}=bU(\theta^Tab\gamma b)=bU(\theta^T)abbU(\gamma)b=U)=U(\theta)^TbabbU(\gamma)b=\tilde{\theta}^Tab\tilde{\gamma}b.
		\end{equation*}
		Since $\theta ab\theta^Tab\gamma b=\gamma a\eta$ for some $\eta$, we have 
		\begin{equation*}
			\tilde{\theta}ab\tilde{\theta}^Tab\tilde{\gamma}b=bU(\theta)abU(\theta)^TbabbU(\gamma)b=bU(\theta ab\theta^Tab\gamma b)=bU(\gamma)abU(\eta)=\tilde{\gamma}ab\eta.
		\end{equation*}
		
		In the second case we have $V(w)=|aV(\theta)aa|bV(\tau)=|a\tilde{\theta}a|b\tilde{\tau}$ where $\tilde{\theta}=V(\theta)a$ and $\tilde{\tau}=V(\tau)$. We chose $\tilde{\gamma}=V(\gamma)a$. 
		\begin{equation*}
			\tilde{\tau}=V(\theta^Tab\gamma b)=V(\theta^T)aabV(\gamma)ab=aV(\theta)^TabV(\gamma)ab=\tilde{\theta}^Tab\tilde{\gamma}b.
		\end{equation*}
		Since $\theta ab\theta^Tab\gamma b=\gamma a\eta$ for some non empty $\eta$, we have
		\begin{equation*}
			\tilde{\theta}ab\tilde{\theta}^Tab\tilde{\gamma}b=V(\theta)aabaV(\theta)^TabV(\gamma)ab=V(\theta ab\theta^Tab\gamma b)=V(\gamma)aV(\eta)=\tilde{\gamma}V(\eta).
		\end{equation*}
		Since $\eta$ is non empty, $V(\eta)$ begins with $a$, so we are done.
		
		To prove the claim over $\theta$, notice that if $(\alpha,\beta)=(ab,b)$ then $\theta=b$ and if $(\alpha,\beta)=(a,ab)$ then $\theta=a$. The claim follows by induction since $\tilde{\theta}=bU(\theta)$ or $\tilde{\theta}=V(\theta)a$ according to whether we need to apply $U$ or $V$ to the pair $(\alpha,\beta)$.
		
		Finally, note that in general $|\tau|\geq|aab\theta|=|abb\theta|$, so we also have $|\tau|\geq|\alpha\alpha(\beta\alpha)^k\beta\beta|/2\geq|\alpha\beta|$
	\end{proof}
	
	Before proceeding, we introduce a technical lemma, which is very useful in the proof of the main result.
	
	\begin{lemma}\label{lem:force_continuation}
		Let $(\alpha,\beta)\in\overline{P}$ and $w=\alpha\alpha(\beta\alpha)^k\beta\beta$ for some $k\geq 1$.
		\begin{enumerate}
			\item If $w=|a\theta a|b\tau$ with $\tau=\theta^Tab\gamma b$, where $\theta,\tau,\gamma$ satisfy the Case 1 of \Cref{lem:leverage_cut}.
			Let $\gamma^\prime$ be a prefix of $\gamma$ over the alphabet $\{a,b\}$ and let $w^\prime=|a\theta a|b\tau^\prime$ with $\tau^\prime=\theta^Tab\gamma^\prime$. 
			Then any extension of $w^\prime$ to the left of length $|\gamma^\prime|+2$, is forced to be $(\gamma^\prime)^T bw^\prime$, otherwise the cuts marked in $w^\prime$ became bad for in the new extension. \label{item_force1}
			\item If $w=\tau a|b\theta b|$ with $\tau=a\gamma^Tab\theta^T$, where $\theta,\tau,\gamma$ satisfy the Case 2 of \Cref{lem:leverage_cut}.
			Let $\gamma^\prime$ be a prefix of $\gamma$ over the alphabet $\{a,b\}$ and let $w^\prime=\tau^\prime a|b\theta b|$ with $\tau^\prime=(\gamma^\prime)^Tab\theta^T$. Then any extension of $w^\prime$ to the right of length $|\gamma^\prime|+2$, is forced to be $w^\prime a\gamma^\prime$, otherwise the cuts marked in $w^\prime$ became bad for in the new extension. \label{item_force2}
		\end{enumerate}
	\end{lemma}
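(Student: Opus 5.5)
Two items are to be proved, and I would treat \Cref{item_force1} and get \Cref{item_force2} by transposition. Transposing $w=\alpha\alpha(\beta\alpha)^k\beta\beta$ and exchanging the roles of $a,b$ (equivalently, passing from $(\alpha,\beta)$ to the reversed alphabet) turns the factorization of Case 2 of \Cref{lem:leverage_cut} into that of Case 1. The goodness criterion of \Cref{prop_lexicographic_comparison} is symmetric under this operation, so the second item follows from the first.

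For \Cref{item_force1}, write $w'=a\theta a|b\theta^Tab\gamma'$. The marked cut is $\dots a\theta a|b\theta^T\dots$. I would argue letter by letter by induction on $j$, the number of letters of $(\gamma')^Tb$ already placed to the left of $w'$.

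\textbf{Step $j=0$.} Read around the marked cut: to the right it is $b\theta^T a b\gamma'$, to the left it is $a\theta^T$ (reversed), followed by the new letters. The left side $a\theta^T a$ read from the cut matches the right side $b\theta^T a$ except at the first position. I would apply \Cref{prop_lexicographic_comparison} to the cut $E^Ta|bF$, with $E$ the leftward continuation and $F$ the rightward one. The cut is good iff $F\preccurlyeq E$ in the appropriate orientation. Since $E$ and $F$ agree on $\theta^T a$, the next comparison is between the first new left letter and $b$, the letter after $\theta^Ta$ on the right. Requiring the cut not to be bad forces that letter to be $b$. This gives the leading $b$ of the extension, i.e.\ the letter adjacent to $a\theta$.

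\textbf{Step $j\to j+1$.} Suppose the extension reads $(\gamma'_{1..j})^T b$ to the left of $w'$. Then $E$ and $F$ agree on $\theta^T a b\gamma'_1\dots\gamma'_j$. The next left letter is compared with $\gamma'_{j+1}$. If it is the opposite letter, the strict lexicographic comparison goes the wrong way and the marked cut becomes bad. Hence it must equal $\gamma'_{j+1}$. Iterating $|\gamma'|$ times yields exactly $(\gamma')^Tbw'$, of total added length $|\gamma'|+2$ counted in digits, since the $a/b$ letters are doubled. Condition (b) of \Cref{lem:leverage_cut}, that $\theta ab\theta^Tab\gamma b$ begins with $\gamma a$, guarantees this forced continuation is consistent with $w$ itself, so the forcing never meets a contradiction before $\gamma'$ is exhausted. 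That condition also identifies which of the two cut orientations in the lexicographic criterion is the relevant one.

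\textbf{Main obstacle.} The delicate point is the bookkeeping of orientation. One must check that with the abuse of notation $E^Ta|bF=E^T2|2bF$, the comparison in \Cref{prop_lexicographic_comparison} truly compares the left word read leftwards, $\theta^T$ followed by new letters, with $\theta^Tab\gamma'$. It must also be checked that a mismatch produces a strict inequality in the bad direction rather than equality. The worry is whether the finite-word notion of bad means every extension is bad. To settle it, I would reduce to \Cref{lem:compare} style size estimates: once the first disagreement occurs at a finite position, every bi-infinite completion is lexicographically on the bad side. So the cut is bad in the sense of \Cref{def_badcut}.
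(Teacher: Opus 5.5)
Your reduction of the second item to the first (transpose and swap $a\leftrightarrow b$) is fine, and so is your first step: the interior cut $a\theta a|b\theta^Tab\gamma'$ does force the letter next to $a\theta$ to be $b$.

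The induction step, however, fails. For the cut $E^Ta|bF$, with $E$ read leftwards and $F=\theta^Tab\gamma'$, goodness means $F\preccurlyeq E$. So a first disagreement hurts in only one direction. Suppose $E$ and $F$ agree on $\theta^Tab\gamma'_1\cdots\gamma'_j$, $\gamma'_{j+1}=a$, and you place $b$ on the left. Then $F\prec E$ strictly, and the interior cut is \emph{good}. That cut therefore forces only the $b$'s of $(\gamma')^T$, and your claim that ``the opposite letter'' always makes it bad is false.

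Concretely, take $(\alpha,\beta)=(a,b)$, $k=2$, $\theta=\varnothing$, $\gamma=ab$, $\gamma'=a$, so that $w'=|aa|baba$. The extension $bb\,aa|baba$ has a good interior cut, yet the lemma asserts the extension must be $ab$.

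The missing idea is that $w'=|a\theta a|b\tau'$ carries \emph{two} marked cuts, and the $a$'s are forced by the left one. Once the first forced $b$ is placed, the left boundary becomes a cut $(E')^Tb|aF'$. Here $E'$ is the rest of the extension read leftwards, and $F'=\theta ab\theta^Tab\gamma'$. This cut is good iff $E'\preccurlyeq F'$.

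Condition (b) of \Cref{lem:leverage_cut}, that $\theta ab\theta^Tab\gamma b$ begins with $\gamma a$, is exactly what makes $F'$ begin with $\gamma'$. Hence this cut forbids a $b$ wherever $\gamma'$ has an $a$; in the example, $bb|aababa$ is bad. Both cuts compare the new letter with the same letter of $\gamma'$, under the same agreement of the preceding letters. So the $b$'s come from the interior cut and the $a$'s from the boundary cut, which is the paper's argument.

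Condition (b) is therefore not a consistency check or a choice of orientation, as you describe it. It drives half of the forcing, and your proof needs the second cut to go through.
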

	
	\begin{proof}
		We give a proof for \Cref{item_force1}, the proof for \Cref{item_force2} is completely analogous.
		
		Assume there is an extension $w_{-(|\gamma^\prime|+2)}\dots w_{-1}$ such that $w_{-(|\gamma^\prime|+2)}\dots w_{-1}w$ has no bad cuts. 
		Since $\tau$ begins with $\theta^Tab$, the second cut forces a $b$ to the left of $w$, that is $w_{-1}=w_{-2}=1$. 
		In particular the configuration $b|a\theta a|b\tau^\prime$ is preceded by a $(\gamma^\prime)^T$: each $b$ of $(\gamma^\prime)^T$ is forced by the second cut (since $\tau^\prime$ begins with $\theta^Tab\gamma^\prime$) and each $a$ of $(\gamma^\prime)^T$ is forced by the first cut (since $\theta ab\theta^Tab\gamma^\prime$ begins with $\gamma^\prime$). 
		Finally we arrive to $(\gamma^\prime)^Tb|a\theta a|b\tau$. 
	\end{proof}
	
	The following lemma is a corollary of \Cref{lem:force_continuation}, which provides a criterion to determine when a bad cut exists.
	
	\begin{lemma}\label{lem:bad_extensions}
		Let $(\alpha,\beta)\in\overline{P}$. If $w$ is a finite word over the alphabet $\{\alpha,\beta\}$ starting with $\alpha\alpha$ and ending with $\beta\beta$. 
		Then any extension to the left or right of $w$ over the alphabet $\{a,b\}$ of the same length as $w$, will force a bad cut inside $w$.
	\end{lemma}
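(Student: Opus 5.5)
The plan is to reduce the statement to the structure already isolated in \Cref{lem:leverage_cut}, and then let \Cref{lem:force_continuation} do the forcing. Since $w$ is a finite word over $\{\alpha,\beta\}$ that starts with $\alpha\alpha$ and ends with $\beta\beta$, we first locate a subword of the form $\alpha\alpha(\beta\alpha)^k\beta\beta$ with $k\geq 0$. To find it, take the last occurrence of $\alpha\alpha$ that lies before the first occurrence of $\beta\beta$ following it. Between these two occurrences the word must alternate, so it has exactly this form. (The case $k=0$ is treated the same way as in the proof of \Cref{lem:leverage_cut}, whose base case allows $k\geq 0$.) Call this subword $w_0$. By \Cref{lem:leverage_cut}, $w_0$ has both factorizations $w_0=|a\theta a|b\tau$ with $\tau=\theta^Tab\gamma b$, and $w_0=\tau' a|b\theta' b|$ with $\tau'=a\gamma'^Tab\theta'^T$, and moreover $|\tau|,|\tau'|\geq|\alpha\beta|$.

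We treat a left extension; the right extension is symmetric, using Case 2 of \Cref{lem:force_continuation}. Suppose $w$ is extended to the left by $|w|$ letters and, aiming for a contradiction, that no cut inside $w$ is bad. Apply Case 1 of \Cref{lem:force_continuation} to $w_0$ with $\gamma'=\gamma$, noting that $\tau$ lies inside $w_0$ and hence inside $w$. The letters immediately to the left of $w_0$ are then forced to read $\gamma^Tb$. There are two possibilities.

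\textbf{The forced letters stay inside $w$.} If $w_0$ is preceded inside $w$ by fewer than $|\gamma|+2$ letters' worth of constraint-free room, the forcing is checked against the actual letters of $w$ to the left of $w_0$. These letters are a concatenation of $\alpha$'s and $\beta$'s. We would compare $\gamma^Tb$ with the suffix of the word over $\{\alpha,\beta\}$ preceding $w_0$. Using the identities of \Cref{lem:symmetry2} and \Cref{lem:symmetry3} (for instance $u^b\beta\alpha^{e}\ldots v_a$ is a transpose), the forced word should coincide with the transpose of a word ending in $\beta\beta$ rather than $\alpha\beta$ or $\alpha\alpha$. This contradicts the maximality in the choice of $w_0$, since $w_0$ was chosen so that the letter before it is $\beta$ (or $w_0$ is at the start of $w$).

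\textbf{The forcing runs past the left end of $w$.} If $w_0$ starts close to the beginning of $w$, the forced letters land in the extension itself. Here the point is that $\gamma$ is a prefix of $\theta ab\theta^Tab\gamma b$, so the forcing propagates: after $\gamma^Tb$ has been forced, the pattern reappears shifted, and we can iterate \Cref{lem:force_continuation} with a new prefix $\gamma'$. This produces a forced periodic-like continuation whose length grows. Since the extension has length $|w|\geq|w_0|\geq|\tau|+\dots$, the iteration must eventually demand a letter contradicting one already forced, in the style of the descent argument in the proof of \Cref{lem:lemma31gurwood}. At that point one of the marked cuts in $w_0$ becomes bad.

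The main obstacle is exactly this bookkeeping. We must show that the length $|w|$ always suffices to reach a contradiction, and that the contradiction occurs at a cut inside $w$ and not merely inside the extension. The first thing to try is to bound the forced length by $|\gamma|+2\leq|\tau|\leq|w_0|\leq|w|$, so that a single application of \Cref{lem:force_continuation} already fits within the extension. After that, the extended word contains $\gamma^Tb\,a\theta a b\theta^T a b\gamma b$, i.e.\ the left-right reversal of a $b\omega^Tb|a\omega a$ pattern around the middle cut. By \Cref{prop_lexicographic_comparison} this pattern is bad, and that would finish the proof.
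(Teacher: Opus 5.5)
Your skeleton is the paper's: pass to a subword $w_0=\alpha\alpha(\beta\alpha)^k\beta\beta$, then apply Case 1 of \Cref{lem:force_continuation} with $\gamma'=\gamma$ to force $\gamma^Tb$ immediately to the left of $w_0$. The closing step, however, is wrong.

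\textbf{The closing step.} The forced word is $\gamma^Tb\,a\theta a|b\theta^Tab\gamma b$, which has the form $\omega^Ta|b\omega b$ with $\omega=\theta^Tab\gamma$. The outer letter $a$ needed for the bad pattern $a\omega^Ta|b\omega b$ of \Cref{prop_lexicographic_comparison} is missing, and without it this cut is only indeterminate. Concretely, take $(\alpha,\beta)=(a,b)$ and $k=1$, so $\theta=\gamma=\varnothing$ and $w_0=aababb$. The forced word is $baababb$, and it has no bad cut at all.

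To finish you need one more letter $c$ to the left of $\gamma^Tb$. It is available, since $|\gamma|+4\le|\tau|\le|w_0|\le|w|$. Then split on $c$:
\begin{itemize}
\item If $c=a$, the second marked cut becomes $a\omega^Ta|b\omega b$, which is bad.
\item If $c=b$, the first marked cut becomes $b\gamma^Tb|a\gamma a\cdots$, which is also bad. This uses property (b) of \Cref{lem:leverage_cut}: $\theta ab\theta^Tab\gamma b$ begins with $\gamma a$.
\end{itemize}
This is exactly how the paper concludes. Your proposal only gestures at the $c=a$ branch, without the letter $c$, and omits the $c=b$ branch entirely.

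\textbf{Two smaller issues.}

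First, your split into ``forced letters inside $w$'' versus ``forced letters in the extension'' is unnecessary, and the first branch is incorrect as written. \Cref{lem:force_continuation} constrains whatever letters stand to the left of $w_0$, whether they come from $w$ or from the extension. Any disagreement with $\gamma^Tb$ already produces a bad cut inside $w_0$, hence inside $w$. So neither a maximality argument nor an iterated descent is needed. Moreover, the $\{\alpha,\beta\}$-letter preceding $w_0$ need not be $\beta$: for $w=\alpha\alpha\alpha\beta\beta$ it is $\alpha$.

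Second, the case $k=0$ is not covered by \Cref{lem:leverage_cut}, since its base case would require $\gamma=(ab)^{-1}$. Instead use the second item of \Cref{lem:symmetry1}: $\alpha\alpha\beta\beta=a\theta ab\theta b$ with $\theta$ a palindrome. Then $a\theta a|b\theta b$ is already a bad cut of $w_0$, and no extension is needed.
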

	\begin{proof}
		Without loss of generality we can assume that the finite word $w$ is of form $\alpha\alpha(\beta\alpha)^k\beta\beta$ with $k\geq 0$.
		For $k=0$, the lemma holds since we can always write $\alpha\alpha\beta\beta$ as $a\theta ab\theta b$ with $\theta$ palindrome, and will produce bad cut precisely at $w=a\theta a\vert b\theta b$.
		
		Now we consider the case $k\geq 1$.
		By \Cref{lem:force_continuation} the word $\omega=|a\theta a|b\tau$ is preceded by $\gamma^T$.
		In particular we arrive to $\gamma^Tb|a\theta a|b\tau$. 
		Finally, observe that the word $a\gamma^Tb|a\theta a|b\tau$ has a bad cut in the second cut (since $\tau$ begins with $\theta^Tab\gamma b$) and the word $b\gamma^Tb|a\theta a|b\tau$ has a bad cut in the first cut (because $\theta ab\theta^Tab\gamma b$ begins with $\gamma a$).
	\end{proof}

	Now we are ready to prove the main steps in the characterization. First we introduce several technical lemmas, which will include two parallel statements, corresponding to the cases starting with letter $1$ and $2$ respectively. We will only prove for one case, since the another is analogous.
	
	\begin{lemma}\label{lem:good_or_indeterminate}
		\begin{enumerate}
			\item Suppose that $\omega^T=\dots\omega_3\omega_2\omega_1$ $(\omega_i\in\{1,2\})$ can be written over the alphabet $(u,v)\in\overline{P}$ and  with $\omega_1=1$. Given $\eta a$ a suffix of $v_a$ or $u_a$, the cut $\eta a\vert\omega$ is good or indeterminate, moreover if the cut $\eta a\vert\omega$ is indeterminate, then $\omega$ will begin with $b\eta^T$. \label{item_a}
			\item Suppose that $\omega=\omega_1\omega_2\dots$ $(\omega_i\in\{1,2\})$ can be written over the alphabet $(u,v)\in\overline{P}$ and with $\omega_1=2$. Given $\eta b$ a suffix of $u^{+}$ or $v^{+}$, the cut $\eta b\vert\omega$ is good or indeterminate, moreover if the cut $\eta b\vert\omega$ is indeterminate, then $\omega$ will begin with $a\eta^T$. \label{item_b}
		\end{enumerate}
	\end{lemma}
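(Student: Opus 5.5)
We prove only \Cref{item_a}; \Cref{item_b} is the same argument with the roles of $a,b$ and of $u^{+},v^{+}$ versus $u_a,v_a$ exchanged.

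The plan is to reduce the cut $\eta a\vert\omega$ to a cut of the form $E^Ta\vert bF$ inside a balanced word and then use \Cref{prop_lexicographic_comparison}. Here $\eta a$ is a suffix of $v_a$ or $u_a$, and $\omega$ begins with $\omega_1=1$, i.e.\ with $b$ over $\{a,b\}$. So the cut has the shape $\eta a\vert b\dots$, and by that proposition its value is governed by comparing $\eta^T$, read leftwards, with the continuation of $\omega$ after the initial $b$, read rightwards. Concretely, write $\omega=bF$. The cut is bad exactly when $\eta^T\ast$ is lexicographically smaller than $F$ at the first position where they differ, and good when it is larger. It is indeterminate exactly when $\eta^T$ is a prefix of $F$, since then the comparison depends on letters to the left of $\eta a$ that are not yet fixed. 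This last alternative is precisely the claimed conclusion that $\omega$ begins with $b\eta^T$. So it suffices to rule out a first disagreement between $\eta^T$ and $F$ at which $F$ has $b$ and $\eta^T$ has $a$. Such a disagreement is the bad pattern $a\theta^Ta\vert b\theta b$, with $\theta$ the common prefix.

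To exclude it, use that $\omega$ is written over $(u,v)\in\overline{P}$. By \Cref{lem:symmetry2}, $v_a$ and $u_a$ are, after transposition, prefixes of words over $\{u,v\}$: $v_a^T=v^b$ is a prefix of $v$, and $u_a=u^{bT}$. Moreover every word $u^kv$ and $uv^k$ begins with $v_a$ and ends with $u^b$. Hence $\eta^T$ is a prefix of some concatenation of the blocks $u,v$. Both $\eta^T$ and $F$ are then prefixes of balanced words coded by the same Christoffel pair, and $F$ is $\omega$ with its leading $b$ removed. The word $\eta a b F$ is then a factor of a bi-infinite word over $\{u,v\}$ of the type $\dots u\,v\,|\,\dots$. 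Such a factor either sits inside a periodic word $(uv)^\infty$ or $(\cdots)$ with Markov value at most $3$, which has no bad cuts (\Cref{prop_renorm}), or it is controlled by \Cref{lem:only_one_indeterminate_cut}. Either way the pattern $a\theta^Ta\vert b\theta b$ cannot occur with $\eta a$ a suffix of $u_a$ or $v_a$.

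The main obstacle is making this last step rigorous. The first disagreement could fall beyond the first block of $\omega$, and $\omega$ is an arbitrary word over $\{u,v\}$ rather than a Christoffel-type word. I would therefore argue by induction along the renormalization tree, as in \Cref{lem:ba_good_cuts} and \Cref{lem:only_one_indeterminate_cut}.

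For $(u,v)=(a,b)$ we have $u_a=v_a=\varnothing$, so $\eta$ is empty and the claim is trivial. For the inductive step, apply $U$ or $V$ and use \Cref{lem:commutative_identities} to transport the cut. A cut $\eta a\vert b\dots$ over $(u,v)$ becomes $\tilde\eta a\vert b\dots$ with $\tilde\eta=bU(\eta)$ or $\tilde\eta=V(\eta)a$. Here we use $U(\omega)$ or $V(\omega)$, with $\omega$ written over the previous alphabet.

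A good cut $x\theta^Ta\vert b\theta y$ with $x\ne y$ maps to a good cut, exactly as in the computation in the proof of \Cref{lem:only_one_indeterminate_cut}. An indeterminate cut, meaning one where $\omega$ begins with $b\eta^T$, maps to one where $\tilde\omega$ begins with $b\tilde\eta^T$. The delicate point to check is that the suffixes $\eta a$ of $u_a,v_a$ at the new level all arise this way from suffixes at the previous level, possibly together with a few new short suffixes that can be verified directly.
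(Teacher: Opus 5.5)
\textbf{There is a genuine gap.} Your first paragraph is a correct reduction. With $\omega=bF$, the cut $\eta a\vert bF$ is indeterminate exactly when $\eta^T$ is a prefix of $F$. So the whole lemma amounts to excluding a first mismatch with $a$ on the left and $b$ on the right.

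The step meant to exclude that mismatch does not work. First, the identities it rests on are misquoted. For $v=a\theta b$ one has $v_a=a\theta a$, a palindrome, so $v_a^T=v_a\neq v^b=b\theta b$, and $v_a$ is not a prefix of $v$. Similarly $u_a\neq (u^b)^T$, and for $(a,b)$ one has $v_a=a$, not $\varnothing$.

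More seriously, $\omega$ is an arbitrary word over $\{u^T,v^T\}$, and nothing places $\eta a\,\omega$ inside a balanced word or a word of Markov value at most $3$. So \Cref{prop_renorm} and \Cref{lem:only_one_indeterminate_cut} are not available. In fact the assertion you want is false for the $u_a$ alternative. Take $(u,v)=(ab,b)=\overline{U}(a,b)$, so $u_a=aa$; put $\eta a=aa$ and let $\omega^T$ end with the block $v=b$. Then $\omega=bb\cdots$, and $\eta a\vert\omega=aa\vert bb\cdots$ is bad for every left extension by \Cref{prop_lexicographic_comparison}: it contains $a\sigma^Ta\vert b\sigma b$ with $\sigma$ empty. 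Likewise $(u,v)=(aab,ab)$ with $\eta a=aaa$ and $\omega=bab\cdots$ gives a bad cut. So the $u_a$ clause can only hold when $\omega$ begins with $u^T$, where it is immediate: $u=a\theta'b$ and $\omega=b\theta'a\cdots$. Note that the paper's proof itself carries out only the $v_a$ case.

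For the $v_a$ clause, the idea you are missing is that no induction is needed, and nothing about $\omega$ beyond its first block matters. Since $|\eta a|\le|v_a|$, the comparison is settled inside the first block $w_1^T\in\{u^T,v^T\}$ of $\omega$. Your worry that the disagreement might fall later is therefore unfounded. The paper argues as follows.
\begin{itemize}
\item If $\omega$ begins with $v^T=b\theta a$, where $\theta$ is a palindrome by \Cref{lem:symmetry1}, then $\eta$ is a suffix of $a\theta$. Hence $\eta^T$ is a prefix of $\theta a$, and $\omega$ begins with $b\eta^T$.
\item If $\omega$ begins with $u^T$ and $(u,v)=(\tilde u\tilde v,\tilde v)$, then $u^T=\tilde v^T\tilde u^T$ and $v_a=\tilde v_a$, which reduces to the previous case.
\item If $\omega$ begins with $u^T$ and $(u,v)=(\tilde u,\tilde u\tilde v)$, then $\tilde u\neq a$ because $\omega_1=1$, and $v_a=\tilde v_a\tilde u^T$ by \Cref{lem:symmetry2}. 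Either $|\eta a|\ge|\tilde u|$, and the cut reads $\cdots b\tilde\theta a\vert b\tilde\theta a\cdots$ with $\tilde u=a\tilde\theta b$, which is good. Or $\eta a$ is a suffix of $\tilde u^T$, and $\omega$ begins with $b\eta^T$.
\end{itemize}

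Your proposed transport along the inner substitutions $U,V$ stops exactly at the point that matters: you do not check that every suffix of the new $u_a,v_a$ comes from an old one. It has a further defect. Every $U$-image ends with $b$, so the hypothesis $\omega_1=1$ does not pass to the preimage of $\omega^T$. Given the counterexample above, this induction cannot close for $u_a$ in any case.
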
	
	\begin{proof}
		We give a proof of \Cref{item_a}, the proof for \Cref{item_b} is completely analogous. Moreover, we will only do the proof when $a\eta$ is a suffix of $v_a$ since the situation for $u_a$ is very similar. 
		
		First notice that $\omega$ will begin with $u^T$ or $v^T$.
		If $\omega$ begins with $v^T$, then we have $\eta a\vert \omega=\eta a\vert v^T\dots$ which is indeterminate because by \Cref{lem:symmetry1} we have $v=a\theta b$ for some palindrome $\theta$ (if $v=b$ then $\eta$ must be empty), and $\eta a$ is a suffix of $v_a=a\theta a$.
		If $\omega$ begins with $u^T$, that is, $\eta a\vert \omega=\eta a\vert u^T\dots$.
		We have two subcases, if $(u,v)=(\tilde{u}\tilde{v},\tilde{v})$ then we have $\eta a\vert \omega=\eta a\vert\tilde{v}^T\tilde{u}^T\dots$ which is indeterminate because $\eta a$ is a suffix of $v_a=\tilde{v}_a$.
		If $(u,v)=(\tilde{u},\tilde{u}\tilde{v})$, notice that since $\omega$ begins with $b$, one must have $\tilde{u}\neq a$.
		By \Cref{lem:symmetry2} we have $\tilde{u}_a^b=\tilde{u}^T$ and $v_a=\tilde{v}_a\tilde{u}_a^b=\tilde{v}_a\tilde{u}^T$. 
		Moreover if $|\eta a|\geq |\tilde{u}|$, we have $\eta a\vert \omega=\dots \tilde{u}^T\vert \tilde{u}^T\dots$ which is a good cut since $\tilde{u}=a\tilde{\theta}b$ for some palindrome $\tilde{\theta}$ by \Cref{lem:symmetry1}.
		If $|\eta a|< |\tilde{u}|$, then $\eta a\vert \omega= \eta a\vert \tilde{u}^T\dots$ is indeterminate since $\eta a$ is a suffix of $\tilde{u}^T$.
		
		Finally, the cut $\eta a\vert \omega$ is of form $\eta a\vert b\eta^T\dots$ for indeterminate case follows directly from the above analysis.
	\end{proof}
	\begin{remark}
		In the second item we can not change $v^{+}$ by $v^b$ because $\omega$ could begin with $uv=v_au^b$ which would make the cut $v^b|\omega=v^b|v_a\dots$ bad.
	\end{remark}
	
	\begin{lemma}\label{lem:bad_cut_truncated1}
		Let $x=[x_0;x_1,x_2,\dots]$ be an irrational and $N\in\N$ minimal such that $\lambda_n(x)\leq 3$ for all $n\geq N+1$. 
		\begin{enumerate}
			\item Suppose $\omega^T=\dots x_{N+3}x_{N+2}x_{N+1}$ $(x_i\in\{1,2\})$ with $x_{N+1}=1$ is a left infinite word that can be written over some $(u,v)\in\overline{P}$. 
			Then $\omega$ can not begin with  $(\theta^\prime)^Tab\theta b$            
			where $\theta,\theta^\prime$ are finite words over the alphabet $\{a,b\}$ such that $|\theta^\prime|\leq|\theta a|$, and $\theta a$ a prefix of $\theta^\prime v_a$ or $\theta^\prime u_a$. \label{item:bad_cut_truncated1}
			\item Suppose $\omega=x_{N+1}x_{N+2}x_{N+3}\dots$ $(x_i\in\{1,2\})$ with $x_{N+1}=2$ is a right infinite word that can be written over some $(u,v)\in\overline{P}$.
			Then $2\omega$ can not begin with  $(\theta^\prime)^Tba\theta a$ 
			where $\theta,\theta^\prime$ are finite words over the alphabet $\{a,b\}$ such that $|\theta^\prime|\leq|\theta b|$, and $\theta b$ a prefix of  $\theta^\prime u^b$ or $\theta^\prime v^b$.\label{item:bad_cut_truncated2}
		\end{enumerate}
	\end{lemma}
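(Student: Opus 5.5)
The plan is to argue by contradiction. Suppose $\omega=x_{N+1}x_{N+2}\dots$ begins with $(\theta^\prime)^Tab\theta b$. I will exhibit an index $n\geq N+1$ with $\lambda_n(x)>3$, which contradicts the choice of $N$. The natural candidate is the marked cut
\[
x_1\dots x_N(\theta^\prime)^Ta|b\theta b\dots,
\]
read with the convention $E^Ta|bF=E^T2|2bF$. Its left side, read outwards from the cut, is $a\,\theta^\prime$ followed by $x_N x_{N-1}\dots x_1$. Its right side is $b\,\theta b$ followed by the rest of $\omega$. The cut lies strictly inside $\omega$, because $(\theta^\prime)^Ta$ is nonempty, so its index $n$ satisfies $n\geq N+2$.

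First I would record what the hypotheses say about $\theta^\prime$. Since $\theta a$ is a prefix of $\theta^\prime v_a$ (or of $\theta^\prime u_a$) and $|\theta^\prime|\leq|\theta a|$, the word $\theta^\prime$ is a prefix of $\theta a$. This gives two cases.

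\emph{Case $\theta^\prime=\theta a$.} Then the cut reads $a\theta^Ta|b\theta b$. This is bad by the second part of \Cref{prop_lexicographic_comparison}, whatever surrounds it, and we are done.

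\emph{Case $\theta^\prime$ a strict prefix of $\theta$.} Write $\theta=\theta^\prime\rho$; the hypothesis then says that $\rho a$ is a prefix of $v_a$ (resp.\ $u_a$). Replace the true left tail $x_N\dots x_1$ by an auxiliary left-infinite continuation over $\{a,b\}$ that begins with $\rho a$; concretely, take the rest of $v_a$ followed by a periodic word over $(u,v)$. In the resulting bi-infinite word the same position becomes $a\theta^Ta|b\theta b$, which is bad by \Cref{prop_lexicographic_comparison}, so its value is strictly bigger than $3$. Since $\lambda_n(x)$ and this auxiliary value share the right part and the block $\theta^\prime$ on the left, it then suffices to show
\[
[0;\,\theta^\prime,x_N,x_{N-1},\dots,x_1]\ \geq\ [0;\,\theta^\prime,\rho,a,\dots],
\]
with $2$ prepended to both sides as the $a$-convention requires. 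Because $|\theta^\prime|$ is even in the $\{1,2\}$-alphabet, this reduces to comparing $[0;x_N,\dots,x_1]$ with $[0;\rho,a,\dots]$ at the same parity.

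This comparison is the main obstacle, and it is where the minimality of $N$, through $\lambda_N(x)>3$, has to be used. As in the last paragraph of the proof of \Cref{lem:evens_blocks}, $\lambda_N(x)>3$ forces either $x_N\geq3$, or $x_N=2$ with $x_{N-1}=1$; that is, the left tail does not start with a full letter $a$. I will take the first case-split on the first letter of $\rho a$. If it is $a=22$, then $x_N\geq3$ wins at the first digit, and $x_N=2,x_{N-1}=1$ wins at the second digit, both with the right parity. If $\rho a$ begins with $b=11$, the tail $x_N\geq2$ already exceeds $1$ at the first digit. If this naive comparison does not close every case (for instance when $N=0$, where there is no left tail and $[0;\theta^\prime]$ is finite), I would instead use \Cref{lem:good_or_indeterminate}. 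It says that the cut $\eta a|\omega$, with $\eta a$ a suffix of $v_a$, is good only if $\omega$ begins with $b\eta^T$. I would use this to show that, among admissible continuations, the left tail is forced to behave like the one that produces the bad cut $a\theta^Ta|b\theta b$; the conclusion $\lambda_n(x)>3$ then follows exactly as above.

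Item \ref{item:bad_cut_truncated2} would be handled symmetrically. The roles of $a$ and $b$ are exchanged, the word considered is $2\omega$, the relevant pair is $u^b$ or $v^b$, and the bad configuration is $b\theta^Tb|a\theta a$ from \Cref{prop_lexicographic_comparison}.
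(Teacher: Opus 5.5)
Your reduction to comparing the true left tail $x_Nx_{N-1}\dots$ against $\rho a$ (a prefix of $w_a$, $w\in\{u,v\}$) is fine, and so are your treatments of $x_N\geq3$ and $(x_N,x_{N-1})=(2,1)$. The gap is your claim that $\lambda_N(x)>3$ excludes $x_Nx_{N-1}=22$. The sentence you borrow from \Cref{lem:evens_blocks} concerns the case $x_{N+1}=2$, where the cut at $N$ reads $\dots x_{N-1}|2\,2\dots$. In item (1) you have $x_{N+1}=1$, and if $x_N=x_{N-1}=2$ then
\[
\lambda_N(x)=[2;1,1,x_{N+3},\dots]+[0;2,x_{N-2},\dots]=3+[0;1,1,x_{N+3},\dots]-[0;1,1,x_{N-2},\dots],
\]
which exceeds $3$ as soon as, say, $x_{N-2}\geq3$. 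Here $\lambda_N(x)>3$ only excludes $x_N=1$ and the case $x_N=2,\ x_{N-1}\geq3$.

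So $x_N=x_{N-1}=2$ survives, and it is exactly the configuration your comparison cannot decide. Since $w_a$ begins with $a$, so does $\rho a$; your subcase where $\rho a$ begins with $b$ is therefore vacuous. The two words thus agree on their first letter $a=22$. This is the real content of the lemma, not a corner case. By contrast, the case $N=0$, which you flag as a possible difficulty, is immediate: the terminating left side $[2;2,\theta']$ beats $[2;2,\theta',\rho,a,\dots]$, because the first slot where they differ has even index.

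Your fallback remark about \Cref{lem:good_or_indeterminate} does not supply the missing step, and the mechanism is not that the tail is forced to copy $w_a$. The paper proceeds as follows.

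\begin{enumerate}
\item Let $\eta=x_i\dots x_{N-2}$ be the longest common suffix of $(w_a)^{-}$ and $x_1\dots x_{N-2}$, so that $\eta a=x_i\dots x_N$ is a suffix of $w_a$.
\item Note that $x_1\dots x_N$ cannot end with $2w_a^{+}$: otherwise the marked cut would be $\dots2\theta^Ta|b\theta b\dots$, which is bad.
\item Suppose $[a,\eta^T,x_{i-1},\dots,x_1]\leq[w_a]$. Choose a longer suffix $\tilde\eta\eta a$ of $w_a$ with $[a,\eta^T,x_{i-1},\dots,x_1]<[a,\eta^T,\tilde\eta^T,R]$ for every infinite word $R$. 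Then the cut $\tilde\eta\eta a|\omega$ has value at least $\lambda_N(x)>3$. By \Cref{lem:good_or_indeterminate} it is indeterminate, so $\omega$ begins with $b\eta^T\tilde\eta^T$. Comparing with the symmetric cut $\dots\tilde\eta\eta a|b\eta^T\tilde\eta^T\dots$, whose value is exactly $3$, gives $\lambda_N(x)<3$, a contradiction.
\item Hence $[a,\eta^T,x_{i-1},\dots,x_1]>[w_a]$. Since the slot after $a\theta'$ has even index, the marked cut is strictly larger than $\lambda(\dots a\theta^Ta|b\theta b\dots)>3$.
\end{enumerate}

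Some argument of this kind, using $\lambda_N(x)>3$ well beyond the first two digits of the tail, is what your proposal lacks. Item (2) needs the analogous step.
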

	\begin{proof}
		We give a proof of \Cref{item:bad_cut_truncated1}, the proof for \Cref{item:bad_cut_truncated2} is analogous.
		
		Suppose by contradiction that $\omega$ begins with such a subword. Let $w\in\{u,v\}$ be such that $\theta a$ is a prefix of $\theta^\prime w_a$. Notice that $\theta^\prime$ ends with $b$, so $|\theta^\prime|<|\theta a|$ and $\theta^\prime a$ is a prefix of $\theta$ since $w_a$ begins with $a$. Since $|\theta^\prime 2|$ is odd, if $N=0$ since $\theta^\prime$ is a prefix of $\theta$ we have
		\begin{equation}
			\begin{aligned}
				\lambda(\omega^T=(\theta^\prime)^T a|b\theta b\dots)&=[0;b,\theta,b,\dots]+[2;2,\theta^\prime] \\
				&>[0;b,\theta,b,\dots]+[2;2,\theta,a]=\lambda(a\theta^T a|b\theta b\dots)>3,
			\end{aligned}
			\label{eq_argument_bad_cut_1}
		\end{equation}
		which is a bad cut. Now let us consider the case $N\geq 1$. We have $x_N\leq 2$ since otherwise if $x_N\geq 3$, using the fact that $\theta^\prime$ is a prefix of $\theta$ of even length, one has
		\begin{equation}\label{eq:x_N<3}
			\lambda(x_1\dots x_N(\theta^\prime)^T a|b\theta b\dots)=3+[0;b,\theta,b,\dots]-[0;b,\theta^\prime,x_N,\dots,x_1]>3,
		\end{equation}
		which is a bad cut.
		On the other hand, we have $x_N\geq 2$ since $x_1\dots\vert x_N\omega$ is bad.
		So we have $x_N=2$, whence $N\geq 2$ and $x_{N-1}\leq 2$, because otherwise if $x_{N-1}\geq 3$ we have
		\begin{equation}\label{eq:x_N-1=2}
			\lambda_N(x)=\lambda(x_1\dots x_{N-1}|2bR)=[2;x_{N-1},\dots,x_1]+[0;b,R]<[2;2,R^T]+[0;b,R]=3.
		\end{equation}
		By the same trick in \eqref{eq:x_N<3} for the cut $x_1\dots x_N(\theta^\prime)^T a\vert b\theta b\dots$ and the fact that $\theta^\prime a$ is a prefix of $\theta$,  we have $x_{N-1}=2$.
		
		Now let $\eta=x_ix_{i+1}\dots x_{N-2}$ be the largest (possibly empty) common suffix between $(w_a)^{-}$ and $x_1\dots x_{N-2}$. 
		Notice that $x_1\dots x_N$ does not ends with $2w_a^{+}$, because otherwise, since $\theta a$ is a prefix of $\theta^\prime w_a$, one has that
		\begin{align*}
			\lambda(x_1\dots x_N(\theta^\prime)^Ta|b\theta b \dots)=\lambda(\dots 2w_a^{+}(\theta^\prime)^Ta|b\theta b \dots)=\lambda(\dots 2\theta^T a|b\theta b \dots) > 3,
		\end{align*}
		gives a bad cut inside $\omega^T$, contradicting the hypothesis. In particular $|\eta a|\leq|w_a|-2$. 
		
		If $[a,\eta^T,x_{i-1},...,x_1]\leq [w_a]$, since $|\eta a|\leq|w_a|-2$ then $[a,\eta^T,x_{i-1},...,x_1]<[a,\eta^T,\tilde{\eta}^T,R]$ for some suffix $\tilde{\eta}\eta a$ of $w_a$ and any infinite word $R$. 
		By hypothesis we have $\lambda(\tilde{\eta}\eta a|\omega)\geq \lambda(x_1\dots x_{i-1}\eta a|\omega)=\lambda_N(x)>3$, and, by the first item of \Cref{lem:good_or_indeterminate} we must have that $\omega$ begins with $b\eta^T\tilde{\eta}^T$.
		Hence
		\begin{align*}
			\lambda_N(x)&=\lambda(x_1\dots x_{i-1}\eta a|b\eta^T\tilde{\eta}^TR)\\
			&=[a,\eta^T,x_{i-1},\dots,x_1]+[0;b,\eta^T,R]<[a,\eta^T,\tilde{\eta}^T,R]+[0;b,\eta^T,\tilde{\eta}^T,R]=3,
		\end{align*}
		a contradiction. Therefore $[a,\eta^T,x_{i-1},\dots,x_1]>[w_a]$. However, this implies that we have a bad cut inside $\omega$ because
		\begin{align*}
			\lambda(x_1\dots x_{i-1}\eta a(\theta^\prime)^T a|b\theta b\dots)&=[a,\theta^\prime,a,\eta^T,x_{i-1},\dots,x_1]+[0;b,\theta,b,\dots] \\
			&>[a,\theta^\prime,w_a]+[0;b,\theta,b,\dots] \\
			&=\lambda(\dots a\theta^T a|b\theta b\dots)>3,
		\end{align*}
		where we used that $a\theta^\prime$ has even length and $\theta a$ is a prefix of $\theta^\prime w_a$.
	\end{proof}

	Before proceeding, we state two identities that are very useful which follow from \Cref{lem:symmetry3}. Let $k\geq 0$ and $\ell\geq 2$. If $(\alpha,\beta)=(uv,v)$ for some $(u,v)\in\overline{P}$, then  
	\begin{equation}\label{eq:identity1}
		u^b\beta^\ell(\alpha\beta)^k\alpha\alpha=u^b\beta^\ell(\alpha\beta)^k\alpha v_au^b=(\alpha\alpha(\beta\alpha)^k\beta^\ell)^Tu^b.
	\end{equation}
	If $(\alpha,\beta)=(u,uv)$, then 
	\begin{equation}\label{eq:identity2}
		\beta\beta(\alpha\beta)^k\alpha^\ell v_a=v_au^b\beta(\alpha\beta)^k\alpha^\ell v_a = v_a(\alpha^\ell(\beta\alpha)^k\beta\beta)^T.
	\end{equation}
	These identities will be used to either produce bad cuts or indeterminate cuts for subwords beginning with $\beta\beta$ and ending with $\alpha\alpha$.
	
	\begin{lemma}\label{lem:good_or_indeterminate2}
		Let $x=[x_0;x_1,x_2,\dots]$ be an irrational and $N\in\N$ minimal such that $\lambda_n(x)\leq 3$ for all $n\geq N+1$. 
		\begin{enumerate}
			\item Suppose $\omega^T=\dots x_{N+3}x_{N+2}x_{N+1}$ $(x_i\in\{1,2\})$ with $x_{N+1}=1$ is a left infinite word. If $\omega^T$ can be written over some alphabet $(\alpha,\beta)\in\overline{P}$, then $\omega$ can not contain $\beta\beta(\alpha\beta)^k\alpha\alpha$ for some $k\geq 0$. \label{item:good_or_indeterminate1}
			\item Suppose $\omega=x_{N+1}x_{N+2}x_{N+3}\dots$ $(x_i\in\{1,2\})$ with $x_{N+1}=2$ is a right infinite word. If $2\omega$ can be written over some alphabet $(\alpha,\beta)\in\overline{P}$, then $2\omega$ can not contain $\beta\beta(\alpha\beta)^k\alpha\alpha$ for some $k\geq 0$. \label{item:good_or_indeterminate2}
		\end{enumerate}
	\end{lemma}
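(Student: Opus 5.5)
We prove \Cref{item:good_or_indeterminate1}; \Cref{item:good_or_indeterminate2} is symmetric, using the second halves of \Cref{lem:good_or_indeterminate} and \Cref{lem:bad_cut_truncated1}. Argue by contradiction: suppose $\omega$ contains a factor $\beta\beta(\alpha\beta)^k\alpha\alpha$. Since $\omega^T$ is written over $(\alpha,\beta)$, the word $\omega$ is written over $(\alpha^T,\beta^T)$. The transpose of this factor is $\alpha^T\alpha^T(\beta^T\alpha^T)^k\beta^T\beta^T$, and it is a genuine factor of $\omega^T$ of the form $\alpha\alpha(\beta\alpha)^k\beta\beta$ in the reading direction of $\omega^T$. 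Hence \Cref{lem:leverage_cut} and \Cref{lem:bad_extensions} apply to it: the word $x_1x_2\dots$ has no bad cuts after position $N$, and any left extension of length at least $|w|$ over $\{a,b\}$ already forces a bad cut.

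\textbf{Step 1 (occurrence far from $x_{N+1}$).} If there are at least $|w|$ letters of $\omega^T$ on the appropriate side of the occurrence, \Cref{lem:bad_extensions} immediately gives a bad cut at some position $n\geq N+1$. This contradicts the definition of $N$.

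\textbf{Step 2 (occurrence near $x_{N+1}$).} The remaining case is an occurrence too close to the start $x_{N+1}$. There we use the finer structure of \Cref{lem:leverage_cut}, Case 2 (or Case 1, depending on orientation): write the occurrence as $\tau a|b\theta b|$ with $\tau=a\gamma^Tab\theta^T$. Then \Cref{lem:force_continuation} says that the letters between the occurrence and $x_{N+1}$ must copy a prefix $\gamma'$ of $\gamma$; otherwise a marked cut is already bad.

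\textbf{Step 3 (the truncated configuration).} After Step 2, $\omega$ begins with a truncated configuration $(\theta')^Tab\theta b$, where $\theta'$ is built from the partial copy of $\gamma$. We then check that $|\theta'|\le|\theta a|$ and that $\theta a$ is a prefix of $\theta' v_a$ or $\theta' u_a$, where $(u,v)$ is the parent of $(\alpha,\beta)$, i.e.\ $(\alpha,\beta)=(u,uv)$ or $(uv,v)$. This should follow from the relation ``$\theta^Tba\theta ba\gamma a$ starts with $\gamma b$'', together with the ``moreover'' clause of \Cref{lem:leverage_cut} that $\theta$ contains $u$ or $v$. Once these conditions hold, \Cref{lem:bad_cut_truncated1} forbids such a beginning, which again is a contradiction. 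The case $k=0$ is handled separately and more simply: $\beta\beta\alpha\alpha$ contains the palindromic bad configuration coming from $\alpha\alpha\beta\beta=a\theta ab\theta b$ (\Cref{lem:symmetry1}), or, via identity \eqref{eq:identity1}/\eqref{eq:identity2}, an indeterminate cut that \Cref{lem:good_or_indeterminate} resolves the same way.

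\textbf{Main obstacle.} The hardest part is Step 3: matching the truncation left by \Cref{lem:force_continuation} exactly to the hypotheses of \Cref{lem:bad_cut_truncated1}. In particular we must verify that the surviving prefix $\theta a$ agrees with $\theta' w_a$ for $w\in\{u,v\}$. Our first attempt will be to use the identities \eqref{eq:identity1} and \eqref{eq:identity2}. They rewrite $\beta\beta(\alpha\beta)^k\alpha\alpha$, padded by $u^b$ or $v_a$, as a transpose of an $\alpha\alpha\cdots\beta\beta$ word. This gives the needed prefix $v_a$ or $u^b$ directly adjacent to the forced cut, from which the prefix condition can be read off.
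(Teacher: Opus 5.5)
\textbf{The first reduction is false.} You transpose the occurrence and assert that $\alpha^T\alpha^T(\beta^T\alpha^T)^k\beta^T\beta^T$ is ``of the form $\alpha\alpha(\beta\alpha)^k\beta\beta$'' in $\omega^T$. For $(\alpha,\beta)\neq(a,b)$, one of $\alpha,\beta$ equals $uv=a\theta b$, which is not a palindrome. So the transposed word is built from the blocks $\alpha^T,\beta^T$ and is not an $(\alpha,\beta)$-word, and \Cref{lem:leverage_cut} and \Cref{lem:bad_extensions} say nothing about it. (The lemma is really about an occurrence inside $\omega^T$; that is how it is proved and how it is used in \Cref{lem:noalphaalphabetabeta}.) A sanity check shows the identification cannot be right. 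If a genuine $\alpha\alpha(\beta\alpha)^k\beta\beta$ sat inside the left-infinite word $\omega^T$, a left extension of any length would always exist inside $\omega^T$. Your Step 1 would then always apply, and there would be no ``occurrence near $x_{N+1}$'' case at all; this is just the trivial first sentence of the proof of \Cref{lem:noalphaalphabetabeta}. The present lemma exists precisely because reversing a $\beta\beta\cdots\alpha\alpha$ word yields an $\alpha\alpha\cdots\beta\beta$ word over the same alphabet only after a shift.

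\textbf{The missing idea is that shift.} It is given by \eqref{eq:identity1} and \eqref{eq:identity2}, and it drives the whole proof rather than being a Step~3 tool. For $(\alpha,\beta)=(u,uv)$ one has $\beta\beta(\alpha\beta)^k\alpha^\ell v_a=v_a(\alpha^\ell(\beta\alpha)^k\beta\beta)^T$. For $(\alpha,\beta)=(uv,v)$ one has $u^b\beta^\ell(\alpha\beta)^k\alpha v_au^b=(\alpha\alpha(\beta\alpha)^k\beta^\ell)^Tu^b$. One takes $\ell$ maximal and borrows one more block.
\begin{itemize}
\item In the $(uv,v)$ case, the $u^b$ comes from the preceding $\alpha=v_au^b$ (or from \eqref{eq:infinite_identities} if $\ell=\infty$), on the side away from $x_{N+1}$. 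So $\omega$ contains a genuine $\alpha\alpha(\beta\alpha)^k\beta\beta$, a right extension in $\omega$ gives a bad cut, and there is no boundary case.
\item In the $(u,uv)$ case, the $v_a$ must come from the following $\beta$. The only boundary case is $\omega^T$ ending with $\beta\beta(\alpha\beta)^k\alpha^\ell$. Then $\omega$ begins with the truncation $\tau'a|b\theta b|$, where $v_a\tau'=\alpha^{\ell-2}\tau$ and $\alpha\alpha(\beta\alpha)^k\beta\beta=\tau a|b\theta b|$ as in \Cref{lem:leverage_cut} (with $\tau=a\theta$ when $k=0$).
\end{itemize}
Your Steps 2--3 never produce this configuration.

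\textbf{Step 3 is also unverified.} Even from the truncated configuration, the conclusion is not a uniform appeal to \Cref{lem:bad_cut_truncated1}. That appeal works for $k=0$, taking $\theta'=(\tau')^T$. For $k\geq 1$ the argument is longer:
\begin{itemize}
\item First force $\omega$ to begin with $\tau'a|b(\tau')^T$ via \Cref{lem:force_continuation}.
\item Settle $N=0$ by a direct inequality.
\item For $N\geq 1$, rerun the argument of \Cref{lem:bad_cut_truncated1}: take the largest common suffix $\eta$ of $(v_a)^{-}$ and $x_1\cdots x_{N-2}$, and use \Cref{lem:good_or_indeterminate}, to force the continuation up to $\tau'a|b\theta b|a\gamma$. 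After that, either next letter gives a bad cut.
\end{itemize}
Your Step 3 leaves exactly this part unverified. Your $k=0$ remark is also inaccurate: $\beta\beta\alpha\alpha$ does not itself contain $a\theta a|b\theta b$; that pattern appears in $\omega$ only after applying \eqref{eq:identity2}.
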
          
	
	\begin{proof}
		We give a proof of \Cref{item:good_or_indeterminate1}, the proof for \Cref{item:good_or_indeterminate2} is analogous.
		
		First assume that $\omega^T$ contains $\beta\beta(\alpha\beta)^k\alpha^{\ell}$ for some $k\geq 0$ and $\ell\geq 2$, where $(\alpha,\beta)=(u,uv)$. Without loss of generality we can further assume $\ell$ to be maximal. 
		In general we could only have two possible cases: $\omega^T$ contains $\beta\beta(\alpha\beta)^k\alpha^{\ell}\beta$, or $\omega^T$ ends with $\beta\beta(\alpha\beta)^k\alpha^{\ell}$.
		
		If $\omega^T$ contains $\beta\beta(\alpha\beta)^k\alpha^{\ell}\beta$, since $\beta=uv=v_au^b$, by \eqref{eq:identity2} one has
		\begin{equation*}
			\beta\beta(\alpha\beta)^k\alpha^\ell v_a=v_au^b\beta(\alpha\beta)^k\alpha^\ell v_a = v_a(\alpha^\ell(\beta\alpha)^k\beta\beta)^T.
		\end{equation*}
		This shows that $\alpha\alpha(\beta\alpha)^k\beta\beta$ is a subword of $\omega^T$ which contradicts \Cref{lem:bad_extensions} by extending to the right.
		
		Now assume that $\omega^T$ ends with $\beta\beta(\alpha\beta)^k\alpha^{\ell}$.
		The proof is by contradiction. First we consider the case $k=0$, that is, suppose $\omega^T$ ends with $\beta\beta\alpha^\ell$.
		Since we can always write $\alpha\alpha\beta\beta=a\theta a|b\theta b$ with $\theta$ palindrome, together with \eqref{eq:identity2} we have $\beta\beta\alpha^\ell v_a=v_a(\alpha^\ell\beta\beta)^T=v_a(\alpha^{\ell-2}a\theta a\vert b\theta b)^T$.
		Letting $v_a\tau^\prime=\alpha^{\ell-2}a\theta^T$ we have that $\omega$ will begin with $\tau^\prime a|b\theta b$.
		If $|\tau^\prime|> |a \theta|$, then $\omega^T$ will contain the subword $a\theta a|b\theta b$ which gives a bad cut. However, if $|\tau^\prime|\leq|a\theta|$, then $\theta a$ is a prefix of $(\tau^\prime)^T v_a$ and $(\tau^\prime)^T$ has even length, which contradicts \Cref{lem:bad_cut_truncated1}.

		For the case $k\geq 1$, the proof is analogous to the case $k=0$. 
		We can write $\alpha\alpha(\beta\alpha)^k\beta\beta=\tau a|b\theta b|$ where $\theta$ and $\tau$ satisfy Case 2 of \Cref{lem:leverage_cut}. 
		In particular, since $|\tau|\geq|\alpha\beta|$, using \eqref{eq:identity2} we can write $\beta\beta(\alpha\beta)^k\alpha^\ell v_a=v_a|b\theta^T b|a\tau^T(\alpha^T)^{\ell-2}=v_a|b\theta^T b|a(\tau^\prime)^T v_a$ and therefore $\omega$ begins with $\tau^\prime a|b\theta b|$ where $v_a\tau^\prime=\alpha^{\ell-2}\tau$.
		If $|\tau^\prime|\geq|\tau|$, then $\omega$ will have a subword of form $\tau a|b\theta b|$, which is precisely the word $\alpha\alpha(\beta\alpha)^k\beta\beta$,
		so by \Cref{lem:bad_extensions} there would exists a bad cut inside $\omega$ when extending to the right.
		Suppose $|\tau^\prime|<|\tau|$, that is, $\tau^\prime$ is a suffix of $\tau=a\gamma^Tab\theta^T$. We claim that $\omega$ begins with $\tau^\prime a|b (\tau^\prime)^T$.
		This is because if $|\tau^\prime|\leq|\theta ba|$, then the claim follows from the fact that $ab\theta^T$ is a suffix of $\tau$.
		If $|\tau^\prime|>|\theta ba|$, we can write $\tau^\prime=(\gamma^\prime)^Tab\theta^T$ with $\gamma^\prime$ a prefix of $\gamma$.
		By \Cref{lem:force_continuation} we know that $\omega$ will begin with $\tau^\prime a|b (\tau^\prime)^T =\tau^\prime a|b\theta b|a\gamma^\prime $. 
		Using the fact that $|\tau^\prime 2|$ is odd, if $N=0$ we conclude				
		\begin{equation}
			\begin{aligned}
				\lambda(\omega=\tau^\prime a|b (\tau^\prime)^T\dots)&= 		[0;b,(\tau^\prime)^T,R]+[2;2,(\tau^\prime)^T]  \\
				&>[0;b,(\tau^\prime)^T,R]+[2;2,(\tau^\prime)^T,R]=3,
			\end{aligned}
			\label{eq_argument_bad_cut_3}
		\end{equation}
		which is a contradiction.
		
		Now consider the case for $N\geq 1$. As before, let $\eta=x_ix_{i+1}\dots x_{N-2}$ be the largest (possibly empty) common suffix between $(v_a)^{-}$ and $x_1\dots x_{N-2}$. Since $x_1\dots x_N$ does not ends with $v_a$, we have that $|\eta a|<|v_a|$. By hypothesis the cut $\eta a|\omega$ can not be good, so by the first item of \Cref{lem:good_or_indeterminate} we must have that $\omega$ begins with $b\eta^T$. Exactly as in the proof of \Cref{lem:bad_cut_truncated1}, we can assume that $[a,\eta^T,x_{i-1},\dots,x_1]>[v_a]$. 
		We claim that this implies that $\omega$ begins with $\tau^\prime a|b\theta b|a\gamma$. 
		Indeed, we already know that it begins with $\tau^\prime a|b\theta b|a$ and now we use the same proof of \Cref{lem:force_continuation}: first, notice that by \Cref{lem:leverage_cut} we have $|\theta|\geq|v|$, so $|\tau^\prime ab\theta b|\geq |\tau ab\theta b|-|v_a|> |\tau|+|\theta|-|v|>|\gamma b|$, 
		in particular $\tau^\prime ab\theta b$ ends with $b\gamma^T$. 
		Now, if we extend $\tau^\prime a|b\theta b|a$ to the right, we will reach $\tau^\prime a|b\theta b|a\gamma$ since each $b$ of $\gamma$ is forced by the second cut and each $a$ of $\gamma$ is forced by the first cut and by the inequality
		\begin{align*}
			\lambda(x_1\dots x_{i-1}\eta a\tau^\prime a|b\theta ba R)&=[a,(\tau^\prime)^T,a,\eta^T,x_{i-1},\dots,x_1]+[0;b,\theta,b,a,R] \\
			&>[a,(\tau^\prime)^T,v_a]+[0;b,\theta,b,R] = \lambda(\alpha^{\ell-2}\tau a|b\theta b aR),
		\end{align*}
		where we used that $a\tau^\prime$ has even length. Finally, if we continue with either $a$ or $b$ to the right will give a bad cut inside $\omega$, because $\tau a|b\theta ba\gamma b=a\gamma^Tab\theta^Ta|b\theta ba\gamma b$ is a bad cut and $\tau^\prime ab\theta b|a\gamma a$ is also a bad cut because  $\theta^Tba(\tau^\prime)^T$ begins with $\gamma b$.
		
		Now assume that $\omega^T$ contains $\beta^\ell(\alpha\beta)^k\alpha\alpha$ for some $k\geq0, \ell\geq 2$ and $(\alpha,\beta)=(uv,v)$ for some $(u,v)\in\overline{P}$.
		Without loss of generality we can further assume $\ell$ to be maximal (possibly infinite). 
		In general we can only have two possible cases: $\omega^T$ contains $\alpha\beta^\ell(\alpha\beta)^k\alpha\alpha$, or $\omega^T$ contains $\beta^\infty(\alpha\beta)^k\alpha\alpha$. In any case, since $\alpha=uv=v_au^b$ and $\beta^\infty=(\beta^T)^\infty\alpha^b\beta^{\ell-1}$ by \eqref{eq:infinite_identities}, then $\omega^T$ contains $u^b\beta^\ell(\alpha\beta)^k\alpha v_au^b$. Hence by \eqref{eq:identity1}
		\begin{equation*}
			u^b\beta^\ell(\alpha\beta)^k\alpha\alpha=u^b\beta^\ell(\alpha\beta)^k\alpha v_au^b=(\alpha\alpha(\beta\alpha)^k\beta^\ell)^Tu^b,
		\end{equation*}
		so $\omega$ contains $\alpha\alpha(\beta\alpha)^k\beta\beta$, which again contradicts \Cref{lem:bad_extensions} by extending to the right.
	\end{proof}

	\begin{lemma}\label{lem:noalphaalphabetabeta}
		Let $x=[x_0;x_1,x_2,\dots]$ be an irrational and $N\in\N$ minimal such that $\lambda_n(x)\leq 3$ for all $n\geq N+1$. 
		\begin{enumerate}
			\item Suppose $\omega^T=\dots x_{N+3}x_{N+2}x_{N+1}$ $(x_i\in\{1,2\})$ with $x_{N+1}=1$ is a left infinite word. If $\omega^T$ can be written over some alphabet $(\alpha,\beta)\in\overline{P}$, then $\omega^T$ can not contain $\alpha\alpha$ and $\beta\beta$ simultaneously. \label{item:noalphaalphabetabeta1}
			\item Suppose $\omega=x_{N+1}x_{N+2}x_{N+3}\dots$ $(x_i\in\{1,2\})$ with $x_{N+1}=2$ is a right infinite word. If $2\omega$ can be written over some alphabet $(\alpha,\beta)\in\overline{P}$, then $2\omega$ can not contain $\alpha\alpha$ and $\beta\beta$ simultaneously.\label{item:noalphaalphabetabeta2}
		\end{enumerate}
	\end{lemma}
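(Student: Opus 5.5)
We prove \Cref{item:noalphaalphabetabeta1}; \Cref{item:noalphaalphabetabeta2} follows by the same argument with the roles of $a,b$ (and of left/right extensions) exchanged. The argument is by contradiction: we assume that $\omega^T$, written over $(\alpha,\beta)\in\overline{P}$, contains both $\alpha\alpha$ and $\beta\beta$. We locate an occurrence of $\alpha\alpha$ and an occurrence of $\beta\beta$ that are as close as possible, and we ignore the trivial case where they overlap, which cannot happen once both are read over the alphabet $\{\alpha,\beta\}$. Between them there is then a factor over $\{\alpha,\beta\}$ in which neither $\alpha\alpha$ nor $\beta\beta$ occurs strictly inside, so it alternates.

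Two configurations are possible, since $\omega^T$ is read from right to left with $x_{N+1}$ at the right end.

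\textbf{First configuration.} Reading in the direction of $\omega^T$, the $\alpha\alpha$ comes first and the $\beta\beta$ after. By minimality $\omega^T$ then contains a factor $\alpha\alpha(\beta\alpha)^k\beta\beta$ for some $k\ge 0$. This word starts with $\alpha\alpha$ and ends with $\beta\beta$. The factor cannot sit too close to the right end, because by \Cref{lem:bad_extensions} extending it to the right by its own length forces a bad cut. Beyond the factor, to its right, we have the rest of $\omega^T$, which lies entirely past position $N$ and so contains no bad cuts. If fewer than $|w|$ letters remain before $x_{N+1}$, we instead extend to the left, where $\omega^T$ is infinite and still lies beyond position $N$. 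Either way \Cref{lem:bad_extensions} yields a bad cut inside the factor, which contradicts the choice of $N$. The lemma allows extension on either side, so the edge case is handled by picking the infinite side.

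\textbf{Second configuration.} Here the $\beta\beta$ comes first, and $\omega^T$ contains $\beta\beta(\alpha\beta)^k\alpha\alpha$. Equivalently, its transpose-read version is contained in $\omega$. This is exactly the situation excluded by \Cref{lem:good_or_indeterminate2}, \Cref{item:good_or_indeterminate1}. One subtlety needs checking: that lemma is phrased as ``$\omega$ can not contain $\beta\beta(\alpha\beta)^k\alpha\alpha$'', and I will verify against its proof, which writes ``$\omega^T$ contains $\beta\beta(\alpha\beta)^k\alpha^\ell$'', that the orientation matches the one produced here. If the orientations disagree, the two configurations simply swap roles.

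\textbf{Main obstacle.} I expect the difficulty to be making the minimal-distance reduction rigorous. A word over $\{\alpha,\beta\}$ containing both squares must contain a factor of the form $\alpha\alpha(\beta\alpha)^k\beta\beta$ or $\beta\beta(\alpha\beta)^k\alpha\alpha$ (allowing $\alpha^\ell$ and $\beta^\ell$ runs, which \Cref{lem:good_or_indeterminate2} already handles by taking $\ell$ maximal). To get this, I take the last occurrence of one square before the first later occurrence of the other. The stretch between them then has no repeated letter, so it is alternating.

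We also need the decomposition of $\omega^T$ over $\{\alpha,\beta\}$ to be unique, so that ``contains $\alpha\alpha$'' is meaningful. This follows from $\alpha$ beginning with $a$ and $\beta$ ending with $b$ (\Cref{lem:symmetry2}), together with the fact that $(\alpha,\beta)$ is a basis of the corresponding Christoffel pair.

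Finally, the proof of \Cref{item:noalphaalphabetabeta2} applies \Cref{item:good_or_indeterminate2} to $2\omega$ and runs \Cref{lem:bad_extensions} in the mirrored direction.
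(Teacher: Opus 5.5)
Your overall route is the paper's. A shortest factor containing both squares has the form $\alpha\alpha(\beta\alpha)^k\beta\beta$ or $\beta\beta(\alpha\beta)^k\alpha\alpha$. The first form is ruled out by \Cref{lem:bad_extensions}, extending into the infinite side of $\omega^T$. Extending to the left always works there, so your discussion of extending to the right is superfluous. The second form is ruled out by \Cref{lem:good_or_indeterminate2}. The orientation does match: the proof of that lemma works with $\omega^T$ containing $\beta\beta(\alpha\beta)^k\alpha^\ell$, so the ``$\omega$'' in its statement should be read as $\omega^T$.

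The gap is the root alphabet $(\alpha,\beta)=(a,b)$. The proof of \Cref{lem:good_or_indeterminate2} covers only $(\alpha,\beta)=(u,uv)$ and $(\alpha,\beta)=(uv,v)$ with $(u,v)\in\overline{P}$. Its key identities \eqref{eq:identity1} and \eqref{eq:identity2} need the parent pair $(u,v)$, so citing it does not handle $(a,b)$. The paper treats this case separately. Since $a=22$ and $b=11$ are palindromes, $\omega^T\supset bb(ab)^kaa$ means $\omega\supset aa(ba)^kbb$. This is a word $\alpha\alpha(\beta\alpha)^k\beta\beta$ over $(a,b)$ inside the right-infinite word $\omega$, so extending it to the right inside $\omega$ with \Cref{lem:bad_extensions} gives a bad cut.

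Your fallback, ``if the orientations disagree, the two configurations simply swap roles,'' is exactly this argument, but it is valid only for $(a,b)$. For any other $(\alpha,\beta)\in\overline{P}$, transposing a word over $\{\alpha,\beta\}$ gives a word over $\{\alpha^T,\beta^T\}$. Every pair in $\overline{P}$ has first word beginning with $a$ and second word ending with $b$, and from this one checks that $(\alpha^T,\beta^T)\in\overline{P}$ only when $(\alpha,\beta)=(a,b)$. So \Cref{lem:bad_extensions} cannot be applied to the transposed word. That failure is why \Cref{lem:good_or_indeterminate2} needs its long analysis near the finite end $x_{N+1}$.

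Add the root case, and its mirror for item 2 (extending to the right inside $2\omega$), and the proof is complete. Your remark on uniqueness of the decomposition is unnecessary, since the hypothesis already fixes a decomposition of $\omega^T$ over $\{\alpha,\beta\}$.
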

	\begin{proof}
		We give a proof of \Cref{item:noalphaalphabetabeta1}, the proof for \Cref{item:noalphaalphabetabeta2} is analogous.

		If $\omega^T$ contains a subword $w$ beginning in $\alpha\alpha$ and ending in $\beta\beta$, then \Cref{lem:bad_extensions} shows that an extension of $w$ to the left (that must be inside of $\omega^T$) contains bad cuts, a contradiction. 
		Now assume that it contains $w$ beginning with $\beta\beta$ and ending with $\alpha\alpha$. 
		If $(\alpha,\beta)=(a,b)$, then we can assume $\omega^T$ contains $bb(ab)^kaa$ for some $k\geq 0$, so the transpose $\omega=x_{N+1}x_{N+2}\dots$ contains $aa(ba)^kbb$ which contradicts \Cref{lem:bad_extensions} by extending to the right. The rest of the cases are ruled out by \Cref{lem:good_or_indeterminate2}.
		
	\end{proof}

	\begin{lemma}\label{lem:forbidden_pattern1}
		Let $x=[x_0;x_1,x_2,\dots]$ be an irrational and $N\in\N$ minimal such that $\lambda_n(x)\leq 3$ for all $n\geq N+1$. 
		\begin{enumerate}
			\item Suppose $\omega^T=\dots x_{N+3}x_{N+2}x_{N+1}$ $(x_i\in\{1,2\})$ with $x_{N+1}=1$ is a left infinite word. If $\omega^T$ can be written over some alphabet $(\alpha,\beta)=(uv,v)$ where $(u,v)\in\overline{P}$. 
			Then $\omega^T$ can not end with $\beta^\ell\alpha^k$ for some $k,\ell\geq 1$. \label{item:forbidden_pattern1}
			\item Suppose $\omega=x_{N+1}x_{N+2}x_{N+3}\dots$ $(x_i\in\{1,2\})$ with $x_{N+1}=2$ is a right infinite word. If $2\omega$ can be written over some alphabet $(\alpha,\beta)=(u,uv)$ where $(u,v)\in\overline{P}$.
			Then $2\omega$ can not begin with
			$\beta^k\alpha^\ell$ for some $k,\ell \geq 1$.\label{item:forbidden_pattern2}
		\end{enumerate}
	\end{lemma}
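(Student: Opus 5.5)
I will prove only the first item; the second is symmetric under the usual exchange $a\leftrightarrow b$ and left $\leftrightarrow$ right, as in the preceding lemmas. Assume by contradiction that $\omega^T$ ends with $\beta^\ell\alpha^k$, where $(\alpha,\beta)=(uv,v)$ and $k,\ell\geq 1$. The idea is to move this suffix into the shape forbidden by \Cref{lem:bad_cut_truncated1}, namely $\omega$ beginning with $(\theta')^Tab\theta b$. To do this I will use the Christoffel identities of \Cref{lem:symmetry2} and \Cref{lem:symmetry3}, in the same way \eqref{eq:identity1} was used in \Cref{lem:good_or_indeterminate2}.

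First I normalise the suffix. If $k\geq 2$, then $\omega^T$ contains $\alpha\alpha$, and also $\beta\beta$ as long as $\ell\geq 2$ or $\beta\beta$ occurs further to the left. So \Cref{lem:noalphaalphabetabeta} settles that case, leaving essentially $\ell$ arbitrary with $\beta\beta$ absent, or $\alpha\alpha$ absent. In the remaining situation $\omega^T$ is a word over $\{\alpha,\beta\}$ in which one letter is isolated. I would then rewrite over the next alphabet, $(\alpha',\beta')=(\alpha\beta,\beta)$ or $(\alpha,\alpha\beta)$. Rather than inducting, it is cleaner to argue directly on the suffix $\beta\alpha$, which is present since $k,\ell\geq1$.

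The key step is to use $\alpha=uv=v_au^b$ to write the suffix as
\[
\beta\alpha^k=v\,(v_au^b)^k .
\]
Transposed, this says $\omega$ begins with $((u^b)^Tv_a^T)^k v^T$. By \Cref{lem:symmetry2}, $(u^b)^T=u_a$ and $v^T=v_a^b$, and by \Cref{lem:symmetry1} we have $v=a\theta_vb$ with $\theta_v$ a palindrome. Hence $\omega$ begins with
\[
\alpha^T\cdots \alpha^T\, b\,\theta_v\, a .
\]
The goal is to locate, across the junction between the last $\alpha^T$ and $v^T$, a cut of the form $(\theta')^Ta|b\theta b$ sitting at the very start of $\omega$. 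Here $\theta'$ is a suffix of a transposed letter, and $\theta a$ is a prefix of $\theta'v_a$ or $\theta'u_a$. Concretely, I expect $\theta'$ to be a proper suffix of $v_a$ read backwards, coming from the first block $u_a v_a^T$, and $\theta$ to be $\theta_v$ or its extension through $u$. The condition $|\theta'|\leq|\theta a|$ should follow because $\theta'$ is shorter than one letter while $\theta$ extends at least through $\theta_v$.

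The main obstacle is to verify the combinatorial identities exhibiting this factorisation exactly, including the parity requirement and the prefix condition. I would first try an induction on the depth of $(u,v)$ in $\overline{T}$, using the commutation rules $bU(w^T)=U(w)^Tb$ and $V(w^T)a=aV(w)^T$ of \Cref{lem:commutative_identities}, just as in the proof of \Cref{lem:extend_alphabeta}. The base case is $(u,v)=(a,b)$, that is $(\alpha,\beta)=(ab,b)$ with suffix $b\,ab$. There $\omega$ begins with $ba\,b$, which gives $\theta'=\varnothing$, and the cut $a|b\,\theta b$ is excluded by \Cref{lem:evens_blocks} and \Cref{lem:bad_cut_truncated1}. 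Once the factorisation is established, \Cref{lem:bad_cut_truncated1} yields the contradiction immediately.
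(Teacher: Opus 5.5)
Your first reduction via \Cref{lem:noalphaalphabetabeta} (the word cannot contain both $\alpha\alpha$ and $\beta\beta$) is the paper's first step, and your target is also the paper's: a prefix $(\theta')^Ta|b\theta b$ of $\omega$ contradicting \Cref{lem:bad_cut_truncated1}. But you discard that reduction, and your decision to ``argue directly on the suffix $\beta\alpha$'' cannot work. That decision amounts to finding the factorisation inside the known prefix $(\alpha^T)^kv^T$ of $\omega$. As a string, $\beta\alpha^k=v(uv)^k$ is also a suffix of $\alpha^{k+1}=uv(uv)^k$, so $(\alpha^T)^kv^T$ is a prefix of $((uv)^T)^\infty$. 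That word is admissible. It is the tail $((\beta')^T)^\infty$ with $(\alpha',\beta')=(u,uv)\in\overline{P}$ in \Cref{thm:full_characterization}, and by \Cref{lem:periodic_good} with $N=0$ it has no bad cuts. For $(u,v)=(a,b)$ it is $[0;\overline{1,1,2,2}]$, which begins with $bab$ exactly as in your base case. Since \Cref{lem:bad_cut_truncated1} applies to this word as well, no factorisation of the required kind can lie in $(\alpha^T)^kv^T$. What separates $\beta^\ell\alpha^k$ from $\alpha^{k+1}$ is the $\{\alpha,\beta\}$-structure to the left of the last $\beta$, and the proof has to use it.

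In the paper the cut sits at the junction $(\alpha^T)^k|\beta^T\cdots$ or $\alpha^T|(\beta^T\alpha^T)^j(\beta^T)^\ell\cdots$. So $\theta'$ is long: it is $u^+v(uv)^{k-1}$, $u^+v$, or $\hat\theta b$ with $uv=a\hat\theta b$, not a short piece of $v_a$ as you expect. The condition $|\theta'|\le|\theta a|$ then forces $b\theta b$ to run well beyond $v^T$. That continuation is controlled only by the case analysis you skip:
\begin{itemize}
\item If $\beta\beta$ occurs, write the suffix as $\beta^\ell(\alpha\beta)^j\alpha$ with $\ell\ge 2$ maximal. It is then preceded by $\alpha$, or $\omega^T=\beta^\infty(\alpha\beta)^j\alpha$. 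By \Cref{lem:symmetry2} this gives $\omega=v^Tu^T|u^bvv\cdots$ for $j=0$ and $\omega=v^Tu^T|u^bvu\cdots$ for $j\ge 1$.
\item If $\beta\beta$ does not occur, then $\ell=1$. Write the suffix as $\alpha^t\beta\alpha^k$ with $t$ maximal. You must first show $t\ge k-1$: for $t\le k-2$, $\omega$ contains the bad cut $v_a(v^Tu^T)^tv^Tu^T|u^bv(uv)^tv$. Only then does $\omega$ begin with $(\theta')^Ta|b\theta b$ with $\theta'=u^+v(uv)^{k-1}$ and $\theta a$ a prefix of $\theta'u_a$ or $\theta'v_a$.
\end{itemize}

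There are also two smaller errors.
\begin{itemize}
\item The identity $(u^b)^T=u_a$ is false: for $u=ab$ one has $(u^b)^T=bb\neq aa=u_a$. What \Cref{lem:symmetry2} gives is $u^T=(u^b)_a$. Your expression $(\alpha^T)^kv^T$ survives only because $(u^b)^T(v_a)^T=(v_au^b)^T=\alpha^T$ holds directly.
\item In the base case, $\omega$ begins with $b$ because $x_{N+1}=1$, so $\theta'=\varnothing$ is impossible. The relevant cut is $ba|b\cdots$ with $\theta'=b$. Whether it is bad depends on what follows $bab$ in $\omega$, which is exactly the information your plan throws away.
\end{itemize}
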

	
	\begin{proof}
		We give a proof of \Cref{item:forbidden_pattern1}, the proof for \Cref{item:forbidden_pattern2} is analogous.
		
		By \Cref{lem:noalphaalphabetabeta} we know that $\omega^T$ does not contain $\alpha\alpha$, $\beta\beta$ simultaneously.
		
		If $\omega^T$ contain $\beta\beta$, we can assume that $\omega^T$ end with $\beta^\ell(\alpha\beta)^k\alpha$ for some $k\geq 0,\ell\geq 2$, where $(\alpha,\beta)=(uv,v)$.
		Without loss of generality we can further assume $\ell$ to be maximal.
		In general we can only have two possible cases: $\omega^T$ ends with $\alpha\beta^{\ell}(\alpha\beta)^k\alpha$, or $\omega^T=\beta^{\infty}(\alpha\beta)^k\alpha$.
		
		First we consider the case $k=0$.
		We have $\omega$ beginning with $v^Tu^T|u^bvv$ since by \Cref{lem:symmetry2} we have $\omega=\alpha^T|(\beta^T)^\ell\alpha^T\dots=v^Tu^T|(v^T)^\ell u^bv_a\dots=v^Tu^T|u^bv^\ell v_a\dots$ or $\omega=\alpha^T|(\beta^T)^\infty=v^Tu^T|u^bv^\infty$. Since we can write $uv=a\hat{\theta} b$ for some palindrome $\hat{\theta}$, we see that $\omega$ begins with $b\hat{\theta} a|b\hat{\theta}bv^{-}b$ so $\theta a=\hat{\theta}bv^{-}a$ is equal to $\theta^\prime v_a=\hat{\theta}bv_a$. This contradicts \Cref{lem:bad_cut_truncated1}.

		Now we consider the case $k\geq 1$.
		We have $\omega$ beginning with $v^Tu^T|u^bvu$ since by \Cref{lem:symmetry2} if $k\geq 2$ we have $\omega=\alpha^T|(\beta^T\alpha^T)^k\dots=v^Tu^T|(v^Tv^Tu^T)^k\dots=v^Tu^T|u^bvv_au^b\dots=v^Tu^T|u^bvuv\dots$, and if $k=1$ we have $\omega=\alpha^T|\beta^T\alpha^T\beta^\ell\alpha\dots=v^Tu^T|u^bvv_au^bv^\ell\dots=v^Tu^T|u^bvuv^{\ell+1}\dots$ or $\omega^T=\alpha^T|\beta^T\alpha^T(\beta^T)^\infty=v^Tu^T|u^bvv_au^bv^\infty=v^Tu^T|u^bvuv^\infty$. In any case, we have that $\theta^\prime=u^{+}v$ is such that $\theta a=u^{+}vu_a$ is a prefix of $\theta^\prime u_a$ with $|\theta^\prime|\leq|\theta a|$, contradicting \Cref{lem:bad_cut_truncated1}.

		Now if $\omega^T$ does not contain $\beta\beta$, then we can assume $\omega^T$ end with $\alpha^t\beta\alpha^k$ with $t$ maximal (possibly infinite).
		In general we can only have two possible cases: $\omega^T=\alpha^\infty\beta\alpha^k$, or $\omega^T$ ends with $\alpha\beta\alpha^t\beta\alpha^k$.

		For $\omega^T=\alpha^\infty\beta\alpha^k$, we have
		\begin{equation*}
			\omega=(\alpha^T)^k|\beta^T(\alpha^T)^\infty=(v^Tu^T)^{k-1}v^Tu^T|u^bv(uv)^\infty=(v^Tu^T)^{k-1}v^Tu^T|u^bv(uv)^{k-1}u\dots,
		\end{equation*}
		so $\omega$ begins with $(\theta^\prime)^T a|b\theta b$ where $\theta^\prime=u^{+}v(uv)^{k-1}$ and $\theta b=u^{+}v(uv)^{k-1}u$, so $\theta a$ is equal to $\theta^\prime u_a$ and $|\theta^\prime|\leq|\theta a|$, a contradiction with \Cref{lem:bad_cut_truncated1}.

		If $\omega^T$ ends with $\alpha\beta\alpha^t\beta\alpha^k$, we have
		\begin{equation*}
			\omega=(\alpha^T)^k|\beta^T(\alpha^T)^t\beta^T\alpha^T\dots=(v^Tu^T)^{k-1}v^Tu^T|u^bv(uv)^tvv_a\dots.
		\end{equation*} 
		We must have $t\geq k-1$ because otherwise if $t\leq k-2$, we have
		\begin{equation*}
			\omega=(v^Tu^T)^{k-t-2}u^bv_a(v^Tu^T)^{t}v^Tu^T|u^bv(uv)^tvv_a\dots
		\end{equation*}
		containing $v_a(v^Tu^T)^{t}v^Tu^T|u^bv(uv)^tv$, which is a bad cut because the first letter of $v_a$ is $a$ while the last letter of $v$ is $b$.
		So we have $\omega$ beginning with $(v^Tu^T)^{k-1}v^Tu^T|u^bv(uv)^{k-1}v$ (when $t=k-1$) or $(v^Tu^T)^{k-1}v^Tu^T|u^bv(uv)^{k-1}u$ (when $t\geq k$). Hence $\omega$ begins with $(\theta^\prime)^T a|b\theta b$ where $\theta^\prime=u^{+}v(uv)^{k-1}$ and $\theta b=u^{+}v(uv)^{k-1}w$ with $w\in\{u,v\}$, so $\theta a$ is equal to $\theta^\prime w_a$ and $|\theta^\prime|\leq|\theta a|$, a contradiction with \Cref{lem:bad_cut_truncated1}. 
	\end{proof}

	\begin{lemma}\label{lem:renormalizable_limit1}
		Let $x=[x_0;x_1,x_2,\dots]$ be an irrational and $N\in\N$ minimal such that $x_{N+1}=1$, $\lambda_n(x)\leq 3$ for all $n\geq N+1$.
		If $x$ is ultimately periodic, then there is $(\alpha,\beta)\in\overline{P}$ such that $x_{N+1}x_{N+2}x_{N+3}\dots = \beta^T(\alpha^T)^\infty$ or $x_{N+1}x_{N+2}x_{N+3}\dots = (\beta^T)^\infty$. 
		Moreover, if if $N\geq 1$ and $x_{N+1}x_{N+2}\dots\neq b^\infty$, then we have a unique factorization $x_{N+1}x_{N+2}\dots=\theta^Tb|a\theta R$ such that $[0;x_N,\dots,x_1]<[0;R]$. 
		
		If $x$ is not ultimately periodic,  then there is a sequence of alphabets $(\alpha_{n+1},\beta_{n+1})\in \{\overline{U}(\alpha_{n},\beta_{n}),\overline{V}(\alpha_{n},\beta_{n})\}$ with both renormalization operators $\overline{U}$ and $\overline{V}$ appearing infinitely many times and such that
		\begin{equation}\label{eq:limit_beginning_1}
			x_{N+1}x_{N+2}x_{N+3}\dots = \lim_{n\to\infty}\beta_n^T.
		\end{equation}
		Moreover, if $N\geq 1$ then
		\begin{equation}\label{eq:sufficient_condition1}
			[0;x_N,\dots,x_1]<[0;\lim_{n\to\infty}\alpha_n].
		\end{equation}
	\end{lemma}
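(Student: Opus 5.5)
We run a renormalization descent on the left infinite word $\omega^T=\dots x_{N+3}x_{N+2}x_{N+1}$. By \Cref{lem:evens_blocks}, since $x_{N+1}=1$, $\omega^T$ is a word over $(\alpha_0,\beta_0)=(a,b)$ ending with $\beta_0$. The inductive step is this. Suppose $\omega^T$ is written over $(\alpha_n,\beta_n)\in\overline{P}_n$ and ends with $\beta_n$. By \Cref{lem:noalphaalphabetabeta}, $\omega^T$ cannot contain both $\alpha_n\alpha_n$ and $\beta_n\beta_n$.

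We split into two cases according to which square is absent.
\begin{itemize}
\item If $\alpha_n\alpha_n$ does not occur, then $\omega^T$ can be parsed over $\overline{V}(\alpha_n,\beta_n)=(\alpha_n,\alpha_n\beta_n)$. The only exception is a leftmost $\beta_n$ not preceded by $\alpha_n$, which is harmless in a left-infinite word up to a prefix-at-infinity issue that we handle as in the periodic case.
\item If $\beta_n\beta_n$ does not occur, then $\omega^T$ can be parsed over $\overline{U}(\alpha_n,\beta_n)=(\alpha_n\beta_n,\beta_n)$. Here the parsing must also end correctly at the right end $x_{N+1}$.
\end{itemize}
In the $\overline{U}$ case the new word still ends with $\beta_{n+1}=\beta_n$ after regrouping. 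In the $\overline{V}$ case the final letter must be $\alpha_n\beta_n=\beta_{n+1}$. This is exactly where \Cref{lem:forbidden_pattern1} enters: with $(\alpha,\beta)=(uv,v)$, i.e.\ just after a $\overline{U}$ step, $\omega^T$ cannot end with $\beta^\ell\alpha^k$. Hence, after a $\overline{U}$ step, the suffix is a $\beta$ preceded appropriately, and the next $\overline{V}$-regrouping keeps $\omega^T$ ending with $\beta_{n+1}$. We check that the invariant ``$\omega^T$ is written over $(\alpha_n,\beta_n)$ and ends with $\beta_n$'' propagates in all four combinations of previous and current operator.

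The process either continues forever or stops. It stops when $\omega^T$ is eventually constant in the current alphabet, say $\alpha^\infty$ or $\beta^\infty$ to the left. Since $\omega^T$ ends with $\beta$, the possible shapes are $\omega^T=\beta^\infty$ or $\omega^T=\alpha^\infty\beta$. Transposing gives $x_{N+1}x_{N+2}\dots=(\beta^T)^\infty$ or $\beta^T(\alpha^T)^\infty$, which is the periodic alternative. If instead one operator is eventually used forever, then $\omega^T$ is constant in the limit, again giving periodicity.

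So in the non-ultimately-periodic case both $\overline{U}$ and $\overline{V}$ occur infinitely often. The words $\beta_n$ are suffixes of $\omega^T$ with $|\beta_n|\to\infty$, so $x_{N+1}x_{N+2}\dots=\lim\beta_n^T$, which proves \eqref{eq:limit_beginning_1}. Conversely, an ultimately periodic $x$ must stop the descent.

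It remains to prove the conditions on $x_1\dots x_N$ when $N\ge1$.

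In the periodic case, \Cref{lem:only_one_indeterminate_cut} provides a unique indeterminate cut $\theta^Tb|a\theta R$ inside the final $\alpha\beta$ block. Its value with $x_1\dots x_N$ prepended is
\[
3+[0;b,\theta,x_N,\dots,x_1]-[0;b,\theta,R].
\]
This value is at most $3$ if and only if $[0;x_N,\dots,x_1]\le[0;R]$, with parity chosen because $\theta$ has even length. Equality is impossible by irrationality and finiteness of $x_1\dots x_N$, so the inequality is strict.

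In the non-periodic case, \Cref{lem:converge_alpha_beta} (applied to the transposed statement) gives infinitely many indeterminate cuts $\eta_n\theta_n^T a|b\theta_n$, viewed from the right end, with $\eta_n^T\to\lim\alpha_n$. Each such cut forces
\[
[0;x_N,\dots,x_1]\le[0;\eta_n^T\dots],
\]
and passing to the limit yields \eqref{eq:sufficient_condition1} with $\le$. Strictness follows because $\lim\alpha_n$ is not ultimately periodic while the finite word, read leftwards, is followed by $x_0$. More precisely, if equality held in the limit, then a finite stage would already give a bad cut by \Cref{lem:compare}.

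\textbf{Main obstacle.} The main difficulty is controlling the right boundary during the descent, that is, ensuring that after each renormalization step $\omega^T$ still ends with $\beta_{n+1}$ and not with a partial block. This is where \Cref{lem:forbidden_pattern1} and the $N\ge1$ analysis of \Cref{lem:bad_cut_truncated1} carry the argument. I would first verify the $\overline{U}\to\overline{V}$ transition explicitly using the identities of \Cref{lem:symmetry2}.
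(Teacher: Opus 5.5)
Your overall architecture matches the paper's. You run a descent on $\omega^T$ with the invariant ``written over $(\alpha_n,\beta_n)$ and ending with $\beta_n$'', use \Cref{lem:noalphaalphabetabeta} to exclude both squares, and use \Cref{lem:only_one_indeterminate_cut} and \Cref{lem:converge_alpha_beta} for the conditions on $x_1\dots x_N$; that last part is fine. However, the inductive step, which you yourself flag as the crux, is wrong as written, because the correspondence between missing squares and operators is reversed.

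\begin{itemize}
\item If $\alpha\alpha$ does not occur, every $\alpha$ is followed by a $\beta$ (the word ends in $\beta$). So $\omega^T$ is a word over $\{\alpha\beta,\beta\}=\overline{U}(\alpha,\beta)$.
\item If $\beta\beta$ does not occur, every $\beta$ is preceded by an $\alpha$ (the word is left infinite). So $\omega^T$ is a word over $\{\alpha,\alpha\beta\}=\overline{V}(\alpha,\beta)$.
\end{itemize}
For example, $\dots\beta\beta\alpha\beta\beta\alpha\beta$ has no $\alpha\alpha$ yet cannot be parsed over $\{\alpha,\alpha\beta\}$. Also, a left infinite word has no ``leftmost $\beta_n$''.

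As a result, the boundary problem sits in the opposite case from where you put it. In the $\overline{V}$ case the final $\beta$ is preceded by $\alpha$, so the regrouped word automatically ends with $\alpha\beta=\beta_{n+1}$. Your assertion that in the $\overline{U}$ case the word ``still ends with $\beta_{n+1}=\beta_n$'' is exactly what can fail. If $\omega^T$ has no $\alpha\alpha$, contains $\beta\beta$ and ends in $\alpha\beta$, then $\overline{V}$ is unavailable and the $\overline{U}$-parse ends with $\alpha_{n+1}$.

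The missing argument (the paper's) is to regroup over $(\hat\alpha,\hat\beta)=(\alpha\beta,\beta)$, which has the form $(uv,v)$ with $(u,v)=(\alpha_n,\beta_n)\in\overline{P}$. The occurrence of $\beta\beta$ produces a standalone $\hat\beta$, so the word ends with $\hat\beta\hat\alpha^k$ for some $k\ge 1$, contradicting \Cref{lem:forbidden_pattern1}. Invoked as you phrase it, on the current alphabet ``just after a $\overline{U}$ step'', that lemma is vacuous: the invariant already says the word ends with $\beta$, not with $\alpha$.

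The periodic classification also needs repair. Ending with $\beta$ does not force $\omega^T\in\{\beta^\infty,\alpha^\infty\beta\}$; consider $\alpha^\infty\beta\alpha\beta$ or $\beta^\infty\alpha\beta$. Moreover, the descent never ``stops''. Once the invariant is established for every $n$, argue as follows.
\begin{itemize}
\item If only $\overline{V}$ is applied after some step $n_0$, then $\omega^T$ ends with $\beta_n=\alpha_{n_0}^{\,n-n_0}\beta_{n_0}$ for all $n$. Hence $\omega^T=\alpha_{n_0}^\infty\beta_{n_0}$.
\item If only $\overline{U}$ is applied after $n_0$, then every occurrence of $\alpha_{n_0}$ in the level-$n_0$ parsing must be followed by at least $k$ copies of $\beta_{n_0}$ for every $k$. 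This is impossible at finite distance from the right end, so $\omega^T=\beta_{n_0}^\infty$.
\end{itemize}
The paper instead rules out the extra shapes directly via \Cref{lem:noalphaalphabetabeta} and \Cref{lem:forbidden_pattern1}.

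Finally, strictness of $[0;x_N,\dots,x_1]<[0;\lim\alpha_n]$ needs no appeal to \Cref{lem:compare}. The left side is rational and the right side irrational, exactly as you argued in the periodic case.
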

	\begin{proof}
		According to \Cref{lem:evens_blocks}, all blocks of 1's and 2's in $x_{N+1}x_{N+2}\dots$ are even. 
		So we can write $x_{N+1}x_{N+2}\dots$ as a word in $\{a,b\}$, say $bw_2w_3\dots$ with $w_i\in\{a,b\}$. 
		Write $\omega^T:=\dots w_3w_2b$, which is a word over the alphabet $(\alpha_0,\beta_0)=(a,b)$. We claim that for all $n\in\mathbb{N}$, the word $\omega^T$ can be written in some alphabet $(\alpha_n,\beta_n)\in\overline{P}_n$ and moreover it always ends with $\beta_n$. 
		
		We have already shown that this is true for the base case, so suppose $\omega^T$ can be written over the alphabet $(\alpha,\beta)=(\alpha_n,\beta_n)$ and ends with $\beta$. 
		The only obstructions to write $\omega^T$ in $\overline{U}(\alpha,\beta)$ or $\overline{V}(\alpha,\beta)$ and to end with $\beta_{n+1}$, is when $\omega^T$ contains both $\alpha\alpha$ and $\beta\beta$ or when it ends with $\alpha\beta$, contains no $\alpha\alpha$ and is different from $(\alpha\beta)^\infty$. 
		We will show that these configurations give bad cuts inside $\omega^T$ and thus are forbidden.

		By \Cref{lem:noalphaalphabetabeta}, we have that $\omega^T$ can not contain simultaneously $\alpha\alpha$ and $\beta\beta$.
		
		Now assume that $\omega^T$ ends with $\alpha\beta$, contains no $\alpha\alpha$ and is different from $(\alpha\beta)^\infty$.
		In particular, we can write $\omega^T$ over the alphabet $(\hat{\alpha},\hat{\beta})=(\alpha\beta,\beta)$ and will end with a word of from $\hat{\beta}^\ell\hat{\alpha}^k$ for some $k,\ell\geq 1$, while this is forbidden by \Cref{lem:forbidden_pattern1}.

		We have completed the proof of the claim.
		So if $\omega^T$ is not ultimately periodic, we will have both the renormalization operators $\overline{U},\overline{V}$ appearing infinitely many times and then $\omega=\lim_{n\to\infty}\beta_n^T$.
		
		If $\omega^T$ is ultimately periodic, then at some finite step of renormalization the period would be $\alpha$ or $\beta$. 
		So eventually $\omega^T$ will be of form $\alpha^\infty\beta\alpha^{k_m}\dots\alpha^{k_1}\beta$
		or $\beta^\infty\alpha\beta^{\ell_n}\dots\alpha\beta^{\ell_1}$ with $k_i,\ell_i\geq 1 $.
		For the first case we will have $m = 0$ because
		otherwise under some steps of renormalization $\omega^T$ will contain $\alpha\alpha$ and $\beta\beta$ simultaneously.
		This corresponds to the case that $\omega^T=\alpha^\infty\beta$.
		And for the second case by the same reason we will have $n=0$ or $1$, that is, $\omega^T=\beta^\infty$ or $\omega^T=\beta^\infty\alpha\beta^{\ell_1}$.
		However, by letting $(\tilde{\alpha},\tilde{\beta})=(\alpha\beta^{\ell_1},\beta)$ we have $\beta^\infty\alpha\beta^{\ell_1}=\tilde{\beta}^\infty\tilde{\alpha}$, which is again forbidden by \Cref{lem:forbidden_pattern1}.
		
		Finally, we will prove \eqref{eq:sufficient_condition1}. 
		Notice that by \Cref{lem:converge_alpha_beta} the word $x_{N+1}x_{N+2}\dots$ can be written as $\theta_m^Tb|a\theta_m\eta_m^TR$ for some finite word $\eta_m,\theta_m$ and infinite word $R$ such that $|\theta_m|$ even and $\lim_{m\to\infty}\eta_m^T=\lim_{n\to\infty}\alpha_n$.
		Therefore, since \[\lambda(x_1\dots x_N \theta_m^Tb|a\theta_m \eta_m^TR)<3=\lambda(R^T\eta_m\theta_m^T b|a\theta_m \eta_m^TR),\] we have that $[0;b,\theta_m,x_N,\dots,x_1]<[0;b,\theta_m,\eta_m^T,R]$ or equivalently $[0;x_N,\dots,x_1]<[0;\eta_m^T]$ and taking the limit proves \eqref{eq:sufficient_condition1}.
		
		In case that $x$ is eventually periodic, by  \Cref{lem:only_one_indeterminate_cut} we have a unique factorization $x_{N+1}x_{N+2}\dots=\theta^Tba\theta R$ for some word $\theta$ of even length. Therefore, since $\lambda(x_1\dots x_N \theta^Tb|a\theta R)<3=\lambda(R^T\theta^Tb|a\theta R)$, we have that $[0;b,\theta,x_N,\dots,x_1]<[0;b,\theta,R]$ or equivalently $[0;x_N,\dots,x_1]<[0;R]$.
		
	\end{proof}
	
	The proof of the following lemma is completely analogous to the proof of \Cref{lem:renormalizable_limit1}, so we will omit the proof.
	
	\begin{lemma}\label{lem:renormalizable_limit2}
		Let $x=[x_0;x_1,x_2,\dots]$ be an irrational and $N\in\N$ minimal such that $x_{N+1}=2$, $\lambda_n(x)\leq 3$ for all $n\geq N+1$.
		If $x$ is ultimately periodic, then there is $(\alpha,\beta)\in\overline{P}$ such that $x_{N+1}x_{N+2}\dots =2\alpha^{+}\alpha^\infty$ or $x_{N+1}x_{N+2}\dots =2\alpha^{+}\beta^\infty$. 
		Moreover, if if $N\geq 1$ and $x_{N+1}x_{N+2}\dots\neq a^\infty$, then we have a unique factorization $2x_{N+1}x_{N+2}\dots=\theta^Ta|b\theta R$ such that $[0;x_N,\dots,x_1]<[0;2,R]$. 
		
		If $x$ is not ultimately periodic,  then there is a sequence of alphabets $(\alpha_{n+1},\beta_{n+1})\in \{\overline{U}(\alpha_{n},\beta_{n}),\overline{V}(\alpha_{n},\beta_{n})\}$ with both renormalization operators $\overline{U}$ and $\overline{V}$ appearing infinitely many times and such that
		\begin{equation}\label{eq:limit_beginning_2}
			x_{N+1}x_{N+2}\dots=\lim_{n\to\infty}2\alpha_n^{+}.
		\end{equation}
		
		Moreover, if $N\geq 1$ then
		\begin{equation}\label{eq:sufficient_condition2}
			[0;x_N,\dots,x_1]<[0;2,\lim_{n\to\infty}\beta_n^T].
		\end{equation}

	\end{lemma}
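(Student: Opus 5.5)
The plan is to mirror the proof of \Cref{lem:renormalizable_limit1}, now working with the right infinite word and the second items of the auxiliary lemmas. By \Cref{lem:evens_blocks}, $\omega=x_{N+1}x_{N+2}\dots$ begins with an odd block of $2$'s and all later blocks are even. Hence $2\omega$ is a right infinite word $w_1w_2\dots$ over $\{a,b\}$ with $w_1=a=\alpha_0$. The first step is to prove by induction on $n$ the following claim: $2\omega$ can be written over some $(\alpha_n,\beta_n)\in\overline{P}_n$ and always begins with $\alpha_n$.

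For the inductive step, assume $2\omega$ is written over $(\alpha,\beta)$ and begins with $\alpha$. There are only two obstructions to rewriting it over $\overline{U}(\alpha,\beta)=(\alpha\beta,\beta)$ or $\overline{V}(\alpha,\beta)=(\alpha,\alpha\beta)$ with first letter the new $\alpha_{n+1}$:
\begin{itemize}
\item[(i)] $2\omega$ contains both $\alpha\alpha$ and $\beta\beta$;
\item[(ii)] $2\omega$ begins with $\alpha\beta$, contains no $\beta\beta$, and is not $(\alpha\beta)^\infty$.
\end{itemize}
This is the mirror of the ``ends with $\alpha\beta$, no $\alpha\alpha$'' case: writing over $\overline{V}$ would force the first letter to be $\beta_{n+1}=\alpha\beta$ rather than $\alpha$. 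Obstruction (i) is excluded by the second item of \Cref{lem:noalphaalphabetabeta}. In case (ii), I regroup over $(\hat\alpha,\hat\beta)=(\alpha,\alpha\beta)$, which has the form $(u,uv)$. Then $2\omega$ begins with $\hat\beta^k\hat\alpha^\ell$ for some $k,\ell\ge1$, and this is forbidden by the second item of \Cref{lem:forbidden_pattern1}.

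The main obstacle is checking that the cases really reduce this cleanly. In particular, I must make sure that ``begins with $\hat\beta^k$ followed by $\hat\alpha$'' is forced in case (ii), using $2\omega\neq(\alpha\beta)^\infty$ and the absence of $\beta\beta$. I also need the boundary convention that the leading $2$ in $2\omega$ is legitimate, exactly as \Cref{lem:bad_cut_truncated1}(2) assumed.

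Once the claim holds, suppose first that $2\omega$ is not ultimately periodic. Then both operators must occur infinitely often, since otherwise the word is eventually $\alpha^\infty$ or $\beta^\infty$. Hence $2\omega=\lim\alpha_n$, and removing the leading $2$ gives $\omega=\lim 2\alpha_n^+$.

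In the ultimately periodic case, at some finite stage the word is $\beta^{\ell_1}\alpha\cdots\alpha\beta^\infty$ or $\alpha^{k_1}\beta\cdots\beta\alpha^\infty$ with $(\alpha,\beta)$ the current alphabet. As in \Cref{lem:renormalizable_limit1}, \Cref{lem:noalphaalphabetabeta} applied after further renormalization cuts this down to $\alpha\beta^\infty$, $\alpha^\infty$ or $\alpha^{k}\beta\alpha^\infty$. The last form is regrouped as $\tilde\alpha\tilde\beta^\infty$ over $(\tilde\alpha,\tilde\beta)=(\alpha,\alpha^{k}\beta)$-type alphabets, or else excluded by \Cref{lem:forbidden_pattern1}. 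Stripping the initial $2$ then yields $2\alpha^+\beta^\infty$ or $2\alpha^+\alpha^\infty$.

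For the inequality \eqref{eq:sufficient_condition2}, I use the second item of \Cref{lem:converge_alpha_beta}. It gives a factorization of $2\omega$ as $\theta_m^Ta|b\theta_m\eta_m^T R$ with $\lim\eta_m^T=\lim\beta_n$, where the cut is the indeterminate cut $\dots2|2b\dots$. Since $\lambda_n(x)\le3$ at this position, I compare with the bi-infinite balanced extension of Markov value $3$, namely $R^T\eta_m\theta_m^Ta|b\theta_m\eta_m^TR$, and cancel the common part $[0;b,\theta_m,\dots]$. This gives $[0;x_N,\dots,x_1]\le[0;2,\eta_m\dots]$; the parity bookkeeping is the same as in \Cref{lem:periodic_good}, and since the leading $2$ of $2\omega$ is not part of $\omega$, the initial $2$ appears in the bound. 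Passing to the limit, and noting that strictness survives because $\lambda$ values are irrational, gives $[0;x_N,\dots,x_1]<[0;2,\lim\beta_n^T]$. In the periodic case, the unique indeterminate cut from \Cref{lem:only_one_indeterminate_cut}(2) gives $[0;x_N,\dots,x_1]<[0;2,R]$ in the same way. The extra cut $2|2a$ occurs when $2\omega$ begins with $aa$, but it is automatically good, as computed in \Cref{lem:periodic_good}.
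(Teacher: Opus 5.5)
Your proposal is correct and is the route the paper intends. The paper omits this proof as the mirror of \Cref{lem:renormalizable_limit1}, and your argument is exactly that mirror: the induction that $2\omega$ begins with $\alpha_n$, with obstruction (i) excluded by \Cref{lem:noalphaalphabetabeta}(2) and obstruction (ii) excluded by \Cref{lem:forbidden_pattern1}(2) over $(\alpha,\alpha\beta)=(u,uv)$, followed by \Cref{lem:converge_alpha_beta}(2) and \Cref{lem:only_one_indeterminate_cut}(2) for the inequalities.

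There is one local slip, in the periodic case. The word $\alpha^k\beta\alpha^\infty$ cannot be regrouped as $\tilde\alpha\tilde\beta^\infty$; for example, $aba^\infty$ is neither of the form $\alpha'^\infty$ nor $\alpha'\beta'^\infty$. Over $(\tilde\alpha,\tilde\beta)=(\alpha,\alpha^k\beta)$, which is $(u,uv)$ with $(u,v)=(\alpha,\alpha^{k-1}\beta)\in\overline{P}$, it reads $\tilde\beta\tilde\alpha^\infty$ and is excluded by \Cref{lem:forbidden_pattern1}(2). So only your ``or else'' branch is valid there.

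A similar notational point: the factorization from \Cref{lem:converge_alpha_beta}(2) is $\theta_m^Ta|b\theta_m\eta_m$ with $\eta_m\to\lim\beta_n^T$, not $\eta_m^T$. Your final bound $[0;x_N,\dots,x_1]<[0;2,\lim\beta_n^T]$ is nonetheless right.
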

	
	\section{Proof of \texorpdfstring{\Cref{thm:characterization_tildeM}}{Theorem 1.4}}\label{sec:tildeMbefore3}
	
	\subsection{Restatement of the result}
	
	As stated in \Cref{subsec:spectrum_widetilde_M_before_3}, we only need to prove the following.
	
	\begin{theorem}\label{thm:equal_value}
		Let $(u,v)\in\overline{P}$. Then
		\begin{equation*}
			\widetilde{m}\left(\left((uv)^T\right)^\infty\right)=\widetilde{m}\left(2u^+(uv)^\infty\right)=\widetilde{m}\left(v^T\left((uv)^T\right)^\infty\right)=\widetilde{m}\left(2u^+v(uv)^\infty\right)
		\end{equation*}
		and this common value is equal to
		\begin{equation*}
			\frac{3+m\left((uv)^\infty\right)}{2},
		\end{equation*}
		where $m\left((uv)^\infty\right)$ is the Markov value of the bi-infinite sequence $(uv)^\infty$.
	\end{theorem}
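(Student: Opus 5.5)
We first reduce the four quantities to two, and then compute one of them. By \Cref{subsec:spectrum_widetilde_M_before_3}, writing $uv=a\theta^{(r)}b$, the four continued fractions are, up to the maps $y\mapsto 1-y$ and $y\mapsto y+k$, the numbers $-2-\Theta_r$, $1-\theta_r$, $3+\Theta_r$ and $\theta_r$. The map $y\mapsto 1-y$ is the composition of $y\mapsto -y$ and $y\mapsto y+1$. Since $\widetilde m$ is defined by $\sup_{p/q}1/(q|qx-p|)$, it is invariant under $x\mapsto \pm x+k$. Hence
\[
\widetilde m\bigl(((uv)^T)^\infty\bigr)=\widetilde m\bigl(2u^+(uv)^\infty\bigr)=\widetilde m(\Theta_r),
\]
and the other two equal $\widetilde m(\theta_r)$. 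So it suffices to show $\widetilde m(\theta_r)=\widetilde m(\Theta_r)=\frac{3+\mu}{2}$, where $\mu=m((uv)^\infty)=\sqrt{9m_r^2-4}/m_r$ by Markov's theorem and \eqref{eq:gamma_r}.

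To compute $\widetilde m(\theta_r)$ I would use \eqref{def_tildem1}: $\widetilde m(x)=\sup_{n}\lambda_{n+1}(x)$, applied to $x=2u^+v(uv)^\infty$. There are two families of cuts.

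\emph{First cut.} Here $\lambda_1=[x_1;x_2,\dots]=2+\theta$-type tail, that is, $\lambda_1=1/\theta_r$ evaluated through the expansion. The cleaner route is the Galois-conjugate computation from \Cref{subsec:roots}: $\theta_r-\Theta_r=2\Delta-3=\sqrt{9-4/m_r^2}=\mu$. Thus $\Delta=\frac{3+\mu}{2}$ is exactly the claimed value, so the goal is to show $\sup_n\lambda_{n+1}=\Delta$.

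\emph{Where the sup is attained.} I would look for the cut at which the left part is $[0;x_n,\dots,x_1]$, with $x_1\dots x_n=2u^+$ read backwards, and the right part is the purely periodic $[(uv)^\infty]=\gamma_r$. That value is $\gamma_r+[0;(u^+)^T,2]$. Using $\gamma_r=2+\theta_r$ and the palindromic structure $uv=a\theta^{(r)}b$ together with \eqref{eq:elementary_identity}, the left piece should evaluate to $-(3+\Theta_r)+1$, or something similar, making the total $\theta_r-\Theta_r-1+\dots$. I would do this computation carefully via the matrix $M_w$ and \eqref{eq:eta_formula}, expecting it to equal $\Delta$.

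\emph{All other cuts.} I would show every other cut gives at most $\Delta$. The strategy is to compare each cut of $x$ with the corresponding cut of the bi-infinite word $\dots (uv)(uv)\,2u^+v(uv)^\infty$, or more directly to bound $\lambda_{n+1}(x)\le \gamma_{n+1}+\eta_{n+1}$. Here $\gamma_{n+1}$ ranges over shifts of the periodic tail, and $\eta_{n+1}$ is a truncated reversed word, which differs from the periodic value by at most a small amount. A cleaner alternative is quadratic forms: with $F_r(x,y)=(x-\theta_r y)(x-\Theta_r y)$ having integer-scaled coefficients and minimum $1/m_r$ on $\Z^2\setminus 0$ (Markov's theorem for forms), we get
\[
q|q\theta_r-p|=\frac{|F_r(p,q)|}{|p/q-\Theta_r|}\ge \frac{1/m_r}{|p/q-\Theta_r|}.
\]
For convergents, $p/q\approx\theta_r$, so $|p/q-\Theta_r|\approx\mu$, which only gives the Lagrange value. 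So the sup over finitely many initial $n$ must be handled directly. I would therefore rely on the symbolic comparison, using \Cref{prop_lexicographic_comparison} and \Cref{lem:compare} to show that the initial cut dominates.

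\emph{Main obstacle.} The main obstacle is this global comparison: proving that the cut immediately after the prefix $2u^+$ (respectively the analogous one for $\Theta_r$) maximizes $\lambda_n$. I would first try to reduce it, via \Cref{lem:only_one_indeterminate_cut}, to the single indeterminate cut $\theta^Ta|b\theta R$. Then I would compute that cut's value exactly, by writing the left finite part as a truncation and using Galois's reversal theorem. Finally I would show that all other cuts have value at most the Markov value $\mu<\Delta$ plus a correction bounded by $\frac{3-\mu}{2}$.
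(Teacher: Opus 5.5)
\emph{Verdict: the proposal has a genuine gap.} The upper bound $\lambda_n\le\Delta$, which is the core of the theorem, is not proved, and the maximizing cut is misidentified.

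\textbf{What is correct.} Your reduction from four values to two works, and it is cleaner than the paper's cut-by-cut argument. Since $u^b=bu^+$ and $[0;1,1,y]=1-[0;2,y]$, the words pair up under $x\mapsto 1-x$, which preserves $\widetilde{m}$. The identity $\theta_r-\Theta_r=\mu$, i.e.\ $\Delta=\frac{3+\mu}{2}$, is also fine.

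\textbf{Where the sketch goes wrong.} For $\theta_r=[0;2,u^+,v,\overline{u,v}]$, the prefix $2u^+$ is followed by $v(uv)^\infty$, not by $(uv)^\infty$. The supremum sits after $2u^+v$, with left part $[0;v^T,(u^+)^T,2]=[0;b,\theta,b]$. For $3+\Theta_r=[0;2,u^+,\overline{u,v}]$, which is the word your description actually fits, the cut after $2u^+$ is not maximal. For $(u,v)=(a,b)$ it has value $\frac{14+\sqrt{221}}{10}$, while $\Delta=\frac{15+\sqrt{221}}{10}$. Your guessed left piece $-(3+\Theta_r)+1$ would give the total $\gamma_r-2-\Theta_r=\mu$, not $\Delta$.

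\textbf{The lower bound is easy.} No cut analysis is needed for $\widetilde{m}\geq\Delta$. We have $\theta_r-\frac{k_r}{m_r}=-(3-\Delta)$, $\Theta_r-\frac{k_r-3m_r}{m_r}=3-\Delta$, and $\Delta(3-\Delta)=\frac{9-\mu^2}{4}=\frac{1}{m_r^2}$. Hence the fractions $\frac{k_r}{m_r}$ and $\frac{k_r-3m_r}{m_r}$ give $\frac{1}{q|qx-p|}=\Delta$ for $x=\theta_r$ and $x=\Theta_r$ respectively.

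\textbf{The gap: the upper bound.} Your last sentence only restates $\lambda_n\le\Delta$, since $\mu+\frac{3-\mu}{2}=\Delta$. The tools you invoke cannot give it. \Cref{lem:only_one_indeterminate_cut}, \Cref{prop_lexicographic_comparison} and \Cref{lem:compare} are calibrated at the threshold $3$: a good cut merely has value $\le 3$ for every extension. Since $3-\Delta=\frac{1}{\Delta m_r^2}$ is tiny, nothing there excludes cuts with value in $(\Delta,3]$.

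\textbf{Your quadratic-form bound is misnormalized.} $m_rF_r$ is the integral Markov form, whose minimum is $m_r$, so $\min|F_r|=1$, not $1/m_r$. With this, $\frac{1}{q|q\theta_r-p|}\le|p/q-\Theta_r|$, which is $\le\Delta$ for every convergent $p/q\le\frac{k_r}{m_r}$. Symmetrically, for $\Theta_r$ the bound holds at every convergent $p/q\ge\frac{k_r-3m_r}{m_r}$. This is much more than the Lagrange value. It leaves only the finitely many early convergents on the other side, i.e.\ cuts in the first period, and that is exactly where an argument is missing.

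\textbf{What the paper does there.} \Cref{lem:position_tildem_attain_max} handles the remaining cuts in four steps.
\begin{enumerate}
\item Cuts $u^bv(uv)^kw_1|w_2(uv)^\infty$ with $|w_1|$ odd increase when the finite left part is replaced by the periodic one, so they are $<\mu$.
\item Those with $|w_1|$ even and $k\ge1$ are dominated by the corresponding cut with $k=0$. This is because $(b\theta a)^kb\theta b$ and $b\theta b$ first differ, $a$ against $b$, after an even number of partial quotients.
\item Cuts of type $22|22$, $11|11$, $22|11$ are excluded by a numerical bound.
\item The remaining $b|a$ and $a|b$ cuts inside $u^bvuv$ (resp.\ $u^buvv_a$) need \Cref{lem:first_cuts} and \Cref{lem:second_cuts}. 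These are proved by induction along the renormalization tree, as in \Cref{lem:cuts}. They show that both halves of any competing cut are separately smaller than those of the maximizing cut.
\end{enumerate}
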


	\subsection{The Proof}
	
	In \cite[Theorem 27]{Bombieri}, it was proved that if $(u,v)\in\overline{P}$, then the Markov value of the bi-infinite periodic orbit $(uv)^\infty$ is attained precisely in two positions of the period $uv=a\theta b$ and it is equal to
	\begin{equation}\label{eq:markov_value_first_version}
		m((uv)^\infty)=\lambda((uv)^\infty uv|uv(uv)^\infty)=[\overline{a,\theta,b}]+[0;\overline{b,\theta,a}]=\frac{\sqrt{\Delta}}{q},
	\end{equation}
	where $\Delta=9q^2-4$ and $q=q(uv)$ is the lower--left corner of the matrix $M_{uv}$ defined on \eqref{eq:matrix_M_w}.
	
	Let us give an independent proof of \eqref{eq:markov_value_first_version} and of the fact that this Markov value is attained at two positions based purely on renormalization.
	
	\begin{lemma}\label{lem:cuts}
		Let $uv\in P$, $uv\neq ab$ and write $uv=v_au^b=a\theta b$ where $\theta$ is a palindromic word on $\{a,b\}$. Given any factorization of the bi-infinite word $(uv)^\infty=y^Tb|ax$ different from $\dots uv|uv\dots$ or $(uv)^\infty=x^Ta|by$ different from $\dots v_a|u^b\dots$, we have that
		\begin{enumerate}
			\item Let $\theta_1$ be the largest prefix common to $x$ and $\theta$. Then $\theta_1b$ is a prefix of $x$ and $\theta_1a$ is a prefix of $\theta$.
			\item $\theta_2$ be the largest prefix common to $y$ and $\theta$. Then $\theta_2a$ is a prefix of $y$ and $\theta_2b$ is a prefix of $\theta$.
		\end{enumerate}
		
	\end{lemma}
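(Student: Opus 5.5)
Both items say that every cut of type $b|a$ or $a|b$ in $(uv)^\infty$ other than the two distinguished positions is strictly good. The disagreement between the right side and $\theta$ is of the favourable kind, which is the content of \Cref{prop_lexicographic_comparison}. I would prove both items together by induction along the renormalization tree, in the style of \Cref{lem:ba_good_cuts} and \Cref{lem:only_one_indeterminate_cut}.

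The base of the induction is the first pairs $(u,v)=(a,ab)$ and $(ab,b)$, i.e.\ $uv=aab$ and $uv=abb$, with $\theta=a$ and $\theta=b$. Here all cuts of $(aab)^\infty$ and $(abb)^\infty$ can be listed by hand. For instance, in $(aab)^\infty$ the only $b|a$ cut is $\dots uv|uv\dots$. The $a|b$ cuts are of the form $x^Ta|by$ with $y=aab\dots$ and $\theta=a$, so $\theta_2=a$ and $\theta_2 a$ is a prefix of $y$ while $\theta_2 b=ab$ is not a prefix of $\theta$. That last fact means the statement must be read with ``prefix'' allowing $\theta$ to end before $\theta_2$, or the base should be taken one level higher. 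I would first check the intended convention on small cases and adjust the base accordingly.

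For the inductive step I would use the description of $uv$ as $W(ab)$, where $W$ is a product of $U,V$ (\Cref{relbtrenorm}, \Cref{lem:symmetry1}). Passing from $uv=a\theta b$ to $U(uv)$ or $V(uv)$ replaces $\theta$ by $\tilde\theta=bU(\theta)$ or $\tilde\theta=V(\theta)a$, as in the proofs above. Every $b|a$ or $a|b$ cut of the new periodic word comes from a cut of the old one: under $U$ from $U(a|a)$ or $U(b|a)$, and under $V$ from $V(b|b)$ or $V(b|a)$, again exactly as in \Cref{lem:ba_good_cuts}. Cuts coming from $U(a|a)=ab|ab$ or $V(b|b)=ab|ab$ are internal to a single image letter. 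For these I would show directly that the longest common prefix with $\tilde\theta$ ends in a mismatch of the required type, using that $x$ continues as an image word while $\tilde\theta$ starts with $b$ or $a$ accordingly.

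For cuts inherited from an old non-distinguished cut, the hypothesis gives a factorization $x=\theta_1 b\dots$ with $\theta_1 a$ a prefix of $\theta$. Applying $U$ (respectively $V$) and using the commutation identities of \Cref{lem:commutative_identities}, $bU(w^T)=U(w)^Tb$ and $V(w^T)a=aV(w)^T$, the new right side begins with $bU(\theta_1)b\dots$ while $\tilde\theta$ begins with $bU(\theta_1)ab\dots$. So the new common prefix is $bU(\theta_1)$ followed by the mismatch $b$ versus $a$, and the $V$ case is analogous with $V(\theta_1)a$. One also has to check that $U$ and $V$ carry the two distinguished cuts to the two new distinguished cuts. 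This follows from $U(v_a|u^b)$, $V(uv|uv)$ and their variants via \Cref{lem:symmetry2}, which also ensures that no new distinguished cut arises.

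The main obstacle I expect is the bookkeeping of the boundary letters. Images of $b|a$ cuts become $a|b$ cuts and vice versa, since $U$ turns $b|a$ into $b|ab$, and the conjugations swap which item applies. One must verify carefully, with the palindromic structure of $\theta$, that the mismatch occurs inside $\tilde\theta$ and not past its end. If the direct letter tracking becomes messy, I would fall back on comparing $x$ with the shift of $(uv)^\infty$ at the distinguished cut and invoke \Cref{lem:extend_alphabeta}.
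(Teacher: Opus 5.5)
Your plan is the paper's proof: induction along $U,V$ with $\tilde\theta=bU(\theta)$ or $\tilde\theta=V(\theta)a$, transporting the old cuts with \Cref{lem:commutative_identities}, and treating separately the cuts created by $U(a|a)=ab|ab$ and $V(b|b)=ab|ab$. But the step you leave open needs the one non-formal ingredient of the proof, and your proposal does not contain it.

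\textbf{The gap: the new cuts.} Take the cut coming from an $a$-run, $a^ka|aa^\ell b\mapsto\dots a(ba)^kb|a(ba)^\ell bb\dots$.
\begin{itemize}
\item The right side $\tilde x$ begins with $(ba)^\ell bb$.
\item Write $uv=a^nb\cdots$, so $\theta=a^{n-1}b\cdots$. Then $\tilde\theta=bU(\theta)$ begins with $(ba)^{n-1}bb$.
\item The required mismatch is $\tilde\theta_1=(ba)^\ell b$, followed by $b$ in $\tilde x$ and $a$ in $\tilde\theta$. It occurs only if $\ell\le n-2$.
\item This bound comes from the fact that the first run of $a$'s in $uv$ is its longest run of $a$'s, so $k+\ell+2\le n$.
\end{itemize}
Under $V$ the situation is symmetric. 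The nontrivial side is $\tilde y=(ab)^kaa\cdots$ against $\tilde\theta=V(\theta)a$. Here you need that the last run $b^n$ of $uv$ is its longest $b$-run; by palindromicity $\theta$ then begins with $b^{n-1}a$, so $\tilde\theta$ begins with $(ab)^{k+1}$, with $k\le n-2$.

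Your sentence about $x$ continuing as an image word while $\tilde\theta$ starts with $b$ or $a$ only settles the side whose common prefix is empty. Without the maximal-run property the other side is not controlled. The fallback via \Cref{lem:extend_alphabeta} does not supply that bound. You must state and prove it; it is an easy induction on $U,V$.

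\textbf{Two misreadings to correct.}
\begin{itemize}
\item \emph{The base case is vacuous.} For $(u,v)=(a,ab)$ one has $v_a=aa$ and $u^b=b$. So the only $a|b$ cut of $(aab)^\infty$, namely $\dots aa|b\,aab\dots$, is exactly the excluded cut $\dots v_a|u^b\dots$. The same holds for $(abb)^\infty$, where $v_a=a$ and $u^b=bb$. Your apparent counterexample is the distinguished cut, so no change of convention is needed; adopting one would change the statement.
\item \emph{Cut types are not exchanged.} The correspondence is $U(y^Tb|ax)=U(y^T)b|abU(x)$, $U(x^Ta|by)=U(x^T)a|bbU(y)$, $V(y^Tb|ax)=V(y^T)ab|aV(x)$ and $V(x^Ta|by)=V(x^T)aa|bV(y)$. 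It keeps $b|a$ cuts of type $b|a$ and $a|b$ cuts of type $a|b$, so items (1) and (2) never swap roles.
\end{itemize}
Relatedly, your list of sources ($U(a|a)$, $U(b|a)$, $V(b|b)$, $V(b|a)$) is the one for $b|a$ cuts from \Cref{lem:ba_good_cuts}. The $a|b$ cuts sit inside the images $U(a)=a|b$ and $V(b)=a|b$.
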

	
	\begin{proof}
		Since $uv=(R_1\dotsb R_n)(ab)$ for some renormalization operators $R_1,\dots,R_n\in\{U,V\}$, we can do the proof by induction. Since for $uv=aab$ or $uv=abb$ the bi-infinite word $(ab)^\infty$ only has two cuts, there is nothing to prove. Assume that the lemma is true and lets prove it for $\tilde{u}\tilde{v}=W(uv)$ where $W\in\{U,V\}$. All factorizations of $uv$ will be in correspondence to factorizations of $\tilde{u}\tilde{v}$, however there can be new factorizations that we will analyze separately. 
		
		We will use the following fact that is very easy to prove by induction: the largest block of $a$'s inside of $uv$ is the first block and the largest block of $b$'s inside of $uv$ is the last block. 
		
		If we apply $W=U$, then $\tilde{u}\tilde{v}=U(uv)=U(a\theta b)=abU(\theta)b=a\tilde{\theta}b$. Observe that the cut $(uv)^\infty=y^Tb|ax$ corresponds to the cut of $\tilde{u}\tilde{v}$ given by
		\begin{equation*}
			(\tilde{u}\tilde{v})^\infty=U((uv)^\infty)=U(y^Tb|ax)=U(y^T)b|abU(x)=\tilde{y}^Tb|a\tilde{x}.
		\end{equation*}
		Since $\theta$ and $x$ begin with $\theta_1a$ and $\theta_1b$ respectively, we see that $\tilde{\theta}=bU(\theta)$ and $\tilde{x}=bU(x)$ begin with $bU(\theta_1)ab$ and $bU(\theta_1)b$, respectively, so $\tilde{\theta}_1=bU(\theta_1)$ is the new common prefix. Similarly, since $y$ begins with $\theta_2a$, using \Cref{lem:commutative_identities} we have that $\tilde{y}=U(y^T)^T$ begins with
		\begin{equation*}
			\left(U\left(a\theta_2^T\right)\right)^T=\left(abU\left(\theta_2^T\right)\right)^T=\left(aU(\theta_2)^Tb\right)^T=bU(\theta_2)a,
		\end{equation*}
		and since $\tilde{\theta}=bU(\theta)$ begins with $bU(\theta_2)b$, we have that $\tilde{\theta}_2=bU(\theta_2)$ is the new common prefix. Analogously the cut $(uv)^\infty=x^Ta|by$ corresponds to the cut of $\tilde{u}\tilde{v}$ given by
		\begin{equation*}
			(\tilde{u}\tilde{v})^\infty=U((uv)^\infty)=U(x^Ta|by)=U(x^T)a|bbU(y)=\tilde{x}^Ta|b\tilde{y}.
		\end{equation*}
		Since $\theta_1b$ is a prefix of $x$, we have that $U(x^T)$ ends with $bbU(\theta_1^T)=bU(\theta_1)^Tb$ (there is a $b$ before $bU(\theta_1^T)$ because any nonempty word ends with $b$ after applying $U$), so $\tilde{x}=U(x^T)^T$ begins with $bU(\theta_1)b$. We have that $\tilde{y}=bU(y)$ begins with $bU(\theta_2)ab$ so $\tilde{\theta}_2=bU(\theta_2)$ is a again the common prefix between $\tilde{\theta}$ and $\tilde{y}$.
		
		This shows that the previous factorizations $\dots b|a\dots$ or $\dots a|b \dots$ of $(uv)^\infty$ correspond to factorizations of $(\tilde{u}\tilde{v})^\infty$ with the same properties when we apply $U$. However, there are new factorizations appearing at the cuts $\dots a|a\dots$ because $U(a|a)=ab|ab$. For this, let us write $uv=a^nbw$ with $w$ some word over the alphabet $\{a,b\}$, so in particular $U(\theta)$ begins with $(ab)^{n-1}b$. Given any subfactor $a^m$ of $uv$ with $m\geq 2$, we must have that $m\leq n$, so any new factorization that comes from $a^ka|aa^\ell b$ has the form
		\begin{equation*}
			\tilde{y}^Tb|a\tilde{x}=\dots a(ba)^kb|a(ba)^\ell bb \dots
		\end{equation*}
		with $k+\ell+2=m$. In particular for that cut it holds that $\tilde{y}$ begins with $a$ (so $\tilde{\theta_2}$ is the empty word) and $\tilde{x}$ begins with $\tilde{\theta}_1b=(ba)^\ell bb$. Since $\tilde{\theta}=bU(\theta)$ begins with $b(ab)^{n-1}b=(ba)^{n-1}bb$ and $\ell\leq m-2\leq n-2$, we are done. In conclusion all cuts of $(\tilde{u}\tilde{v})^\infty$ where $\tilde{uv}=U(uv)$ have the claimed properties.

		Now if we apply $W=V$ then one has that $\tilde{u}\tilde{v}=V(uv)=V(a\theta b)=aV(\theta)ab=a\Tilde{\theta}b$. Hence the factorization $(uv)^\infty=y^Tb|ax$ will change to 
		\begin{equation*}
			(\tilde{u}\tilde{v})^\infty=\left(V(uv)\right)^\infty=V(y^Tbax)=V(y^T)ab|aV(x)=\tilde{y}^Tb|a\tilde{x}.
		\end{equation*}
		Since $\theta$ begins with $\theta_1a$, we have that $\tilde{\theta}=V(\theta)a=V(\theta_1)aa\dots$ and since $x$ begins with $\theta_1 b$, one has $\tilde{x}=V(x)=V(\theta_1)ab\dots$ so $\tilde{\theta}_1=V(\theta_1)a$ is the new largest common prefix. Similarly, since 
		\begin{equation*}
			\tilde{y}=\left(V(y^T)a\right)^T=\left(aV(y)^T\right)^T=V(y)a
		\end{equation*}
		then $y$ begins with $\theta_2a$ and $\theta$ begins with $\theta_2 b$, so $\tilde{y}$ and $\tilde{\theta}$ begin with $V(\theta_2)aa$ and $V(\theta_2b)=V(\theta_2)ab$, respectively, so $\tilde{\theta}_2=V(\theta_2)a$ is the new largest common prefix. Analogously, the cut $(uv)^\infty = x^Ta|by$ becomes
		\begin{equation*}
			\tilde{u}\tilde{v}=\left(V(uv)\right)^\infty=V\left(x^Ta|by\right)=V(x^T)aa|bV(y)=\tilde{x}^Ta|b\tilde{y}.
		\end{equation*}
		We have that
		\begin{equation*}
			\tilde{x}=\left(V(x^T)a\right)^T=\left(aV(x)^T\right)^T=V(x)a
		\end{equation*}
		Since $\theta_1b$ is a prefix $x$ and $\theta_1a$ is a prefix of $\theta$, we have that $V(\theta_1)aba$ is a prefix of $\tilde{x}$  and $\tilde{\theta}=V(\theta)a$ begins with $V(\theta_1)aa$ (since any word begins with $a$ after applying $V$), so $\tilde{\theta_1}=V(\theta_1)a$ is the new common prefix. The corresponding argument works for $\tilde{y}$.
		
		As before, there are new cuts appearing from $\dots b|b\dots$ since $V(bb)=ab|ab$. Since the largest subfactor of $b$'s of $uv$ is at the end, let us write $a\theta b=uv=wab^n$ for some word over the alphabet $\{a,b\}$. Since $\theta$ is a palindrome, we have that $\theta$ begins with $b^{n-1}a$. Given any subfactor $b^m$ of $uv$ with $m\geq 2$, the cut $ab^kb|bb^\ell$ becomes
		\begin{equation*}
			\tilde{y}^Tb|a\tilde{x} = \dots aa(ba)^kb|a(ba)^\ell b\dots
		\end{equation*}
		with $k+\ell+2=n$.  In particular that cut satisfies that $\tilde{y}$ begins with $(ab)^kaa$ with $k\leq n-2$ and since $\theta$ begins with $b^{n-1}a$, the word $\tilde{\theta}=V(\theta)a$ begins with $(ab)^{k+1}$ so $\tilde{\theta}_2=(ab)^ka$ is the largest common prefix. Similarly, we have that $\tilde{x}$ begins with $b$ and $\tilde{\theta}=V(\theta)a$ begins with $a$, so the largest common prefix $\tilde{\theta}_1$ is the empty word. In conclusion all cuts of $\tilde{u}^b\tilde{v}\tilde{u}\tilde{v}$ where $\tilde{uv}=V(uv)$, satisfy the properties.
	\end{proof}
	
	\begin{corollary}\label{cor:easy_cor}
		Let $uv\in P$ and write $uv=v_au^b=a\theta b$ where $\theta$ is a palindromic word on $\{a,b\}$. Given any factorization of the bi-infinite word $(uv)^\infty=y^Tb|ax$ different from $\dots uv|uv\dots$ or $(uv)^\infty=x^Ta|by$ different from $\dots v_a|u^b\dots$, then if $w$ is the largest common prefix of $x$ and $y$, then $w$ is an strict prefix of $\theta$ and $x$ begins with $wb$ and $y$ begins with $wa$.
	\end{corollary}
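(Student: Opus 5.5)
We derive \Cref{cor:easy_cor} directly from \Cref{lem:cuts}, comparing the two prefixes $\theta_1$ (common to $x$ and $\theta$) and $\theta_2$ (common to $y$ and $\theta$). We treat a cut of the form $(uv)^\infty=y^Tb|ax$; the cut $x^Ta|by$ is handled by the same argument. The case $uv=ab$ we check by hand: the only cuts of $(ab)^\infty$ are of the excluded types, so the statement is vacuous.

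By \Cref{lem:cuts}, $x$ begins with $\theta_1b$ while $\theta$ begins with $\theta_1a$. Likewise $y$ begins with $\theta_2a$ while $\theta$ begins with $\theta_2b$. Both $\theta_1a$ and $\theta_2b$ are prefixes of the same word $\theta$, and they end in different letters. Hence $\theta_1\neq\theta_2$, and one is a strict prefix of the other.

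We now split into cases.

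\emph{Case $|\theta_1|<|\theta_2|$.} Then $\theta_1$ is a strict prefix of $\theta_2$, and the letter of $\theta$ following $\theta_1$ is $a$. So $\theta_2$ begins with $\theta_1a$, and therefore $y$ begins with $\theta_1a$. Meanwhile $x$ begins with $\theta_1b$. Thus the largest common prefix of $x$ and $y$ is $w=\theta_1$, with $x$ beginning with $wb$ and $y$ with $wa$. Moreover $w=\theta_1$ is a strict prefix of $\theta$, since $\theta_1a$ is a prefix of $\theta$.

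\emph{Case $|\theta_2|<|\theta_1|$.} Symmetrically, $\theta_1$ begins with $\theta_2b$, so $x$ begins with $\theta_2b$ while $y$ begins with $\theta_2a$. Hence $w=\theta_2$, again a strict prefix of $\theta$, and the claimed letters follow.

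In either case we obtain $w=\min(\theta_1,\theta_2)$ (in the prefix order), and the corollary follows. There is essentially no obstacle here; the only point requiring care is the observation that $\theta_1\neq\theta_2$. This is exactly what forces strict comparability and identifies which letters follow $w$ in $x$ and in $y$.
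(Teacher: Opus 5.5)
Your proposal is correct and follows the paper's route. The paper states this as an immediate corollary of \Cref{lem:cuts} without writing out the deduction. Your argument supplies exactly the missing details: $\theta_1a$ and $\theta_2b$ are prefixes of the same word $\theta$, so $\theta_1\neq\theta_2$ are strictly comparable, which gives $w$ as the shorter of the two. Your separate check that the case $uv=ab$ (excluded from the lemma) is vacuous is also right.
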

	\begin{remark}
		The previous corollary is easier to prove directly than the proof of \Cref{lem:cuts}. However, \Cref{lem:cuts} gives a stronger conclusion as we will see. 
	\end{remark}

	\begin{corollary}\label{cor:cuts}
		Let $uv\in P$ and write $uv=a\theta b$ where $\theta$ is a palindromic word on $\{a,b\}$. The Markov value of the bi-infinite sequence $(uv)^\infty$ is attained at precisely two positions in its minimal period $uv$, namely the cuts:
		\begin{equation*}
			m\left((uv)^\infty\right)=\lambda(\dots |uv\dots)=\lambda(\dots v_a | u^b \dots).
		\end{equation*}
	\end{corollary}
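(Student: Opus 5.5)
Both cuts in the statement are to be compared against an arbitrary cut of $(uv)^\infty$ using \Cref{prop_lexicographic_comparison} together with \Cref{cor:easy_cor} (equivalently \Cref{lem:cuts}). The distinguished cuts are $\lambda(\dots uv|uv\dots)$ and $\lambda(\dots v_a|u^b\dots)$. Because $uv=a\theta b=v_au^b$, the first reads $(\dots a\theta^T)b|a(\theta b a\theta\dots)$. The second, with $u^b$ beginning with $b$ and $v_a$ ending with $a$, reads $\dots a|b\dots$. Since $uv=v_au^b$ and $(uv)^T=u^bv_a$, the bi-infinite word $(uv)^\infty$ is fixed, up to shift, by reversal. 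The reversal carries the cut $\dots uv|uv\dots$ to a cut of the form $x^Ta|by$ located at $\dots v_a|u^b\dots$. Using the abuse of notation $\lambda(E^Ta|bF)=\lambda(E^T2|2bF)$, these two cuts are mirror images under reversal of the word in $\{1,2\}$, so they have equal value. It therefore suffices to show that $\lambda(\dots uv|uv\dots)$ strictly exceeds every other cut of the period.

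Next I would eliminate the cuts that are not of type $b|a$ or $a|b$. A cut inside a letter $a=22$ or $b=11$, or between two equal letters, gives a value of about $2+[0;2,\dots]<2.5$ or $1+[0;\dots]<2$. Both are strictly below the value of the distinguished cut, which exceeds $2.9$ as soon as $uv\ne ab$ (the case $uv=ab$ is trivial, since its only cuts are the two distinguished ones). That leaves the cuts $y^Tb|ax$ and $x^Ta|by$.

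For a cut $y^Tb|ax$ other than $\dots uv|uv\dots$, let $w$ be the longest common prefix of $x$ and $y$. By \Cref{cor:easy_cor}, $w$ is a strict prefix of $\theta$, $x$ begins with $wb$, and $y$ begins with $wa$. At the distinguished cut, by contrast, both sides begin with the full palindrome $\theta$: $x_0=\theta b\dots$ and $y_0=\theta a\dots$. I would then apply a monotonicity statement of the kind in \Cref{lem:compare}, made strict. The cut value is $3$ minus or plus a term of the size $\sizes$ of the common symmetric block, and the sign is fixed by which side carries $a$ at the first disagreement. Here the configuration $a w^Tb|aw b$ is good, and its value is bounded by $3-\sizes(bwb)$ up to the tail. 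Since $w$ is a strict prefix of $\theta$, $w$ is shorter, and the same estimate gives $\lambda(\text{cut})<3-\sizes(bwb\cdot)\le\lambda(\dots uv|uv\dots)$. To make this precise I would order the words by extension length exactly as in \Cref{lem:compare}: the longer common symmetric block $\theta$ yields the larger value. For the cuts $x^Ta|by$ I would run the same argument after reversal, again invoking \Cref{cor:easy_cor}.

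The main obstacle is the comparison step. \Cref{lem:compare} is stated for words of the form $a\gamma^Tb|a\gamma b$ where one block is a strict prefix of the other. Here the other cut has symmetric block $w$ with disagreement letters $wb$ on the $x$ side and $wa$ on the $y$ side, whereas the distinguished cut has block $\theta$, and $\theta$ continues past $w$ with $a$, as \Cref{lem:cuts} gives. I expect to need the stronger form of \Cref{lem:cuts} to control the parity of $|w|$. The plan is to verify, via the induction through $U$ and $V$ in \Cref{lem:cuts}, that $|w|$ is even: $\tilde\theta_1=bU(\theta_1)$ and $V(\theta_1)a$ preserve parity over $\{1,2\}$, since each letter has length $2$. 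With $|w|$ even, the estimate $\lambda<3-\sizes(bwb)$ from \cite[Lemma 3.2]{EGRS2024} applies directly, and the value of the distinguished cut dominates it. Strictness then shows that the maximum is attained only at the two stated positions.
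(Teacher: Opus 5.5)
Your proposal is correct and is essentially the paper's proof. \Cref{cor:easy_cor} puts every other $b|a$ or $a|b$ cut in the form $\dots aw^Tb|awb\dots$ with $w$ a strict prefix of $\theta$, and \Cref{lem:compare} then gives strict inequality against $\dots a\theta^Tb|a\theta b\dots$. Your reversal symmetry, which correctly matches the two distinguished cuts and shows their values agree, and your disposal of the remaining cuts are sound additions. The bounds you quote for those remaining cuts are too optimistic: a cut $22|22$ can reach about $2.8$, and it is the bound $20/7$ that keeps these cuts below the distinguished value. The parity issue you flag is not an obstacle. Every word over $\{a,b\}$ automatically has even length over $\{1,2\}$, which is all \Cref{lem:compare} needs, so no extra induction through \Cref{lem:cuts} is required. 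Also, $\theta$ need not continue past $w$ with $a$: for $uv=ababb$ it continues with $b$, which does not affect the comparison.
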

	
	\begin{proof}
		Clearly the Markov value of $(uv)^\infty=(ab)^\infty$ is being attained at the cut $(ab)^\infty|ab(ab)^\infty$ and $(ab)^\infty a|b(ab)^\infty$. For $uv\in P$, $uv\neq ab$, the \Cref{cor:easy_cor} is sufficient to finish the proof, since for any cut $y^Tb|ax$ different from $\dots uv|uv \dots$ and $x^Ta|by$ different from $\dots v_a|u^b\dots$, one has
		\begin{equation*}
			[a,x]+[0;b,y]<[\overline{a,\theta,b}]+[0;\overline{b,\theta,a}].
		\end{equation*}
		Indeed, since $[a,x]=[a,w,b,\dots]$, $[0;b,y]=[0;b,w,a,\dots]$ and in either case $w$ is a strict prefix of $\theta$, this follows from \Cref{lem:compare}.	
		
		However, \Cref{lem:cuts} gives a stronger conclusion:
		\begin{equation*}
			[a,x]<[\overline{a,\theta,b}] \quad\text{and}\quad [0;b,y]<[0;\overline{b,\theta,a}].
		\end{equation*}
	\end{proof}

	The proof of the following two lemmas are quite similar to \Cref{lem:cuts}, so we will omit the proofs.
	
	\begin{lemma}\label{lem:first_cuts}
		Let $(u,v)\in\overline{P}$ and write $uv=a\theta b$ where $\theta$ is a palindromic word on $\{a,b\}$. Let $x$ and $y$ be words on $\{a,b\}$ defined by any of the following factorizations:
		\begin{itemize}
			\item $u^bvuv=y^Tb|ax$  different from $u^bv|uv=b\theta b|a\theta b$;
			\item $u^bvuv=x^Ta|by$ with length $|xa|<|uv|$;
		\end{itemize}
		Then in any case
		\begin{enumerate}
			\item\label{itm:1} Let $\theta_1$ be the largest prefix common to $x$ and $\theta$. Then either $\theta_1 b$ is a prefix of $x$ and $\theta_1a$ is a prefix of $\theta$, or $x=\theta_1$ is a prefix of $\theta$.
			\item\label{itm:2} Let $\theta_2$ be the largest prefix common to $y$ and $\theta$. Then either $\theta_2 a$ is a prefix of $y$ and $\theta_2 b$ is a prefix of $\theta$, or $y=\theta_2$ is a prefix of $\theta$.
		\end{enumerate}
	\end{lemma}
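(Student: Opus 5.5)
We would prove \Cref{lem:first_cuts} in the same way as \Cref{lem:cuts}, by induction on the length of the renormalization sequence $uv=(R_1\dotsb R_n)(ab)$ with $R_i\in\{U,V\}$. The finite word $u^bvuv$ plays the role of a window of the bi-infinite word $(uv)^\infty$. By \Cref{lem:symmetry2} it equals $u^bv_au^bv$, and it is the piece $b\theta b|a\theta b$ around the distinguished cut. The new feature is truncation. When a cut sits near the left or right end of $u^bvuv$, the word $x$ or $y$ may run out before it differs from $\theta$. This is exactly the alternative ``$x=\theta_1$ is a prefix of $\theta$'' (respectively $y=\theta_2$). So the statement is the finite analogue of \Cref{lem:cuts}, where a comparison may terminate at the end of the window.

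\textbf{Base cases.} We check the base cases $uv\in\{ab,aab,abb\}$ by hand. For example, $(u,v)=(a,ab)$ gives $u^bvuv=abaab$ after removing the $b$ convention. In each case only a couple of cuts must be examined, and each is immediate.

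\textbf{Inductive step.} Assume the statement for $(u,v)$ and let $\tilde u\tilde v=W(uv)$ with $W\in\{U,V\}$. We relate $\tilde u^b\tilde v\tilde u\tilde v$ to $W(u^bvuv)$ up to a boundary letter. For $U$, $U(u^b)$ equals $\tilde u^b$ up to a trailing $b$, and for $V$ one similarly uses \Cref{lem:commutative_identities}. Every old cut $\dots b|a\dots$ or $\dots a|b\dots$ is transported exactly as in \Cref{lem:cuts}. The new common prefixes are $bU(\theta_i)$ in the $U$ case and $V(\theta_i)a$ in the $V$ case, and the same computations using $bU(w^T)=U(w)^Tb$ and $V(w^T)a=aV(w)^T$ show that the dichotomy ``mismatch with $b$ versus $a$'' is preserved. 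In the truncated alternative, $x=\theta_1$ a prefix of $\theta$, the image $\tilde x$ is the image of a prefix of $\theta$. It therefore remains a prefix of $\tilde\theta=bU(\theta)$ or $V(\theta)a$, provided we control the boundary letter. This is where the hypothesis $|xa|<|uv|$ for the $a|b$ cuts enters, since it excludes the distinguished cut $v_a|u^b$ and its images.

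\textbf{New cuts and main obstacle.} We then handle the cuts that are newly created, namely $U(a|a)=ab|ab$ and $V(b|b)=ab|ab$. As in \Cref{lem:cuts}, these come from blocks $a^m$ or $b^m$ that are no longer than the first $a$-block or the last $b$-block of $uv$. Such a cut has one side beginning with the wrong letter immediately, so that $\theta_i$ is empty, and the other side is $(ba)^\ell bb$ or $(ab)^ka a$, which is compared against the beginning $(ba)^{n-1}bb$ of $\tilde\theta$. The main obstacle is the bookkeeping at the two ends of the finite window. We must check that the excluded cut $u^bv|uv$ and the length restriction $|xa|<|uv|$ correspond exactly under $W$. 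We must also check that cuts near the right end of $u^bvuv$ do produce the truncated case rather than a wrong-letter mismatch. For this we would use that $\theta$ is a palindrome and that $u^bvuv$ ends with $\theta b$, so any suffix read as $x$ is a prefix of $\theta b$ reversed appropriately. This reduces the verification to the case already covered by \Cref{cor:easy_cor} applied to the periodic word $(uv)^\infty$.
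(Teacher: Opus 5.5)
Your strategy (induction along the renormalization sequence, transporting cuts as in \Cref{lem:cuts}) is the one the paper has in mind when it omits the proof. However, it fails at the step you call immediate, and in fact the statement as printed is false there.

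\textbf{The counterexample.} Take $(u,v)=(a,ab)$. Then $uv=aab$, $\theta=a$ and $u^bvuv=babaab$ (not $abaab$). The factorization $u^bvuv=ba|baab$ is of the second kind, with $x=b$, $|xa|=2<3=|uv|$ and $y=aab$. Hence $\theta_2=a=\theta$, and neither alternative of item (2) holds: $\theta_2b=ab$ is not a prefix of $\theta$, and $y\neq\theta_2$.

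This is not special to the base case. From $uv=v_au^b$ one gets $u^bvuv=(v_a)^bu^bv_au^b$. Your identity $u^bvuv=u^bv_au^bv$ is false: $babaab\neq baabab$. For every $v\neq b$ the cut $(v_a)^b|u^bv_au^b$ is admissible, with $|xa|=|v|<|uv|$ and $y=u^{+}v_au^b=\theta a u^b$, which runs past $\theta$.

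So your assertion that $|xa|<|uv|$ ``excludes the distinguished cut $v_a|u^b$ and its images'' is wrong. It removes only the second copy of that cut in the window. The left-truncated first copy survives, and this is exactly where the dichotomy breaks. The trouble sits at the left end of the window, on the long side $y$, not at the right end where you look for it. Since your inductive hypothesis is already false for $(a,ab)$, the induction cannot be run as proposed.

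\textbf{The corrected statement.} What is true, and what \Cref{lem:position_tildem_attain_max} actually uses, is item (2) with the extra alternative ``$\theta_2=\theta$ and $\theta a$ is a prefix of $y$''. Equivalently, take the common prefix of $y$ with $\theta b$ instead of with $\theta$. This still gives $[0;b,y]<[0;b,\theta,b]$, because $y$ and $\theta b$ first differ, $a$ against $b$, at an odd position of the expansion.

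\textbf{A simpler proof.} For the corrected statement you need no new induction. The word $u^bvuv$ is the factor $uvuv$ of $(uv)^\infty$ with its first letter changed from $a$ to $b$. Apart from the cut right after the first letter (checked directly) and the exceptional cut above, every admissible cut is therefore a cut of $(uv)^\infty$. These are covered by \Cref{lem:cuts} itself, not by \Cref{cor:easy_cor}, which compares $x$ with $y$ rather than each with $\theta$.

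Truncation on the right only produces the alternatives $x=\theta_1$ or $y=\theta_2$. On the left, the flipped letter does one of three things:
\begin{itemize}
\item it lies beyond the first mismatch, so nothing changes;
\item for $b|a$ cuts, it turns $\theta_2a$ into the prefix $\theta_2b$ of $\theta$, giving the truncated alternative;
\item for $a|b$ cuts, it turns a prefix of $\theta$ ending in $a$ into $\theta_1 b$, giving the first alternative.
\end{itemize}
One further case must be excluded: for a $b|a$ cut, the whole left side being a prefix of $\theta$ before the flip. This is ruled out because $\theta$ is a palindrome, which would force the letter before the cut to be $a$.
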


	\begin{lemma}\label{lem:second_cuts}
		Let $(u,v)\in\overline{P}$ and write $uv=a\theta b$ where $\theta$ is a palindromic word on $\{a,b\}$. Let $x$ and $y$ be words on $\{a,b\}$ defined by any of the following factorizations:
		\begin{itemize}
			\item $u^buvv_a=y^Tb|ax$ with length $|ya|<|uv|$;
			\item $u^buvv_a=x^Ta|by$ different from $u^bv_a|u^bv_a=b\theta a|b\theta b$.
		\end{itemize}
		Then in any case
		\begin{enumerate}
			\item Let $\theta_1$ be the largest prefix common to $x$ and $\theta$. Then either $\theta_1 a$ is a prefix of $x$ and $\theta_1b$ is a prefix of $\theta$, or $x=\theta_1$ is a prefix of $\theta$.
			\item Let $\theta_2$ be the largest prefix common to $y$ and $\theta$. Then either $\theta_2 b$ is a prefix of $y$ and $\theta_2 a$ is a prefix of $\theta$, or $y=\theta_2$ is a prefix of $\theta$.
		\end{enumerate}
	\end{lemma}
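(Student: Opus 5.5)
The natural approach is to imitate the induction in the proof of \Cref{lem:cuts}, applied now to the finite words $u^buvv_a$ rather than the bi-infinite word $(uv)^\infty$. Since $(u,v)\in\overline{P}$, \Cref{relbtrenorm} gives $(u,v)=W(a,b)$ with $W=R_1\dotsb R_n$, $R_i\in\{U,V\}$. I would induct on $n$. The base cases are the shortest pairs, e.g. $(u,v)=(a,b)$, $(a,ab)$, $(ab,b)$. There $u^buvv_a$ has only a handful of cuts, and the two alternatives can be verified by inspection. The exceptional cut $u^bv_a|u^bv_a=b\theta a|b\theta b$ and the length restriction $|ya|<|uv|$ are exactly what exclude the cuts for which the conclusion fails.

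For the inductive step, let $\tilde u\tilde v=W'(uv)$ with $W'\in\{U,V\}$, and write $\tilde\theta=bU(\theta)$ or $\tilde\theta=V(\theta)a$ accordingly. I would first check how $u^buvv_a$ transforms. Using \Cref{lem:commutative_identities} and \Cref{lem:symmetry2}, $W'(u^buvv_a)$ should coincide with $\tilde u^b\tilde u\tilde v\tilde v_a$, up to adding or removing one boundary letter. This is because $U$ and $V$ only append a $b$ at the end or an $a$ at the start.

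Each old cut $\dots b|a\dots$ or $\dots a|b\dots$ is then mapped to a cut of the same type. Exactly as in \Cref{lem:cuts}, the common prefixes transform by $\theta_i\mapsto bU(\theta_i)$ or $\theta_i\mapsto V(\theta_i)a$, and the distinguishing letters keep their order. This preserves the first alternative of each item. Note that the letters here are opposite to those in \Cref{lem:first_cuts}, since we are reading the mirrored configuration. In the degenerate alternative, $x=\theta_1$ a prefix of $\theta$, the image $\tilde x$ is again a prefix of $\tilde\theta$, possibly after the boundary letter is trimmed.

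Next come the new cuts created by $U(a|a)=ab|ab$ or $V(b|b)=ab|ab$. I would treat these with the argument on maximal blocks from \Cref{lem:cuts}: the longest $a$-block of $uv$ is the first and the longest $b$-block is the last, and $\theta$ is a palindrome. Together these give the comparison of $\tilde x$, respectively $\tilde y$, with the prefix $(ba)^{n-1}bb$, respectively $(ab)^{k+1}$, of $\tilde\theta$. The only difference from \Cref{lem:cuts} is that near the ends of the finite word a block may be truncated, and then $x$ or $y$ becomes a prefix of $\tilde\theta$. That case is exactly the second alternative of each item.

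I expect the main obstacle to be the bookkeeping at the boundaries. Three things need care:
\begin{itemize}
\item confirming that the exceptional cut $b\theta a|b\theta b$ corresponds to the new exceptional cut;
\item confirming that the length condition $|ya|<|uv|$ is preserved;
\item identifying, for each truncated block, which of the two alternatives applies.
\end{itemize}
I would handle this by writing $u^buvv_a=u^b\,v_a u^b\,v_a$ and using $uv=v_au^b$ from \Cref{lem:symmetry2}, so that the finite word is viewed as a window inside $(uv)^\infty$. Every cut whose context is long enough then inherits the conclusion of \Cref{lem:cuts} directly, with the letters swapped because of the reversal $(uv)^T=u^bv_a$. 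Only the cuts whose context runs off one end of the window need the truncated alternative, and those can be checked directly.
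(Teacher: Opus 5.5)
Your route (rerun the renormalization induction of \Cref{lem:cuts}, or read $u^buvv_a$ as a window of $(uv)^\infty$) is the one the paper intends, since it omits the proof as ``similar to'' \Cref{lem:cuts}. However, the step that is supposed to produce the letter pattern of the statement is wrong, because there is no reversal. Since $uv=v_au^b$ by \Cref{lem:symmetry2}, the word $u^buvv_a=u^bv_au^bv_a$ is a factor of $(uv)^\infty=(v_au^b)^\infty$ itself, read in the same direction. So \Cref{lem:cuts} transfers with the \emph{same} letters: $\theta_1b$ is a prefix of $x$ and $\theta_1a$ of $\theta$, and $\theta_2a$ is a prefix of $y$ and $\theta_2b$ of $\theta$. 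This is the opposite of what is printed, and the printed version is false. Take $(u,v)=(ab,abb)$, so $\theta=bab$ and $u^buvv_a=bbababbaba$. The cut $bbab|abbaba$ satisfies $|ya|<|uv|$, with $y=abb$ and $x=bbaba$, so $\theta_1=b$ and $\theta_2=\varnothing$. Neither alternative of (1) or (2) holds, while the orientation of \Cref{lem:cuts} does. So your base cases cannot be ``verified by inspection'', and the induction has nothing correct to propagate. The letters in the first alternatives of (1) and (2) must be interchanged. That corrected orientation is also what the proof of \eqref{eq:conj2} needs: one wants $[a,x]<[a,\theta,b]$ and $[0;b,y]<[0;\overline{b,\theta,a}]$. (Incidentally, the excluded cut should read $b\theta a|b\theta a$.)

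Second, contrary to your claim, the restriction $|ya|<|uv|$ does not exclude every cut where the conclusion fails. The cut $u^b|v_au^bv_a$ is one of the $\dots uv|uv\dots$ cuts of $(uv)^\infty$ and has $|ya|=|u|<|uv|$. But there $x=\theta bv_a$, so $\theta_1=\theta$ and item (1) fails in either orientation. This already happens for $(u,v)=(a,b)$: in the cut $b|aba$ of $baba$ we have $\theta=\varnothing$ and $x=ba$. This cut must be excluded or treated by hand; in the application it is harmless, since $x$ begins with $\theta b$ and $y$ is a prefix of $\theta$. With both corrections, your window observation is already a complete proof and no induction is needed. Every other cut admitted by the hypotheses is a cut of $(uv)^\infty$ different from $\dots uv|uv\dots$ and $\dots v_a|u^b\dots$. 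So \Cref{lem:cuts} applies to the infinite words $X,Y$ of which $x,y$ are prefixes. If $x$ (resp.\ $y$) ends before the first letter at which $X$ (resp.\ $Y$) departs from $\theta$, it is a prefix of $\theta$, which is the second alternative.
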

	
	The following lemma extends \cite[Theorem 1.5]{Bur+2002} (see also \cite[Theorem 29]{Bombieri}) with a different method, by also considering the Galois conjugates. 
	
	\begin{lemma}\label{lem:position_tildem_attain_max}
		Let $(u,v)\in\overline{P}$ and write $uv=a\theta b$. Then
		\begin{equation}\label{eq:conj1}
			\widetilde{m}\left(u^bv(uv)^\infty\right)=\lambda\left(u^bv|(uv)^\infty\right)=\lambda(b\theta b|(a\theta b)^\infty)=[\overline{a,\theta,b}]+[0;b,\theta,b].
		\end{equation}
		\begin{equation}\label{eq:conj2}
			\widetilde{m}\left((u^bv_a)^\infty\right)=\lambda\left(u^bv_a|(u^bv_a)^\infty\right)=\lambda(b\theta a|(b\theta a)^\infty)=[0;\overline{b,\theta,a}]+[a,\theta,b].
		\end{equation}
	\end{lemma}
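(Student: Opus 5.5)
We have to show that for $x=u^bv(uv)^\infty$ the supremum of $\lambda_n(x)$ over all cuts is attained at the cut $u^bv|(uv)^\infty$, and similarly for $(u^bv_a)^\infty$ at the cut after the first period. We use \eqref{def_tildem1}, $\widetilde{m}(x)=\sup_n\lambda_{n}(x)$, and compare every cut of the one-sided word with the distinguished one.

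The first step is to reduce every cut to the three types $\dots b|a\dots$, $\dots a|b\dots$ (with the abuse of notation $E^Ta|bF$), and the cuts strictly inside a block $11$ or $22$. The in-block cuts have value at most $1+[0;1,\dots]+[0;1,\dots]<2.5$ or a similar bound well below $\frac{3+\sqrt5}{2}$, and the distinguished value is at least that. For the remaining cuts there are two regimes, according to where the cut sits in \eqref{eq:conj1}.

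\emph{Cuts deep in the periodic part.} Here the right side lies in $(uv)^\infty$ and the left side is a finite word of the form $(\text{suffix of } u^bv(uv)^k)$. I would use \Cref{lem:cuts}, in its stronger form recorded in \Cref{cor:cuts}: for a cut $y^Tb|ax$ of $(uv)^\infty$ other than $\dots uv|uv\dots$ we get $[a,x]<[\overline{a,\theta,b}]$, and $[0;b,y]$ is controlled because $y$ and $\theta$ diverge with $y$ carrying $a$ where $\theta$ carries $b$. Truncating $y$ to a finite word only matters when the divergence point lies beyond the truncation, and this is exactly what \Cref{lem:first_cuts} and \Cref{lem:second_cuts} handle, since they allow the alternative ``$y=\theta_2$ is a prefix of $\theta$''. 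In that alternative the truncated $[0;b,y]$ is compared to $[0;b,\theta,b]$ using the parity (length of $b\theta$ even) and the standard monotonicity of continued fractions, the same argument as \Cref{lem:compare}. The cuts $\dots uv|uv\dots$ at later periods have the same right side, but a left side of the form $b\theta b\,a\theta b\cdots$ versus $b\theta b$. Since $[0;b,\theta,b,a,\dots]$ versus $[0;b,\theta,b]$ is decided by parity, the first occurrence is the maximum, which gives the equality of $\widetilde{m}$ with the first-cut value.

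\emph{Cuts inside the prefix $u^bvuv$.} These are exactly the factorizations listed in \Cref{lem:first_cuts}. Combining items (\ref{itm:1}) and (\ref{itm:2}) gives $[a,x,\dots]\le[\overline{a,\theta,b}]$ and $[0;b,y]<[0;b,\theta,b]$ with the correct parity inequalities, hence a strictly smaller value. For \eqref{eq:conj2} the same scheme applies using \Cref{lem:second_cuts} on $u^buvv_a$, together with the identity $(u^bv_a)^\infty=u^b(uv)^\infty$ from \eqref{eq:periodic_identity1}. The maximum is the cut after the first $b\theta a$, whose right side is $a\theta b\dots$, so its value equals $[a,\theta,b]+[0;\overline{b,\theta,a}]$ after regrouping.

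The main obstacle is the parity bookkeeping. For each alternative in \Cref{lem:first_cuts} and \Cref{lem:second_cuts} we must check that the first differing letter (or the truncation) lies at a position where a smaller letter decreases the continued fraction. This amounts to verifying that $|\theta|$ is even and that the common prefixes $\theta_1,\theta_2$ have the right parity over $\{1,2\}$. I would handle this uniformly using the fact that each letter $a,b$ has length $2$, so every prefix over $\{a,b\}$ has even length, which reduces every comparison to the lexicographic criterion of \Cref{prop_lexicographic_comparison}.
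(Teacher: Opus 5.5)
Your plan is viable, and in the periodic part it takes a different route from the paper's. As written, however, several steps fail.

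\textbf{Comparison with the paper.} The paper never compares halves in the periodic part. It writes a cut past $u^bv$ as $u^bv(uv)^kw_1|w_2(uv)^\infty$. If $|w_1|$ is odd, the one-sided left side $[0;w_1^T,(b,\theta,a)^k,b,\theta,b]$ is smaller than the bi-infinite one, so the value is below $m((uv)^\infty)<[\overline{a,\theta,b}]+[0;b,\theta,b]$. If $|w_1|$ is even and $k\geq 1$, the value is below that of the same cut with $k=0$, which lies in $u^bvuv$. This handles every in-letter cut there (in particular the family $\dots v_a|u^b\dots$) using only the value of $m((uv)^\infty)$.

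Your cut-by-cut use of the two-sided inequalities of \Cref{lem:cuts} also works and does not need the Markov value. Its justification is not \Cref{lem:first_cuts}, though. Past $u^bv$, the one-sided left side differs from the bi-infinite one only in its last letter ($b$ instead of $a$) and has at least $|\theta|+4$ entries. The first disagreement with $\theta$ occurs within the first $|\theta|$ entries, so it is always visible. The price of your route is that every exceptional family of \Cref{lem:cuts} must then be handled by hand.

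\textbf{Errors to fix.}
\begin{enumerate}
\item Your numerical dismissal is false. Cuts of the form $22|22$ (between two $a$'s, or in the middle of an $a$ followed by an $a$) can exceed $2.8$. For example, the cut between the two $a$'s of the first period of $bab(aab)^\infty$ has value about $2.81$, well above $\frac{3+\sqrt5}{2}\approx 2.618$. You need a lower bound for the distinguished value itself, such as $[\overline{a,\theta,b}]+[0;b,\theta,b]\geq[2;2,1]+[0;1,1,2,2]=35/12$, which beats the bound $20/7$ for such cuts.
\item You read the left side of a later cut $u^bv(uv)^k|(uv)^\infty$ in the wrong direction. It is $[0;(b,\theta,a)^k,b,\theta,b]$, not $[0;b,\theta,b,a,\dots]$. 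With your word, parity gives the opposite inequality: $b\theta b$ has even length, so appending entries increases $[0;b,\theta,b]$. With the correct word, the first difference is $a$ versus $b$ right after $b\theta$, at an odd index, so later cuts are smaller.
\item \Cref{lem:cuts} also excludes the family $\dots v_a|u^b\dots$, which attains $m((uv)^\infty)$ in the bi-infinite word, and you never treat it. In $u^bv(uv)^\infty$ it is harmless: its half $[0;b,y]=[0;\overline{b,\theta,a}]$ is $<[0;b,\theta,b]$, and its truncated half $[a,x]$ is $<[\overline{a,\theta,b}]$ because the altered last letter sits at an even index. This must still be checked, and likewise for both families in \eqref{eq:conj2}.
\item Parity does not settle every alternative of \Cref{lem:first_cuts}. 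For an $a|b$ cut the truncated side is $x$. If $x=\theta_1$ were a prefix of $\theta$, you would get $[a,x]>[\overline{a,\theta,b}]$, since appending entries to the even-length word $a\theta_1$ decreases the value. That case must be excluded combinatorially: $x^T=b\theta'$ with $\theta'a$ a prefix of $\theta$, and $\theta$ cannot begin with both $\theta'a$ and $(\theta')^Tb$.
\end{enumerate}
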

	
	\begin{proof}
		We only give the proof of \eqref{eq:conj1}, since the another is analogous.
		
		Let us consider first cuts of the form $u^bv(uv)^\infty=u^bv(uv)^kw_1|w_2(uv)^\infty$ where $w_1,w_2$ are words on $\{1,2\}$ such that $w_1w_2=uv$, $w_1$ is non-empty and $k\geq 0$. If $w_1$ has odd length, then  we have
		\begin{align*}
			\lambda\left(u^bv(uv)^kw_1|w_2(uv)^\infty\right)&=[w_2,\overline{a,\theta,b}]+[0;w_1^T,(b,\theta,a)^k,b,\theta,b] \\
			&<[w_2,\overline{a,\theta,b}]+[0;w_1^T,\overline{b,\theta,a}] \\
			&=\lambda\left((uv)^\infty w_1|w_2(uv)^\infty\right) \leq m\left((uv)^\infty\right).
		\end{align*}
		Since by \eqref{eq:markov_value_first_version} one has that
		\begin{equation*}
			m\left((uv)^\infty\right) = [\overline{a,\theta,b}]+[0;\overline{b,\theta,a}] < [\overline{a,\theta,b}]+[0;\overline{b,\theta,b}],
		\end{equation*}
		the $\widetilde{m}$ value is not attained when $w_1$ has odd length.
		
		If $w_1$ has even length and $k\geq 1$ then
		\begin{align*}
			\lambda\left(u^bv(uv)^kw_1|w_2(uv)^\infty\right)&=[w_2,\overline{a,\theta,b}]+[0;w_1^T,(b,\theta,a)^k,b,\theta,b] \\
			&<[w_2,\overline{a,\theta,b}]+[0;w_1^T,b,\theta,b]=\lambda\left(u^bvw_1|w_2(uv)^\infty\right).
		\end{align*}
		In particular we see that the $\widetilde{m}$ value of $u^bv(uv)^\infty$ is being attained in the prefix $u^bvuv$. Moreover if it is attained at some cut $u^bvw_1|w_2=u^bvuv$, then $w_1$ has even length. In particular, such a cut must have the form $\dotsb b|a\dotsb$, because cuts of the form $22|11, 22|22, 11|11$ all give values at most $[0;2,2,1]+[2;2,2,1]=20/7<35/12=[0;1,1,2,2]+[2;2,1]$ (which is a lower bound for the value of $[\overline{a,\theta,b}]+[0;b,\theta,b]$).
		
		Suppose now that the $\widetilde{m}$ value of $u^bv(uv)^\infty$ is attained at some cut of the form $u^bv(uv)^\infty=x^Ta|by$ with $|xa|<|uv|$ or $u^bv(uv)^\infty=y^Tb|ax$. In any case, \Cref{lem:first_cuts} gives that if $y^Tb|ax$ is different from $b\theta b|a\theta b$, then 
		\begin{equation*}
			[a,x]<[a,\theta] \quad\text{and}\quad [0;b,y]<[b,\theta,b]. 
		\end{equation*}
		In conclusion the value is maximized at the cut $u^bv(uv)^\infty=b\theta b|a\theta a\dots$, which is precisely \eqref{eq:conj1}.
		
	\end{proof}
	
	Finally, we come to the proof of \Cref{thm:equal_value}.

	\begin{proof}[Proof of \Cref{thm:equal_value}]

		It is easy to see that the first two numbers have equal value. Indeed, note that given any positive integers $b_1,\dots,b_m$ one has that $[0;b_1,\dots,b_m,1,1]=[0;b_1,\dots,b_m,2]$. Recall by \eqref{eq:periodic_identity1} we have $((uv)^T)^\infty=u^b(uv)^\infty$. Since the value $\widetilde{m}(x)$ should not be attained at the firsts two positions because $[1;1]+[0;2,2,1,\dots]<[1;1]+[0;2,2,1]=17/7<35/12=[0;1,1,2,2]+[2;2,1]$ (which is a lower bound for the value of $[0;\overline{b,\theta,a}]+[a,\theta,b]$), this shows equality between the first cases. The same argument applies for the last two because of \eqref{eq:periodic_identity2} which gives $v^T((uv)^T)^\infty=u^bv(uv)^\infty$.

		In particular, it suffices to find equality between the second and fourth case. Recall that we can write $uv=a\theta b$ with $\theta^T=\theta$.

		From equalities \eqref{eq:conj1} and \eqref{eq:conj2}, it suffices to show that
		\begin{equation*}
			[\overline{a,\theta,b}]+[0;b,\theta,b]=[0;\overline{b,\theta,a}]+[a,\theta,b].
		\end{equation*}

		In particular letting $\eta=[\overline{a,\theta,b}]=(P+\sqrt{D})/Q$ we have that
		\begin{equation*}
			[\overline{a,\theta,b}]-[0;\overline{b,\theta,a}]=\frac{P+\sqrt{D}}{Q}+\frac{P-\sqrt{D}}{Q}=\frac{2P}{Q}.
		\end{equation*}
		On the other hand we have
		\begin{equation*}
			\eta=[\overline{a,\theta,b}]=\frac{\eta p_n+p_{n-1}}{\eta q_n+q_{n-1}}, \quad \frac{p_n}{q_n}=[a,\theta,b], \quad \frac{p_{n-1}}{q_{n-1}}=[a,\theta,1].
		\end{equation*}
		Therefore we have that $\eta$ satisfies the equation $q_n\eta^2+(q_{n-1}-p_n)\eta-p_{n-1}=0$. This implies that 
		\begin{equation*}
			\frac{2P}{Q}=\frac{p_n-q_{n-1}}{q_n}=[a,\theta,b]-[0;b,\theta,2]=[a,\theta,b]-[0;b,\theta,b]
		\end{equation*}
		where we used that $\beta_{n+1}=q_{n-1}/q_n=[0;a_n,\dots,a_1]$.
		
		Finally we will compute $[\overline{a,\theta,b}]+[0;b,\theta,b]$. For this, let us recall some facts proved in \cite{Bombieri}. If $uv\in P$, then \cite[Theorem 23, (c)]{Bombieri} gives that $M_{uv}$ has the form
		\begin{equation*}
			M_{uv}=\begin{pmatrix}
				3q-q^\prime & \ast \\
				q & q^\prime
			\end{pmatrix}
		\end{equation*}
		In particular, since $uv=a\theta b$, replacing in \eqref{eq:eta_formula} we have that
		\begin{equation*}
			[\overline{a,\theta,b}]=\frac{3q-2q^\prime+\sqrt{\Delta}}{2q},
		\end{equation*}
		where $\Delta=9q^2-4$. In general, one has that $M_{w^T}=M_w^T$, so one has that
		\begin{equation*}
			M_{b\theta a}=M_{(uv)^T}=M_{uv}^T=\begin{pmatrix}
				3q-q^\prime & q \\
				\ast & q^\prime
			\end{pmatrix}
		\end{equation*}
		In particular we have that $[b,\theta,b]=[b,\theta,2]=q/q^\prime$. Therefore from \eqref{eq:markov_value_first_version}
		\begin{equation*}
			[\overline{a,\theta,b}]+[0;b,\theta,b] = \frac{3q-2q^\prime+\sqrt{\Delta}}{2q}+\frac{q^\prime}{q}=\frac{3}{2}+\frac{\sqrt{\Delta}}{2q} = \frac{3+m\left((uv)^\infty\right)}{2}.
		\end{equation*}
		
	\end{proof}

	\section{Similar spectrums and other remarks}\label{sec:remarks}

	Davenport and Schmidt \cite{DavenportSchmidt1970} studied a set called \emph{Dirichlet spectrum} $\mathcal{D}$. For irrational $x$ define
	\begin{equation}\label{def_D}
		D(x)=\limsup_{n\to\infty}\frac{\gamma_{n+1}}{\eta_{n+1}},
	\end{equation} 
	and define the Dirichlet spectrum by
	\begin{equation*}
		\mathcal{D}=\{D(x)<\infty\mid x\in\R\setminus\Q\}.
	\end{equation*}
	Its name comes from its relation to the Dirichlet's approximation theorem, in the sense that if $c(x)=1+\frac{1}{D(x)}$, then
	\begin{equation*}
		c(x)=\sup\left\{c>0\colon \left|x-\frac{p}{q}\right|<\frac{1}{cqQ} \text{ has inf. sol. } (p/q,Q)\in\Q\times\N_{>0},1\leq q\leq Q\right\}
	\end{equation*}
	Similarly with $\mathcal{L}$ and $\mathcal{M}$, the Dirichlet spectrum $\mathcal{D}$ has interesting geometric properties.
	For example, the beginning part of $\mathcal{D}$ is a discrete sequence accumulating at $2+\sqrt{5}$ and $\mathcal{D}$ contains Hall's ray.
	Moreover, the Hausdorff dimension $\dim_H(\mathcal{D}\cap(-\infty,t))$ is continuous with respect to $t$.
	We refer to Section 3.3 of the book \cite{Lim+21} for more details about the spectrum $\mathcal{D}$.
	
	In 1972, Divis \cite{Divis1972} defined a spectrum $\widetilde{\mathcal{D}}$ (he denoted it $\mathcal{M}^*$ in his paper), which is quite related to $\mathcal{D}$. Indeed, if 
	\begin{equation}
		\widetilde{D}(x)=\sup_{n\in\N}\frac{\gamma_{n+1}}{\eta_{n+1}},
	\end{equation}
	then he defined the spectrum
	\begin{equation*}
		\widetilde{\mathcal{D}}=\{\widetilde{D}(x)<\infty\mid x\in\R\setminus\Q\}.
	\end{equation*}
	Notice that the definition of $\widetilde{\mathcal{D}}$ (with respect to $\mathcal{D}$) is quite similar with our spectrum $\widetilde{\mathcal{M}}$ (with respect to $\mathcal{L}$). In \cite{Divis1972}, the author proved that $\widetilde{\mathcal{D}}$ is not closed by proving that the first accumulation point $2+\sqrt{5}$ does not belong to $\widetilde{\mathcal{D}}$ (in contrast observe that the first accumulation point of both $\widetilde{\mathcal{M}}$ and $\mathcal{M}$ is 3 and belongs to them). Also, the beginning part of $\widetilde{\mathcal{D}}$ (it is not homothetic to the beginning $\mathcal{D}$), together with its preimage $\{x\in\R\setminus\Q:\widetilde{D}(x)<2+\sqrt{5}\}$, was fully studied. In fact, it also holds that $\{x\in\R\setminus\Q:\widetilde{D}(x)<2+\sqrt{5}-\varepsilon\}$ is finite for any $\varepsilon>0$ and the preimage $\widetilde{D}^{-1}(t)$ of each $t<2+\sqrt{5}$ contains exactly 0 or 3 elements. Furthermore, the author also proved that $\widetilde{\mathcal{D}}$ contains Hall's ray.
	We expect some of this results to hold for $\widetilde{\mathcal{M}}$, namely, that $\widetilde{\mathcal{M}}$ is not closed and that contains a Hall's ray. 
	
	On the other side, it would be interesting to inquire whether a similar result to \Cref{thm:full_characterization} could be established for the approximation of complex numbers. More precisely, what can we say about numbers $z\in\C$ such that
	\begin{equation*}
		\left|z-\frac{p}{q}\right|<\frac{1}{2|q|^2}
	\end{equation*}
	for only finitely many $p,q\in\Z[i]$ with $p/q\neq z$? For relevant background regarding this point of view, we refer the reader to \cite{AsmusSchmidt} for the choice of constant 2 (see also \cite{Hensley2006} and \cite{Bosma+2012}).

	\bibliographystyle{plain}
	\bibliography{biblography}
	
\end{document}